\newcommand{\lowercasegreek}[1]{
\ifthenelse{\equal{#1}{a}}{\alpha}{
\ifthenelse{\equal{#1}{b}}{\beta}{
\ifthenelse{\equal{#1}{g}}{\gamma}{
\ifthenelse{\equal{#1}{d}}{\delta}{
\ifthenelse{\equal{#1}{e}}{\varepsilon}{
\ifthenelse{\equal{#1}{z}}{\zeta}{
\ifthenelse{\equal{#1}{l}}{\lambda}{
\ifthenelse{\equal{#1}{m}}{\mu}{
\ifthenelse{\equal{#1}{n}}{\nu}{
\ifthenelse{\equal{#1}{x}}{\xi}{
\ifthenelse{\equal{#1}{o}}{\omega}{
\ifthenelse{\equal{#1}{r}}{\rho}{
\ifthenelse{\equal{#1}{s}}{\sigma}{
\ifthenelse{\equal{#1}{u}}{\upsilon}{
\ifthenelse{\equal{#1}{f}}{\varphi}{
\ifthenelse{\equal{#1}{h}}{\chi}{
\ifthenelse{\equal{#1}{p}}{\psi}{
\ifthenelse{\equal{#1}{t}}{\theta}{
\ifthenelse{\equal{#1}{k}}{\kappa}{
\ifthenelse{\equal{#1}{i}}{\iota}{
\ifthenelse{\equal{#1}{t1}}{\tau}{
\ifthenelse{\equal{#1}{e1}}{\eta}{error\_greek
}}}}}}}}}}}}}}}}}}}}}}}
\newcommand{\thrf}[1]{\textbf{#1}}
\newcommand{\ordf}[1]{\lowercasegreek{#1}}
\newcommand{\onf}[1]{\lowercasegreek{#1}}
\newcommand{\natf}[1]{\textit{\textsf{#1}}}
\newcommand{\nattrf}[1]{\lowercasegreek{#1}}
\newcommand{\elf}[1]{\textit{#1}}
\newcommand{\prefixf}[1]{\ifthenelse{\equal{#1}{P}}{\mathit{\Gamma}}{\ifthenelse{\equal{#1}{W}}{\mathit{\Delta}}{prefixf\_error}}}
\newcommand{\fovarf}[1]{\textbf{#1}}
\newcommand{\prvarf}[1]{\textbf{#1}}
\newcommand{\objf}[1]{\textit{#1}}
\newcommand{\algf}[1]{\mathbf{#1}}
\newcommand{\setf}[1]{\textit{#1}}
\newcommand{\csetf}[1]{\textit{#1}}
\newcommand{\isetf}[1]{\textit{#1}}
\newcommand{\foflf}[1]{\lowercasegreek{#1}}
\newcommand{\prflf}[1]{\lowercasegreek{#1}}
\newcommand{\typef}[1]{\textsf{\textmd{#1}}}
\newcommand{\tembf}[1]{\mathfrak{#1}}
\newcommand{\funf}[1]{\textit{#1}}
\newcommand{\frf}[1]{\mathcal{#1}}
\newcommand{\termf}[1]{\textbf{#1}}
\newcommand{\constf}[1]{\textbf{#1}}
\newcommand{\symf}[1]{\textbf{#1}}
\newcommand{\cmpf}[1]{\texttt{#1}}
\newcommand{\seqf}[1]{\textbf{#1}}
\newcommand{\seqelf}[1]{\textit{#1}}
\newcommand{\extsqtf}[1]{\textbf{#1}}
\newcommand{\typeplus}{+}
\newcommand{\baz}{\mathbf{0}}
\newcommand{\bau}{\mathbf{1}}
\newcommand{\baand}{\boldsymbol{\cdot}}
\newcommand{\baor}{\boldsymbol{+}}
\newcommand{\bacmp}[1]{\mbox{\textbf{-}}#1}
\newcommand{\bacmpi}[2]{\mbox{\textbf{-}}^{#1}#2}
\newcommand{\bacmps}{\mbox{\textbf{-}}}
\newcommand{\boxo}[2][error]{\boldsymbol{\tau}_{#1}(#2)}
\newcommand{\boxos}[1][error]{\boldsymbol{\tau}_{#1}}
\newcommand{\diamo}[2][error]{\boldsymbol{d}_{#1}(#2)}
\newcommand{\diamoi}[3][error]{\boldsymbol{d}_{#1}^{#2}(#3)}
\newcommand{\diamos}[1][error]{\boldsymbol{d}_{#1}}
\newcommand{\diamosi}[2][error]{\boldsymbol{d}_{#1}^{#2}}
\newcommand{\algl}{\triangleleft}
\newcommand{\algs}{\bowtie}
\newcommand{\ione}{0}
\newcommand{\itwo}{1}
\newcommand{\boxm}[1][0]{[#1]}
\newcommand{\diamm}[1][0]{\langle#1\rangle}
\newcommand{\sppvar}[1]{v^{#1}}
\newcommand{\sppconst}[1]{u^{#1}}
\newcommand{\trtfl}[1]{{#1}^{\star}}
\newcommand{\foshift}{\mathrm{SF}}
\newcommand{\prshift}{\mathrm{SP}}
\newcommand{\sqtr}[3]{\mathrm{SQ}_{#1}^{#2}(#3)}
\newcommand{\sqtrs}[2]{\mathrm{SQ}_{#1}^{#2}}
\newcommand{\sqftr}[3]{\mathrm{SQF}_{#1}^{#2}(#3)}
\newcommand{\sqftrs}[2]{\mathrm{SQF}_{#1}^{#2}}
\newcommand{\sqfatr}[2]{\mathrm{SQF}_{#1}(#2)}
\newcommand{\sqfatrs}[1]{\mathrm{SQF}_{#1}}
\newcommand{\doubleprefix}[1]{#1^{+}}
\newcommand{\negprefix}[1]{\overline{#1}}
\newcommand{\quant}{\mathbf{Q}}
\newcommand{\formeq}{\leftrightharpoons}
\newcommand{\foor}{\lor}
\newcommand{\foand}{\land}
\newcommand{\foimp}{\to}
\newcommand{\fonot}{\lnot}
\newcommand{\primp}{\to}
\newcommand{\prand}{\land}
\newcommand{\pror}{\lor}
\newcommand{\prnot}{\lnot}
\newcommand{\prequiv}{\;\leftrightarrow\;}
\newcommand{\freevariables}[1]{\mathrm{FV}(#1)}
\newcommand{\elthr}[2][]{\thrf{Th}_{#1}(#2)}
\newcommand{\dom}{\textbf{dom}}
\newcommand{\ran}{\textbf{ran}}
\newcommand{\codom}{\textbf{codom}}
\newcommand{\comp}{\circ}
\newcommand{\prodfr}{\times}
\newcommand{\linprodfr}{\otimes}
\newcommand{\purfr}{\frf{P}}
\newcommand{\fextfr}{\frf{E}}
\newcommand{\fquotalg}{\frf{Q}}
\newcommand{\squotalg}{\frf{R}}
\newcommand{\fextfquotfr}{\frf{S}}
\newcommand{\sqext}{\algf{Y}}
\newcommand{\sqqlp}{\algf{U}}
\newcommand{\sqqlpm}{\nattrf{z}}
\newcommand{\sqqlpam}{\nattrf{h}}
\newcommand{\idf}{\mathrm{id}}
\newcommand{\fqprsqiso}{\nattrf{e1}}
\newcommand{\fqlprsqiso}{\nattrf{l}}
\newcommand{\fextemb}{\nattrf{e}}
\newcommand{\fquotalgm}{\nattrf{e1}}
\newcommand{\fextfquotm}{\nattrf{i}}
\newcommand{\slangat}[1]{\textit{LA}_{#1}}
\newcommand{\slangate}[1]{\textit{LAE}_{#1}}
\newcommand{\slangprn}[1]{\textit{LP}_{#1}}
\newcommand{\slangn}[2]{\textit{L}_{#1}^{#2}}
\newcommand{\slangatc}[1]{\textit{LAC}_{#1}}
\newcommand{\slangprnc}[1]{\textit{LPC}_{#1}}
\newcommand{\slangnc}[2]{\textit{LC}_{#1}^{#2}}
\newcommand{\Implication}{\;\Rightarrow\;}
\newcommand{\defiff}{\stackrel{\mathrm{def}}{\iff}}
\newcommand{\lang}[1]{\mathcal{L}(#1)}
\newcommand{\sptype}[1]{\typef{Y}^{#1}}
\newcommand{\spemb}[2]{\tembf{h}^{#1}_{#2}}
\newtheorem{proposition}{Proposition}
\newtheorem{lemma}{Lemma}
\newtheorem{theorem}{Theorem}
\newtheorem{corollary}{Corollary}
\begin{document}
\title{On Elementary Theories of \thrf{GLP}-Algebras}
\author{Fedor~Pakhomov\thanks{This work was partially supported by RFFI grant 12-01-00888\_a and Dynasty foundation.}\\Steklov Mathematical Institute,\\Moscow\\ \texttt{pakhfn@mi.ras.ru}}
\date{December 2014}
\maketitle

\begin{abstract} There is a polymodal provability logic $\thrf{GLP}$. We consider generalizations of this logic:  the logics $\thrf{GLP}_{\ordf{a}}$, where $\ordf{a}$ ranges over linear ordered sets and play the role of the set of indexes of modalities. We consider the varieties of modal algebras that corresponds to the polymodal logics. We prove that the elementary theories of the free $\emptyset$-generated $\thrf{GLP}_{\natf{n}}$-algebras are decidable for all finite ordinals $\natf{n}$. 
\end{abstract}

\section{Introduction}
There is a classical modal logic $\thrf{GL}$, it can be axiomatized over $\thrf{K}$ by the axiom scheme $\Box (\Box \prflf{f} \primp \prflf{f})\primp \Box \prflf{f}$. R.M.~Solovay have proved \cite{Sol76} that the logic $\thrf{GL}$ proves a formula iff formal arithmetics $\thrf{PA}$ proves every arithmetical interpretation of the formula.An arithmetical interpretation of modal formulas interprets variables by arbitrary  arithmetical sentences, commute with propositional connectives, and  interprets $\Box\prflf{f}$ by arithmetical sentence that means ``$\thrf{PA}$ prove the interpretation of $\prflf{f}$''. 

G.K.~Japaridze  have introduced polymodal provability logic $\thrf{GLP}$\cite{Jap86}. The modalities of the logic $\thrf{GLP}$ are $\boxm[0],\boxm[1],\ldots$. There is an analogue of Solovay theorem for the logic $\thrf{GLP}$ \cite{Jap86} (there is a more modern variant of the result in \cite{Bek11}). 

There were several research on closed fragment of $\thrf{GLP}$, i.e. the fragment consists of all formulas without variables \cite{Ign93}\cite{BJV05}. There were simple representation of an universal model for the closed fragment of $\thrf{GLP}$. 

There are generalization of the logics $\thrf{GLP}$ --- the logics $\thrf{GLP}_{\ordf{a}}$, where $\ordf{a}$  are linear ordered sets that are sets of index of modalities \cite{BekFerJoo13}; the standard logic $\thrf{GLP}$ is the same as $\thrf{GLP}_{\omega}$. In \cite{FerJoo13} it were shown that the construction of universal model for the logic $\thrf{GLP}$ can be generalized to the case of the logics $\thrf{GLP}_{\ordf{a}}$, when $\ordf{a}$ is an ordinal.  

For every modal logic there is the corresponding variety of modal algebras. The free algebra of the variety with the set of generators$\setf{A}$ is the same as the Lindenbaum-Tarski algebra for the fragment of the logic with variables restricted to some set of variables indexed by elements of $\setf{A}$. In particular $\emptyset$-generated algebra is the same as the Lindenbaum-Tarski algebra for the closed fragment.

The decidability of elementary problem is classical for model theory. S.N.~Artemov and L.D.~Beklemishev have proved that for finite $\csetf{C}$ the elementary theory of free $\csetf{C}$-generated $\thrf{GL}$-algebra is decidable iff $\csetf{C}=\emptyset$ \cite{ArtBek93}. L.D.~Beklemishev have asked the question about the decidability of free $\emptyset$-generated $\thrf{GLP}$-algebra \cite[Problem 33]{BekVis06}. We prove that the free $\thrf{GLP}_{\natf{n}}$-algebra have decidable elementary theory for every $\natf{n}$. 

In the paper we introduce the notion of linear $\thrf{GLP}$-algebra that generalize  the notion of free $\emptyset$-generated $\thrf{GLP}$-algebra. We prove that every   free $\emptyset$-generated $\thrf{GLP}_{\ordf{a}}$-algebra is linear. We introduce operation of linear product of $\thrf{GLP}_{\ordf{a}}$-algebras. We consider some decompositions of the free $\emptyset$-generated $\thrf{GLP}_{\ordf{a}}$-algebras with respect to the operation of linear product. We use this decompositions in our proof of the decidability of elementary theories of $\thrf{GLP}_{\natf{n}}$-algebras.
\section{\thrf{GLP}-Algebras}
In this section we give the notion of a \thrf{GLP}-algebra with a given set of modalities and constants. The only algebras we consider are $\thrf{GLP}$-algebras; thus we omit \thrf{GLP} in  ``\thrf{GLP}-algebras'' and write ``algebras''. 

Underlying formalism of our work is set-theoretic. We assume that there is the proper class of constant symbols. We have a unique unary functional symbol $\boxos[\objf{a}]$ for every  set $\objf{a}$. 

Suppose we have a pair $\typef{A}=(\ordf{a},\csetf{A})$, where $\ordf{a}$ is a strict linear order $(\setf{D}_{\ordf{a}},<_{\ordf{a}})$ and  $\csetf{A}$ is a set of constant symbols such that symbols $\baz,\bau\not\in\csetf{A}$. We call such a pair an {\it algebra type} (or shorter {\it type}). $\algf{A}$ is a {\it $\thrf{GLP}$-algebra of the type $\typef{A}$} (or shorter {\it $\typef{A}$-algebra}) if $\algf{A}$ is a model of the signature $$\{\baz,\bau,\baand,\baor,\bacmps\}\sqcup\{\diamos[\objf{i}]\mid\objf{i}\in\ordf{a}\}\sqcup \{\constf{c}\mid\constf{c}\in\csetf{A}\}$$ 
such that $\algf{A}$ is a Boolean algebra and satisfies the following axioms
\begin{enumerate}
\item \label{GLPA_Ax1} $\diamo[\objf{i}]{\baz}=\baz$, for $\objf{i}\in \ordf{a}$; 
\item \label{GLPA_Ax2} $\diamo[\objf{i}]{\elf{x}}\baor\diamo[\objf{i}]{\elf{y}}=\diamo[\objf{i}]{\elf{x}\baor\elf{y}}$, for $\objf{i}\in \ordf{a}$;
\item \label{GLPA_Ax3} $\diamo[\objf{i}]{\bacmp\diamo[\objf{i}]{\elf{x}}\baand\elf{x}}=\diamo[\objf{i}]{\elf{x}}$, где $\objf{i}\in \ordf{a}$;
\item \label{GLPA_Ax4} $\diamo[\objf{j}]{\elf{x}}\le\diamo[\objf{i}]{\elf{x}}$, for $\objf{i},\objf{j}\in\ordf{a}$, $\objf{i}<_{\ordf{a}}\objf{j}$;
\item \label{GLPA_Ax5} $\diamo[\objf{i}]{\elf{x}}\le\boxo[\objf{j}]{\diamo[\objf{i}]{\elf{x}}}$, for $\objf{i},\objf{j}\in\ordf{a}$, $\objf{i}<_{\ordf{a}}\objf{j}$.
\end{enumerate} 
Note that $\elf{x}\le\elf{y}$ is an abbreviation for $\elf{x}=\elf{x}\baand \elf{y}$ and unary functions $\boxos[\objf{i}]$ are given by $$\boxo[\objf{i}]{\elf{x}}=\bacmp{\diamo[\objf{i}]{\bacmp{\elf{x}}}}.$$ We denote the first-order theory of $\typef{A}$-algebras by $\thrf{GLPA}_{\typef{A}}$.

A simple check shows that
\begin{lemma} \label{GLPA_Eq1}
Suppose $\typef{A}=(\ordf{a},\csetf{A})$ is a type. Then the following equations holds in all $\typef{A}$-algebras:
\begin{enumerate}
\item \label{GLPA_Eq1_1}$ \diamo[\objf{i}]{\diamo[\objf{i}]{\elf{x}}}\le\diamo[\objf{i}]{\elf{x}}$, for $\objf{i}\in\ordf{a}$;
\item \label{GLPA_Eq1_2}$\diamo[\objf{j}]{\elf{x} \baand \diamo[\objf{i}]{\elf{y}}}=\diamo[\objf{j}]{\elf{x}} \baor \diamo[\objf{i}]{\elf{y}}$, for $\objf{i},\objf{j}\in\ordf{a}$, $\objf{i}<_{\ordf{a}}\objf{j}$;
\item \label{GLPA_Eq1_3}$\diamo[\objf{j}]{\elf{x} \baand  \boxo[\objf{i}]{\elf{y}}}=\diamo[\objf{j}]{\elf{x}} \baand \boxo[\objf{i}]{\elf{y}}$, for $\objf{i},\objf{j}\in\ordf{a}$, $\objf{i}<_{\ordf{a}}\objf{j}$;
\item \label{GLPA_Eq1_4}$\diamo[\objf{j}]{\elf{x} \baor \diamo[\objf{i}]{\elf{y}}}\baor \diamo[\objf{i}]{\elf{y}}=\diamo[\objf{j}]{\elf{x}} \baor \diamo[\objf{i}]{\elf{y}}$, for $\objf{i},\objf{j}\in\ordf{a}$, $\objf{i}<_{\ordf{a}}\objf{j}$;
\item \label{GLPA_Eq1_5}$\diamo[\objf{j}]{\elf{x} \baor \boxo[\objf{i}]{\elf{y}}}\baor \boxo[\objf{i}]{\elf{y}}=\diamo[\objf{j}]{\elf{x}} \baor \boxo[\objf{i}]{\elf{y}}$, for $\objf{i},\objf{j}\in\ordf{a}$, $\objf{i}<_{\ordf{a}}\objf{j}$;
\end{enumerate}
\end{lemma}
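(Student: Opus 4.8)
The plan is to prove Equation~\ref{GLPA_Eq1_1} first, since it is the only clause that is not an immediate consequence of the axioms together with the normality of the operators, and then to reduce Equations~\ref{GLPA_Eq1_2}--\ref{GLPA_Eq1_5} to it. Throughout I use the elementary consequences of Axioms~\ref{GLPA_Ax1}--\ref{GLPA_Ax2} and the Boolean axioms: each $\diamos[\objf{i}]$ and $\boxos[\objf{i}]$ is monotone, $\boxos[\objf{i}]$ commutes with $\baand$, and the $\thrf{K}$-type law $\diamo[\objf{i}]{\elf{x}}\baand\boxo[\objf{i}]{\elf{y}}\le\diamo[\objf{i}]{\elf{x}\baand\elf{y}}$ holds; I also use $\boxo[\objf{i}]{\elf{x}}=\bacmp\diamo[\objf{i}]{\bacmp\elf{x}}$ and, equivalently, $\diamo[\objf{i}]{\bacmp\elf{x}}=\bacmp\boxo[\objf{i}]{\elf{x}}$.

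For Equation~\ref{GLPA_Eq1_1}, the key move is to substitute $\elf{x}\baor\diamo[\objf{i}]{\elf{x}}$ for $\elf{x}$ in the L\"ob axiom~\ref{GLPA_Ax3}. Using Axiom~\ref{GLPA_Ax2} to rewrite $\diamo[\objf{i}]{\elf{x}\baor\diamo[\objf{i}]{\elf{x}}}$ as $\diamo[\objf{i}]{\elf{x}}\baor\diamo[\objf{i}]{\diamo[\objf{i}]{\elf{x}}}$, and simplifying the Boolean expression $(\elf{x}\baor\diamo[\objf{i}]{\elf{x}})\baand\bacmp\diamo[\objf{i}]{\elf{x}}\baand\bacmp\diamo[\objf{i}]{\diamo[\objf{i}]{\elf{x}}}=\elf{x}\baand\bacmp\diamo[\objf{i}]{\elf{x}}\baand\bacmp\diamo[\objf{i}]{\diamo[\objf{i}]{\elf{x}}}$, Axiom~\ref{GLPA_Ax3} becomes an equation whose right-hand side is $\diamo[\objf{i}]{\elf{x}}\baor\diamo[\objf{i}]{\diamo[\objf{i}]{\elf{x}}}$ and whose left-hand side is $\diamos[\objf{i}]$ applied to an element lying below $\elf{x}\baand\bacmp\diamo[\objf{i}]{\elf{x}}$. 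By monotonicity of $\diamos[\objf{i}]$ and by Axiom~\ref{GLPA_Ax3} applied to $\elf{x}$ itself, that left-hand side is $\le\diamo[\objf{i}]{\elf{x}}$, whence $\diamo[\objf{i}]{\diamo[\objf{i}]{\elf{x}}}\le\diamo[\objf{i}]{\elf{x}}$.

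Next I would record, for $\objf{i}<_{\ordf{a}}\objf{j}$, three facts: (a)~$\diamo[\objf{j}]{\diamo[\objf{i}]{\elf{y}}}\le\diamo[\objf{i}]{\elf{y}}$, which follows from Axiom~\ref{GLPA_Ax4} and Equation~\ref{GLPA_Eq1_1}; (b)~$\diamo[\objf{j}]{\boxo[\objf{i}]{\elf{y}}}\le\boxo[\objf{i}]{\elf{y}}$, which is Axiom~\ref{GLPA_Ax5} with $\bacmp\elf{y}$ in place of $\elf{x}$, rewritten via $\diamo[\objf{i}]{\bacmp\elf{y}}=\bacmp\boxo[\objf{i}]{\elf{y}}$ and complemented; and (c)~$\boxo[\objf{i}]{\elf{y}}\le\boxo[\objf{j}]{\boxo[\objf{i}]{\elf{y}}}$, which is (a) applied to $\bacmp\elf{y}$ and complemented. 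Given these, Equation~\ref{GLPA_Eq1_4} is immediate: by Axiom~\ref{GLPA_Ax2} its left-hand side equals $\diamo[\objf{j}]{\elf{x}}\baor\diamo[\objf{j}]{\diamo[\objf{i}]{\elf{y}}}\baor\diamo[\objf{i}]{\elf{y}}$, and the middle summand is absorbed by (a); Equation~\ref{GLPA_Eq1_5} is the same computation with (b) in place of (a). Equations~\ref{GLPA_Eq1_2} and~\ref{GLPA_Eq1_3} say that $\diamos[\objf{j}]$ pulls a $\diamos[\objf{i}]$- or $\boxos[\objf{i}]$-factor out of a meet; for each, the inequality $\le$ follows from monotonicity of $\diamos[\objf{j}]$ together with (a), resp.\ (b), and the reverse inequality from the fact that the factor is stable under $\boxos[\objf{j}]$ --- Axiom~\ref{GLPA_Ax5} itself in the case of $\diamo[\objf{i}]{\elf{y}}$, and (c) in the case of $\boxo[\objf{i}]{\elf{y}}$ --- combined with the $\thrf{K}$-type law above.

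The only step that requires an idea is Equation~\ref{GLPA_Eq1_1}: the substitution $\elf{x}\mapsto\elf{x}\baor\diamo[\objf{i}]{\elf{x}}$ into L\"ob's axiom is the trick that produces the transitivity of $\diamos[\objf{i}]$. Everything else is bookkeeping with Boolean identities, normality of the operators, and the two ``mixed'' Axioms~\ref{GLPA_Ax4} and~\ref{GLPA_Ax5}, so I do not anticipate any real obstacle there.
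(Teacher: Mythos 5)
Your proposal is correct, and in fact it supplies the argument the paper omits entirely: the paper disposes of this lemma with ``a simple check shows that'' and gives no proof, so there is nothing to compare against. Your route is the standard one for diagonalizable algebras --- deriving transitivity (item \ref{GLPA_Eq1_1}) by substituting $\elf{x}\baor\diamo[\objf{i}]{\elf{x}}$ into the L\"ob identity, and then obtaining items \ref{GLPA_Eq1_2}--\ref{GLPA_Eq1_5} from the auxiliary facts (a)--(c) together with additivity and the $\thrf{K}$-type law --- and every step checks out. One remark: as printed, item \ref{GLPA_Eq1_2} has $\baor$ on the right-hand side, whereas your proof (correctly) establishes $\diamo[\objf{j}]{\elf{x}\baand\diamo[\objf{i}]{\elf{y}}}=\diamo[\objf{j}]{\elf{x}}\baand\diamo[\objf{i}]{\elf{y}}$; the meet version is what the paper actually uses later (e.g.\ in verifying Axiom \ref{GLPA_Ax4} for $\squotalg^{\typef{A},\constf{q}}$), so the $\baor$ is evidently a typo and your reading is the intended statement.
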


$\thrf{GLP}$-algebras are related to the logic $\thrf{GLP}$. The axioms of $\thrf{GLP}$-algebras are axioms of the logic $\thrf{GLP}$ ``translated'' to the language of Boolean algebras with additional operators. Classically logic $\thrf{GLP}$  is defined as a polymodal logic with modalities indexed by natural numbers. We index modalities by elements of an arbitrary linear ordered set and we have unique propositional variable $\sppvar{\constf{c}}$ for every constant symbol $\constf{c}$ and $\sppvar{\fovarf{x}}$ for every first-order variable $\fovarf{x}$. Suppose $\ordf{a}$ is a strict linear order. The set $\lang{\thrf{GLP}_{\ordf{a}}}$ of well-formed formulas of the logic $\thrf{GLP}_{\ordf{a}}$ is given inductively by
$$\begin{aligned}\mbox{$\thrf{GLP}_{\ordf{a}}$-Form}::= & \mbox{Propositional variable}\mid \mbox{Propositional constant}\mid \\ &  \top\mid \bot\mid   \mbox{$\thrf{GLP}_{\ordf{a}}$-Form}\prand \mbox{$\thrf{GLP}_{\ordf{a}}$-Form}\mid  \mbox{$\thrf{GLP}_{\ordf{a}}$-Form}\pror \mbox{$\thrf{GLP}_{\ordf{a}}$-Form}  \mid \\ &  \mbox{$\thrf{GLP}_{\ordf{a}}$-Form} \primp \mbox{$\thrf{GLP}_{\ordf{a}}$-Form} \mid  \prnot \mbox{$\thrf{GLP}_{\ordf{a}}$-Form} \mid  \boxm[\objf{i}] \mbox{$\thrf{GLP}_{\ordf{a}}$-Form},\\ & \mbox{where }\objf{i}\in\ordf{a}.\end{aligned}$$
For an index $\objf{i}$ and a formula $\prflf{p}$ we write $\diamm[\objf{i}]\prflf{p}$ for $\prnot \boxm[\objf{i}]\prnot\prflf{p}$. The axioms and inference rules of the logic $\thrf{GLP}_{\ordf{a}}$ are 
\begin{enumerate}
\item the axiom schemes of $\thrf{PC}$;
\item $\boxm[\objf{i}](\prflf{f}\primp \prflf{p})\primp (\boxm[\objf{i}] \prflf{f} \primp \boxm[\objf{i}] \prflf{p})$;
\item $\boxm[\objf{i}](\boxm[\objf{i}]\prflf{p}\primp \prflf{p})\primp \boxm[\objf{i}]\prflf{p}$;
\item $\boxm[\objf{i}]\prflf{p}\primp \boxm[\objf{j}]\prflf{p}$, for $\objf{i}<_{\ordf{a}}\objf{j}$;
\item $\diamm[\objf{i}]\prflf{p}\primp \boxm[\objf{j}]\diamm[\objf{i}]\prflf{p}$, for $\objf{i}<_{\ordf{a}}\objf{j}$;
\item $\frac{\prflf{f}\;\prflf{f}\primp\prflf{p}}{\prflf{p}}$ (Modus Ponens);
\item $\frac{\prflf{f}}{\boxm[\objf{i}]\prflf{f}}$ (Generalization);
\item $\frac{\prflf{f}}{\prflf{f}[\prflf{p}/\prvarf{x}]}$, where $\prvarf{x}$ is a propositional variable.
\end{enumerate}

There are correspondence between the logic $\thrf{GLP}_{\ordf{a}}$ and the theory of $(\ordf{a},\csetf{A})$-algebras. 

We use propositional constants $\sppconst{\constf{c}}$ for all constant symbols $\constf{c}$ and propositional variables $\sppvar{\fovarf{x}}$ for all propositional variables $\fovarf{x}$. We consider the class of all terms that all functional symbols in them are either from the signature of boolean algebras or of the form $\diamos[\objf{x}]$. We give a translation $\termf{t}\longmapsto\trtfl{\termf{t}}$ of the terms of the class to modal formulas:
\begin{enumerate}
\item $\trtfl{\baz}=\bot$;
\item $\trtfl{\bau}=\top$;
\item $\trtfl{\fovarf{x}}=\sppvar{\fovarf{x}}$, for a first order variable $\fovarf{x}$;
\item $\trtfl{\constf{c}}=\sppconst{\constf{c}}$, где $\constf{c}\ne\baz$, $\constf{c}\ne \bau$, $\constf{c}$ is a constant symbol;
\item $\trtfl{(\termf{t}_1\baand \termf{t}_2)}=\trtfl{\termf{t}_1}\prand \trtfl{\termf{t}_2}$;
\item $\trtfl{(\termf{t}_1\baor \termf{t}_2)}=\trtfl{\termf{t}_1}\pror \trtfl{\termf{t}_2}$;
\item $\trtfl{(\bacmp{\termf{t}})}=\prnot(\trtfl{\termf{t}})$;
\item $\trtfl{(\diamo[\objf{x}]{\termf{t}})}=\diamm[\objf{x}](\trtfl{\termf{t}})$.
\end{enumerate}

\begin{lemma}\label{fo-mod_corr} Suppose $(\ordf{a},\csetf{A})$ is a type, $\termf{t}$, $\termf{u}$  are $\thrf{GLPA}_{(\ordf{a},\csetf{A})}$-terms and  $\{\termf{v}_{\objf{i}}\mid \objf{i}\in\isetf{I}\}$, $\{\termf{w}_{\objf{i}}\mid \objf{i}\in\isetf{I}\}$ are families of $\thrf{GLPA}_{(\ordf{a},\csetf{A})}$-terms. Then 
$$\thrf{GLPA}_{(\ordf{a},\csetf{A})}+\{\termf{v}_{\objf{i}}=\termf{u}_{\objf{i}}\mid \objf{i}\in\setf{I}\}\vdash \termf{t}=\termf{u}$$
iff
$$\thrf{GLP}_{\ordf{a}}+\{\trtfl{\termf{v}_{\objf{i}}}\prequiv\trtfl{\termf{w}_{\objf{i}}}\mid \objf{i}\in \isetf{I}\}\vdash  \trtfl{\termf{t}}\prequiv\trtfl{\termf{u}}.$$\end{lemma}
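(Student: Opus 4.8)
The plan is to prove both implications by induction on derivations, with the help of a reverse translation. First I would define a map $(\cdot)^{\flat}$ sending every $\thrf{GLP}_{\ordf{a}}$-formula to a term of the class on which $\trtfl{(\cdot)}$ is defined: it sends a propositional variable $\sppvar{\fovarf{x}}$ to the first-order variable $\fovarf{x}$, a propositional constant $\sppconst{\constf{c}}$ to $\constf{c}$, sends $\top,\bot$ to $\bau,\baz$, commutes with the Boolean connectives ($\prand,\pror,\prnot$ go to $\baand,\baor,\bacmps$ and $\primp$ goes to $\bacmp{(\cdot)}\baor(\cdot)$), and sends $\boxm[\objf{i}]\prflf{p}$ to the defined term $\boxo[\objf{i}]{\prflf{p}^{\flat}}$. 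Two straightforward inductions give the round-trip facts $\thrf{GLPA}_{(\ordf{a},\csetf{A})}\vdash(\trtfl{\termf{t}})^{\flat}=\termf{t}$ and $\thrf{GLP}_{\ordf{a}}\vdash\trtfl{(\prflf{p}^{\flat})}\prequiv\prflf{p}$ --- the first only up to the double-negation rewrites produced when unfolding $\diamm[\objf{i}]=\prnot\boxm[\objf{i}]\prnot$ and $\boxo[\objf{i}]{\elf{x}}=\bacmp{\diamo[\objf{i}]{\bacmp{\elf{x}}}}$ --- and a third induction shows that both translations commute with substitution, $\trtfl{(\termf{t}[\termf{s}/\fovarf{x}])}=\trtfl{\termf{t}}[\trtfl{\termf{s}}/\sppvar{\fovarf{x}}]$ and $(\prflf{p}[\prflf{f}/\sppvar{\fovarf{x}}])^{\flat}=\prflf{p}^{\flat}[\prflf{f}^{\flat}/\fovarf{x}]$. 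Finally, since $\thrf{GLPA}_{(\ordf{a},\csetf{A})}$, the added premises and the conclusion are all (implicitly universally closed) equations, the completeness theorems for first-order and equational logic reduce the left-hand derivability to derivability in equational logic from the Boolean-algebra axioms, axioms \ref{GLPA_Ax1}--\ref{GLPA_Ax5}, and the premises.

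For the forward implication I would induct on such an equational derivation, showing $\thrf{GLP}_{\ordf{a}}+\{\trtfl{\termf{v}_{\objf{i}}}\prequiv\trtfl{\termf{w}_{\objf{i}}}\}\vdash\trtfl{\termf{r}}\prequiv\trtfl{\termf{s}}$ for every equation $\termf{r}=\termf{s}$ on a line. A Boolean-algebra axiom translates to a propositional tautology, provable already in $\thrf{PC}$; each of \ref{GLPA_Ax1}--\ref{GLPA_Ax5}, after unfolding the definitions of $\boxos[\objf{i}]$, $\diamm[\objf{i}]$ and $\le$, becomes an equivalence provable in $\thrf{GLP}_{\ordf{a}}$ --- using, beyond normality and Generalization, L\"ob's scheme for \ref{GLPA_Ax3}, index-monotonicity for \ref{GLPA_Ax4}, and negative introspection for \ref{GLPA_Ax5}; a premise $\termf{v}_{\objf{i}}=\termf{w}_{\objf{i}}$ maps to an added premise. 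Reflexivity, symmetry and transitivity of $=$ are mirrored by propositional reasoning with $\prequiv$; the substitution rule of equational logic corresponds to rule~8 of $\thrf{GLP}_{\ordf{a}}$ via the commutation above; and the congruence rule is the only genuinely modal point, since for the operator $\boxos[\objf{i}]$ it reduces to the derived modal rule ``from $\prflf{f}\prequiv\prflf{p}$ infer $\boxm[\objf{i}]\prflf{f}\prequiv\boxm[\objf{i}]\prflf{p}$'', which follows from Generalization and normality.

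For the backward implication I would induct on a derivation of $\trtfl{\termf{t}}\prequiv\trtfl{\termf{u}}$ in $\thrf{GLP}_{\ordf{a}}+\{\trtfl{\termf{v}_{\objf{i}}}\prequiv\trtfl{\termf{w}_{\objf{i}}}\}$, showing that every line $\prflf{p}$ satisfies $\thrf{GLPA}_{(\ordf{a},\csetf{A})}+\{\termf{v}_{\objf{i}}=\termf{w}_{\objf{i}}\}\vdash\prflf{p}^{\flat}=\bau$. A $\thrf{PC}$-axiom goes to a term provably equal to $\bau$ by Boolean reasoning; the four modal schemes go to consequences of the Boolean axioms and \ref{GLPA_Ax1}--\ref{GLPA_Ax5} (for instance $\boxm[\objf{i}]\prflf{p}\primp\boxm[\objf{j}]\prflf{p}$ goes to a consequence of \ref{GLPA_Ax4} using monotonicity of $\diamos[\objf{i}]$); an added premise $\trtfl{\termf{v}_{\objf{i}}}\prequiv\trtfl{\termf{w}_{\objf{i}}}$ goes, via the round-trip fact $(\trtfl{\termf{v}_{\objf{i}}})^{\flat}=\termf{v}_{\objf{i}}$ and the premise $\termf{v}_{\objf{i}}=\termf{w}_{\objf{i}}$, to a term provably equal to $\bau$; Modus Ponens is pure Boolean algebra; Generalization uses $\boxo[\objf{i}]{\bau}=\bau$, a consequence of \ref{GLPA_Ax1}, together with substitution of $\prflf{f}^{\flat}=\bau$; and rule~8 becomes substitution of a term for a first-order variable. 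Applying this to the last line gives $\thrf{GLPA}_{(\ordf{a},\csetf{A})}+\{\termf{v}_{\objf{i}}=\termf{w}_{\objf{i}}\}\vdash(\trtfl{\termf{t}}\prequiv\trtfl{\termf{u}})^{\flat}=\bau$, which unfolds to a Boolean equation forcing $(\trtfl{\termf{t}})^{\flat}=(\trtfl{\termf{u}})^{\flat}$, whence $\termf{t}=\termf{u}$ by the round-trip fact.

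I expect the real work to be the bidirectional matching of \ref{GLPA_Ax1}--\ref{GLPA_Ax5} with the axiom schemes of $\thrf{GLP}_{\ordf{a}}$: the term translation is phrased through $\diamos[\objf{i}]$ whereas the modal schemes are phrased through $\boxm[\objf{i}]$, so each side must be pushed through the abbreviations $\boxos[\objf{i}]=\bacmps\diamos[\objf{i}]\bacmps$ and $\diamm[\objf{i}]=\prnot\boxm[\objf{i}]\prnot$; in particular one must verify that the algebraic L\"ob identity \ref{GLPA_Ax3} is interderivable with L\"ob's scheme over normal modal reasoning, and analogously for \ref{GLPA_Ax5} and negative introspection. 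Combined with the fact that $\prequiv$ is a congruence for $\boxm[\objf{i}]$ (Generalization plus normality), this is the only part that is not routine; everything else is propositional bookkeeping plus the observation that the translations commute with substitution.
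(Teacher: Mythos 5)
Your proposal is correct and follows essentially the same route as the paper's (very brief) proof sketch: reduce first-order derivability on the algebraic side to Birkhoff-style equational derivability, then match the two calculi by induction on derivations in each direction. The explicit reverse translation $(\cdot)^{\flat}$, the round-trip and substitution-commutation lemmas, and the axiom-by-axiom matching of \ref{GLPA_Ax1}--\ref{GLPA_Ax5} with the modal schemes are exactly the details the paper leaves implicit under ``straightforward induction on the length of derivations.''
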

\begin{proof}\textit{(Sketch)} All axioms of the theory $\thrf{GLPA}_{(\ordf{a},\csetf{A})}+\{\termf{v}_{\objf{i}}=\termf{u}_{\objf{i}}\mid \objf{i}\in\isetf{I}\}$ are equations. It is well-known (some form of the following fact is due to Birkhoff \cite{Bir35}, also it can be found in the textbook \cite[II,§§14]{BurSan81})  that for a theory axiomatizable by equations all first-order theorems that are equations can be deduced from the axioms by the following rules:
\begin{itemize}
\item $\frac{}{\termf{t}=\termf{t}}$ (Reflexivity);
\item $\frac{\termf{t}_1=\termf{t}_2\;\;\termf{t}_2=\termf{t}_3}{\termf{t}_1=\termf{t}_3}$ (Transitivity);
\item $\frac{\termf{t}_1=\termf{t}_2}{\termf{t}_2=\termf{t}_1}$ (Symmetricity);
\item $\frac{\termf{t}_1=\termf{t}_2}{\termf{t}_1=\termf{t}_2[\termf{t}_3/\fovarf{x}]}$ (Substitution);
\item $\frac{\termf{t}_1=\termf{t}_2[\termf{t}_3/\fovarf{x}]\;\;\termf{t}_3=\termf{t}_4}{\termf{t}_1=\termf{t}_2[\termf{t}_4/\fovarf{x}]}$ (Replacement).
\end{itemize} In this lemma we consider that type of derivations for $\thrf{GLPA}_{(\ordf{a},\csetf{A})}+\{\termf{v}_{\objf{i}}=\termf{u}_{\objf{i}}\mid \objf{i}\in\isetf{I}\}$.

Both 'if' and 'only if' parts of the lemma can be proved by straightforward induction on the length of derivations.\end{proof}

The following form of deduction theorem holds for the logic $\thrf{GLP}$
\begin{lemma}\label{glp_deduction} Suppose $\typef{A}=(\ordf{a},\csetf{A})$ is a type. Then for $\thrf{GLP}_{\ordf{a}}$-formulas $\prflf{f},\prflf{p}_1,\ldots,\prflf{p}_{\natf{n}}$   without free variables there exists $\objf{x}_0\in\ordf{a}$ such that for all $\objf{x}\le_{\ordf{a}}\objf{x}_0$ the following conditions are equivalent:
\begin{enumerate}
\item \label{glp_deduction_c1}$\thrf{GLP}_{\ordf{a}} + \prflf{p}_1+\ldots+\prflf{p}_{\natf{n}}\vdash \prflf{f}$,
\item \label{glp_deduction_c2}$\thrf{GLP}_{\ordf{a}} \vdash (\prflf{p}_1\prand\ldots\prand\prflf{p}_{\natf{n}})\prand \boxm[\objf{x}] (\prflf{p}_1\prand\ldots\prand\prflf{p}_{\natf{n}}) \primp \prflf{f}$.
\end{enumerate} 
\end{lemma}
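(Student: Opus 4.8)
The plan is to reduce the deduction theorem for $\thrf{GLP}_{\ordf{a}}$ to the classical deduction theorem of $\thrf{PC}$, paying careful attention to the fact that the only non-propositional inference rules, Generalization and Substitution, can be eliminated in the presence of the hypotheses $\prflf{p}_1,\ldots,\prflf{p}_{\natf{n}}$. Since the $\prflf{p}_j$ have no free variables, the Substitution rule is never needed to act on them, and one can assume (as is standard for the closed fragment) that any derivation of $\prflf{f}$ from $\thrf{GLP}_{\ordf{a}}+\prflf{p}_1+\ldots+\prflf{p}_{\natf{n}}$ uses Substitution only on axioms, hence can be folded into the choice of axiom instances. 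So the real issue is the Generalization rule $\frac{\prflf{g}}{\boxm[\objf{i}]\prflf{g}}$, which may be applied to lines that depend on the hypotheses.

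The key idea is to use axiom scheme 5, $\diamm[\objf{i}]\prflf{r}\primp\boxm[\objf{j}]\diamm[\objf{i}]\prflf{r}$ for $\objf{i}<_{\ordf{a}}\objf{j}$, together with its $\boxm$-dual $\diamm[\objf{i}]\boxm[\objf{j}]\prflf{r}\primp\boxm[\objf{j}]\prflf{r}$, to show that for a sufficiently small index $\objf{x}$ the formula $\prflf{q}:=(\prflf{p}_1\prand\ldots\prand\prflf{p}_{\natf{n}})\prand\boxm[\objf{x}](\prflf{p}_1\prand\ldots\prand\prflf{p}_{\natf{n}})$ is ``self-necessitating'' in the sense that $\thrf{GLP}_{\ordf{a}}\vdash\prflf{q}\primp\boxm[\objf{y}]\prflf{q}$ for every $\objf{y}\in\ordf{a}$. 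First I would fix $\objf{x}_0$ to be any element of $\ordf{a}$ strictly below every modality index actually occurring in $\prflf{f},\prflf{p}_1,\ldots,\prflf{p}_{\natf{n}}$ (if there is no such element the statement is trivial or needs a boundary remark — I would handle the degenerate cases separately, e.g. when $\ordf{a}$ has a least element or the formulas are purely Boolean). For $\objf{x}\le_{\ordf{a}}\objf{x}_0$ and any index $\objf{y}$ occurring in the derivation: if $\objf{x}<_{\ordf{a}}\objf{y}$, then $\boxm[\objf{x}]\prflf{q}\primp\boxm[\objf{y}]\boxm[\objf{x}]\prflf{q}$ is an instance of axiom 5 (dualized), and axiom 3 plus axiom 4 give $\boxm[\objf{x}]\prflf{r}\primp\boxm[\objf{x}]\boxm[\objf{x}]\prflf{r}$ and $\boxm[\objf{x}]\prflf{r}\primp\boxm[\objf{y}]\prflf{r}$, from which $\prflf{q}\primp\boxm[\objf{y}]\prflf{q}$ follows by propositional reasoning and the $\boxm[\objf{y}]$-distribution axiom 2. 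This establishes the self-necessitation of $\prflf{q}$.

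With that in hand the equivalence is routine. For $\ref{glp_deduction_c2}\Rightarrow\ref{glp_deduction_c1}$: from $\thrf{GLP}_{\ordf{a}}\vdash\prflf{q}\primp\prflf{f}$ and $\thrf{GLP}_{\ordf{a}}+\prflf{p}_1+\ldots+\prflf{p}_{\natf{n}}\vdash\prflf{q}$ (the latter because $\prflf{p}_1\prand\ldots\prand\prflf{p}_{\natf{n}}$ is derivable from the hypotheses, hence so is $\boxm[\objf{x}]$ of it by Generalization), Modus Ponens gives $\prflf{f}$. For $\ref{glp_deduction_c1}\Rightarrow\ref{glp_deduction_c2}$: argue by induction on the length of a derivation of $\prflf{f}$ from the hypotheses that $\thrf{GLP}_{\ordf{a}}\vdash\prflf{q}\primp\prflf{g}$ for every line $\prflf{g}$. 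Axioms and the hypotheses $\prflf{p}_j$ are immediate ($\prflf{q}\primp\prflf{p}_j$ is propositional). Modus Ponens is handled propositionally. For Generalization applied to a line $\prflf{g}$ with index $\objf{y}$: by the induction hypothesis $\thrf{GLP}_{\ordf{a}}\vdash\prflf{q}\primp\prflf{g}$, so by Generalization and axiom 2, $\thrf{GLP}_{\ordf{a}}\vdash\boxm[\objf{y}]\prflf{q}\primp\boxm[\objf{y}]\prflf{g}$; composing with the self-necessitation $\prflf{q}\primp\boxm[\objf{y}]\prflf{q}$ yields $\prflf{q}\primp\boxm[\objf{y}]\prflf{g}$, as needed. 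The main obstacle is getting the bookkeeping on $\objf{x}_0$ exactly right — in particular making sure every modality index appearing anywhere in the derivation (not merely in $\prflf{f},\prflf{p}_1,\ldots,\prflf{p}_{\natf{n}}$) lies strictly above $\objf{x}$, which requires the standard observation that one may restrict to derivations whose formulas only use indices occurring in the endpoints (achievable by a uniform substitution collapsing unused higher modalities, or by noting $\thrf{GLP}_{\ordf{a}}$ is conservative over its fragment with the relevant finite index set), and handling the edge case where $\ordf{a}$ has no element below all relevant indices.
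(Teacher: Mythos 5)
Your proof follows essentially the same route as the paper's: the classical deduction-theorem induction on the length of a derivation, with the Generalization case discharged by showing that $\prflf{q}=(\prflf{p}_1\prand\ldots\prand\prflf{p}_{\natf{n}})\prand\boxm[\objf{x}](\prflf{p}_1\prand\ldots\prand\prflf{p}_{\natf{n}})$ implies $\boxm[\objf{y}]\prflf{q}$ for the indices $\objf{y}$ occurring at Generalization steps. The one genuine problem is your choice of $\objf{x}_0$. You demand that $\objf{x}_0$ lie \emph{strictly} below every index occurring in $\prflf{f},\prflf{p}_1,\ldots,\prflf{p}_{\natf{n}}$ and defer the case where no such element exists as ``degenerate''; but that case is the principal one in this paper --- every application is to $\thrf{GLP}_{\natf{n}}$, where $\ordf{a}$ has least element $0$ and the formulas routinely contain $\boxm[0]$ --- so as written your construction produces no $\objf{x}_0$ in exactly the situations where the lemma is used. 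The strictness is also what forces you to normalize derivations so that only endpoint indices appear. Both complications are avoidable, and the paper avoids them: the self-necessitation $\prflf{q}\primp\boxm[\objf{y}]\prflf{q}$ only needs $\objf{x}\le_{\ordf{a}}\objf{y}$ (for $\objf{x}=\objf{y}$ the L\"ob-derived transitivity $\boxm[\objf{x}]\prflf{r}\primp\boxm[\objf{x}]\boxm[\objf{x}]\prflf{r}$ suffices; for $\objf{x}<_{\ordf{a}}\objf{y}$ one additionally applies axiom 4), so the paper makes the existence of $\objf{x}_0$ part of the induction statement and, at each Generalization step with index $\objf{y}$, simply shrinks $\objf{x}_0$ to be $\le_{\ordf{a}}\objf{y}$; a derivation uses only finitely many indices and $\ordf{a}$ is linear, so the minimum exists. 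Two smaller points: $\boxm[\objf{x}]\prflf{r}\primp\boxm[\objf{y}]\boxm[\objf{x}]\prflf{r}$ is not a dualized instance of axiom 5 (whose dual reads $\diamm[\objf{y}]\boxm[\objf{x}]\prflf{r}\primp\boxm[\objf{x}]\prflf{r}$); it follows, as you also note, from axioms 3 and 4. And $\prflf{q}\primp\boxm[\objf{y}]\prflf{q}$ does not hold for \emph{every} $\objf{y}\in\ordf{a}$ but only for $\objf{y}\ge_{\ordf{a}}\objf{x}$, which is all you need. The remainder --- folding Substitution into axiom instances because the hypotheses are closed, the Modus Ponens case, and the easy direction from condition~2 to condition~1 --- is correct.
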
 
\begin{proof} Obviously, from the condition \ref{glp_deduction_c2}  it follows  the condition \ref{glp_deduction_c1}.
 
By induction on a length of a proof we prove that for any $\thrf{GLP}_{\ordf{a}}$-formulas $\prflf{f},\prflf{p}_1,\ldots,\prflf{p}_{\natf{n}}$  if
$$\thrf{GLP}_{\ordf{a}} + \prflf{p}_1+\ldots+\prflf{p}_{\natf{n}}\vdash \prflf{f}$$
 then there exists   $\objf{x}_0\in\ordf{a}$ such that for all $\objf{x}\le_{\ordf{a}}\objf{x}_0$
$$\thrf{GLP}_{\ordf{a}} \vdash (\prflf{p}_1\prand\ldots\prand\prflf{p}_{\natf{n}})\prand \boxm[\objf{x}] (\prflf{p}_1\prand\ldots\prand\prflf{p}_{\natf{n}}) \primp \prflf{f}.$$ The induction is almost the same as the induction in the classical proof of the deduction theorem for propositional calculus. The only essential difference is the case when  $\prflf{f}$ is $\boxm[\objf{y}]\prflf{x}$ and the last rule in the proof of  $\prflf{f}$ in $\thrf{GLP}_{\ordf{a}} + \prflf{p}_1+\ldots+\prflf{p}_{\natf{n}}$ is $\frac{\prflf{x}}{\boxm[\objf{y}]\prflf{x}}$. From induction hypothesis it follows that  there exists   $\objf{x}_0\le_{\ordf{a}}\objf{y}$ such that for all $\objf{x}\le_{\ordf{a}}\objf{x}_0$ $$\thrf{GLP}_{\ordf{a}} \vdash (\prflf{p}_1\prand\ldots\prand\prflf{p}_{\natf{n}})\prand \boxm[\objf{x}] (\prflf{p}_1\prand\ldots\prand\prflf{p}_{\natf{n}}) \primp \prflf{x}.$$ Hence for all $\objf{x}\le_{\ordf{a}}\objf{x}_0$ $$\thrf{GLP}_{\ordf{a}} \vdash \boxm[\objf{y}]((\prflf{p}_1\prand\ldots\prand\prflf{p}_{\natf{n}})\prand \boxm[\objf{x}] (\prflf{p}_1\prand\ldots\prand\prflf{p}_{\natf{n}}) \primp \prflf{x}).$$ Because $\objf{x}_0\le_{\ordf{a}}\objf{y}$ for all $\objf{x}\le_{\ordf{a}}\objf{x}_0$  $$\thrf{GLP}_{\ordf{a}} \vdash \boxm[\objf{y}]\boxm[\objf{x}] (\prflf{p}_1\prand\ldots\prand\prflf{p}_{\natf{n}}) \primp \boxm[\objf{y}]((\prflf{p}_1\prand\ldots\prand\prflf{p}_{\natf{n}})\foand \boxm[\objf{x}] (\prflf{p}_1\prand\ldots\prand\prflf{p}_{\natf{n}}))$$. Hence for all $\objf{x}\le_{\ordf{a}}\objf{x}_0$ $$\thrf{GLP}_{\ordf{a}} \vdash (\prflf{p}_1\prand\ldots\prand\prflf{p}_{\natf{n}})\prand \boxm[\objf{x}] (\prflf{p}_1\prand\ldots\prand\prflf{p}_{\natf{n}}) \primp \boxm[\objf{y}]\prflf{x}.$$
\end{proof}

 A tuple $\tembf{l}=(\typef{A},\funf{f},\funf{g},\typef{B})$ is a {\it type embedding} if 
\begin{enumerate}
\item $\typef{A}=(\ordf{a},\csetf{A})$ is a type; 
\item $\typef{B}=(\ordf{b},\csetf{B})$ is a type;
\item $\funf{f}\colon \ordf{a}\to\ordf{b}$ is a strictly monotone function, i.e.
$$\forall \objf{x},\objf{y}\in \ordf{a}( \objf{x}<_{\ordf{a}}\objf{y} \Implication \funf{f}(\objf{x})<_{\ordf{b}}\funf{g}(\objf{y});$$
\item $\ordf{g}\colon \csetf{A}\to\csetf{B}$ is an injection.
\end{enumerate}
We say that $\tembf{l}$ is an embedding of $\typef{A}$ into $\typef{B}$, $\typef{A}$ is the {\it domain} of $\tembf{l}$ and that $\typef{B}$ is the {\it codomain} of $\tembf{l}$. For a $\tembf{l}\colon\typef{A}\to\typef{B}$, $\dom(\tembf{l})=\typef{A}$ and $\codom(\tembf{l})=\typef{B}$. We call a type  embedding $(\typef{A},\funf{f},\funf{g},\typef{B})$ a {\it trivial type embedding} if $\funf{f}$ and $\funf{g}$ maps every $\objf{x}$ to itself.

Suppose $\tembf{l}_1=(\typef{A},\funf{f}_1,\funf{g}_1,\typef{B})$ and $\tembf{l}_2=(\typef{B},\funf{f}_2,\funf{g}_2,\typef{C})$ are type embedding. We denote by $\tembf{l}_1\comp\tembf{l}_2$ a type embedding $(\typef{A},\funf{f}_1\comp\funf{f}_2,\funf{g}_1\comp\funf{g}_2,\typef{C})$.

Suppose $\typef{A}=(\ordf{a},\csetf{A})$ is a type. We frequently consider a type $\typef{A}$ as a set of symbols. A symbol lies in $\typef{A}$ if it is either $\diamos[\objf{x}]$ and $\objf{x}\in \ordf{a}$ or $\constf{c}$ and $\constf{c}\in \csetf{A}$. A type embedding $\tembf{l}\colon \typef{A}\to\typef{B}$, $(\typef{A},\funf{f},\funf{g},\typef{B})$ can be considered as the mapping of symbols. In this sense the domain is $\typef{A}$ as the set of symbols, $$\tembf{l}\colon\diamos[\objf{x}]\longmapsto\diamos[\funf{f}(\objf{x})]$$ and $$\tembf{l}\colon \constf{c}\longmapsto\funf{g}(\constf{c}).$$

Suppose $\typef{A},\typef{B}$ are types. Obviously, there is at most one trivial type embedding $\tembf{l}$ of $\typef{A}$ into $\typef{B}$. If such an $\tembf{l}$ exists then we call $\typef{B}$  an extension of $\typef{A}$. Suppose $\typef{B}$ is an extension of $\typef{A}$.  If unary operators of $\typef{A}$ and $\typef{B}$ are the same then we call an extension $\typef{B}$ of $\typef{A}$ a {\it constant extension} of $\typef{A}$. If a type $\typef{A}$ and set of constants $\csetf{C}$ are such that $\csetf{C}$ and $\typef{A}$ don't intersects then we denote by $\typef{A}\typeplus \csetf{C}$ the only type $\typef{B}$ such that $\typef{B}$ is a constant extension of $\typef{A}$ and constant symbols of $\typef{B}$ are exactly constant symbols from $\typef{A}$ and symbols from $\csetf{C}$. For a type embedding $\tembf{l}\colon \typef{A}\to \typef{B}$ and set of constants $\csetf{C}$ such that $\typef{A}\typeplus \csetf{C}$ and $\typef{B}\typeplus \csetf{C}$ are defined we denote by $\tembf{l}\typeplus\csetf{C}$ the type embedding  $\tembf{r}\colon \typef{A}\typeplus \csetf{C}\to \typef{B}\typeplus \csetf{C}$ such that $\tembf{r}$ maps symbols from $\typef{A}$ as $\tembf{l}$ and $\tembf{r}$ maps symbols from $\csetf{C}$ to themselves.  For a type $\typef{A}$ and a constant symbol $\constf{c}\not\in \typef{A}$ we denote by $\typef{A}\typeplus \constf{c}$ the type $\typef{A}\typeplus \{\constf{c}\}$. For a type embedding $\tembf{l}\colon\typef{A}\to \typef{B}$ and a constant symbol $\constf{c}$, $\constf{c}\not\in\typef{A}$,$\constf{c}\not\in\typef{B}$ we denote by $\tembf{l}\typeplus \constf{c}$ the type embedding $\tembf{l}\typeplus \{\constf{c}\}$.

Suppose $\tembf{l}\colon\typef{A}\to\typef{B}$ is a type embedding, and $\algf{B}$ is a $\typef{B}$-algebra. We say that an $\typef{A}$-algebra $\algf{A}$ is the {\it $\tembf{l}$-puration} of $\algf{B}$ if the domains of $\algf{A}$ and $\algf{B}$ are the same, Boolean algebra structure of $\algf{A}$ and $\algf{B}$ are the same, and for every symbol $\symf{s}$ from $\typef{A}$ it's interpretation in $\algf{A}$ is the same as the interpretation of $\tembf{l}(\symf{s})$ in $\algf{B}$.  Obviously, $\tembf{l}$-puration of every $\typef{B}$-algebra exists and unique. We denote $\tembf{l}$-puration of $\algf{B}$ by $\purfr^{\tembf{l}}(\algf{B})$.  For a homomorphism $\funf{f}\colon \algf{A}\to \algf{B}$ of $\typef{B}$-algebras, we denote by $\purfr^{\tembf{l}}(\funf{f})$ the homomorphism $\funf{g}\colon \purfr^{\tembf{l}}(\algf{A})\to \purfr^{\tembf{l}}(\algf{B})$ such that $\funf{g}$ is given by the same function from domain of $\algf{A}$ to domain of $\algf{B}$ as $\funf{f}$. Note that $\purfr^{\tembf{l}}$ is a functor from the category of $\typef{B}$-algebras to the category of $\typef{A}$-algebras.

Suppose  $\algf{B}$ is a $\typef{B}$-algebra and $\setf{C}$ is a set of symbols such that every symbol from $\setf{C}$ lies in $\typef{B}$. Then there exists the unique $\typef{A}$ such that $\typef{B}=\typef{A}\typeplus\csetf{C}$. The {\it $\setf{C}$-puration} of $\algf{B}$ is the $\tembf{l}$-puration of $\algf{B}$, where $\tembf{l}$ is the trivial embedding of $\typef{A}$ into $\typef{B}$.

We call a $\typef{B}$-algebra $\algf{B}$ a {\it strong extension} of an $\typef{A}$-algebra $\algf{A}$ if there is a trivial type embedding $\tembf{l}\colon\typef{A}\to\typef{B}$ such that $\algf{A}$ is a $\tembf{l}$-puration of $\algf{B}$.  We call a $\typef{B}$-algebra $\algf{B}$ a {\it strong constant extension}  of an $\typef{A}$-algebra $\algf{A}$  if $\algf{B}$ is a strong extension of $\algf{A}$ and  $\typef{B}$ is a constant extension of $\typef{A}$. We call a $\typef{B}$-algebra $\algf{B}$ a {\it strong  extension by a set of constants $\csetf{C}$} of an $\typef{A}$-algebra $\algf{A}$ if $\algf{B}$ is strong extension of $\algf{A}$ and $\typef{A}\typeplus\csetf{C}=\typef{B}$.

Below we will define the notion of free $\tembf{l}$-extension. 

 Suppose $\tembf{l}\colon \typef{A}\to \typef{B}$ is a type embedding and $\algf{A}$ is a $\typef{A}$-algebra. We define (up to isomorphism) a $\typef{B}$-algebra $\fextfr^{\tembf{l}}(\algf{A})$  and homomorphism $\fextemb^{\tembf{l}}_{\algf{A}}\colon \algf{A}\to \purfr^{\tembf{l}}(\fextfr^{\tembf{l}}(\algf{A}))$. We call $\fextfr^{\tembf{l}}(\algf{A})$  a  {\it free $\tembf{l}$-extension} of $\algf{A}$. $\fextfr^{\tembf{l}}(\algf{A})$ is a $\typef{B}$-algebra such that for every $\typef{B}$-algebra $\algf{C}$ and homomorphism $\funf{g}\colon \algf{A} \to \purfr^{\tembf{l}}(\algf{C})$ there exists the unique homomorphism $\funf{h}\colon\algf{B}\to \algf{C}$ such that $\fextemb^{\tembf{l}}_{\algf{A}}\comp \purfr^{\tembf{l}}(\funf{h})=\funf{g}$. 
\[
\begin{diagram}
\node[3]{\purfr^{\tembf{l}}(\algf{C})}
\node[3]{\algf{C}}\\
\node{\algf{A}}
  \arrow{ene,t}{\funf{g}}
  \arrow[2]{e,b}{\fextemb^{\tembf{l}}_{\algf{A}}}
\node[2]{\purfr^{\tembf{l}}(\fextfr^{\tembf{l}}(\algf{A}))}
  \arrow{n,r,..}{\purfr^{\tembf{l}}(\funf{h})} 
\node[3]{\fextfr^{\tembf{l}}(\algf{A})}
  \arrow{n,r,..}{\funf{h}}\\
\node[2]{\mbox{\it $\typef{A}$-algebras}}
\node[4]{\mbox{\it $\typef{B}$-algebras}}
\end{diagram}
\]
Simple check shows that  every two algebras that satisfies the definition of $\fextfr^{\tembf{l}}(\algf{A})$ are isomorphic. Obviously, if $(\algf{B},\funf{f})$ and $(\algf{B}',\funf{f}')$ satisfies the definition of $(\fextfr^{\tembf{l}}(\algf{A}),\fextemb^{\tembf{l}}_{\algf{A}})$  then there exists the unique isomorphism $\funf{g}\colon \algf{B}\to \algf{B}'$ such that $\funf{f}\comp \purfr^{\tembf{l}}(\funf{g})=\funf{f}'$.\[
\begin{diagram}
\node[3]{\purfr^{\tembf{l}}(\algf{B}')}
\node[3]{\algf{B}'}\\
\node{\algf{A}}
  \arrow{ene,t}{\funf{f}'}
  \arrow[2]{e,b}{\funf{f}}
\node[2]{\purfr^{\tembf{l}}(\algf{B})}
  \arrow{n,r,..}{\purfr^{\tembf{l}}(\funf{g})} 
\node[3]{\algf{B}}
  \arrow{n,r,..}{\funf{g}}\\
\node[2]{\mbox{\it $\typef{A}$-algebras}}
\node[4]{\mbox{\it $\typef{B}$-algebras}}
\end{diagram}
\] Further, we will prove that $\fextfr^{\tembf{l}}(\algf{A})$  and $\fextemb^{\tembf{l}}_{\algf{A}}$ exists; we will assume that we work with some fixed choice of  $\fextfr^{\tembf{l}}(\algf{A})$  and $\fextemb^{\tembf{l}}_{\algf{A}}$.

Suppose   $\tembf{l}\colon \typef{A}\to \typef{B}$ is a type embedding. The {\it $\tembf{l}$-shift} of a $\thrf{GLPA}_{\typef{A}}$-term $\termf{t}$ is the result of replacing every operator symbol $\boxos[\objf{x}]$ and constant symbol $\constf{c}$ with their $\tembf{l}$-image. For a first-order formula $\foflf{f}\in\lang{\thrf{GLPA}_{\typef{A}}}$ we denote by $\foshift^{\tembf{l}}(\foflf{f})$  the result of replacing every term $\termf{t}$ from $\foflf{f}$ with it's $\tembf{l}$-shift; we call the formula $\foshift^{\tembf{l}}(\foflf{f})$ the  $\tembf{l}$-shift of $\foflf{f}$. 
  
 We call an $\typef{A}$-algebra $\algf{A}$ {\it constant complete} if for every $\elf{x}\in\algf{A}$ there exists a constant $\constf{c}\in \typef{A}$ such that $\constf{c}^{\algf{A}}=\elf{x}$.  Note that for a given constant complete $\typef{A}$-algebra $\algf{A}$ and $\typef{A}$-algebra $\algf{B}$ there is at most one homomorphism from  $\algf{A}$ to $\algf{B}$. Clearly for every algebra there exists a strong constant extension which is constant complete. Suppose  $\tembf{l}\colon \typef{A}\to \typef{B}$ is a type embedding and $\algf{A}$ is constant complete $\typef{A}$-algebra. We consider a $\typef{B}$-algebra $\algf{B}$ built of equivalence classes of closed $\thrf{GLPA}_{\typef{B}}$-terms where the equivalence relation is given by
$$\begin{aligned}\termf{t}_1\sim \termf{t}_2 \defiff \thrf{GLPA}_{\typef{B}} &+ \foshift^{\tembf{l}}(\foflf{f}) \vdash \termf{t}_1=\termf{t}_2\mbox{, for some conjunction $\foflf{f}$ of}\\ &\mbox{closed $\thrf{GLPA}_{\typef{A}}$-equations such that $\algf{A}\models \foflf{f}$}. \end{aligned}$$
Interpretations of functions and constants are given for $\algf{B}$ in a natural way: 
\begin{enumerate}
\item $[\termf{t}_1]\baand^{\algf{B}}[\termf{t}_2]=[\termf{t}_1\baand\termf{t}_2]$;
\item $[\termf{t}_1]\baor^{\algf{B}}[\termf{t}_2]=[\termf{t}_1\baor\termf{t}_2]$;
\item $\bacmpi{\algf{B}}{[\termf{t}]}=[\bacmp{\termf{t}}]$;
\item $\diamoi[\objf{x}]{\algf{B}}{[\termf{t}]}=[\diamo[\objf{x}]{\termf{t}}]$, for $\diamos[\objf{x}]\in \typef{B}$;
\item $\constf{c}^{\algf{B}}=[\constf{c}]$, for $\constf{c}\in\typef{A}$ or $\constf{c}\in\{\baz,\bau\}$.
\end{enumerate} 

Suppose $\algf{A}$ is a constant complete $\typef{A}$-algebra. We consider the function $\funf{f}\colon \algf{A}\to \purfr^{\tembf{l}}(\algf{B})$ that maps a $\constf{c}^{\algf{A}}$ to $[\tembf{l}(\constf{c})]$. Clearly, if $\constf{c}_1^{\algf{A}}=\constf{c}_2^{\algf{A}}$ then $\algf{A}\models \constf{c}_1=\constf{c}_2$ and hence $\tembf{l}(\constf{c}_1)\sim\tembf{l}(\constf{c}_2)$. Thus function $\funf{f}$ is well-defined.  Simple check shows that $\funf{f}$ is a homomorphism.  Let us check that $(\algf{B},\funf{f})$ satisfies the definition of $(\fextfr^{\tembf{l}}(\algf{A}),\fextemb^{\tembf{l}}_{\algf{A}})$. Suppose we have a $\typef{B}$-algebra $\algf{C}$ and homomorphism $\funf{g}\colon \algf{A}\to \purfr^{\tembf{l}}(\algf{C})$. We claim that there exists the unique homomorphism $\funf{h}\colon \algf{B}\to \algf{C}$  such that $\funf{f}\comp \purfr^{\tembf{l}}(\funf{h})=\funf{g}$. We put $\funf{h}([\termf{t}])=\termf{t}^{\algf{C}}$, for every $\thrf{GLPA}_{\typef{B}}$-term $\termf{t}$.  For every quantifier-less closed $\foflf{f}\in\lang{\thrf{GLPA}_{\typef{A}}}$ that is true in $\algf{A}$ the algebra $\algf{C}$ satisfies $\foshift^{\tembf{l}}(\foflf{f})$ and hence for closed $\thrf{GLPA}_{\typef{B}}$-terms $\termf{t}_1$ and $\termf{t}_2$ such that lie in a one equivalence class in $\algf{B}$ we have $\termf{t}_1^{\algf{C}}=\termf{t}_2^{\algf{C}}$. Thus $\funf{h}$ is a well-defined function. Clearly, $\funf{h}$  is a homomorphism. Clearly, the homomorphism $\funf{f}\comp \purfr^{\tembf{l}}(\funf{h})=\funf{g}$. Obviously, for every homomorphism $\funf{h}'\colon \algf{B}\to \algf{C}$ and close $\thrf{GLPA}_{\typef{B}}$-term $\termf{t}$ we have $\funf{h}'([\termf{t}])=\funf{h}'(\termf{t}^{\algf{B}})=\termf{t}^{\algf{C}}$. Hence our claim holds.

For a type embedding $\tembf{l}\colon \typef{A}\to \typef{B}$ we denote by $\ran(\tembf{l})$ the set of all $\tembf{l}(\funf{a})$ for symbols $\funf{a}\in\typef{A}$.

Suppose $\tembf{l}\colon \typef{A} \to \typef{B}$, $\tembf{l}'\colon \typef{A}'\to\typef{B}'$, $\tembf{r}_1\colon \typef{A}\to \typef{A}'$, and $\tembf{r}_2\colon \typef{B}\to \typef{B}'$ such  that $\tembf{l}\comp \tembf{r}_2=\tembf{r}_1\comp\tembf{l}'$, $\tembf{r}_1$ and $\tembf{r}_2$ are constant extensions, $\ran(\tembf{l}')\cap \ran(\tembf{r}_2)=\ran(\tembf{l}\comp \tembf{r}_2)$, and $\ran(\tembf{l}')\cup \ran(\tembf{r}_2)=\typef{B}'$. 
\[
\begin{diagram}
\node{\typef{B}'}
\node{\typef{A}'}
  \arrow{w,t}{\tembf{l}'}\\
\node{\typef{B}}
  \arrow{n,l}{\tembf{r}_2}
\node{\typef{A}}
  \arrow{n,r}{\tembf{r}_1}
  \arrow{w,b}{\tembf{l}}
\end{diagram}
\]
We claim that for an $\typef{A}$-algebra $\algf{A}$ and its strong constant extension $\algf{A}'$ that is constant complete $\typef{A}'$-algebra the pair $(\purfr^{\tembf{r}_2}(\fextfr^{\tembf{l}'}(\algf{A})),\purfr^{\tembf{r}_1}(\fextemb^{\tembf{l}'}_{\algf{A}}))$ satisfies the definition of $(\fextfr^{\tembf{l}}(\algf{A}),\fextemb^{\tembf{l}}_{\algf{A}})$. For a $\typef{B}$-algebra $\algf{C}$ and homorphism $\funf{g}\colon \algf{A}\to \purfr^{\tembf{l}}(\algf{C})$ we can in the unique way find a $\typef{B}'$-algebra $\algf{C}'$ and homorphism $\funf{g}'\colon \algf{A}'\to \purfr^{\tembf{l}'}(\algf{C})$ such that $\algf{C}'$ is strong constant extension of $\algf{C}$ and $\purfr^{\tembf{l}}(\funf{g}')=\funf{g}$.  Clearly, every morphism $\funf{h}\colon \algf{A} \to \purfr^{\tembf{r}_2}(\fextfr^{\tembf{l}}(\algf{A}))$  such that $\funf{g}=\purfr^{\tembf{r}_1}(\fextemb^{\tembf{l}}_{\algf{A}})\comp\funf{h}$  is the $\purfr^{\tembf{r}_1}$-image of the unique $\funf{h}'\colon \algf{A}' \to \fextfr^{\tembf{l}}(\algf{A})$. Hence our claim holds.

From the claim it follows that for every $\tembf{l}\colon \typef{A}\to\typef{B}$ and $\typef{A}$-algebra $\algf{A}$  there exists some $\fextfr^{\tembf{l}}(\algf{A})$ and corresponding $\fextemb^{\tembf{l}}_{\algf{A}}$. We fix a choice of $(\fextfr^{\tembf{l}}(\algf{A}),\fextemb^{\tembf{l}}_{\algf{A}})$ for all type embeddings $\tembf{l}\colon \typef{A}\to\typef{B}$ and $\typef{A}$-algebras $\algf{A}$. Also from the claim it follows that
\begin{lemma} \label{fextfr_const_shift}Suppose $\tembf{l}\colon \typef{A} \to \typef{B}$, $\tembf{l}'\colon \typef{A}'\to\typef{B}'$, $\tembf{r}_1\colon \typef{A}\to \typef{A}'$, and $\tembf{r}_2\colon \typef{B}\to \typef{B}'$ are type embeddings,  $\algf{A}$  is an $\typef{A}$-algebra, and $\algf{A}'$ is an $\typef{A}'$-algebra such  that $\tembf{l}\comp \tembf{r}_2=\tembf{r}_1\comp\tembf{l}'$, $\tembf{r}_1$ and $\tembf{r}_2$ are constant extensions, $\ran(\tembf{l}')\cap \ran(\tembf{r}_2)=\ran(\tembf{l}\comp \tembf{r}_2)$, $\ran(\tembf{l}')\cup \ran(\tembf{r}_2)=\typef{B}'$, and $\purfr^{\tembf{r}_1}(\algf{A}')=\algf{A}$. Then the pair $(\purfr^{\tembf{r}_2}(\fextfr^{\tembf{l}'}(\algf{A}')),\purfr^{\tembf{r}_1}(\fextemb^{\tembf{l}'}_{\algf{A}'}))$ satisfies the definition of $(\fextfr^{\tembf{l}}(\algf{A}),\fextemb^{\tembf{l}}_{\algf{A}})$.
\end{lemma}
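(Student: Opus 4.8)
The plan is to reduce the statement to the claim proved in the paragraph immediately above, which I will call $(\star)$: that claim yields exactly this conclusion under the extra hypothesis that the algebra $\algf{A}'$ sitting in the upper-right corner is constant complete, and I will get rid of this hypothesis by first enlarging $\algf{A}'$ to a constant-complete algebra by adjoining fresh constants.

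First I would fix a set $\csetf{E}$ of constant symbols, disjoint from all symbols occurring in $\typef{A},\typef{A}',\typef{B},\typef{B}'$ and large enough that $\algf{A}'$ has a constant-complete strong constant extension $\algf{A}''$ of type $\typef{A}''=\typef{A}'\typeplus\csetf{E}$ (such an extension exists by the fact noted above that every algebra has a constant-complete strong constant extension). Put $\typef{B}''=\typef{B}'\typeplus\csetf{E}$, $\tembf{l}''=\tembf{l}'\typeplus\csetf{E}\colon\typef{A}''\to\typef{B}''$, and let $\tembf{s}_1\colon\typef{A}'\to\typef{A}''$ and $\tembf{s}_2\colon\typef{B}'\to\typef{B}''$ be the trivial embeddings. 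Then $\tembf{r}_1\comp\tembf{s}_1$ is again a constant extension (a composite of two such), so $\typef{A}''$ is a constant extension of $\typef{A}$; and since $\purfr^{\tembf{r}_1\comp\tembf{s}_1}(\algf{A}'')=\purfr^{\tembf{r}_1}(\purfr^{\tembf{s}_1}(\algf{A}''))=\purfr^{\tembf{r}_1}(\algf{A}')=\algf{A}$, the algebra $\algf{A}''$ is a constant-complete strong constant extension of $\algf{A}$ as well as of $\algf{A}'$.

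Next I would verify that the hypotheses of $(\star)$ are met by both of the commuting squares
$$\tembf{l}'\comp\tembf{s}_2=\tembf{s}_1\comp\tembf{l}''\qquad\text{and}\qquad \tembf{l}\comp\tembf{r}_2\comp\tembf{s}_2=\tembf{r}_1\comp\tembf{s}_1\comp\tembf{l}''.$$
Both displayed identities hold, the relevant vertical embeddings ($\tembf{s}_1,\tembf{s}_2$ in the first square, $\tembf{r}_1\comp\tembf{s}_1,\tembf{r}_2\comp\tembf{s}_2$ in the second) are constant extensions, and the two range conditions required by $(\star)$ follow, via $\ran(\tembf{l}'')=\ran(\tembf{l}')\cup\csetf{E}$ and $\ran(\tembf{r}_2\comp\tembf{s}_2)=\ran(\tembf{r}_2)$, from the corresponding conditions assumed for $\tembf{l},\tembf{l}',\tembf{r}_1,\tembf{r}_2$ together with the freshness of $\csetf{E}$. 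Applying $(\star)$ to the first square shows that $(\purfr^{\tembf{s}_2}(\fextfr^{\tembf{l}''}(\algf{A}'')),\purfr^{\tembf{s}_1}(\fextemb^{\tembf{l}''}_{\algf{A}''}))$ satisfies the definition of $(\fextfr^{\tembf{l}'}(\algf{A}'),\fextemb^{\tembf{l}'}_{\algf{A}'})$, so there is a unique isomorphism $\funf{p}\colon\fextfr^{\tembf{l}'}(\algf{A}')\to\purfr^{\tembf{s}_2}(\fextfr^{\tembf{l}''}(\algf{A}''))$ with $\fextemb^{\tembf{l}'}_{\algf{A}'}\comp\purfr^{\tembf{l}'}(\funf{p})=\purfr^{\tembf{s}_1}(\fextemb^{\tembf{l}''}_{\algf{A}''})$; applying $(\star)$ to the second square shows that $(\purfr^{\tembf{r}_2\comp\tembf{s}_2}(\fextfr^{\tembf{l}''}(\algf{A}'')),\purfr^{\tembf{r}_1\comp\tembf{s}_1}(\fextemb^{\tembf{l}''}_{\algf{A}''}))$ satisfies the definition of $(\fextfr^{\tembf{l}}(\algf{A}),\fextemb^{\tembf{l}}_{\algf{A}})$.

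Finally I would transport the second fact along $\funf{p}$. Using $\purfr^{\tembf{r}_2}(\purfr^{\tembf{s}_2}(\cdot))=\purfr^{\tembf{r}_2\comp\tembf{s}_2}(\cdot)$, the chain $\purfr^{\tembf{l}}(\purfr^{\tembf{r}_2}(\cdot))=\purfr^{\tembf{l}\comp\tembf{r}_2}(\cdot)=\purfr^{\tembf{r}_1\comp\tembf{l}'}(\cdot)=\purfr^{\tembf{r}_1}(\purfr^{\tembf{l}'}(\cdot))$ (valid on objects and on morphisms), and functoriality of $\purfr^{\tembf{r}_1}$, one checks that $\purfr^{\tembf{r}_2}(\funf{p})$ is an isomorphism $\purfr^{\tembf{r}_2}(\fextfr^{\tembf{l}'}(\algf{A}'))\to\purfr^{\tembf{r}_2\comp\tembf{s}_2}(\fextfr^{\tembf{l}''}(\algf{A}''))$ with $\purfr^{\tembf{r}_1}(\fextemb^{\tembf{l}'}_{\algf{A}'})\comp\purfr^{\tembf{l}}(\purfr^{\tembf{r}_2}(\funf{p}))=\purfr^{\tembf{r}_1\comp\tembf{s}_1}(\fextemb^{\tembf{l}''}_{\algf{A}''})$. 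Since satisfying the definition of $(\fextfr^{\tembf{l}}(\algf{A}),\fextemb^{\tembf{l}}_{\algf{A}})$ is preserved when a pair is replaced by an isomorphic one via such a compatible isomorphism, it follows from the second application of $(\star)$ that $(\purfr^{\tembf{r}_2}(\fextfr^{\tembf{l}'}(\algf{A}')),\purfr^{\tembf{r}_1}(\fextemb^{\tembf{l}'}_{\algf{A}'}))$ satisfies the definition of $(\fextfr^{\tembf{l}}(\algf{A}),\fextemb^{\tembf{l}}_{\algf{A}})$, as required. The only genuinely fiddly step is the verification of the two range conditions for the enlarged squares; the rest is bookkeeping with the $\purfr$-composition identities and is entirely formal.
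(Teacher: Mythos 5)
Your proposal is correct and takes essentially the route the paper intends: the paper asserts without detail that the lemma ``follows from the claim'' proved in the preceding paragraph, and you supply that derivation by passing to a constant-complete strong constant extension $\algf{A}''$, applying the claim to the two enlarged squares, and transporting along the canonical compatible isomorphism. The range-condition checks and the $\purfr$-composition identities you invoke (in particular $\purfr^{\tembf{r}_1}\comp\purfr^{\tembf{l}'}=\purfr^{\tembf{l}\comp\tembf{r}_2}=\purfr^{\tembf{l}}\comp\purfr^{\tembf{r}_2}$) all go through.
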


Suppose $\tembf{l}\colon \typef{A}\to \typef{B}$ is a trivial embedding, $\setf{C}$ is the set of all symbols that lie in $\typef{B}$ but not in $\typef{A}$. Then 
\begin{itemize}
\item $\purfr^{\typef{A},\typef{B}}(\algf{A})$ denotes the algebra $\purfr^{\tembf{l}}(\algf{A})$, for an $\typef{A}$-algebra $\algf{A}$;
\item$\purfr^{\typef{A},\typef{B}}(\funf{f})$ denotes the homomorphism $\purfr^{\tembf{l}}(\funf{f})$, for a homomorphism of $\typef{A}$-algebras $\funf{f}$;
\item $\purfr^{\csetf{C}}(\algf{A})$ denotes the algebra $\purfr^{\tembf{l}}(\algf{A})$, for an $\typef{A}$-algebra $\algf{A}$;
\item$\purfr^{\csetf{C}}(\funf{f})$ denotes the homomorphism $\purfr^{\tembf{l}}(\funf{f})$, for a homomorphism of $\typef{A}$-algebras $\funf{f}$;
\item $\purfr^{\termf{c}}(\algf{A})$ denotes the algebra $\purfr^{\tembf{l}}(\algf{A})$, for an $\typef{A}$-algebra $\algf{A}$, if $\setf{C}=\{\termf{c}\}$;
\item$\purfr^{\termf{c}}(\funf{f})$ denotes the homomorphism $\purfr^{\tembf{l}}(\funf{f})$, for a homomorphism of $\typef{A}$-algebras $\funf{f}$, if $\setf{C}=\{\termf{c}\}$.
\end{itemize}

The following corollary is frequently used form of Lemma \ref{fextfr_const_shift}
\begin{lemma} \label{fextfr_const_shift_c}Suppose $\tembf{l}\colon \typef{A} \to \typef{B}$ is a type embedding and $\csetf{C}$ is a set of constants, $\algf{A}$ is $\typef{A}$-algebra, and $\algf{A}'$ is a strong constant extension of $\algf{A}$ by $\csetf{C}$ such that $\tembf{l}\typeplus \csetf{C}$ is well-defined. Then the pair $(\purfr^{\csetf{C}}(\fextfr^{\tembf{l}\typeplus \csetf{C}}(\algf{A}')),\purfr^{\csetf{C}}(\fextemb^{\tembf{l}\typeplus\csetf{C}}_{\algf{A}'}))$ satisfies the definition of $(\fextfr^{\tembf{l}}(\algf{A}),\fextemb^{\tembf{l}}_{\algf{A}})$.
\end{lemma}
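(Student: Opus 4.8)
The plan is to obtain the statement as a direct instantiation of Lemma~\ref{fextfr_const_shift}. Since $\tembf{l}\typeplus\csetf{C}$ is assumed well-defined, the types $\typef{A}\typeplus\csetf{C}$ and $\typef{B}\typeplus\csetf{C}$ exist, so $\csetf{C}$ is disjoint from both $\typef{A}$ and $\typef{B}$ viewed as sets of symbols. I would set $\typef{A}':=\typef{A}\typeplus\csetf{C}$ --- which is the type of $\algf{A}'$ --- and $\typef{B}':=\typef{B}\typeplus\csetf{C}$, take $\tembf{l}':=\tembf{l}\typeplus\csetf{C}\colon\typef{A}'\to\typef{B}'$, and let $\tembf{r}_1\colon\typef{A}\to\typef{A}'$ and $\tembf{r}_2\colon\typef{B}\to\typef{B}'$ be the (unique) trivial type embeddings. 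The algebra played by $\algf{A}$ in Lemma~\ref{fextfr_const_shift} is the given $\algf{A}$, and the algebra there called $\algf{A}'$ is the given strong constant extension.

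Then I would check the hypotheses of Lemma~\ref{fextfr_const_shift} one by one. Commutativity $\tembf{l}\comp\tembf{r}_2=\tembf{r}_1\comp\tembf{l}'$ holds because both composites send each symbol $\symf{s}$ of $\typef{A}$ to $\tembf{l}(\symf{s})$: the $\tembf{r}_i$ are trivial and $\tembf{l}\typeplus\csetf{C}$ acts on the symbols of $\typef{A}$ exactly as $\tembf{l}$. The embeddings $\tembf{r}_1$ and $\tembf{r}_2$ are constant extensions by the very definition of $\typeplus$. For the range conditions, $\ran(\tembf{r}_2)$ is the set of symbols of $\typef{B}$, while $\ran(\tembf{l}')=\ran(\tembf{l})\cup\csetf{C}$ with $\ran(\tembf{l})\subseteq\typef{B}$ and $\csetf{C}$ disjoint from $\typef{B}$; hence $\ran(\tembf{l}')\cup\ran(\tembf{r}_2)=\typef{B}\cup\csetf{C}=\typef{B}'$ and $\ran(\tembf{l}')\cap\ran(\tembf{r}_2)=\ran(\tembf{l})=\ran(\tembf{l}\comp\tembf{r}_2)$. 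Finally $\purfr^{\tembf{r}_1}(\algf{A}')=\algf{A}$ is precisely the assumption that $\algf{A}'$ is a strong (constant) extension of $\algf{A}$ by $\csetf{C}$, with $\tembf{r}_1$ the trivial embedding $\typef{A}\to\typef{A}'$.

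With these in hand, Lemma~\ref{fextfr_const_shift} yields that $(\purfr^{\tembf{r}_2}(\fextfr^{\tembf{l}'}(\algf{A}')),\purfr^{\tembf{r}_1}(\fextemb^{\tembf{l}'}_{\algf{A}'}))$ satisfies the definition of $(\fextfr^{\tembf{l}}(\algf{A}),\fextemb^{\tembf{l}}_{\algf{A}})$. The last step is purely notational: since $\tembf{r}_2$ is the trivial embedding of $\typef{B}$ into $\typef{B}\typeplus\csetf{C}$ we have $\purfr^{\tembf{r}_2}=\purfr^{\csetf{C}}$, and likewise $\purfr^{\tembf{r}_1}=\purfr^{\csetf{C}}$ on homomorphisms of $\typef{A}'$-algebras; together with $\tembf{l}'=\tembf{l}\typeplus\csetf{C}$ this rewrites the pair as $(\purfr^{\csetf{C}}(\fextfr^{\tembf{l}\typeplus\csetf{C}}(\algf{A}')),\purfr^{\csetf{C}}(\fextemb^{\tembf{l}\typeplus\csetf{C}}_{\algf{A}'}))$, which is the claim. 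I do not expect any real obstacle here: the only place using more than unwinding of definitions is the pair of range conditions, and even there the whole content is the disjointness of $\csetf{C}$ from $\typef{B}$ forced by well-definedness of $\tembf{l}\typeplus\csetf{C}$.
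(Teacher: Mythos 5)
Your proof is correct and matches the paper's intent exactly: the paper states this lemma as a "frequently used form" of Lemma~\ref{fextfr_const_shift} without writing out the instantiation, and your choice of $\typef{A}'=\typef{A}\typeplus\csetf{C}$, $\typef{B}'=\typef{B}\typeplus\csetf{C}$, $\tembf{l}'=\tembf{l}\typeplus\csetf{C}$ with trivial $\tembf{r}_1,\tembf{r}_2$ is precisely the intended specialization. Your verification of the hypotheses (commutativity, the range conditions via disjointness of $\csetf{C}$ from $\typef{B}$, and $\purfr^{\tembf{r}_1}(\algf{A}')=\algf{A}$ from the definition of strong constant extension) is complete and accurate.
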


The following lemmma is a corollary of arithmetic completeness theorem for the logic $\thrf{GLP}$:
\begin{lemma}\label{mod_add_cons} Suppose $\tembf{l}\colon(\ordf{a},\csetf{A})\to(\ordf{b},\csetf{B})$ is a type embedding  and $\prflf{f}$ is a $\thrf{GLP}_{\ordf{a}}$-formula. Then $$\thrf{GLP}_{\ordf{a}}\vdash\prflf{f} \iff \thrf{GLP}_{\ordf{b}}\vdash\prshift^{\tembf{l}}(\prflf{f}).$$
\end{lemma}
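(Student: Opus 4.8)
The forward implication $\thrf{GLP}_{\ordf{a}}\vdash\prflf{f}\;\Implication\;\thrf{GLP}_{\ordf{b}}\vdash\prshift^{\tembf{l}}(\prflf{f})$ is routine, and all the content of the lemma lies in the converse, for which (as the statement indicates) I would appeal to arithmetic completeness of $\thrf{GLP}$. For the forward direction the plan is an induction on the length of a $\thrf{GLP}_{\ordf{a}}$-derivation, showing that $\prshift^{\tembf{l}}$ carries it step by step into a $\thrf{GLP}_{\ordf{b}}$-derivation of $\prshift^{\tembf{l}}(\prflf{f})$. Since $\prshift^{\tembf{l}}$ commutes with the Boolean connectives and with substitution for propositional variables, the propositional axioms and the rules (modus ponens, generalization, substitution) go to instances of themselves; since it sends $\boxm[\objf{i}]$ uniformly to $\boxm[\funf{f}(\objf{i})]$, the distribution and L\"ob schemes are preserved. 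The only nontrivial point is the two order-sensitive schemes ($\boxm[\objf{i}]\prflf{p}\primp\boxm[\objf{j}]\prflf{p}$ and $\diamm[\objf{i}]\prflf{p}\primp\boxm[\objf{j}]\diamm[\objf{i}]\prflf{p}$ for $\objf{i}<_{\ordf{a}}\objf{j}$), which must go to instances of the same schemes over $\ordf{b}$; this is precisely where one uses that $\funf{f}$ is \emph{strictly} monotone, so that $\funf{f}(\objf{i})<_{\ordf{b}}\funf{f}(\objf{j})$.

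For the converse, $\thrf{GLP}_{\ordf{b}}\vdash\prshift^{\tembf{l}}(\prflf{f})\;\Implication\;\thrf{GLP}_{\ordf{a}}\vdash\prflf{f}$, I would first strip off two inessential layers. The constant part is harmless: propositional constants are never substituted into, so in a $\thrf{GLP}_{\ordf{b}}$-proof of $\prshift^{\tembf{l}}(\prflf{f})$ one may replace every propositional constant outside the $\funf{g}$-image by $\top$ (axioms stay axioms, rules stay rules, the conclusion is unchanged since its constants lie in the $\funf{g}$-image) and then rename the remaining constants backwards along the injection $\funf{g}$; this reduces the claim to the case where $\tembf{l}$ is a pure strictly monotone reindexing $\funf{f}$ of modalities. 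Finiteness reduces it further: a single formula and a single proof use only finitely many modalities, so, passing to the finite sub-orders of indices occurring in $\prflf{f}$ and in the proof and relabelling them as initial segments, it suffices to prove: for strictly monotone $\funf{f}\colon\{0,\dots,\natf{n}-1\}\to\{0,\dots,\natf{m}-1\}$ and a formula $\prflf{f}$ of $\thrf{GLP}_{\natf{n}}$, if $\thrf{GLP}_{\natf{m}}\vdash\prshift^{\funf{f}}(\prflf{f})$ then $\thrf{GLP}_{\natf{n}}\vdash\prflf{f}$ (the resulting proof is then carried back to $\ordf{a}$ by the forward direction applied to the relevant trivial embedding).

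This last statement is where arithmetic completeness of $\thrf{GLP}$ \cite{Jap86,Bek11} enters, and it is the step I expect to be the main obstacle. The standard sequence $\Box_{0},\Box_{1},\dots$ of (iterated-)provability predicates has the feature that, for our strictly monotone $\funf{f}$, the tuple $\Box_{\funf{f}(0)},\dots,\Box_{\funf{f}(\natf{n}-1)}$ is again a legitimate arithmetical interpretation of $\thrf{GLP}_{\natf{n}}$ over a $\Sigma_{1}$-sound base --- one checks the two order-sensitive $\thrf{GLP}$-schemes directly and sees that widening the gaps between indices only makes them easier to verify --- and, crucially, the arithmetic completeness theorem holds verbatim for this tuple, because the Solovay-style refutation of a $\thrf{GLP}$-non-theorem uses only the finitely many predicates appearing in the formula and is insensitive to which $\thrf{GLP}$-hierarchy over a $\Sigma_{1}$-sound base they come from. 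Granting this, one argues by contraposition: if $\thrf{GLP}_{\natf{n}}\not\vdash\prflf{f}$ then, by completeness relative to $\Box_{\funf{f}(0)},\dots,\Box_{\funf{f}(\natf{n}-1)}$, some assignment of arithmetical sentences to the variables makes the interpretation of $\prflf{f}$ along that tuple unprovable; but that interpretation coincides with the interpretation of $\prshift^{\funf{f}}(\prflf{f})$ along the standard tuple $\Box_{0},\dots,\Box_{\natf{m}-1}$, so by arithmetic soundness of $\thrf{GLP}_{\natf{m}}$ we get $\thrf{GLP}_{\natf{m}}\not\vdash\prshift^{\funf{f}}(\prflf{f})$. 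The delicate point to pin down is exactly this reindexing-invariant (``uniform'') form of the completeness theorem; alternatively, the same conclusion is available from the transfinite arithmetic completeness of \cite{FerJoo13,BekFerJoo13}, by realizing $\ordf{a}$ and $\ordf{b}$ as finite suborders of a suitable ordinal and invoking the corresponding invariance of the ordinal-indexed provability hierarchy under strictly monotone reindexings.
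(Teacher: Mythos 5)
The paper states this lemma without proof, simply remarking that it is a corollary of the arithmetic completeness theorem for $\thrf{GLP}$, and your proposal is a correct elaboration of exactly that route: the routine inductive forward direction, the reduction of the converse to finitely many (hence $\omega$-indexed) modalities, and an appeal to arithmetic completeness for the reindexed provability hierarchy (the ``uniform'' form you flag is indeed the one delicate ingredient, and is available from the literature on $\thrf{GLP}_{\ordf{a}}$ that the paper cites). So this matches the paper's intended argument, only in more detail.
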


We will prove the following lemma in Section \ref{syntactical_facts}
\begin{lemma}\label{box_switch_imp}Suppose $\ordf{a}$ is an order type, $\objf{x}_1\le_{\ordf{a}}\objf{x}_2$ are indexes from $\ordf{a}$, and  $\prflf{f},\prflf{p}$ are formulas from $\lang{\thrf{GLP}_{\ordf{a}}}$ such that for any $\boxm[\objf{y}]$ from $\prflf{f}$ or $\prflf{p}$ we have $\objf{x}_2\le_{\ordf{a}}\objf{y}$. Then $\thrf{GLP}_{\ordf{a}}\vdash \boxm[\objf{x}_1]\prflf{p} \primp \prflf{f}$ iff $\thrf{GLP}_{\ordf{a}}\vdash \boxm[\objf{x}_2]\prflf{p} \primp \prflf{f}$.
\end{lemma}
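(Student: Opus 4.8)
The easy direction — from $\thrf{GLP}_{\ordf{a}}\vdash\boxm[\objf{x}_2]\prflf{p}\primp\prflf{f}$ to $\thrf{GLP}_{\ordf{a}}\vdash\boxm[\objf{x}_1]\prflf{p}\primp\prflf{f}$ — is immediate: the monotonicity axiom gives $\thrf{GLP}_{\ordf{a}}\vdash\boxm[\objf{x}_1]\prflf{p}\primp\boxm[\objf{x}_2]\prflf{p}$ (trivially if $\objf{x}_1=\objf{x}_2$), and one finishes by propositional logic. So the whole weight of the lemma lies in the converse: to pass from $\thrf{GLP}_{\ordf{a}}\vdash\boxm[\objf{x}_1]\prflf{p}\primp\prflf{f}$ to $\thrf{GLP}_{\ordf{a}}\vdash\boxm[\objf{x}_2]\prflf{p}\primp\prflf{f}$. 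Here the \emph{stronger} antecedent $\boxm[\objf{x}_1]\prflf{p}$ is being weakened to $\boxm[\objf{x}_2]\prflf{p}$, so the conclusion of the lemma is a stronger assertion than its hypothesis; this cannot follow from the monotonicity axiom, and the only thing that can make it go through is the assumption that every modality occurring in $\prflf{p}$ and $\prflf{f}$ has index $\ge_{\ordf{a}}\objf{x}_2$.

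I would start from two reductions. Since $\prflf{p},\prflf{f}$ involve only finitely many modalities, all of index $\ge_{\ordf{a}}\objf{x}_2\ge_{\ordf{a}}\objf{x}_1$, and since $\thrf{GLP}$ over a linear order is conservative over $\thrf{GLP}$ over any suborder (the instance of Lemma~\ref{mod_add_cons} for an inclusion type embedding), I may assume $\ordf{a}$ is the finite order on $\{\objf{x}_1,\dots,\objf{x}_2\}$ together with the indices occurring in $\prflf{p},\prflf{f}$, and in particular that $\objf{x}_1$ is its least element; if some index lies strictly between $\objf{x}_1$ and $\objf{x}_2$ the biconditional factors through it, so one may even take $\objf{x}_2$ to be the immediate successor of $\objf{x}_1$, though the argument below works for a gap of any finite length $k\ge1$. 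It is convenient to treat first the closed fragment, where a complete universal model is available (atoms can afterwards be carried along via a suitable semantics accommodating valuations, e.g.\ the topological one); so I would assume $\prflf{p},\prflf{f}$ are closed and $\ordf{a}=\natf{n}$.

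The core step I would do model-theoretically, in the known Ignatiev-type universal model $\mathbb{U}$ for the closed fragment of $\thrf{GLP}_{\natf{n}}$ (see the references in the introduction), which is complete for that fragment and whose worlds can be coordinatized as tuples $(\alpha_0,\dots,\alpha_{\natf{n}-1})$ with $\alpha_{i+1}\le\ell(\alpha_i)$, where $\ell$ is the last-exponent function and $R_i$ lowers the $i$-th coordinate and fixes the earlier ones. Two facts are relevant: the truth of $\prflf{p}$ and $\prflf{f}$ at a world depends only on the tail $(\alpha_{\objf{x}_2},\dots,\alpha_{\natf{n}-1})$ (all their modalities being $\ge\objf{x}_2$), and $\boxm[\objf{x}_2]\prflf{p}$ holds at a world with $\objf{x}_2$-coordinate $\alpha$ exactly when $\prflf{p}$ is true at every admissible tail with leading coordinate $<\alpha$. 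Arguing contrapositively, take a world $w$ refuting $\boxm[\objf{x}_2]\prflf{p}\primp\prflf{f}$, with $\alpha:=\alpha_{\objf{x}_2}^{w}$: then $\prflf{p}$ holds at all admissible tails with leading coordinate $<\alpha$, while $\fonot\prflf{f}$ holds at the tail of $w$. I would then ``inflate'' below level $\objf{x}_2$: let $w'$ retain exactly the tail of $w$ from coordinate $\objf{x}_2$ on (so $\fonot\prflf{f}$ survives), set its coordinates $\objf{x}_2-1,\dots,\objf{x}_1$ to the first, second, $\dots$, $k$-th iterate of $\xi\mapsto\omega^{\xi}$ applied to $\alpha$, and make the coordinates below $\objf{x}_1$ large enough for admissibility. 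The point is that then any $R_{\objf{x}_1}$-successor of $w'$ has $\objf{x}_2$-coordinate bounded by $\ell^{(k)}$ of some ordinal below the $k$-th iterate, hence — by a short induction on $k$, using that $\ell$ of an ordinal does not exceed its leading CNF exponent — strictly below $\alpha$; so the tail of every such successor is one of the tails on which $\prflf{p}$ is already true. Thus $\boxm[\objf{x}_1]\prflf{p}\foand\fonot\prflf{f}$ holds at $w'$, contradicting $\thrf{GLP}_{\natf{n}}\vdash\boxm[\objf{x}_1]\prflf{p}\primp\prflf{f}$.

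The principal obstacle is precisely this converse direction. It genuinely requires work: $\boxm[\objf{x}_1]\prflf{p}$ and $\boxm[\objf{x}_2]\prflf{p}$ are not interderivable, and $\thrf{GLP}_{\natf{n}}$ is Kripke-incomplete already for $\natf{n}=2$, so the surgery must be carried out inside a correct universal model rather than in an arbitrary frame, and the inflation has to be calibrated to the last-exponent dynamics. A purely syntactic route through the deduction theorem (Lemma~\ref{glp_deduction}) runs into trouble: from $\thrf{GLP}_{\ordf{a}}\vdash\boxm[\objf{x}_1]\prflf{p}\primp\prflf{f}$ one gets $\thrf{GLP}_{\ordf{a}}+\prflf{p}\vdash\prflf{f}$, but Lemma~\ref{glp_deduction} only returns implications of the shape $\prflf{p}\prand\boxm[\objf{x}]\prflf{p}\primp\prflf{f}$ for sufficiently small $\objf{x}$, and both moving $\objf{x}$ up to $\objf{x}_2$ and discarding the extra conjunct $\prflf{p}$ seem to demand exactly the level-by-level analysis that the model-theoretic argument packages cleanly — which is why I would lead with the latter.
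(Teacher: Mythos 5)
Your overall strategy for the hard direction (argue contrapositively, take a point refuting $\boxm[\objf{x}_2]\prflf{p}\primp\prflf{f}$, and surgically rebuild the model below level $\objf{x}_2$ so that the $R_{\objf{x}_1}$-successors see only tails where $\prflf{p}$ already holds) is the same idea as the paper's, which performs the surgery in Beklemishev's hereditarily rooted finite stratified $\thrf{J}$-models via the blowup operation $\mathfrak{B}_{\natf{n}}$ rather than in the Ignatiev model. But there is a genuine gap: your argument is carried out only for the \emph{closed} fragment, and the reduction of the general statement to the closed case is not justified. The Ignatiev model is a universal model for the variable-free fragment only; as a \emph{frame} it does not validate $\thrf{GLP}$ under arbitrary valuations (already the monotonicity axiom $\boxm[\objf{i}]\prflf{p}\primp\boxm[\objf{j}]\prflf{p}$ fails as a frame condition, since $R_{\objf{j}}\not\subseteq R_{\objf{i}}$ there), so "adding valuations" is not available, and the remark that atoms "can afterwards be carried along via \dots the topological" semantics is a pointer to a different and much harder completeness theorem in which your coordinate surgery has no obvious counterpart. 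This matters because the lemma is stated for arbitrary $\prflf{f},\prflf{p}$ and is applied in the paper precisely to formulas containing atoms: in the proof of Lemma \ref{sqftr} it is invoked with a bare propositional variable $\prvarf{x}$ as $\prflf{f}$, and in the derivation that $\fextemb^{\tembf{l}}_{\algf{A}}$ is an embedding the formulas carry propositional constants. The paper's choice of the stratified-model semantics is exactly what makes the full language accessible.

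Two smaller points in the closed-fragment core. First, your bound on the $\objf{x}_2$-coordinate of an $R_{\objf{x}_1}$-successor needs care because the last-exponent function is not monotone; the chain $v_{\objf{j}+1}\le\ell(v_{\objf{j}})$ does not literally give $v_{\objf{x}_2}\le\ell^{(k)}(v_{\objf{x}_1})$, though the intended induction (if $\beta<\omega^{\gamma}$ and $\beta>0$ then every CNF exponent of $\beta$, hence $\ell(\beta)$, is below $\gamma$) does go through. Second, the case $\alpha=0$ (where $\boxm[\objf{x}_2]\prflf{p}$ holds vacuously) breaks the uniform inflation formula and must be treated separately, e.g.\ by taking the $\objf{x}_1$-coordinate of $w'$ to be $0$ as well. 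These are fixable; the restriction to the closed fragment is not, without a substantially different argument.
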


From Lemmas \ref{box_switch_imp},  \ref{glp_deduction}, \ref{fo-mod_corr}, \ref{mod_add_cons} it follows that for a type embedding $\tembf{l} \colon \typef{A} \to \typef{B}$, a constant complete $\typef{A}$-algebra $\algf{A}$, and $[\termf{t}_1],[\termf{t}_2]\in\fextfr^{\tembf{l}}(\algf{A})$
$$\termf{t}_1^{\algf{A}}\ne\termf{t}_1^{\algf{A}}\Implication [\termf{t}_1]\ne[\termf{t}_2].$$ 
Hence we have
\begin{lemma} For a type embedding $\tembf{l} \colon \typef{A} \to \typef{B}$ and an $\typef{A}$-algebra $\algf{A}$ the homorphism $\fextemb^{\tembf{l}}_{\algf{A}}$ is an embedding.
\end{lemma}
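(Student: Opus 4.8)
The plan is to deduce injectivity of $\fextemb^{\tembf{l}}_{\algf{A}}$ from the implication
$$\termf{t}_1^{\algf{A}}\ne\termf{t}_2^{\algf{A}}\Implication[\termf{t}_1]\ne[\termf{t}_2]$$
recorded just above, which however is only available once $\algf{A}$ is constant complete and $\fextfr^{\tembf{l}}(\algf{A})$ is presented explicitly as the algebra of $\sim$-classes of closed $\thrf{GLPA}_{\typef{B}}$-terms. So I would first reduce to that situation. Given an arbitrary $\typef{A}$-algebra $\algf{A}$, choose a constant complete strong constant extension $\algf{A}'$ of $\algf{A}$ by a set of constants $\csetf{C}$, picking $\csetf{C}$ disjoint from $\typef{B}$ (possible since the class of constant symbols is proper, and such an extension always exists). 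Then $\tembf{l}\typeplus\csetf{C}$ is defined, and Lemma~\ref{fextfr_const_shift_c} tells us that $(\purfr^{\csetf{C}}(\fextfr^{\tembf{l}\typeplus\csetf{C}}(\algf{A}')),\,\purfr^{\csetf{C}}(\fextemb^{\tembf{l}\typeplus\csetf{C}}_{\algf{A}'}))$ satisfies the defining universal property of $(\fextfr^{\tembf{l}}(\algf{A}),\fextemb^{\tembf{l}}_{\algf{A}})$. By uniqueness of the free $\tembf{l}$-extension up to the unique isomorphism commuting with the canonical maps, and because $\purfr^{\csetf{C}}$ of a homomorphism is literally the same underlying function while composition with an isomorphism preserves injectivity, $\fextemb^{\tembf{l}}_{\algf{A}}$ is injective iff $\fextemb^{\tembf{l}\typeplus\csetf{C}}_{\algf{A}'}$ is. Hence it suffices to prove the lemma for constant complete algebras.

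Now let $\algf{A}$ be constant complete. By the same uniqueness I may take $(\fextfr^{\tembf{l}}(\algf{A}),\fextemb^{\tembf{l}}_{\algf{A}})$ to be the explicit pair $(\algf{B},\funf{f})$ from the text, where $\funf{f}$ sends $\constf{c}^{\algf{A}}$ to $[\tembf{l}(\constf{c})]$. Suppose $\funf{f}(\elf{x})=\funf{f}(\elf{y})$ with $\elf{x},\elf{y}\in\algf{A}$. By constant completeness write $\elf{x}=\constf{c}^{\algf{A}}$ and $\elf{y}=\constf{d}^{\algf{A}}$ for constants $\constf{c},\constf{d}\in\typef{A}$, so that $[\tembf{l}(\constf{c})]=[\tembf{l}(\constf{d})]$ in $\algf{B}$. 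Applying the displayed implication with $\termf{t}_1=\tembf{l}(\constf{c})$ and $\termf{t}_2=\tembf{l}(\constf{d})$ — whose values in $\algf{A}$ are $\constf{c}^{\algf{A}}$ and $\constf{d}^{\algf{A}}$ — gives $\constf{c}^{\algf{A}}=\constf{d}^{\algf{A}}$, i.e.\ $\elf{x}=\elf{y}$. Since $\funf{f}$ is a homomorphism by construction, it is an embedding.

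The genuine content is the displayed implication $\termf{t}_1^{\algf{A}}\ne\termf{t}_2^{\algf{A}}\Rightarrow[\termf{t}_1]\ne[\termf{t}_2]$, and I expect its delicate point to be Lemma~\ref{box_switch_imp}: unwinding $[\tembf{l}(\constf{c})]=[\tembf{l}(\constf{d})]$ via Lemma~\ref{fo-mod_corr} together with the deduction theorem (Lemma~\ref{glp_deduction}) produces a $\thrf{GLP}_{\ordf{b}}$-derivation with a box $\boxm[\objf{x}]$ of arbitrarily small index sitting in front of the translated hypotheses, and Lemma~\ref{box_switch_imp} is exactly what moves that box into $\ran(\tembf{l})$ so that Lemma~\ref{mod_add_cons} can pull the derivation back to $\thrf{GLP}_{\ordf{a}}$ and, via Lemma~\ref{fo-mod_corr} once more, force $\algf{A}\models\constf{c}=\constf{d}$. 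Everything else here — the reduction through Lemma~\ref{fextfr_const_shift_c} and the use of constant completeness to name the two given elements by constants — is routine bookkeeping.
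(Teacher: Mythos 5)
Your proposal is correct and follows essentially the same route as the paper: the paper likewise derives the lemma from the implication $\termf{t}_1^{\algf{A}}\ne\termf{t}_2^{\algf{A}}\Implication[\termf{t}_1]\ne[\termf{t}_2]$ for constant complete $\algf{A}$, obtained from Lemmas~\ref{box_switch_imp}, \ref{glp_deduction}, \ref{fo-mod_corr} and \ref{mod_add_cons} exactly as in your closing paragraph. The only difference is that the paper leaves the reduction from an arbitrary $\typef{A}$-algebra to a constant complete one implicit, whereas you spell it out via Lemma~\ref{fextfr_const_shift_c}; that bookkeeping is correct and harmless.
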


\begin{lemma}\label{fextfr_comp}Suppose $\tembf{l}\colon \typef{A}\to \typef{B}$ and $\tembf{r}\colon \typef{B} \to \typef{C}$ are type embeddings. Then for an $\typef{A}$-algebra $\algf{A}$ there exists an isomorphism $\funf{f}\colon \fextfr^{\tembf{l}\comp\tembf{r}}(\algf{A}) \to \fextfr^{\tembf{r}}(\fextfr^{\tembf{l}}(\algf{A})$ such that $\fextemb^{\tembf{l}\comp\tembf{r}}_{\algf{A}}\comp\purfr^{\tembf{l}\comp\tembf{r}}(\funf{f})=\fextemb^{\tembf{l}}_{\algf{A}}\comp\purfr^{\tembf{l}}(\fextemb^{\tembf{r}}_{\fextfr^{\tembf{l}}(\algf{A})})$.
\end{lemma}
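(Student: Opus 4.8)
The plan is to recognize this as the statement that the free-extension construction composes, and to prove it by checking that the pair $(\fextfr^{\tembf{r}}(\fextfr^{\tembf{l}}(\algf{A})),\,\fextemb^{\tembf{l}}_{\algf{A}}\comp\purfr^{\tembf{l}}(\fextemb^{\tembf{r}}_{\fextfr^{\tembf{l}}(\algf{A})}))$ satisfies the defining universal property of $(\fextfr^{\tembf{l}\comp\tembf{r}}(\algf{A}),\fextemb^{\tembf{l}\comp\tembf{r}}_{\algf{A}})$. Once this is established, the isomorphism $\funf{f}$ together with the asserted identity $\fextemb^{\tembf{l}\comp\tembf{r}}_{\algf{A}}\comp\purfr^{\tembf{l}\comp\tembf{r}}(\funf{f})=\fextemb^{\tembf{l}}_{\algf{A}}\comp\purfr^{\tembf{l}}(\fextemb^{\tembf{r}}_{\fextfr^{\tembf{l}}(\algf{A})})$ is exactly the ``uniqueness up to unique isomorphism'' statement recorded just after the definition of free $\tembf{l}$-extension, applied with $\tembf{l}$ there replaced by $\tembf{l}\comp\tembf{r}$ and with the two competing pairs being the chosen $(\fextfr^{\tembf{l}\comp\tembf{r}}(\algf{A}),\fextemb^{\tembf{l}\comp\tembf{r}}_{\algf{A}})$ and the pair displayed above.

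Before verifying the universal property I would record the routine identity $\purfr^{\tembf{l}\comp\tembf{r}}(\algf{D})=\purfr^{\tembf{l}}(\purfr^{\tembf{r}}(\algf{D}))$ for every $\typef{C}$-algebra $\algf{D}$, and the analogous identity on homomorphisms: purification only reinterprets symbols, leaving the underlying set and the Boolean structure fixed, and $(\tembf{l}\comp\tembf{r})(\symf{s})=\tembf{r}(\tembf{l}(\symf{s}))$. This makes the displayed composite well-typed: $\purfr^{\tembf{l}}(\fextemb^{\tembf{r}}_{\fextfr^{\tembf{l}}(\algf{A})})$ has codomain $\purfr^{\tembf{l}}(\purfr^{\tembf{r}}(\fextfr^{\tembf{r}}(\fextfr^{\tembf{l}}(\algf{A}))))=\purfr^{\tembf{l}\comp\tembf{r}}(\fextfr^{\tembf{r}}(\fextfr^{\tembf{l}}(\algf{A})))$, so the pair consists of a $\typef{C}$-algebra together with a homomorphism from $\algf{A}$ into the result of applying $\purfr^{\tembf{l}\comp\tembf{r}}$ to it, which is precisely the shape of data to which the universal property of $\fextfr^{\tembf{l}\comp\tembf{r}}(\algf{A})$ applies.

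Then I would verify the universal property by peeling off the two extensions one at a time. Given a $\typef{C}$-algebra $\algf{C}$ and a homomorphism $\funf{g}\colon\algf{A}\to\purfr^{\tembf{l}\comp\tembf{r}}(\algf{C})=\purfr^{\tembf{l}}(\purfr^{\tembf{r}}(\algf{C}))$, I first apply the universal property of $\fextfr^{\tembf{l}}(\algf{A})$ to the $\typef{B}$-algebra $\purfr^{\tembf{r}}(\algf{C})$ and to $\funf{g}$, obtaining a unique $\funf{h}_1\colon\fextfr^{\tembf{l}}(\algf{A})\to\purfr^{\tembf{r}}(\algf{C})$ with $\fextemb^{\tembf{l}}_{\algf{A}}\comp\purfr^{\tembf{l}}(\funf{h}_1)=\funf{g}$; then I apply the universal property of $\fextfr^{\tembf{r}}(\fextfr^{\tembf{l}}(\algf{A}))$ to $\algf{C}$ and to $\funf{h}_1$, obtaining a unique $\funf{h}\colon\fextfr^{\tembf{r}}(\fextfr^{\tembf{l}}(\algf{A}))\to\algf{C}$ with $\fextemb^{\tembf{r}}_{\fextfr^{\tembf{l}}(\algf{A})}\comp\purfr^{\tembf{r}}(\funf{h})=\funf{h}_1$. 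Using functoriality of $\purfr^{\tembf{l}}$ together with $\purfr^{\tembf{l}}(\purfr^{\tembf{r}}(\cdot))=\purfr^{\tembf{l}\comp\tembf{r}}(\cdot)$ on homomorphisms, I get $\fextemb^{\tembf{l}}_{\algf{A}}\comp\purfr^{\tembf{l}}(\fextemb^{\tembf{r}}_{\fextfr^{\tembf{l}}(\algf{A})})\comp\purfr^{\tembf{l}\comp\tembf{r}}(\funf{h})=\fextemb^{\tembf{l}}_{\algf{A}}\comp\purfr^{\tembf{l}}(\funf{h}_1)=\funf{g}$, which is existence; conversely, if $\funf{h}'$ also has this property, then $\fextemb^{\tembf{r}}_{\fextfr^{\tembf{l}}(\algf{A})}\comp\purfr^{\tembf{r}}(\funf{h}')$ is a homomorphism $\fextfr^{\tembf{l}}(\algf{A})\to\purfr^{\tembf{r}}(\algf{C})$ satisfying $\fextemb^{\tembf{l}}_{\algf{A}}\comp\purfr^{\tembf{l}}(\fextemb^{\tembf{r}}_{\fextfr^{\tembf{l}}(\algf{A})}\comp\purfr^{\tembf{r}}(\funf{h}'))=\funf{g}$, hence equals $\funf{h}_1$ by the uniqueness clause for $\fextfr^{\tembf{l}}(\algf{A})$, and then $\funf{h}'=\funf{h}$ by the uniqueness clause for $\fextfr^{\tembf{r}}(\fextfr^{\tembf{l}}(\algf{A}))$. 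I do not expect any genuine obstacle here: the whole argument is a diagram chase through universal properties. The one point demanding attention is the bookkeeping — the paper's composition is written in diagrammatic order and $\purfr$ is contravariant in the type embedding, so one has to keep $\purfr^{\tembf{l}\comp\tembf{r}}(\cdot)=\purfr^{\tembf{l}}(\purfr^{\tembf{r}}(\cdot))$ consistently in mind and track the domains and codomains of all the intermediate homomorphisms.
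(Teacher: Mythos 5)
Your proposal is correct and follows essentially the same route as the paper: both verify that $(\fextfr^{\tembf{r}}(\fextfr^{\tembf{l}}(\algf{A})),\fextemb^{\tembf{l}}_{\algf{A}}\comp\purfr^{\tembf{l}}(\fextemb^{\tembf{r}}_{\fextfr^{\tembf{l}}(\algf{A})}))$ satisfies the universal property defining $(\fextfr^{\tembf{l}\comp\tembf{r}}(\algf{A}),\fextemb^{\tembf{l}\comp\tembf{r}}_{\algf{A}})$ by factoring a given $\funf{g}\colon\algf{A}\to\purfr^{\tembf{l}\comp\tembf{r}}(\algf{C})$ through the two extensions in turn, with the same two-step uniqueness argument. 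Your explicit remark that $\purfr^{\tembf{l}\comp\tembf{r}}=\purfr^{\tembf{l}}\comp\purfr^{\tembf{r}}$ is a point the paper leaves implicit but is a welcome clarification.
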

\begin{proof} Clearly, the lemma holds if the pair $(\fextfr^{\tembf{r}}(\fextfr^{\tembf{l}}(\algf{A})),\fextemb^{\tembf{l}}_{\algf{A}}\comp\purfr^{\tembf{l}}(\fextemb^{\tembf{r}}_{\fextfr^{\tembf{l}}(\algf{A})})$ satisfies the definition of $(\fextfr^{\tembf{l}\comp\tembf{r}}(\algf{A}),\fextemb^{\tembf{l}\comp\tembf{r}}_{\algf{A}})$. We will prove the late. We denote $\fextfr^{\tembf{l}}(\algf{A})$ by $\algf{B}$, $\fextfr^{\tembf{r}}(\fextfr^{\tembf{l}}(\algf{A}))$ by $\algf{C}$, $\fextemb^{\tembf{l}}_{\algf{A}}$ by $\funf{f}$, and $\fextemb^{\tembf{r}}_{\fextfr^{\tembf{l}}(\algf{A})}$ by $\funf{g}$. Suppose we have a $\typef{C}$-algebra $\algf{D}$ and a homomorphism $\funf{h}\colon \algf{A}\to \purfr^{\tembf{l}\comp\tembf{r}}(\algf{D})$. We claim that there exists a homomorphism $\funf{u}\colon \algf{C}\to \algf{D}$ such that $\funf{u}=\funf{f}\comp \purfr^{\tembf{l}}(\funf{g})\comp\purfr^{\tembf{l}\comp\tembf{r}}(\funf{u})$. We have the unique $\funf{e}\colon \algf{B}\to \purfr^{\tembf{r}}(\algf{D})$ such that $\funf{f}\comp\purfr^{\tembf{l}}(\funf{e})=\funf{h}$.
{
\[
\small
\begin{diagram}
\node[3]{\purfr^{\tembf{l}\comp\tembf{r}}(\algf{D})}
\node[2]{\purfr^{\tembf{r}}(\algf{D})}
\node{\algf{D}}\\
\node{\algf{A}}
  \arrow{ene,t}{\funf{h}}
  \arrow{e,b}{\funf{f}}
\node{\purfr^{\tembf{l}}(\algf{B})}
  \arrow{ne,r,..}{\purfr^{\tembf{l}}(\funf{e})}
  \arrow{e,b}{\purfr^{\tembf{l}}(\funf{g})}
\node{\purfr^{\tembf{l}\comp\tembf{r}}(\algf{C})}
  \arrow{n,r,!}{}
\node{\algf{B}}
  \arrow{ne,b,..}{\funf{e}}
  \arrow{e,b}{\funf{g}}
\node{\purfr^{\tembf{r}}(\algf{C})}
  \arrow{n,r,!}{}
\node{\algf{C}}
  \arrow{n,r,!}{}
\end{diagram}
\]
}
 We obtain the unique $\funf{q}\colon \algf{C}\to \algf{D}$ such that $\funf{g}\comp\purfr^{\tembf{r}}(\funf{q})=\funf{e}$.
{
\[
\small
\begin{diagram}
\node[3]{\purfr^{\tembf{l}\comp\tembf{r}}(\algf{D})}
\node[2]{\purfr^{\tembf{r}}(\algf{D})}
\node{\algf{D}}\\
\node{\algf{A}}
  \arrow{ene,t}{\funf{h}}
  \arrow{e,b}{\funf{f}}
\node{\purfr^{\tembf{l}}(\algf{B})}
  \arrow{ne,r}{\purfr^{\tembf{l}}(\funf{e})}
  \arrow{e,b}{\purfr^{\tembf{l}}(\funf{g})}
\node{\purfr^{\tembf{l}\comp\tembf{r}}(\algf{C})}
  \arrow{n,r,..}{\purfr^{\tembf{l}\comp\tembf{r}}(\funf{q})}
\node{\algf{B}}
  \arrow{ne,b}{\funf{e}}
  \arrow{e,b}{\funf{g}}
\node{\purfr^{\tembf{r}}(\algf{C})}
   \arrow{n,r,..}{\purfr^{\tembf{r}}(\funf{q})}
\node{\algf{C}}
   \arrow{n,lr,..}{\funf{u}}{\funf{q}}
\end{diagram}
\]
}
 We put $\funf{u}=\funf{q}$. Obviously, $\funf{h}=\funf{f}\comp \purfr^{\tembf{l}}(\funf{g})\comp\purfr^{\tembf{l}\comp\tembf{r}}(\funf{u})$. Let us prove that for an $\funf{u}'\colon \algf{C}\to \algf{D}$ such that $\funf{h}=\funf{f}\comp \purfr^{\tembf{l}}(\funf{g})\comp\purfr^{\tembf{l}\comp\tembf{r}}(\funf{u}')$ we have $\funf{u}=\funf{u}'$. Clearly, we have
$$\funf{f}\comp\purfr^{\tembf{l}}(\funf{g}\comp\purfr^{\tembf{r}}(\funf{u}'))=\funf{h},$$
hence from the uniqueness of $\funf{e}$ it follows that $\funf{g}\comp\purfr^{\tembf{r}}(\funf{u}')=\funf{e}$.   Further,  from the uniqueness of $\funf{q}$ it follows that $\funf{u}'=\funf{q}=\funf{u}$.\end{proof}

Suppose $\typef{A}$ is a type, $\typef{B}$ is an extension of $\typef{A}$, and $\tembf{l}$ is the trivial embedding from $\typef{A}$ to $\typef{B}$. We use alias $\fextfr^{\typef{A},\typef{B}}=\fextfr^{\tembf{l}}$.

We call a type $\typef{A}=(\ordf{a},\csetf{A})$ a {\it normal type} if $\ordf{a}$ has the minimal element $\objf{m}$; if $\typef{A}$ is a normal type we call $\objf{m}$ the {\it minimal operator index} of $\typef{A}$ and $\boxos[\objf{m}]$ the {\it minimal operator} of $\typef{A}$.

We define the product of a pair of $\typef{A}$-algebras in a standard fashion. Suppose $\algf{A}$ and $\algf{B}$ are $\typef{A}$-algebras. Then the product $\typef{A}$-algebra $\algf{A}\prodfr\algf{B}$ has the domain $\{(\elf{x},\elf{y})\mid \elf{x}\in \algf{A}, \elf{y}\in \algf{B}\}$ and for any symbol $\funf{f}(\fovarf{x}_{1},\ldots,\fovarf{x}_{\natf{n}})$ from  the signature of $\thrf{GLPA}_{\typef{A}}$ we interpret $\funf{f}(\fovarf{x}_{1},\ldots,\fovarf{x}_{\natf{n}})$  as following:
$$\funf{f}^{\algf{A}\prodfr \algf{B}}((\elf{x}_1,\elf{y}_1),\ldots,(\elf{x}_{\natf{n}},\elf{y}_{\natf{n}}))=(\funf{f}^{\algf{A}}(\elf{x}_1,\ldots,\elf{x}_{\natf{n}}),\funf{f}^{\algf{B}}(\elf{y}_1,\ldots,\elf{y}_{\natf{n}})).$$
Obviously, it gives us an $\typef{A}$-algebra.
If there are $\funf{f}\colon \algf{A}_1\to\algf{A}_2$ and $\funf{g}\colon \algf{B}_1\to\algf{B}_2$ then the homomorphism $\funf{f}\prodfr\funf{g}\colon \algf{A}_1\prodfr\algf{A}_2 \to \algf{B}_1\prodfr \algf{B}_2$ is given by
$$\funf{f}\prodfr\funf{g}\colon (\elf{x},\elf{y})\longmapsto (\funf{f}(\elf{x}),\funf{g}(\elf{y})).$$

Suppose $\typef{A}$ is a normal type and $\algf{A}$, $\algf{B}$ are $\typef{A}$-algebras. We define the linear product $\algf{A}\linprodfr \algf{B}$ of algebras $\algf{A}$ and $\algf{B}$. Suppose $\diamos[\objf{m}]$ is the minimal operator of $\typef{A}$. Suppose $\algf{C}$ is the product of $\{\diamos[\objf{m}]\}$-puration of $\algf{A}$ and  $\{\diamos[\objf{m}]\}$-puration of $\algf{B}$. $\algf{A}\linprodfr \algf{B}$ is an $\typef{A}$-algebra. $\algf{A}\linprodfr \algf{B}$ is a strong extension of $\algf{C}$ with the following interpretation of $\diamos[\objf{m}]$: 
\begin{enumerate}
\item $\diamoi[\objf{m}]{\algf{A}\linprodfr \algf{B}}{(\elf{x},\elf{y})}=(\diamoi[\objf{m}]{\algf{A}}{\elf{x}},\bau^{\algf{B}})$, for $\elf{x}\ne\baz^{\algf{A}}$;
\item $\diamoi[\objf{m}]{\algf{A}\linprodfr \algf{B}}{(\elf{x},\elf{y})}=(\baz^{\algf{A}},\diamoi[\objf{m}]{\algf{B}}{\elf{y}})$, otherwise.
\end{enumerate}
Let us check that this interpretation of $\diamos[\objf{m}]$ gives us an $\typef{A}$-algebras. Obviously, the only axioms we need to check are those where $\diamos[\objf{m}]$ occur. The axioms \ref{GLPA_Ax1}, \ref{GLPA_Ax2}, \ref{GLPA_Ax4}, and \ref{GLPA_Ax5} can be straightforward  check by considering cases from definition of interpretation of $\diamos[\objf{m}]$ for every variable occur in axiom. The fact that \ref{GLPA_Ax3} holds can be proved by considering following cases for $\elf{x}=(\elf{y},\elf{z})$:
\begin{enumerate}
\item \label{gen_case_1734_1}$\diamoi[\objf{m}]{\algf{A}}{\elf{y}}\ne \baz^{\algf{A}}$,
\item \label{gen_case_1734_2}$\diamoi[\objf{m}]{\algf{A}}{\elf{y}}= \baz^{\algf{A}}$ and $\elf{y}\ne \baz^{\algf{A}}$,
\item \label{gen_case_1734_3}$\elf{y}= \baz^{\algf{A}}$.
\end{enumerate}

Suppose $\typef{A}$ is a normal type, $\diamos[\objf{m}]$ is the minimal operator of $\typef{A}$, $\typef{B}$ is the $\{\diamos[\objf{m}]\}$-puration of $\typef{A}$, and $\tembf{l}\colon \typef{B}\to\typef{A}$ is the trivial type embedding. For embeddings of $\typef{A}$-algebras $\funf{f}\colon\algf{A}_1\to\algf{A}_2$ and $\funf{g}\colon\algf{B}_1\to\algf{B}_2$ the embedding $\funf{f}\linprodfr \funf{g}\colon \algf{A}_1\linprodfr \algf{B}_1\to\algf{A}_2\linprodfr\algf{B}_2$ is the only homomorphism $\funf{h}\colon \algf{A}_1\linprodfr \algf{B}_1\to\algf{A}_2\linprodfr \algf{B}_2$ such that $\purfr^{\tembf{l}}(\funf{h})=\purfr^{\tembf{l}}(\funf{f})\prodfr \purfr^{\tembf{l}}(\funf{g})$. Trivial check shows that the late definition is correct. 

\section{Linear Algebras}
In this section first we introduce the notion of a linear $\thrf{GLP}$-algebra. Then in Lemma \ref{fext_linear_preserve} we show that linearity of algebras is  preserved for free extensions that add no new constants and add new operators only below existed operators.

Suppose $\typef{A}=(\ordf{a},\csetf{A})$ is a type and $\algf{A}$ is an $\typef{A}$-algebra.  For every $\objf{i}\in\ordf{a}$ we define two binary relations on $\algf{A}$  
$$x\algl_{\objf{i}}\elf{y}\defiff \elf{y}\le \diamo[\objf{m}]{\elf{x}},$$
$$x\algs_{\objf{i}}\elf{y}\defiff \diamo[\objf{m}]{\elf{x}}\baor\elf{x}= \diamo[\objf{m}]{\elf{y}}\baor\elf{y}.$$
It is clear that for $\objf{i}<_{\ordf{a}}\objf{j}$ we have $\algl_{\objf{j}}\subset \algl_{\objf{i}}$ and $\algs_{\objf{i}}\subset \algs_{\objf{j}}$. We call the algebra $\algf{A}$ {\it linear} if $\bigcup\limits_{\diamos[\objf{i}]\in\typef{A}}(\algl_{\objf{i}}\cup \algs_{\objf{i}})$ is a linear preorder on $\algf{A}$ and $$\bigcup\limits_{\diamos[\objf{i}]\in\typef{A}}(\algl_{\objf{i}})\cap \bigcup\limits_{\diamos[\objf{i}]\in\typef{A}}(\algs_{\objf{i}})=\{(\baz,\baz)\}.$$

Let us consider the case of normal type $\typef{A}$ with minimal operator index $\objf{m}$. An  $\typef{A}$-algebra $\algf{A}$ is  linear if $\algl_{\objf{m}}\cup \algs_{\objf{m}}$ is a linear preorder on $\algf{A}$ and $$\algl_{\objf{m}}\cap \algs_{\objf{m}}=\{(\baz,\baz)\}.$$ 

The proofs of the four following lemmas are trivial and we omit them:

\begin{lemma} \label{algl_subset}Suppose $\typef{A}=(\ordf{a},\csetf{A})$ is a type, $\algf{A}$ is an $\typef{A}$-algebra, and $\objf{i},\objf{j}\in\ordf{a}$, $\objf{i}<_{\ordf{a}}\objf{j}$. Then for $\algf{A}$ we have $\algl_{\objf{j}}\subset\algl_{\objf{i}}$ and $\algs_{\objf{j}}\subset \algs_{\objf{i}}$.
\end{lemma}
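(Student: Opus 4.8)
The plan is to read off both inclusions directly from the algebra axioms; the only substantive input is axiom~\ref{GLPA_Ax4}, $\diamo[\objf{j}]{\elf{x}}\le\diamo[\objf{i}]{\elf{x}}$, together with the fact that the map $z\mapsto\diamo[\objf{i}]{z}\baor z$ is a closure operator.

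First I would dispose of $\algl_{\objf{j}}\subseteq\algl_{\objf{i}}$: if $\elf{x}\algl_{\objf{j}}\elf{y}$, i.e.\ $\elf{y}\le\diamo[\objf{j}]{\elf{x}}$, then by axiom~\ref{GLPA_Ax4} $\elf{y}\le\diamo[\objf{j}]{\elf{x}}\le\diamo[\objf{i}]{\elf{x}}$, so $\elf{x}\algl_{\objf{i}}\elf{y}$. Nothing further is needed here.

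For $\algs_{\objf{j}}\subseteq\algs_{\objf{i}}$, abbreviate $c_{\objf{k}}(z)=\diamo[\objf{k}]{z}\baor z$, so $\elf{x}\algs_{\objf{k}}\elf{y}$ means $c_{\objf{k}}(\elf{x})=c_{\objf{k}}(\elf{y})$. The crux is the identity $c_{\objf{i}}(c_{\objf{j}}(z))=c_{\objf{i}}(z)$. To obtain it I would expand $c_{\objf{i}}(c_{\objf{j}}(z))=\diamo[\objf{i}]{\diamo[\objf{j}]{z}\baor z}\baor\diamo[\objf{j}]{z}\baor z$, rewrite $\diamo[\objf{i}]{\diamo[\objf{j}]{z}\baor z}=\diamo[\objf{i}]{\diamo[\objf{j}]{z}}\baor\diamo[\objf{i}]{z}$ using axiom~\ref{GLPA_Ax2}, and observe that $\diamo[\objf{j}]{z}\le\diamo[\objf{i}]{z}$ (axiom~\ref{GLPA_Ax4}) yields, via monotonicity of $\diamos[\objf{i}]$ (immediate from axiom~\ref{GLPA_Ax2}) and part~\ref{GLPA_Eq1_1} of Lemma~\ref{GLPA_Eq1}, the chain $\diamo[\objf{i}]{\diamo[\objf{j}]{z}}\le\diamo[\objf{i}]{\diamo[\objf{i}]{z}}\le\diamo[\objf{i}]{z}$; hence $\diamo[\objf{i}]{\diamo[\objf{j}]{z}\baor z}=\diamo[\objf{i}]{z}$, and absorbing $\diamo[\objf{j}]{z}$ into $\diamo[\objf{i}]{z}$ once more gives $c_{\objf{i}}(c_{\objf{j}}(z))=\diamo[\objf{i}]{z}\baor z=c_{\objf{i}}(z)$. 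Given this, the inclusion is immediate: if $c_{\objf{j}}(\elf{x})=c_{\objf{j}}(\elf{y})$ then $c_{\objf{i}}(\elf{x})=c_{\objf{i}}(c_{\objf{j}}(\elf{x}))=c_{\objf{i}}(c_{\objf{j}}(\elf{y}))=c_{\objf{i}}(\elf{y})$, i.e.\ $\elf{x}\algs_{\objf{i}}\elf{y}$.

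The only spot requiring any thought is the identity $c_{\objf{i}}\comp c_{\objf{j}}=c_{\objf{i}}$, and even there the argument merely strings together axioms~\ref{GLPA_Ax2} and~\ref{GLPA_Ax4} with part~\ref{GLPA_Eq1_1} of Lemma~\ref{GLPA_Eq1}; the rest is one-line Boolean-algebra bookkeeping, which is why the statement can reasonably be listed among those whose proof is omitted as trivial.
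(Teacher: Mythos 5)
Your proof is correct. The paper lists this among the lemmas whose proofs are ``trivial and we omit them,'' so there is no proof to compare against; your argument is the natural one, and you correctly identify that only the $\algs$ half needs any work. The inclusion $\algl_{\objf{j}}\subseteq\algl_{\objf{i}}$ is indeed immediate from axiom~\ref{GLPA_Ax4}, and the key identity $c_{\objf{i}}\comp c_{\objf{j}}=c_{\objf{i}}$ for $c_{\objf{k}}(z)=\diamo[\objf{k}]{z}\baor z$ is verified soundly: additivity (axiom~\ref{GLPA_Ax2}) gives monotonicity and lets you split $\diamo[\objf{i}]{\diamo[\objf{j}]{z}\baor z}$, and then axiom~\ref{GLPA_Ax4} together with item~\ref{GLPA_Eq1_1} of Lemma~\ref{GLPA_Eq1} collapses $\diamo[\objf{i}]{\diamo[\objf{j}]{z}}$ into $\diamo[\objf{i}]{z}$. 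One remark worth making explicit: the sentence in the paper directly following the definitions of $\algl_{\objf{i}}$ and $\algs_{\objf{i}}$ asserts $\algs_{\objf{i}}\subset\algs_{\objf{j}}$, which is the opposite of the lemma's statement; your computation confirms that the lemma's direction $\algs_{\objf{j}}\subset\algs_{\objf{i}}$ is the one that actually follows from the axioms (and it is the direction the paper later relies on, e.g.\ in Corollary~\ref{algl_fextemb}), so that inline remark is a typo rather than a sign of a gap in your argument.
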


\begin{lemma} \label{algl_properties} Suppose $\typef{A}$ is a type, $\diamos[\objf{i}]$ is an operator symbol from $\typef{A}$, and $\algf{A}$ is an $\typef{A}$-algebra. Then for $\algf{A}$
\begin{enumerate}
\item \label{algl_transitivity}$\algl_{\objf{i}}$ is a transitive relation;
\item \label{algl_comparability} for $\elf{x},\elf{y},\elf{z}\in \algf{A}$ such that $\elf{x},\elf{y},\elf{z}\in \algf{A}$ and $\elf{x}\algs_{\objf{i}}\elf{y}$ we have
$$\elf{x}\algl_{\objf{i}}\elf{z}\iff \elf{y}\algl_{\objf{i}}\elf{z},$$ $$\elf{z}\algl_{\objf{i}}\elf{x}\iff\elf{z}\algl_{\objf{i}}\elf{y}$$
\item \label{algl_alternatives} for $\elf{x},\elf{y}\in\algf{A}$ such that $\elf{x},\elf{y}\ne\baz$ at most one of the following three propositions holds:
\begin{enumerate}
\item  $\elf{x}\algl_{\objf{i}}\elf{y}$;
\item  $\elf{y}\algl_{\objf{i}}\elf{x}$;
\item $\elf{x}\algs_{\objf{i}}\elf{y}$.
\end{enumerate}
\end{enumerate}
\end{lemma}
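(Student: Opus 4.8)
The plan is to reduce the whole statement to three elementary facts about the single operator $\diamos[\objf{i}]$ and then run short case analyses for items~1--3. First I would record the following. \textbf{(i)} By axiom~\ref{GLPA_Ax2} the operator $\diamos[\objf{i}]$ is monotone, so together with Lemma~\ref{GLPA_Eq1}(\ref{GLPA_Eq1_1}) one gets: $\elf{x}\le\diamo[\objf{i}]{\elf{y}}$ implies $\diamo[\objf{i}]{\elf{x}}\le\diamo[\objf{i}]{\diamo[\objf{i}]{\elf{y}}}\le\diamo[\objf{i}]{\elf{y}}$. \textbf{(ii)} The algebraic form of L\"ob's principle: $\elf{x}\le\diamo[\objf{i}]{\elf{x}}$ implies $\elf{x}=\baz$; indeed, in that case $\bacmp{\diamo[\objf{i}]{\elf{x}}}\baand\elf{x}=\baz$, so axioms~\ref{GLPA_Ax3} and~\ref{GLPA_Ax1} give $\diamo[\objf{i}]{\elf{x}}=\diamo[\objf{i}]{\bacmp{\diamo[\objf{i}]{\elf{x}}}\baand\elf{x}}=\diamo[\objf{i}]{\baz}=\baz$, whence $\elf{x}\le\baz$. \textbf{(iii)} $\elf{x}\algs_{\objf{i}}\elf{y}$ implies $\diamo[\objf{i}]{\elf{x}}=\diamo[\objf{i}]{\elf{y}}$: writing $\elf{w}$ for the common value $\diamo[\objf{i}]{\elf{x}}\baor\elf{x}=\diamo[\objf{i}]{\elf{y}}\baor\elf{y}$ and applying $\diamos[\objf{i}]$, axiom~\ref{GLPA_Ax2} and Lemma~\ref{GLPA_Eq1}(\ref{GLPA_Eq1_1}) yield $\diamo[\objf{i}]{\elf{w}}=\diamo[\objf{i}]{\diamo[\objf{i}]{\elf{x}}}\baor\diamo[\objf{i}]{\elf{x}}=\diamo[\objf{i}]{\elf{x}}$, and symmetrically $\diamo[\objf{i}]{\elf{w}}=\diamo[\objf{i}]{\elf{y}}$.

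Item~1 then follows at once: from $\elf{y}\le\diamo[\objf{i}]{\elf{x}}$ and $\elf{z}\le\diamo[\objf{i}]{\elf{y}}$, fact (i) gives $\diamo[\objf{i}]{\elf{y}}\le\diamo[\objf{i}]{\elf{x}}$, hence $\elf{z}\le\diamo[\objf{i}]{\elf{x}}$, i.e.\ $\elf{x}\algl_{\objf{i}}\elf{z}$. For item~2 the first equivalence is immediate from (iii): $\elf{x}\algl_{\objf{i}}\elf{z}$ says $\elf{z}\le\diamo[\objf{i}]{\elf{x}}=\diamo[\objf{i}]{\elf{y}}$, i.e.\ $\elf{y}\algl_{\objf{i}}\elf{z}$. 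For the second equivalence, assume $\elf{z}\algl_{\objf{i}}\elf{x}$, i.e.\ $\elf{x}\le\diamo[\objf{i}]{\elf{z}}$; then by (i) also $\diamo[\objf{i}]{\elf{x}}\le\diamo[\objf{i}]{\elf{z}}$, so $\diamo[\objf{i}]{\elf{x}}\baor\elf{x}\le\diamo[\objf{i}]{\elf{z}}$, and since $\elf{y}\le\diamo[\objf{i}]{\elf{y}}\baor\elf{y}=\diamo[\objf{i}]{\elf{x}}\baor\elf{x}$ we conclude $\elf{y}\le\diamo[\objf{i}]{\elf{z}}$, i.e.\ $\elf{z}\algl_{\objf{i}}\elf{y}$; the reverse implication is the same computation with $\elf{x}$ and $\elf{y}$ interchanged, which is legitimate because $\algs_{\objf{i}}$ is symmetric.

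For item~3, I would assume $\elf{x},\elf{y}\ne\baz$ and rule out each pair of the three alternatives holding simultaneously. If $\elf{x}\algl_{\objf{i}}\elf{y}$ and $\elf{y}\algl_{\objf{i}}\elf{x}$, then (i) turns $\elf{y}\le\diamo[\objf{i}]{\elf{x}}$ into $\diamo[\objf{i}]{\elf{y}}\le\diamo[\objf{i}]{\elf{x}}$, and combined with $\elf{x}\le\diamo[\objf{i}]{\elf{y}}$ this yields $\elf{x}\le\diamo[\objf{i}]{\elf{x}}$, so $\elf{x}=\baz$ by (ii), contradicting $\elf{x}\ne\baz$. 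If $\elf{x}\algl_{\objf{i}}\elf{y}$ and $\elf{x}\algs_{\objf{i}}\elf{y}$, then (iii) gives $\elf{y}\le\diamo[\objf{i}]{\elf{x}}=\diamo[\objf{i}]{\elf{y}}$, so $\elf{y}=\baz$ by (ii), again a contradiction; the case $\elf{y}\algl_{\objf{i}}\elf{x}$ together with $\elf{x}\algs_{\objf{i}}\elf{y}$ is symmetric. I do not anticipate a real obstacle---the whole thing is a few lines of Boolean manipulation---but the one point that must be set up correctly is fact (ii), the algebraic version of L\"ob's theorem, together with the monotonicity-plus-idempotency consequence (i) derived from it: every case split above is closed by appealing to one of these, so the entire substance of the lemma lies in pinning down those two observations and keeping the directions of the inequalities straight.
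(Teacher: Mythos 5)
Your proof is correct. The paper itself omits the argument as trivial, and what you give is exactly the intended routine verification: the three preliminary facts you isolate --- monotonicity plus transitivity of $\diamos[\objf{i}]$ from Axiom~\ref{GLPA_Ax2} and Lemma~\ref{GLPA_Eq1}(\ref{GLPA_Eq1_1}), the algebraic L\"ob principle from Axioms~\ref{GLPA_Ax1} and~\ref{GLPA_Ax3}, and the implication $\elf{x}\algs_{\objf{i}}\elf{y}\Rightarrow\diamo[\objf{i}]{\elf{x}}=\diamo[\objf{i}]{\elf{y}}$ --- are precisely what the lemma rests on, and the last of these is re-derived verbatim in the paper's proof of Lemma~\ref{algl_lin_properties}. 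All case analyses check out, so there is nothing to correct.
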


\begin{lemma} \label{embedding_order_preservation} Suppose $\typef{A}$ is a type, $\diamos[\objf{i}]$ lies in $\typef{A}$, and $\funf{f}\colon \algf{A}\to \algf{B}$ is an embedding of $\typef{A}$-algebras. Then $\funf{f}$ preserve $\algl_{\objf{i}}$ and $\algs_{\objf{i}}$, i.e. $\forall \elf{x},\elf{y}\in\algf{A}$:
$$\elf{x}\algl^{\algf{A}}_{\objf{i}}\elf{y}\iff \funf{f}(\elf{x})\algl^{\algf{B}}_{\objf{i}}\funf{f}(\elf{y}),$$
$$\elf{x}\algs^{\algf{A}}_{\objf{i}}\elf{y}\iff \funf{f}(\elf{x})\algs^{\algf{B}}_{\objf{i}}\funf{f}(\elf{y}).$$
\end{lemma}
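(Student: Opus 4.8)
The plan is to unfold everything to the underlying Boolean-algebra-with-operators language and use that an \emph{embedding} $\funf{f}\colon\algf{A}\to\algf{B}$ of $\typef{A}$-algebras is by definition an injective homomorphism. As a homomorphism, $\funf{f}$ commutes with every symbol in the signature of $\thrf{GLPA}_{\typef{A}}$; since the hypothesis of the lemma is that $\diamos[\objf{i}]$ lies in $\typef{A}$, this includes $\funf{f}(\diamoi[\objf{i}]{\algf{A}}{\elf{x}})=\diamoi[\objf{i}]{\algf{B}}{\funf{f}(\elf{x})}$, as well as the fact that $\funf{f}$ commutes with $\baand$ and $\baor$. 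As an injection, $\funf{f}$ also reflects equalities: $\funf{f}(\elf{x})=\funf{f}(\elf{y})$ forces $\elf{x}=\elf{y}$.

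First I would isolate the auxiliary fact that such an $\funf{f}$ both preserves and reflects the order $\le$. Because $\elf{x}\le\elf{y}$ abbreviates $\elf{x}=\elf{x}\baand\elf{y}$, applying $\funf{f}$ to this equation and using that $\funf{f}$ commutes with $\baand$ gives $\funf{f}(\elf{x})\le\funf{f}(\elf{y})$. Conversely, from $\funf{f}(\elf{x})\le\funf{f}(\elf{y})$ we get $\funf{f}(\elf{x})=\funf{f}(\elf{x})\baand\funf{f}(\elf{y})=\funf{f}(\elf{x}\baand\elf{y})$, and injectivity yields $\elf{x}=\elf{x}\baand\elf{y}$, i.e.\ $\elf{x}\le\elf{y}$.

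Granting this, both equivalences follow by simply rewriting the definitions. For $\algl_{\objf{i}}$: $\elf{x}\algl^{\algf{A}}_{\objf{i}}\elf{y}$ means $\elf{y}\le\diamoi[\objf{i}]{\algf{A}}{\elf{x}}$, which by the order fact above is equivalent to $\funf{f}(\elf{y})\le\funf{f}(\diamoi[\objf{i}]{\algf{A}}{\elf{x}})=\diamoi[\objf{i}]{\algf{B}}{\funf{f}(\elf{x})}$, that is $\funf{f}(\elf{x})\algl^{\algf{B}}_{\objf{i}}\funf{f}(\elf{y})$. For $\algs_{\objf{i}}$: $\elf{x}\algs^{\algf{A}}_{\objf{i}}\elf{y}$ is the equation $\diamoi[\objf{i}]{\algf{A}}{\elf{x}}\baor\elf{x}=\diamoi[\objf{i}]{\algf{A}}{\elf{y}}\baor\elf{y}$; applying the homomorphism $\funf{f}$ for one direction and injectivity of $\funf{f}$ for the other, this is equivalent to $\diamoi[\objf{i}]{\algf{B}}{\funf{f}(\elf{x})}\baor\funf{f}(\elf{x})=\diamoi[\objf{i}]{\algf{B}}{\funf{f}(\elf{y})}\baor\funf{f}(\elf{y})$, i.e.\ $\funf{f}(\elf{x})\algs^{\algf{B}}_{\objf{i}}\funf{f}(\elf{y})$.

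I do not expect any real obstacle here; the argument is entirely a computation with the homomorphism property. The only place where the hypothesis that $\funf{f}$ is an embedding (rather than an arbitrary homomorphism) is genuinely needed is the $\Leftarrow$ direction of each equivalence, where injectivity is used to reflect $\le$ in the first case and the defining equality in the second.
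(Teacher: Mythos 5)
Your proof is correct and is exactly the routine unfolding of definitions that the paper has in mind: it explicitly omits the proof of this lemma as trivial, and your argument (an injective homomorphism preserves and reflects both $\le$ and equality, and commutes with $\diamos[\objf{i}]$, $\baand$, $\baor$) is the intended one. You also correctly identify that injectivity is only needed for the reflection directions.
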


\begin{lemma} Suppose a $\tembf{l}\colon \typef{A}\to\typef{B}$ is a trivial type embedding, $\diamos[\objf{i}]$ is an operator symbol from $\typef{A}$, and $\algf{A}$ is $\typef{B}$-algebra. Then  $\algl^{\algf{A}}_{\objf{i}}$ and $\algs^{\algf{A}}_{\objf{i}}$ are equal to $\algl^{\purfr^{\tembf{l}}(\algf{A})}_{\objf{i}}$ and $\algs^{\purfr^{\tembf{l}}(\algf{A})}_{\objf{i}}$, respectively.
\end{lemma}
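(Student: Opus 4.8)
The plan is to unfold the two definitions and observe that $\algl_{\objf{i}}$ and $\algs_{\objf{i}}$ are expressed purely in terms of structure that the $\tembf{l}$-puration carries over verbatim. Write $\algf{A}'=\purfr^{\tembf{l}}(\algf{A})$ for brevity. First I would recall that, by the definition of puration, $\algf{A}'$ has the same underlying set as $\algf{A}$, the same Boolean operations $\baand,\baor,\bacmps,\baz,\bau$, and, for every symbol $\symf{s}$ that lies in $\typef{A}$, interprets $\symf{s}$ by the interpretation of $\tembf{l}(\symf{s})$ in $\algf{A}$. Since $\tembf{l}$ is a trivial embedding, $\tembf{l}(\diamos[\objf{i}])=\diamos[\objf{i}]$, so $\diamoi[\objf{i}]{\algf{A}'}{\elf{x}}=\diamoi[\objf{i}]{\algf{A}}{\elf{x}}$ for every element $\elf{x}$ of the common domain; likewise the abbreviation $\le$ (which unfolds to an equation in $\baand$) defines one and the same relation on $\algf{A}$ and on $\algf{A}'$.

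With this in hand the lemma is essentially immediate. For $\algl_{\objf{i}}$: given elements $\elf{x},\elf{y}$, the assertion $\elf{x}\algl^{\algf{A}'}_{\objf{i}}\elf{y}$ expands to $\elf{y}\le\diamoi[\objf{i}]{\algf{A}'}{\elf{x}}$, which by the previous paragraph is the very same inequality as $\elf{y}\le\diamoi[\objf{i}]{\algf{A}}{\elf{x}}$, i.e. as $\elf{x}\algl^{\algf{A}}_{\objf{i}}\elf{y}$; hence the two relations coincide. For $\algs_{\objf{i}}$ one argues identically: $\elf{x}\algs_{\objf{i}}\elf{y}$ unwinds to the Boolean equation $\diamo[\objf{i}]{\elf{x}}\baor\elf{x}=\diamo[\objf{i}]{\elf{y}}\baor\elf{y}$, whose two sides are built solely from the Boolean operations and the operator $\diamos[\objf{i}]$ and therefore take the same values in $\algf{A}$ and in $\algf{A}'$, so the equation holds in one algebra iff it holds in the other.

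There is no real obstacle here. The single point one has to be explicit about is that $\algl_{\objf{i}}$ and $\algs_{\objf{i}}$ mention nothing beyond the Boolean structure and the one operator $\diamos[\objf{i}]$ of $\typef{A}$, all of which is left untouched when we pass to the $\tembf{l}$-puration along a trivial embedding: this puration only forgets the operators and constants of $\typef{B}$ that do not come from $\typef{A}$ and renames nothing, so every $\thrf{GLPA}_{\typef{A}}$-term evaluates the same way before and after. Hence the two pairs of relations agree.
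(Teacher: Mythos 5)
Your proof is correct and is exactly the intended argument: the paper omits this proof as trivial, and the trivial argument is precisely your unfolding of the definitions, since a puration along a trivial embedding leaves the domain, the Boolean structure, and the interpretation of $\diamos[\objf{i}]$ untouched, so the defining conditions of $\algl_{\objf{i}}$ and $\algs_{\objf{i}}$ are literally the same in both algebras.
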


\begin{lemma} \label{algl_lin_properties} Suppose $\typef{A}$ is a normal type, $\diamos[\objf{m}]$ is a minimal operator symbol for $\typef{A}$, and $\algf{A}$ is a linear $\typef{A}$-algebra. Then 
for $\elf{x},\elf{y}\in \algf{A}$ we have 
$$\diamo[\objf{m}]{\elf{x}}=\diamo[\objf{m}]{\elf{y}} \iff \elf{x}\algs_{\objf{m}}\elf{y}.$$
\end{lemma}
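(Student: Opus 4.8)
The plan is to prove the equivalence in two parts, only one of which uses linearity. For the implication from $\elf{x}\algs_{\objf{m}}\elf{y}$ to $\diamo[\objf{m}]{\elf{x}}=\diamo[\objf{m}]{\elf{y}}$, which holds in \emph{every} $\typef{A}$-algebra, I would start from the defining equation $\diamo[\objf{m}]{\elf{x}}\baor\elf{x}=\diamo[\objf{m}]{\elf{y}}\baor\elf{y}$, apply $\diamos[\objf{m}]$ to both sides, and expand using Axiom~\ref{GLPA_Ax2} to get $\diamo[\objf{m}]{\diamo[\objf{m}]{\elf{x}}}\baor\diamo[\objf{m}]{\elf{x}}=\diamo[\objf{m}]{\diamo[\objf{m}]{\elf{y}}}\baor\diamo[\objf{m}]{\elf{y}}$. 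Since $\diamo[\objf{m}]{\diamo[\objf{m}]{\elf{x}}}\le\diamo[\objf{m}]{\elf{x}}$ by Lemma~\ref{GLPA_Eq1}(\ref{GLPA_Eq1_1}), the left-hand side collapses to $\diamo[\objf{m}]{\elf{x}}$ and, symmetrically, the right-hand side to $\diamo[\objf{m}]{\elf{y}}$, which gives $\diamo[\objf{m}]{\elf{x}}=\diamo[\objf{m}]{\elf{y}}$.

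For the converse, write $\elf{p}=\diamo[\objf{m}]{\elf{x}}=\diamo[\objf{m}]{\elf{y}}$. I would first use Axiom~\ref{GLPA_Ax3} twice, for $\elf{x}$ and for $\elf{y}$, to obtain $\diamo[\objf{m}]{\elf{x}\baand\bacmp{\elf{p}}}=\elf{p}$ and $\diamo[\objf{m}]{\elf{y}\baand\bacmp{\elf{p}}}=\elf{p}$. Next note that in a Boolean algebra $\elf{p}\baor\elf{x}=\elf{p}\baor\elf{y}$ holds iff $\elf{x}\baand\bacmp{\elf{p}}=\elf{y}\baand\bacmp{\elf{p}}$, so (recalling that $\elf{x}\algs_{\objf{m}}\elf{y}$ means exactly $\elf{p}\baor\elf{x}=\elf{p}\baor\elf{y}$) it suffices to prove $\elf{x}'=\elf{y}'$ where $\elf{x}'=\elf{x}\baand\bacmp{\elf{p}}$ and $\elf{y}'=\elf{y}\baand\bacmp{\elf{p}}$; by construction $\elf{x}'\baand\elf{p}=\elf{y}'\baand\elf{p}=\baz$ and $\diamo[\objf{m}]{\elf{x}'}=\diamo[\objf{m}]{\elf{y}'}=\elf{p}$.

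Now I would bring in linearity. As $\algl_{\objf{m}}\cup\algs_{\objf{m}}$ is a linear preorder, $\elf{x}'$ and $\elf{y}'$ are comparable, so at least one of $\elf{x}'\algs_{\objf{m}}\elf{y}'$, $\elf{x}'\algl_{\objf{m}}\elf{y}'$, $\elf{y}'\algl_{\objf{m}}\elf{x}'$ holds. In the first case $\diamo[\objf{m}]{\elf{x}'}\baor\elf{x}'=\diamo[\objf{m}]{\elf{y}'}\baor\elf{y}'$ becomes $\elf{p}\baor\elf{x}'=\elf{p}\baor\elf{y}'$, which together with $\elf{x}',\elf{y}'\le\bacmp{\elf{p}}$ yields $\elf{x}'=\elf{y}'$, as wanted. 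In the second case (the third being symmetric) we have $\elf{y}'\le\diamo[\objf{m}]{\elf{x}'}=\elf{p}$ while also $\elf{y}'\le\bacmp{\elf{p}}$, so $\elf{y}'=\baz$; then $\elf{p}=\diamo[\objf{m}]{\elf{y}'}=\diamo[\objf{m}]{\baz}=\baz$ by Axiom~\ref{GLPA_Ax1}, so $\elf{x}'=\elf{x}$ and $\diamo[\objf{m}]{\elf{x}}=\baz$, and it remains to conclude $\elf{x}=\baz$. This I would do by contradiction: supposing $\elf{x}\ne\baz$, I would compare $\elf{x}$ in the linear preorder with a suitably chosen element (such as $\bacmp{\elf{x}}$ or $\bau$), using that $\diamo[\objf{m}]{\elf{x}}=\baz$ forces $\elf{x}\algl_{\objf{m}}\elf{z}$ to fail for all $\elf{z}\ne\baz$, together with totality, the second linearity condition $\algl_{\objf{m}}\cap\algs_{\objf{m}}=\{(\baz,\baz)\}$, and the combinatorial facts of Lemma~\ref{algl_properties}, to reach a contradiction.

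The step I expect to be the main obstacle is precisely this residual degenerate case, namely showing that in a linear algebra $\diamo[\objf{m}]{\elf{x}}=\baz$ entails $\elf{x}=\baz$. The rest is routine manipulation with the Boolean operations and the equations of Lemma~\ref{GLPA_Eq1}; it is only in squeezing out $\elf{x}=\baz$ that one has to use both clauses of the definition of linearity simultaneously, and that is where I would concentrate the care.
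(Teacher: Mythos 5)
Your two main computations are sound and essentially coincide with the paper's: the implication from $\elf{x}\algs_{\objf{m}}\elf{y}$ to $\diamo[\objf{m}]{\elf{x}}=\diamo[\objf{m}]{\elf{y}}$ is proved in the paper exactly as you propose (apply $\diamos[\objf{m}]$ to the defining equation and collapse using $\diamo[\objf{m}]{\diamo[\objf{m}]{\elf{x}}}\le\diamo[\objf{m]}{\elf{x}}$), and your reduction of the converse to $\elf{x}'=\elf{x}\baand\bacmp{\elf{p}}$, $\elf{y}'=\elf{y}\baand\bacmp{\elf{p}}$ followed by the three-way comparability split is a correct, slightly more computational variant of the paper's argument. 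The paper instead shows directly, via Axiom~\ref{GLPA_Ax3}, that $\elf{x}\algl_{\objf{m}}\elf{y}$ and $\elf{y}\algl_{\objf{m}}\elf{x}$ are both impossible when $\diamo[\objf{m}]{\elf{x}}=\diamo[\objf{m}]{\elf{y}}\ne\baz$, and concludes by totality of the preorder. In your Cases 2 and 3 everything correctly reduces to $\elf{p}=\baz$, so your argument, like the paper's, fully settles the case $\diamo[\objf{m}]{\elf{x}}=\diamo[\objf{m}]{\elf{y}}\ne\baz$.

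The genuine problem is exactly the ``residual degenerate case'' you single out: the claim that in a linear algebra $\diamo[\objf{m}]{\elf{x}}=\baz$ entails $\elf{x}=\baz$ is not merely hard, it is false, so the contradiction you plan to extract from the two linearity clauses does not exist. Take the two-element Boolean algebra with every $\diamos[\objf{i}]$ interpreted as the constant-$\baz$ map: all axioms of $\typef{A}$-algebras hold, $\algl_{\objf{m}}=\{(\baz,\baz),(\bau,\baz)\}$ and $\algs_{\objf{m}}=\{(\baz,\baz),(\bau,\bau)\}$, so their union is a linear preorder and their intersection is $\{(\baz,\baz)\}$ --- the algebra is linear --- yet $\diamo[\objf{m}]{\bau}=\baz=\diamo[\objf{m}]{\baz}$ while $\bau\algs_{\objf{m}}\baz$ fails. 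Thus the stated equivalence itself breaks down in this case. The paper's own proof quietly restricts to $\diamo[\objf{m}]{\elf{x}}=\diamo[\objf{m}]{\elf{y}}\ne\baz$ and never returns to the excluded case (harmless for its later applications, where the algebras in question are free extensions in which $\diamos[\objf{m}]$ annihilates only $\baz$). The honest repair is to add such a nondegeneracy hypothesis, not to try to derive it from linearity; to your credit, you located precisely the point where the lemma, as literally stated, cannot be proved.
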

\begin{proof} $\Rightarrow:$ Suppose $\diamo[\objf{m}]{\elf{x}}=\diamo[\objf{m}]{\elf{y}}\ne\baz$. We claim that $\elf{x}\algs_{\objf{m}}\elf{y}$. Obviously, it's enough to show that  $\lnot \elf{x}\algl_{\objf{m}}\elf{y}$ and $\lnot \elf{y}\algl_{\objf{m}}\elf{x}$. Assume that $\elf{x}\algl_{\objf{m}}\elf{y}$. Then $\diamo[\objf{m}]{\elf{y}}=\diamo[\objf{m}]{\elf{x}}\ge\elf{y}$. From Axiom \ref{GLPA_Ax3} of $\thrf{GLP}$-algebras it follows that  $\diamo[\objf{m}]{\elf{y}}=\baz$, contradiction. For the same reason, the assumption $\elf{y}\algl_{\objf{m}}\elf{x}$ leads to contradiction too. 

$\Leftarrow$: Now suppose that  $\elf{x}\algs_{\objf{m}}\elf{y}$. We claim that $\diamo[\objf{m}]{\elf{x}}=\diamo[\objf{m}]{\elf{y}}$. We have $$\elf{x}\baor\diamo[\objf{m}]{\elf{x}}=\elf{y}\baor\diamo[\objf{m}]{\elf{y}}.$$ Hence we have $$\diamo[\objf{m}]{\elf{x}}\baor\diamo[\objf{m}]{\diamo[\objf{m}]{\elf{x}}}=\diamo[\objf{m}]{\elf{y}}\baor\diamo[\objf{m}]{\diamo[\objf{m}]{\elf{y}}}.$$ And finally we conclude $$\diamo[\objf{m}]{\elf{x}}=\diamo[\objf{m}]{\elf{y}}.$$
\end{proof}

Suppose $\tembf{l}\colon \typef{A}\to\typef{B}$ is a type embedding, where $\tembf{l}=(\typef{A},\funf{f},\funf{g},\typef{B})$, $\typef{A}=(\ordf{a},\csetf{A})$, and $\typef{B}=(\ordf{b},\csetf{B})$. We call $\tembf{l}$ a {\it final type embedding}, if
\begin{enumerate}
\item $\typef{A}$ is a normal type;
\item $\funf{g}$ is bijection;
\item $\funf{f}$ is an embedding of $\ordf{a}$ into $\ordf{b}$ as final interval.
\end{enumerate}
 We call $\tembf{l}$ a {\it simple final type embedding}, if it is a final type embedding and there is only one element of $\ordf{b}$ that is not in the range of $\funf{f}$. We call $\tembf{l}$ a {\it normal type embedding}, if  $\tembf{l}$  is a simple final type embedding and a trivial embedding, the minimal operator index of $\typef{A}$ is $\ione$, and the minimal operator index of $\typef{B}$ is $\itwo$.

Obviously, for every simple final type embedding $\tembf{l}\colon \typef{A}\to\typef{B}$ we can find a normal embedding $\tembf{l}'\colon\typef{A}'\to\typef{B}'$ and bijective type embeddings $\tembf{r}_1\colon \typef{A}\to\typef{A}'$, $\tembf{r}_2\colon \typef{B}\to\typef{B}'$ such that $\tembf{l}=\tembf{r}_1\comp\tembf{l}'\comp\tembf{r}_2^{-1}$. 
\[
\begin{diagram}
\node{\typef{B}'}
\node{\typef{A}'}
  \arrow{w,t}{\tembf{l}'}\\
\node{\typef{B}}
  \arrow{n,l}{\tembf{r}_2}
\node{\typef{A}}
  \arrow{n,r}{\tembf{r}_1}
  \arrow{w,b}{\tembf{l}}
\end{diagram}
\]
In most cases, without lose of generality, we consider only normal embeddings instead of simple final type embeddings.

Further in this section we develop a generalization of the theory of $\thrf{GLP}$-words \cite{Bek04} for linear $\thrf{GLP}$-algebras.

Suppose $\typef{A}$ is a type with the minimal operator symbol $\diamos[\ione]$ and $\termf{t}$ is a closed  $\thrf{GLPA}_{\typef{A}}$-terms of the form
 $$\constf{c}_{\natf{n}} \baand \diamo[\ione]{\constf{c}_{\natf{n}-1} \baand \diamo[\ione]{ \constf{c}_{\natf{n}-2} \baand\diamo[\ione]{\ldots \diamo[\ione]{ \constf{c}_{2} \baand \diamo[\ione]{\constf{c}_{1}}} \ldots}}},$$ where $n\ge 1$. Then we call $\termf{t}$ a {\it quasi-words of the type $\typef{A}$}. 

Suppose $\tembf{l}\colon\typef{A}\to \typef{B}$ is a normal type embedding and $\algf{A}$ is a $\typef{A}$-algebra. Then we call a quasi-word
 $$\constf{c}_{\natf{n}} \baand \diamo[\ione]{\constf{c}_{\natf{n}-1} \baand \diamo[\ione]{ \constf{c}_{\natf{n}-2} \baand\diamo[\ione]{\ldots \diamo[\ione]{ \constf{c}_{2} \baand \diamo[\ione]{\constf{c}_{1}}} \ldots}}}$$
 of the type $\typef{B}$  {\it $\algf{A}$-normalized} if $\algf{A}\not \models \constf{c}_{\natf{i}}\algl_{\itwo}\constf{c}_{\natf{i+1}}$, for all $\natf{i}$ from $1$ to $\natf{n}-1$.

\begin{lemma} \label{quasi-word_normalization} Suppose $\tembf{l}\colon\typef{A}\to \typef{B}$ is a normal type embedding, $\algf{A}$ is a $\typef{A}$-algebra, and $\termf{t}$ is a quasi-word of the type $\typef{B}$. Then there exists an $\algf{A}$-normalized word $\termf{t}'$ such that $\fextfr{\algf{A}}\models \termf{t}=\termf{t}'$.
\end{lemma}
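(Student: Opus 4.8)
The plan is to argue by induction on the length $\natf{n}$ of the quasi-word
$$\termf{t}=\constf{c}_{\natf{n}} \baand \diamo[\ione]{\constf{c}_{\natf{n}-1} \baand \diamo[\ione]{ \ldots \diamo[\ione]{ \constf{c}_{2} \baand \diamo[\ione]{\constf{c}_{1}}} \ldots}},$$
reducing any non-normalized $\termf{t}$ to a strictly shorter quasi-word with the same value in $\fextfr^{\tembf{l}}(\algf{A})$. Throughout we use that $\ione<_{\ordf{b}}\itwo$, the operator $\diamos[\ione]$ being the one freshly adjoined in passing from $\typef{A}$ to $\typef{B}$ and $\diamos[\itwo]$ being an operator of $\typef{A}$. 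If $\natf{n}=1$, then $\termf{t}=\constf{c}_1$ is $\algf{A}$-normalized vacuously and we take $\termf{t}'=\termf{t}$. If $\natf{n}\ge 2$ and $\termf{t}$ is already $\algf{A}$-normalized, we again take $\termf{t}'=\termf{t}$. Otherwise fix $\natf{i}$ with $1\le\natf{i}\le\natf{n}-1$ and $\algf{A}\models\constf{c}_{\natf{i}}\algl_{\itwo}\constf{c}_{\natf{i}+1}$, i.e. $\algf{A}\models\constf{c}_{\natf{i}+1}\le\diamo[\itwo]{\constf{c}_{\natf{i}}}$. I will show that $\fextfr^{\tembf{l}}(\algf{A})\models\termf{t}=\termf{s}$, where
$$\termf{s}=\constf{c}_{\natf{n}} \baand \diamo[\ione]{\constf{c}_{\natf{n}-1} \baand \diamo[\ione]{ \ldots \diamo[\ione]{ \constf{c}_{\natf{i}+2} \baand \diamo[\ione]{\constf{c}_{\natf{i}+1}}} \ldots}}$$
is a quasi-word of length $\natf{n}-\natf{i}<\natf{n}$; the induction hypothesis applied to $\termf{s}$ then provides an $\algf{A}$-normalized $\termf{t}'$ with $\fextfr^{\tembf{l}}(\algf{A})\models\termf{s}=\termf{t}'$, hence $\fextfr^{\tembf{l}}(\algf{A})\models\termf{t}=\termf{t}'$.

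To prove $\fextfr^{\tembf{l}}(\algf{A})\models\termf{t}=\termf{s}$, write $\termf{w}_{\natf{i}}$ for the subterm $\constf{c}_{\natf{i}} \baand \diamo[\ione]{ \ldots \diamo[\ione]{ \constf{c}_{2} \baand \diamo[\ione]{\constf{c}_{1}}} \ldots}$ of $\termf{t}$, so that $\termf{w}_1=\constf{c}_1$ and $\termf{w}_{\natf{i}}=\constf{c}_{\natf{i}}\baand\diamo[\ione]{\termf{w}_{\natf{i}-1}}$ for $\natf{i}\ge 2$. Both $\termf{t}$ and $\termf{s}$ arise by applying $\diamos[\ione]$ and conjoining the constants $\constf{c}_{\natf{i}+2},\ldots,\constf{c}_{\natf{n}}$, to $\constf{c}_{\natf{i}+1}\baand\diamo[\ione]{\termf{w}_{\natf{i}}}$ in the first case and to $\constf{c}_{\natf{i}+1}$ in the second, so since equality is a congruence it suffices to check $\fextfr^{\tembf{l}}(\algf{A})\models\constf{c}_{\natf{i}+1}\baand\diamo[\ione]{\termf{w}_{\natf{i}}}=\constf{c}_{\natf{i}+1}$, i.e. $\fextfr^{\tembf{l}}(\algf{A})\models\constf{c}_{\natf{i}+1}\le\diamo[\ione]{\termf{w}_{\natf{i}}}$. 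Since $\fextemb^{\tembf{l}}_{\algf{A}}$ is a homomorphism fixing the constants of $\typef{A}$ and commuting with $\diamos[\itwo]$ (as $\tembf{l}$ is trivial), the hypothesis $\algf{A}\models\constf{c}_{\natf{i}+1}\le\diamo[\itwo]{\constf{c}_{\natf{i}}}$ transports to $\fextfr^{\tembf{l}}(\algf{A})\models\constf{c}_{\natf{i}+1}\le\diamo[\itwo]{\constf{c}_{\natf{i}}}$. It therefore remains to see that $\diamo[\itwo]{\constf{c}_{\natf{i}}}\le\diamo[\ione]{\termf{w}_{\natf{i}}}$ holds in $\fextfr^{\tembf{l}}(\algf{A})$ — in fact in every $\typef{B}$-algebra. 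For $\natf{i}=1$ this is exactly Axiom~\ref{GLPA_Ax4} with $\ione<_{\ordf{b}}\itwo$, since $\termf{w}_1=\constf{c}_1$. For $\natf{i}\ge 2$, combining Axiom~\ref{GLPA_Ax4} with Lemma~\ref{GLPA_Eq1}(\ref{GLPA_Eq1_2}), both for $\ione<_{\ordf{b}}\itwo$,
$$\diamo[\ione]{\termf{w}_{\natf{i}}}=\diamo[\ione]{\constf{c}_{\natf{i}}\baand\diamo[\ione]{\termf{w}_{\natf{i}-1}}}\ge\diamo[\itwo]{\constf{c}_{\natf{i}}\baand\diamo[\ione]{\termf{w}_{\natf{i}-1}}}=\diamo[\itwo]{\constf{c}_{\natf{i}}}\baor\diamo[\ione]{\termf{w}_{\natf{i}-1}}\ge\diamo[\itwo]{\constf{c}_{\natf{i}}}.$$

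The only non-routine step is this last estimate $\diamo[\ione]{\termf{w}_{\natf{i}}}\ge\diamo[\itwo]{\constf{c}_{\natf{i}}}$: it is a priori counterintuitive because $\termf{w}_{\natf{i}}\le\constf{c}_{\natf{i}}$, so monotonicity alone only yields the useless bound $\diamo[\ione]{\termf{w}_{\natf{i}}}\le\diamo[\ione]{\constf{c}_{\natf{i}}}$. What saves the argument is that $\termf{w}_{\natf{i}}$ has the special shape $\constf{c}_{\natf{i}}\baand\diamo[\ione]{\cdots}$: passing from $\diamos[\ione]$ to $\diamos[\itwo]$ via Axiom~\ref{GLPA_Ax4} and then applying Lemma~\ref{GLPA_Eq1}(\ref{GLPA_Eq1_2}) lets the conjunct $\constf{c}_{\natf{i}}$ escape the scope of $\diamos[\itwo]$. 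Everything else — the reduction of a non-normalized quasi-word to a shorter one, the transport of the inequality along $\fextemb^{\tembf{l}}_{\algf{A}}$, and the induction on $\natf{n}$ — is routine. (Alternatively, $\constf{c}_{\natf{i}+1}\le\diamo[\ione]{\termf{w}_{\natf{i}}}$ could be obtained from the hypothesis via Lemma~\ref{fo-mod_corr} and a derivation in $\thrf{GLP}_{\ordf{b}}$, but the direct computation above is shorter.)
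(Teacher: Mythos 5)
Your reduction step is unsound. When normalization fails at position $\natf{i}$ you replace $\termf{t}$ by the truncation $\termf{s}$ that discards the entire tail $\constf{c}_{\natf{i}},\ldots,\constf{c}_1$, and this rests on the claim that $\diamo[\itwo]{\constf{c}_{\natf{i}}}\le\diamo[\ione]{\termf{w}_{\natf{i}}}$ holds in every $\typef{B}$-algebra. For $\natf{i}\ge 2$ that claim is false, and your derivation of it uses item \ref{GLPA_Eq1_2} of Lemma \ref{GLPA_Eq1} exactly as printed, with $\baor$ on the right-hand side. That $\baor$ is evidently a typo for $\baand$: with $\baor$ the stated identity already fails at $\elf{x}=\baz$, $\elf{y}=\bau$, and everywhere else the paper invokes item \ref{GLPA_Eq1_2} it uses the $\baand$ form. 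With the correct version your chain yields $\diamo[\itwo]{\constf{c}_{\natf{i}}\baand\diamo[\ione]{\termf{w}_{\natf{i}-1}}}=\diamo[\itwo]{\constf{c}_{\natf{i}}}\baand\diamo[\ione]{\termf{w}_{\natf{i}-1}}\le\diamo[\itwo]{\constf{c}_{\natf{i}}}$, the opposite of what you need. Moreover the conclusion $\termf{t}=\termf{s}$ is genuinely false, not merely unproved: take $\natf{n}=3$ and an $\typef{A}$-algebra $\algf{A}$ with $\constf{c}_1^{\algf{A}}=\baz^{\algf{A}}$, $\constf{c}_2^{\algf{A}}=\bau^{\algf{A}}$, $\constf{c}_3^{\algf{A}}=\diamoi[\itwo]{\algf{A}}{\bau^{\algf{A}}}\ne\baz^{\algf{A}}$. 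The only normalization failure is at $\natf{i}=2$, your $\termf{s}$ is $\constf{c}_3$, which is nonzero in $\fextfr^{\tembf{l}}(\algf{A})$ since $\fextemb^{\tembf{l}}_{\algf{A}}$ is an embedding, while $\termf{t}=\constf{c}_3\baand\diamo[\ione]{\constf{c}_2\baand\diamo[\ione]{\constf{c}_1}}$ evaluates to $\baz$ because $\diamo[\ione]{\constf{c}_1}=\baz$.

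The paper's reduction is different and avoids this: it deletes only the single offending constant $\constf{c}_{\natf{i}}$ while keeping the lower part of the word, producing a quasi-word of length $\natf{n}-1$, namely $\constf{c}_{\natf{n}}\baand\diamo[\ione]{\ldots\constf{c}_{\natf{i}+1}\baand\diamo[\ione]{\constf{c}_{\natf{i}-1}\baand\diamo[\ione]{\ldots}}\ldots}$. The identity to verify is then the local one $\constf{c}_{\natf{i}+1}\baand\diamo[\ione]{\fovarf{x}}=\constf{c}_{\natf{i}+1}\baand\diamo[\ione]{\constf{c}_{\natf{i}}\baand\diamo[\ione]{\fovarf{x}}}$, where $\ge$ follows from $\diamo[\ione]{\fovarf{x}}\ge\diamo[\ione]{\diamo[\ione]{\fovarf{x}}}\ge\diamo[\ione]{\constf{c}_{\natf{i}}\baand\diamo[\ione]{\fovarf{x}}}$ and $\le$ uses the hypothesis $\constf{c}_{\natf{i}+1}\le\diamo[\itwo]{\constf{c}_{\natf{i}}}$ together with the ($\baand$) form of item \ref{GLPA_Eq1_2} and Axiom \ref{GLPA_Ax4}. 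Your computation for $\natf{i}=1$ is correct, since there $\termf{w}_1=\constf{c}_1$ and item \ref{GLPA_Eq1_2} is not needed; the gap is precisely that for $\natf{i}\ge 2$ the conjunct $\diamo[\ione]{\termf{w}_{\natf{i}-1}}$ cannot be discarded, so the whole tail below position $\natf{i}+1$ cannot be dropped in one step.
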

\begin{proof} Suppose $\termf{t}$ have the form 
 $$\constf{c}_{\natf{n}} \baand \diamo[\ione]{\constf{c}_{\natf{n}-1} \baand \diamo[\ione]{ \constf{c}_{\natf{n}-2} \baand\diamo[\ione]{\ldots \diamo[\ione]{ \constf{c}_{2} \baand \diamo[\ione]{\constf{c}_{1}}} \ldots}}}.$$
We prove the lemma by induction on $\natf{n}$. In the case of $\natf{n}=1$ the quasi-word $\termf{t}$ is just a constant symbol. Hence the induction basis  holds.

Now we prove the induction step. Either $\termf{t}$ is an $\algf{A}$-normalized quasi-word and we are done or there is a number $\natf{i}<\natf{n}$ such that $\algf{A}\models \constf{c}_{\natf{i}}\algl_{\itwo}\constf{c}_{\natf{i}+1}$. We denote the word
$$\constf{c}_{\natf{n}} \baand \diamo[\ione]{\constf{c}_{\natf{n}-1} \baand \diamo[\ione]{ \ldots \constf{c}_{\natf{i+1}} \baand\diamo[\ione]{\constf{c}_{\natf{i}-1}\baand\diamo[\ione]{\ldots \diamo[\ione]{ \constf{c}_{2} \baand \diamo[\ione]{\constf{c}_{1}}} \ldots}}\ldots}}$$
by  $\termf{w}$.
Clearly,
$$\fextfr^{\tembf{l}}(\algf{A})\models\forall \fovarf{x}(\constf{c}_{\natf{i}+1}\baand\diamo[\ione]{\fovarf{x}}= \constf{c}_{\natf{i}+1}\baand \diamo[\itwo]{\constf{c}_{\natf{i}}}\baand \diamo[\ione]{\fovarf{x}}= \constf{c}_{\natf{i}+1}\baand \diamo[\itwo]{\constf{c}_{\natf{i}}\baand \diamo[\ione]{\fovarf{x}}}\le \constf{c}_{\natf{i}+1}\baand \diamo[\ione]{\constf{c}_{\natf{i}}\baand \diamo[\ione]{\fovarf{x}}}).$$
From the other side, 
$$\fextfr^{\tembf{l}}(\algf{A})\models\forall \fovarf{x}(\constf{c}_{\natf{i}+1}\baand\diamo[\ione]{\fovarf{x}}\ge
\constf{c}_{\natf{i}+1}\baand\diamo[\ione]{\diamo[\ione]{\fovarf{x}}}\ge\constf{c}_{\natf{i}+1}\baand\diamo[\ione]{\constf{c}_{\natf{i}}\baand\diamo[\ione]{\fovarf{x}}}).$$
Hence
 $$\fextfr^{\tembf{l}}(\algf{A})\models\forall \fovarf{x}(\constf{c}_{\natf{i}+1}\baand\diamo[\ione]{\fovarf{x}}=\constf{c}_{\natf{i}+1}\baand\diamo[\ione]{\constf{c}_{\natf{i}}\baand\diamo[\ione]{\fovarf{x}}}).$$
Therefore
 $$\fextfr^{\tembf{l}}(\algf{A})\models \termf{t}=\termf{w}.$$
We use the inductive hypothesis for $\termf{w}$ and obtain an $\algf{A}$-normalized quasi-word $\termf{w}'$ such that $$\fextfr^{\tembf{l}}(\algf{A})\models \termf{w}'=\termf{w}=\termf{t}.$$ It finishes the proof of the induction step and the lemma.\end{proof}

\begin{lemma} \label{quasi-word_comparability}Suppose $\tembf{l}\colon\typef{A}\to\typef{B}$ is a normal type embedding and $\algf{A}$ is a constant complete linear $\typef{A}$-algebra. Then for every two quasi-words $\termf{t},\termf{w}$ of the type $\typef{B}$ we have either $\fextfr^{\tembf{l}}(\algf{A})\models \termf{t}  \algl_{\ione}\termf{w}$, or $\fextfr^{\tembf{l}}(\algf{A})\models \termf{t}\algs_{\ione}\termf{w}$, or $\fextfr^{\tembf{l}}(\algf{A})\models \termf{w}\algl_{\ione}\termf{t}$.
\end{lemma}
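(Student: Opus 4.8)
The plan is to reduce to $\algf{A}$-normalized quasi-words and then induct on their combined length, each time comparing the outermost constants of the two quasi-words in the linear preorder of $\algf{A}$ and recursing on shorter quasi-words. By Lemma~\ref{quasi-word_normalization} I may assume $\termf{t}$ and $\termf{w}$ are $\algf{A}$-normalized, since replacing a quasi-word by a value-equal $\algf{A}$-normalized one changes none of the relations $\algl_{\ione},\algs_{\ione}$ in $\fextfr^{\tembf{l}}(\algf{A})$. The case where $\termf{t}$ or $\termf{w}$ has value $\baz$ is trivial: as $\baz\le\diamo[\ione]{\elf{x}}$ for every $\elf{x}$, one of $\termf{t}\algl_{\ione}\termf{w}$, $\termf{w}\algl_{\ione}\termf{t}$ holds outright. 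So assume both values differ from $\baz$, write $\termf{t}=\constf{c}_{\natf{n}}\baand\diamo[\ione]{\constf{c}_{\natf{n}-1}\baand\diamo[\ione]{\ldots\diamo[\ione]{\constf{c}_1}\ldots}}$ and $\termf{w}=\constf{d}_{\natf{m}}\baand\diamo[\ione]{\constf{d}_{\natf{m}-1}\baand\diamo[\ione]{\ldots\diamo[\ione]{\constf{d}_1}\ldots}}$, and induct on $\natf{n}+\natf{m}$.

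For $\natf{n}=\natf{m}=1$ the quasi-words are the constants $\constf{c}_1,\constf{d}_1$; since $\algf{A}$ is linear, $\constf{c}_1^{\algf{A}}$ and $\constf{d}_1^{\algf{A}}$ are comparable under $\algl_{\itwo}\cup\algs_{\itwo}$ in $\algf{A}$, and as $\fextemb^{\tembf{l}}_{\algf{A}}$ is an embedding it preserves $\algl_{\itwo},\algs_{\itwo}$ (Lemma~\ref{embedding_order_preservation}) while $\algl_{\itwo}\subseteq\algl_{\ione}$, $\algs_{\itwo}\subseteq\algs_{\ione}$ hold in $\fextfr^{\tembf{l}}(\algf{A})$ (Lemma~\ref{algl_subset}), so the required trichotomy follows. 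For the inductive step I compare $\constf{c}_{\natf{n}}$ and $\constf{d}_{\natf{m}}$ in the linear preorder of $\algf{A}$. If $\constf{c}_{\natf{n}}\algs_{\itwo}\constf{d}_{\natf{m}}$ in $\algf{A}$, then $\diamo[\itwo]{\constf{c}_{\natf{n}}}=\diamo[\itwo]{\constf{d}_{\natf{m}}}$ (Lemma~\ref{algl_lin_properties}), and using the equations of Lemma~\ref{GLPA_Eq1} I would show that in $\fextfr^{\tembf{l}}(\algf{A})$ the $\algl_{\ione}$- and $\algs_{\ione}$-relations between $\termf{t}$ and $\termf{w}$ coincide with those between their strictly shorter tails $\constf{c}_{\natf{n}-1}\baand\diamo[\ione]{\ldots\constf{c}_1\ldots}$ and $\constf{d}_{\natf{m}-1}\baand\diamo[\ione]{\ldots\constf{d}_1\ldots}$, so the induction hypothesis applies (the length-one boundary subcases being settled directly by the same identities). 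If instead $\constf{c}_{\natf{n}}\algl_{\itwo}\constf{d}_{\natf{m}}$ strictly, or symmetrically, I use the rewriting $\constf{d}\baand\diamo[\ione]{\elf{x}}=\constf{d}\baand\diamo[\ione]{\constf{c}\baand\diamo[\ione]{\elf{x}}}$ (valid when $\constf{c}\algl_{\itwo}\constf{d}$) from the proof of Lemma~\ref{quasi-word_normalization} to align the head of the second quasi-word's outermost block with that of the other quasi-word; this reduces, after re-normalization (which strictly shortens), to a comparison covered by the induction hypothesis, whose outcome is then transferred back using Axiom~\ref{GLPA_Ax3}, equation~\ref{GLPA_Eq1_2}, and $\diamo[\ione]{\diamo[\ione]{\elf{x}}}\le\diamo[\ione]{\elf{x}}$. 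Finally, at most one of the three alternatives holds because both values are non-$\baz$ (Lemma~\ref{algl_properties}(\ref{algl_alternatives})), so the trichotomy is genuine.

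The crux is the package of reduction identities in $\fextfr^{\tembf{l}}(\algf{A})$ underlying the inductive step: one must pin down exactly how $\diamo[\ione]{}$ acts on a normalized quasi-word modulo the diagram of $\algf{A}$, i.e.\ establish, for linear $\thrf{GLP}$-algebras, the analogue of the arithmetically verified identities that drive Beklemishev's calculus of $\thrf{GLP}$-words \cite{Bek04}; here their role is taken over by Lemma~\ref{GLPA_Eq1} and, decisively, by the linearity of $\algf{A}$, which is what forces the head constants to be comparable so the recursion has a base. I would expect the bookkeeping of the length-one boundary subcases, and the exhaustive translation of outcomes back through the alignment step in the strict case, to be the most error-prone parts of the argument.
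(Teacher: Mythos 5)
Your overall framing (normalize first, then compare head constants in the linear preorder of $\algf{A}$) is genuinely different from the paper's proof, and it contains a gap at its core. The paper does \emph{not} recurse from the outermost constant inward: it scans the two normalized quasi-words from the \emph{innermost} position outward, finds the first index $\natf{i}$ at which the constants fail to be $\algs_{\itwo}$-equivalent, and then uses normalization plus linearity to show that \emph{all} the remaining outer constants of the word whose $\natf{i}$-th constant is $\algl_{\itwo}$-smaller are dominated by the other word's $\natf{i}$-th constant, so that its entire outer segment is absorbed via $\constf{c}\baand\diamo[\ione]{\fovarf{x}}=\constf{c}\baand\diamo[\ione]{\constf{q}\baand\diamo[\ione]{\fovarf{x}}}$. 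That ordering of the comparison matters: the relation $\algl_{\ione}$ on quasi-words is lexicographic from the \emph{inner} end (as for Cantor normal forms compared from the leading term), so your outside-in recursion is comparing the least significant data first. Your reduction step --- ``if $\constf{c}_{\natf{n}}\algs_{\itwo}\constf{d}_{\natf{m}}$ then the $\algl_{\ione}/\algs_{\ione}$ relations between $\termf{t}$ and $\termf{w}$ coincide with those between their tails'' --- is exactly the point where this matters, and it is asserted, not proved. It does not follow from Lemma~\ref{GLPA_Eq1}: to get $\constf{d}\baand\diamo[\ione]{\termf{w}'}\le\diamo[\ione]{\constf{c}\baand\diamo[\ione]{\termf{t}'}}$ from $\termf{w}'\le\diamo[\ione]{\termf{t}'}$, the only lower bound those identities give you is $\diamo[\ione]{\constf{c}\baand\diamo[\ione]{\termf{t}'}}\ge\diamo[\itwo]{\constf{c}}\baand\diamo[\ione]{\termf{t}'}$, and you would then need $\constf{d}\le\diamo[\itwo]{\constf{c}}$; but $\constf{c}\algs_{\itwo}\constf{d}$ gives $\diamo[\itwo]{\constf{c}}=\diamo[\itwo]{\constf{d}}$, and $\constf{d}\le\diamo[\itwo]{\constf{d}}$ forces $\constf{d}=\baz$ by Axiom~\ref{GLPA_Ax3}. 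So the estimate runs the wrong way. The claim is in fact delicate (it amounts to the statement that stripping equal smallest summands from two decreasing ordinal sums preserves their comparison, which needs the normalization constraint and an indecomposability property of the summands), and proving it in an arbitrary constant complete linear $\typef{A}$-algebra requires essentially the full comparison machinery you are trying to establish.

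The strict case has a second, independent problem. If $\constf{c}_{\natf{n}}\algl_{\itwo}\constf{d}_{\natf{m}}$, the absorption identity lets you show $(\constf{c}_{\natf{n}}\baand\diamo[\ione]{\termf{w}'})\algl_{\ione}\termf{w}$ and then compare $\termf{t}$ with the head-replaced word $\constf{c}_{\natf{n}}\baand\diamo[\ione]{\termf{w}'}$; but if that comparison comes out as $(\constf{c}_{\natf{n}}\baand\diamo[\ione]{\termf{w}'})\algl_{\ione}\termf{t}$, you then know only that both $\termf{t}$ and $\termf{w}$ lie strictly $\algl_{\ione}$-above a common element, which does not determine how they relate to each other; your ``transferred back using Axiom~\ref{GLPA_Ax3}, equation~\ref{GLPA_Eq1_2}'' does not close this. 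The paper avoids both difficulties by resolving the whole comparison at the first inner disagreement: there the smaller word's \emph{entire} remaining outer part is $\algl_{\itwo}$-dominated by a single constant of the other word (this is where $\algf{A}$-normalizedness and linearity of $\algf{A}$ are used), so one absorption argument finishes the proof without any recursion on tails. I would recommend either adopting that inner-first scan or, if you want to keep the outside-in recursion, proving the stripping claim as a separate lemma --- at which point you will find you are reproving the inner-first comparison anyway.
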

\begin{proof} We consider  $\algf{A}$-normalized quasi-words $\termf{t}$ and $\termf{w}$  of the type $\typef{B}$. Suppose $\termf{t}$ is
 $$\constf{c}_{\natf{n}} \baand \diamo[\ione]{\constf{c}_{\natf{n}-1} \baand \diamo[\ione]{ \constf{c}_{\natf{n}-2} \baand\diamo[\ione]{\ldots \diamo[\ione]{ \constf{c}_{2} \baand \diamo[\ione]{\constf{c}_{1}}} \ldots}}}$$
and $\termf{w}$ is
 $$\constf{q}_{\natf{m}} \baand \diamo[\ione]{\constf{q}_{\natf{m}-1} \baand \diamo[\ione]{ \constf{q}_{\natf{m}-2} \baand\diamo[\ione]{\ldots \diamo[\ione]{ \constf{q}_{2} \baand \diamo[\ione]{\constf{q}_{1}}} \ldots}}}.$$
We claim that either $\fextfr^{\tembf{l}}(\algf{A})\models \termf{t}  \algl_{\ione}\termf{w}$ or $\fextfr^{\tembf{l}}(\algf{A})\models \termf{t}\algs_{\ione}\termf{w}$ or $\fextfr^{\tembf{l}}(\algf{A})\models \termf{w}\algl_{\ione}\termf{t}$. If for some $1<i\le\natf{n}$ we have $\algf{A}\models \constf{c}_{\natf{i}}=\bau$ or for some $1<i\le\natf{m}$ we have $\algf{A}\models \constf{q}_{\natf{i}}=\bau$ then the claim holds trivially and we are done. Now we assume that for all $1<i\le\natf{n}$ we have $\algf{A}\not\models \constf{c}_{\natf{i}}=\bau$  and  for all $1<i\le\natf{m}$ we have $\algf{A}\not\models \constf{q}_{\natf{i}}=\bau$.  We find the minimal $\natf{i}$ from $1$ to $\min(\natf{n}-1,\natf{m}-1)$ such that $\algf{A}\not\models \constf{q}_{\natf{i}}\algs_{\itwo}\constf{c}_{\natf{i}}$. If there are no such a $\natf{i}$ then we show that
\begin{enumerate}
\item \label{quasi-word_comparability_l1_c1} $\algf{A}\models \termf{t}\algl_{\ione} \termf{w}$, if $\natf{n}<\natf{m}$,
\item \label{quasi-word_comparability_l1_c2}$\algf{A}\models \termf{t}\algs_{\ione} \termf{w}$, if $\natf{n}=\natf{m}$,
\item  \label{quasi-word_comparability_l1_c3}$\algf{A}\models \termf{w}\algl_{\ione} \termf{t}$, if $\natf{m}<\natf{n}$.
\end{enumerate}
We have $$\thrf{GLPA}_{\typef{B}}\vdash \forall \fovarf{x},\fovarf{y},\fovarf{z}( \fovarf{x}\algs_{\itwo}\fovarf{y} \;\Rightarrow\; \fovarf{x}\baand \diamo[\ione]{ \fovarf{z}} \algs_{\itwo}\fovarf{y}\baand \diamo[\ione]{\fovarf{z}})$$
and $$\thrf{GLPA}_{\typef{B}}\vdash \forall \fovarf{x},\fovarf{y}( \fovarf{x}\algs_{\itwo}\fovarf{y} \;\Rightarrow\; \diamo[\ione]{ \fovarf{x}} = \diamo[\ione]{\fovarf{y}}).$$
Using this two facts we prove \ref{quasi-word_comparability_l1_c2}. Second, using  these facts and 
$$\thrf{GLPA}_{\typef{B}}\vdash \forall \fovarf{x},\fovarf{y}( \fovarf{x}\algl_{\ione}\fovarf{y} \baand \diamo[\ione]{ \fovarf{x}})$$ we prove \ref{quasi-word_comparability_l1_c1} and \ref{quasi-word_comparability_l1_c3}.
Now we assume that we have $\natf{i}$ such that $\algf{A}\not\models \constf{q}_{\natf{i}}\algs_{\itwo}\constf{c}_{\natf{i}}$ and for all $1\le \natf{j}<\natf{i}$ we have  $\algf{A}\models \constf{q}_{\natf{j}}\algs_{\itwo}\constf{c}_{\natf{j}}$. Because $\algf{A}$ is linear we have either $\algf{A}\models \constf{q}_{\natf{i}}\algl_{\itwo}\constf{c}_{\natf{i}}$ or $\algf{A}\models \constf{c}_{\natf{i}}\algl_{\itwo}\constf{q}_{\natf{i}}$. Without lose of generality we assume that $\algf{A}\models \constf{q}_{\natf{i}}\algl_{\itwo}\constf{c}_{\natf{i}}$. Because $\termf{w}$ is  $\algf{A}$-normalized and $\algf{A}$ is linear, we have  $\algf{A}\models \constf{q}_{\natf{j}}\algl_{\itwo}\constf{c}_{\natf{i}}$ for all $\natf{j}$ from $\natf{i}$ to $\natf{m}$. Using the same method as above in the proof we show that
$$\fextfr^{\tembf{l}}(\algf{A})\models\forall \fovarf{x}(\constf{c}_{\natf{i}}\baand\diamo[\ione]{\fovarf{x}}=\constf{c}_{\natf{i}}\baand\diamo[\ione]{\constf{q}_{\natf{j}}\baand\diamo[\ione]{\fovarf{x}}}),$$
for all $\natf{j}$ from $\natf{i}$ to $\natf{m}$. Using the last we show that 
$$\fextfr^{\tembf{l}}(\algf{A})\models \termf{w}\algl_{\ione}\constf{c}_{\natf{i}} \baand \diamo[\ione]{\constf{c}_{\natf{i}-1} \baand \diamo[\ione]{ \constf{c}_{\natf{n}-2} \baand\diamo[\ione]{\ldots \diamo[\ione]{ \constf{c}_{2} \baand \diamo[\ione]{\constf{c}_{1}}} \ldots}}}.$$
And finally we conclude that $$\fextfr^{\tembf{l}}(\algf{A})\models \termf{w}\algl_{\ione}\termf{t}.$$
\end{proof}

\begin{corollary} \label{quasi-word_comparability_2}Suppose $\tembf{l}\colon\typef{A}\to\typef{B}$ is a normal type embedding and $\algf{A}$ is a constant complete linear $\typef{A}$-algebra. Then for every two quasi-words $\termf{t},\termf{w}$ of the type $\typef{B}$, terms $\diamo[\ione]{\termf{t}}$ and $\diamo[\ione]{\termf{w}}$ are $\le$-comparable in $\fextfr^{\tembf{l}}(\algf{A})$.
\end{corollary}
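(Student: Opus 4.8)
The plan is to read this straight off Lemma \ref{quasi-word_comparability}. That lemma already gives, for any two quasi-words $\termf{t},\termf{w}$ of type $\typef{B}$, that inside $\fextfr^{\tembf{l}}(\algf{A})$ one of $\termf{t}\algl_{\ione}\termf{w}$, $\termf{t}\algs_{\ione}\termf{w}$, or $\termf{w}\algl_{\ione}\termf{t}$ holds; so it suffices to check that each of the three alternatives forces $\diamo[\ione]{\termf{t}}$ and $\diamo[\ione]{\termf{w}}$ to be $\le$-comparable. All the reasoning takes place inside the $\typef{B}$-algebra $\fextfr^{\tembf{l}}(\algf{A})$, where the $\thrf{GLPA}_{\typef{B}}$-axioms and the identities of Lemma \ref{GLPA_Eq1} are available; I shall also use that $\diamos[\ione]$ is monotone, which is immediate from Axiom \ref{GLPA_Ax2} (if $\elf{x}\le\elf{y}$ then $\diamo[\ione]{\elf{y}}=\diamo[\ione]{\elf{x}}\baor\diamo[\ione]{\elf{y}}$, hence $\diamo[\ione]{\elf{x}}\le\diamo[\ione]{\elf{y}}$).

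For the first alternative $\termf{w}\le\diamo[\ione]{\termf{t}}$; applying $\diamos[\ione]$, using monotonicity, and then collapsing $\diamo[\ione]{\diamo[\ione]{\termf{t}}}$ below $\diamo[\ione]{\termf{t}}$ by Lemma \ref{GLPA_Eq1}, item \ref{GLPA_Eq1_1}, yields $\diamo[\ione]{\termf{w}}\le\diamo[\ione]{\diamo[\ione]{\termf{t}}}\le\diamo[\ione]{\termf{t}}$. The third alternative is symmetric and gives $\diamo[\ione]{\termf{t}}\le\diamo[\ione]{\termf{w}}$. For the middle alternative $\diamo[\ione]{\termf{t}}\baor\termf{t}=\diamo[\ione]{\termf{w}}\baor\termf{w}$; applying $\diamos[\ione]$ to both sides, expanding by Axiom \ref{GLPA_Ax2}, and then using Lemma \ref{GLPA_Eq1}, item \ref{GLPA_Eq1_1}, to rewrite $\diamo[\ione]{\diamo[\ione]{\termf{t}}}\baor\diamo[\ione]{\termf{t}}$ as $\diamo[\ione]{\termf{t}}$ (and likewise with $\termf{w}$), one obtains $\diamo[\ione]{\termf{t}}=\diamo[\ione]{\termf{w}}$ --- the same computation as in the ``$\Leftarrow$'' half of Lemma \ref{algl_lin_properties}. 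In all three cases $\diamo[\ione]{\termf{t}}$ and $\diamo[\ione]{\termf{w}}$ are $\le$-comparable.

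I do not anticipate any real difficulty here: the corollary is essentially a restatement of Lemma \ref{quasi-word_comparability} once one observes that $\algl_{\ione}$ passes to $\le$-comparability of the $\diamos[\ione]$-images via monotonicity together with the inequality $\diamo[\ione]{\diamo[\ione]{\elf{x}}}\le\diamo[\ione]{\elf{x}}$, and that the $\algs_{\ione}$ case in fact delivers the stronger conclusion of equality. The only thing to be careful about is that everything is argued inside $\fextfr^{\tembf{l}}(\algf{A})$; in particular we do not need, and should not invoke at this point, any linearity of $\fextfr^{\tembf{l}}(\algf{A})$ itself.
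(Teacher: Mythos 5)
Your proposal is correct and is exactly the intended derivation: the paper states this as an immediate corollary of Lemma \ref{quasi-word_comparability} without spelling out the step, and your case analysis (monotonicity of $\diamos[\ione]$ plus $\diamo[\ione]{\diamo[\ione]{\elf{x}}}\le\diamo[\ione]{\elf{x}}$ for the $\algl_{\ione}$ cases, and the computation from the ``$\Leftarrow$'' half of Lemma \ref{algl_lin_properties} for the $\algs_{\ione}$ case) fills it in correctly.
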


\begin{proposition}\label{quasi-word_combination} Suppose $\tembf{l}\colon\typef{A}\to\typef{B}$ is a normal type embedding and $\algf{A}$ is a constant complete linear $\typef{A}$-algebra. Then every element of $\fextfr^{\tembf{l}}(\algf{A})$ is equal to the value of some Boolean combination of  quasi-words of the type $\typef{A}$.
\end{proposition}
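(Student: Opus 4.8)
The plan is to realise $\fextfr^{\tembf{l}}(\algf{A})$ through the explicit construction given above for constant complete $\algf{A}$, namely as the algebra of equivalence classes of closed $\thrf{GLPA}_{\typef{B}}$-terms, and to prove by induction on the structure of such a term $\termf{t}$ that its value lies in the set $S$ of elements equal to a Boolean combination of quasi-words (of the type $\typef{A}$, i.e. built from the minimal operator $\diamos[\ione]$ of $\fextfr^{\tembf{l}}(\algf{A})$). The nullary symbols $\baz,\bau$ and the constant symbols are length-one quasi-words, and $S$ is closed under $\baand,\baor,\bacmp{}$ by its very definition, so only the operator cases $\termf{t}=\diamo[\objf{i}]{\termf{t}'}$ require work. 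Writing the value of $\termf{t}'$ (which lies in $S$ by the induction hypothesis) in disjunctive normal form and invoking $\diamo[\objf{i}]{\baz}=\baz$ and $\diamo[\objf{i}]{\elf{x}}\baor\diamo[\objf{i}]{\elf{y}}=\diamo[\objf{i}]{\elf{x}\baor\elf{y}}$ (Axioms~\ref{GLPA_Ax1} and~\ref{GLPA_Ax2}), the operator distributes over the disjunction, so it suffices to show $\diamo[\objf{i}]{m}\in S$ whenever $m$ is a conjunction of quasi-words and negated quasi-words.

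Two clean reductions dispose of the purely positive situation. First, a conjunction of quasi-words is again a single quasi-word: for $\termf{q}_1=\constf{c}_1\baand\diamo[\ione]{\termf{w}_1}$ and $\termf{q}_2=\constf{c}_2\baand\diamo[\ione]{\termf{w}_2}$ the factors $\diamo[\ione]{\termf{w}_1}$ and $\diamo[\ione]{\termf{w}_2}$ are $\le$-comparable by Corollary~\ref{quasi-word_comparability_2}, so their meet is the smaller of the two and $\termf{q}_1\baand\termf{q}_2=(\constf{c}_1\baand\constf{c}_2)\baand\diamo[\ione]{\termf{w}_i}$ is a quasi-word (the degenerate cases in which a factor is a bare constant being immediate). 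Hence the positive part of $m$ collapses to one quasi-word $\termf{q}$. Second, applying an operator to a single quasi-word stays in $S$: for $\objf{i}>\ione$ the identity $\diamo[\objf{j}]{\elf{x}\baand\diamo[\ione]{\elf{y}}}=\diamo[\objf{j}]{\elf{x}}\baor\diamo[\ione]{\elf{y}}$ of Lemma~\ref{GLPA_Eq1} gives $\diamo[\objf{i}]{\constf{c}\baand\diamo[\ione]{\termf{w}}}=\diamo[\objf{i}]{\constf{c}}\baor\diamo[\ione]{\termf{w}}$, a disjunction of a constant and a quasi-word, while $\diamo[\ione]{\termf{q}}=\bau\baand\diamo[\ione]{\termf{q}}$ is itself a quasi-word. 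Thus when $m$ carries no negations we already have $\diamo[\objf{i}]{m}\in S$.

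The main obstacle is the treatment of the negated factors, that is, of $\diamo[\objf{i}]{\termf{q}\baand\bacmp{\termf{r}_1}\baand\cdots\baand\bacmp{\termf{r}_s}}$ with $\termf{q},\termf{r}_1,\dots,\termf{r}_s$ quasi-words. The approach is to expand each negated quasi-word $\bacmp{\termf{r}_b}=\bacmp{(\constf{d}_b\baand\diamo[\ione]{\termf{u}_b})}=\bacmp{\constf{d}_b}\baor\bacmp{\diamo[\ione]{\termf{u}_b}}$ and to distribute. In every branch that chooses $\bacmp{\constf{d}_b}$ this constant is absorbed into the leading constant of $\termf{q}$, yielding a quasi-word and strictly reducing the number of negated factors. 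In a branch that chooses $\bacmp{\diamo[\ione]{\termf{u}_b}}$, Corollary~\ref{quasi-word_comparability_2} applies to the pair $\diamo[\ione]{\termf{w}},\diamo[\ione]{\termf{u}_b}$ (where $\termf{q}=\constf{c}\baand\diamo[\ione]{\termf{w}}$): if $\diamo[\ione]{\termf{w}}\le\diamo[\ione]{\termf{u}_b}$ then $\termf{q}\baand\bacmp{\diamo[\ione]{\termf{u}_b}}=\baz$ and the branch vanishes, and otherwise the branch survives with a negated factor whose $\diamos[\ione]$-value is strictly below $\diamo[\ione]{\termf{w}}$. Iterating, $\termf{q}\baand\bacmp{\termf{r}_1}\baand\cdots\baand\bacmp{\termf{r}_s}$ is rewritten as a disjunction of quasi-words together with residual conjunctions whose negated factors have strictly descended in the worm order; once no negations remain, $\diamo[\objf{i}]{\,\cdot\,}$ distributes and lands in $S$ by the positive case above.

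The step I expect to be genuinely hard is proving that this rewriting terminates, i.e. that the residual ``strictly smaller'' branches are eliminated after finitely many rounds. This rests on the linearity of $\fextfr^{\tembf{l}}(\algf{A})$, which is exactly what Lemma~\ref{fext_linear_preserve} supplies, since a normal type embedding adds no new constants and adds only the new minimal operator $\diamos[\ione]$: by Lemma~\ref{algl_lin_properties} the value $\diamo[\ione]{\elf{x}}$ depends only on the $\algs_{\ione}$-class of $\elf{x}$, and linearity makes the whole family $\{\diamo[\ione]{\elf{x}}\}$ linearly ordered by $\le$, so the $\diamos[\ione]$-values produced in successive branches are comparable and the induction can be grounded on the well-foundedness of this worm order. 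Making the descending measure precise, so that each negated factor is absorbed, annihilated, or pushed strictly downward only finitely often, is where the real work lies; here the normalization of quasi-words (Lemma~\ref{quasi-word_normalization}) is the tool I would use to keep all intermediate quasi-words in a fixed canonical form and thereby read off the descent.
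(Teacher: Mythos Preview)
Your overall architecture matches the paper's: induction on closed $\thrf{GLPA}_{\typef{B}}$-terms, reduce to the operator case, write the argument in DNF and distribute. The gap is in your treatment of the negated factors.

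Your iteration does not progress. After one pass through the expansion $\bacmp{\termf{r}_b}=\bacmp{\constf{d}_b}\baor\bacmp{\diamo[\ione]{\termf{u}_b}}$, every surviving branch has the shape $\termf{q}'\baand\bacmp{\diamo[\ione]{\termf{u}_{b_1}}}\baand\cdots\baand\bacmp{\diamo[\ione]{\termf{u}_{b_k}}}$, i.e.\ the remaining negated factors are pure closures, which are quasi-words with leading constant $\bau$. Re-applying your expansion to them yields $\bacmp{\bau}\baor\bacmp{\diamo[\ione]{\termf{u}_b}}=\bacmp{\diamo[\ione]{\termf{u}_b}}$, so nothing changes; no descending measure will rescue this, because the process has literally reached a fixed point. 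The difficulty you flag as ``genuinely hard'' is therefore not a termination subtlety but a sign that the scheme, as written, is incomplete.

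The missing observation is the one you already used on the \emph{positive} side. By Corollary~\ref{quasi-word_comparability_2} the values $\diamo[\ione]{\termf{u}_b}$ are linearly ordered by $\le$, hence so are their complements, and $\bigwedge_b\bacmp{\diamo[\ione]{\termf{u}_b}}=\bacmp{\diamo[\ione]{\termf{u}_{b^\star}}}$ for the index $b^\star$ with maximal closure. This collapses every conjunct to the shape $\constf{c}\baand\termf{u}\baand\bacmp{\termf{v}}$ with a single positive closure $\termf{u}$ and a single negated closure $\bacmp{\termf{v}}$, and then $\diamo[\objf{i}]{\,\cdot\,}$ is handled by one direct application of Lemma~\ref{GLPA_Eq1} (for $\objf{i}\ne\ione$) or of Lemma~\ref{quasi-word_comparability} (for $\objf{i}=\ione$): in the latter case either $\constf{c}\baand\termf{u}\algl_{\ione}\termf{v}$ and the negated factor can be dropped without changing $\diamo[\ione]{\,\cdot\,}$, or else $\constf{c}\baand\termf{u}\baand\bacmp{\termf{v}}=\baz$. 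This is exactly the paper's route; it reformulates from the outset in terms of ``quasi-word closures'' $\diamo[\ione]{\termf{w}}$ precisely so that Corollary~\ref{quasi-word_comparability_2} does this collapsing in one stroke, with no iteration at all.

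One more point: you appeal to Lemma~\ref{fext_linear_preserve} to justify that $\fextfr^{\tembf{l}}(\algf{A})$ is linear. In the paper that lemma is proved \emph{after} the present proposition (via Lemma~\ref{quasi-word_equiv}, whose proof explicitly cites this proposition), so invoking it here is circular. The paper's argument needs only the linearity of $\algf{A}$, entering through Lemma~\ref{quasi-word_comparability} and its corollary.
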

\begin{proof} We call a term $\termf{t}$ a {\it quasi-word closures of the type $\typef{B}$} if $\termf{t}$ is $\boxo[\ione]{\termf{w}}$ for some quasi-word $\termf{w}$ of the type $\typef{B}$ .

Note that the conclusion of the lemma is equivalent to the following proposition: every element of $\fextfr^{\tembf{l}}(\algf{A})$ is equal to the value of some Boolean combination of constants and quasi-word closures of the type $\typef{B}$. We will actually prove this equivalent form.

Every element of  $\fextfr^{\tembf{l}}(\algf{A})$  is the value of some closed $\thrf{GLPA}_{\typef{B}}$-term $\termf{t}$. We prove the lemma by induction on the length of the representing term. For constant symbols as representing term the lemma holds trivially. Obviously, the induction step holds for all cases for the top operation in representing term but some $\boxos[\objf{x}]$.  

Suppose $\termf{t}$ is $\boxo[\objf{x}]{\termf{w}}$. From induction hypothesis we know that $\termf{w}^{\algf{B}}$ is the value of some Boolean combination  $\termf{w}'$ of constants and quasi-word closures of the type $\typef{B}$. We use Corollary \ref{quasi-word_comparability_2} and constant completeness of $\algf{A}$ to transform CNF of $\termf{w}'$ in the way we describe subsequently. We obtain number $\natf{k}$ and for every $\natf{i}$ from $1$ to $\natf{k}$ we obtain constants $\constf{c}_{\natf{i}}\in\typef{A}$ and  quasi-word closures $\termf{u}_{\natf{i}}$, $\termf{v}_{\natf{i}}$ of the type $\typef{B}$ such that 
$$\fextfr^{\tembf{l}}(\algf{A})\models \termf{w}=\sum\limits_{1\le \natf{i}\le\natf{k}}(\constf{c}_{\natf{i}}\baand \termf{u}_{\natf{i}}\baand \bacmp{\termf{v}_{\natf{i}}}).$$
First, we assume that $\diamos[\objf{x}]\in\typef{A}$. For every $\natf{i}$ from $1$ to $\natf{k}$ we find  $\constf{q}_{\natf{i}}\in \typef{A}$ such that
$$\algf{A}\models \constf{q}_{\natf{i}}=\diamo[\objf{x}]{\constf{c}_{\natf{i}}}.$$ 
Then using items \ref{GLPA_Eq1_2} and \ref{GLPA_Eq1_3} of Lemma \ref{GLPA_Eq1} we conclude that
$$\fextfr^{\tembf{l}}(\algf{A})\models \termf{t}=\sum\limits_{1\le \natf{i}\le\natf{k}}(\constf{q}_{\natf{i}}\baand \termf{u}_{\natf{i}}\baand \bacmp{\termf{v}_{\natf{i}}})$$
and we are done.

Now we assume that $\objf{x}=\ione$.  Clearly, it's enough to show that for every $\natf{i}$ from $1$ to $\natf{k}$ we have a quasi-word closure $\diamo[\ione]{\termf{q}_{\natf{i}}}$ of the type $\typef{B}$ such that  $$\fextfr^{\tembf{l}}(\algf{A})\models \diamo[\ione]{\constf{c}_{\natf{i}}\baand \termf{u}_{\natf{i}}\baand \bacmp{\termf{v}_{\natf{i}}}}=\diamo[\ione]{\termf{q}_{\natf{i}}}.$$  If $$\fextfr^{\tembf{l}}(\algf{A})\models\constf{c}_{\natf{i}}\baand \termf{u}_{\natf{i}}\algl_{\ione}\termf{v}_{\natf{i}}$$ then we put $\termf{q}_{\natf{i}}=\constf{q}_{\natf{i}}\baor \termf{u}_{\natf{i}}$; simple check shows that $\termf{q}_{\natf{i}}$ satisfies the requirements. Note that here we also have $$\fextfr^{\tembf{l}}(\algf{A})\models\termf{q}_{\natf{i}}\algs_{\ione}\constf{c}_{\natf{i}}\baand \termf{u}_{\natf{i}}\baand \bacmp{\termf{v}_{\natf{i}}}.$$  Now we assume that $$\fextfr^{\tembf{l}}(\algf{A})\not\models\constf{c}_{\natf{i}}\baand \termf{u}_{\natf{i}}\algl_{\ione}\termf{v}_{\natf{i}}$$ From Lemma \ref{quasi-word_comparability} it follows that we have $$\fextfr^{\tembf{l}}(\algf{A})\models(\termf{v}_{\natf{i}}\algl_{\ione}\constf{c}_{\natf{i}}\baand \termf{u}_{\natf{i}})\foor(\constf{c}_{\natf{i}}\baand \termf{u}_{\natf{i}}\algs_{\ione}\termf{v}_{\natf{i}}).$$ Because $\termf{v}_{\natf{i}}$ starts with $\diamos[\ione]$ we have
$$\fextfr^{\tembf{l}}(\algf{A})\models \termf{v}_{\natf{i}}=\termf{v}_{\natf{i}}\baor \diamo[\ione]{\termf{v}_{\natf{i}}}\ge(\constf{c}_{\natf{i}}\baand \termf{u}_{\natf{i}})\baor \diamo[\ione]{\constf{c}_{\natf{i}}\baand \termf{u}_{\natf{i}}}\ge\constf{c}_{\natf{i}}\baand \termf{u}_{\natf{i}} .$$ 
Hence we can take $\bau$ as $\termf{q}_{\natf{i}}$. Note that here we also have $$\fextfr^{\tembf{l}}(\algf{A})\models\termf{q}_{\natf{i}}\algs_{\ione}\constf{c}_{\natf{i}}\baand \termf{u}_{\natf{i}}\baand \bacmp{\termf{v}_{\natf{i}}}.$$ 
\end{proof}

\begin{lemma} \label{quasi-word_equiv}Suppose $\tembf{l}\colon\typef{A}\to\typef{B}$ is a normal type embedding and $\algf{A}$ is a constant complete linear $\typef{A}$-algebra. Then every element of $\fextfr^{\tembf{l}}(\algf{A})$ is $\algs_{\ione}$-equivalent to the value of some  quasi-word of the type $\typef{B}$.
\end{lemma}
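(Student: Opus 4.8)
The plan is to lean on Proposition~\ref{quasi-word_combination} to reduce an arbitrary element to a finite ``sum of conjuncts'' in which each conjunct is, up to $\algs_{\ione}$, a single quasi-word, and then to glue the conjuncts together by a short chain argument. First I would fix an arbitrary $\elf{a}\in\fextfr^{\tembf{l}}(\algf{A})$. By Proposition~\ref{quasi-word_combination}, $\elf{a}$ is the value of a Boolean combination of quasi-words of the type $\typef{B}$. Running exactly the rewriting performed in the proof of that proposition --- Corollary~\ref{quasi-word_comparability_2} for the $\le$-comparability of the terms $\diamo[\ione]{\termf{w}}$, together with constant completeness of $\algf{A}$ to collapse the $\typef{A}$-parts into single constants --- I would put this combination in the form
\[
\fextfr^{\tembf{l}}(\algf{A})\models \elf{a}=\sum_{1\le\natf{i}\le\natf{k}}\bigl(\constf{c}_{\natf{i}}\baand\termf{u}_{\natf{i}}\baand\bacmp{\termf{v}_{\natf{i}}}\bigr),
\]
with $\constf{c}_{\natf{i}}\in\typef{A}$ and $\termf{u}_{\natf{i}},\termf{v}_{\natf{i}}$ quasi-word closures of the type $\typef{B}$ (in the terminology of the proof of Proposition~\ref{quasi-word_combination}).

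The next step is to observe that each summand $\elf{d}_{\natf{i}}:=\constf{c}_{\natf{i}}\baand\termf{u}_{\natf{i}}\baand\bacmp{\termf{v}_{\natf{i}}}$ is $\algs_{\ione}$-equivalent, in $\fextfr^{\tembf{l}}(\algf{A})$, to a quasi-word $\termf{q}_{\natf{i}}$ of the type $\typef{B}$. This is essentially already contained in the proof of Proposition~\ref{quasi-word_combination}: the two remarks there of the shape ``\emph{Note that here we also have} $\ldots\algs_{\ione}\ldots$'' produce, by a case split (via Lemma~\ref{quasi-word_comparability}) on whether $\fextfr^{\tembf{l}}(\algf{A})\models\elf{d}_{\natf{i}}\algl_{\ione}\termf{v}_{\natf{i}}$, a term with the required $\algs_{\ione}$-behaviour, which one then brings into the shape of an honest quasi-word using Axiom~\ref{GLPA_Ax2} and constant completeness.

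It remains to combine the $\termf{q}_{\natf{i}}$, where $\algl_{\ione}$ and $\algs_{\ione}$ are computed with respect to the minimal operator $\diamos[\ione]$ of $\typef{B}$, so that $\elf{x}\algs_{\ione}\elf{y}$ means $\diamo[\ione]{\elf{x}}\baor\elf{x}=\diamo[\ione]{\elf{y}}\baor\elf{y}$. From monotonicity of $\diamos[\ione]$ (a consequence of Axiom~\ref{GLPA_Ax2}) and item~\ref{GLPA_Eq1_1} of Lemma~\ref{GLPA_Eq1} one checks that $\elf{x}\algl_{\ione}\elf{y}$ implies $\diamo[\ione]{\elf{y}}\baor\elf{y}\le\diamo[\ione]{\elf{x}}\le\diamo[\ione]{\elf{x}}\baor\elf{x}$; combined with the trichotomy of Lemma~\ref{quasi-word_comparability}, this shows the elements $\diamo[\ione]{\termf{q}_{\natf{i}}}\baor\termf{q}_{\natf{i}}$, $1\le\natf{i}\le\natf{k}$, are pairwise $\le$-comparable, so this finite set has a greatest element, which equals $\diamo[\ione]{\termf{q}}\baor\termf{q}$ for one of the quasi-words $\termf{q}=\termf{q}_{\natf{i}}$. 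Using Axiom~\ref{GLPA_Ax2} and the equivalences $\elf{d}_{\natf{i}}\algs_{\ione}\termf{q}_{\natf{i}}$,
\[
\diamo[\ione]{\elf{a}}\baor\elf{a}=\sum_{\natf{i}}\bigl(\diamo[\ione]{\elf{d}_{\natf{i}}}\baor\elf{d}_{\natf{i}}\bigr)=\sum_{\natf{i}}\bigl(\diamo[\ione]{\termf{q}_{\natf{i}}}\baor\termf{q}_{\natf{i}}\bigr)=\diamo[\ione]{\termf{q}}\baor\termf{q},
\]
so $\elf{a}\algs_{\ione}\termf{q}$, which is the assertion of the lemma.

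I expect the main obstacle to be purely organisational: faithfully carrying over the reduction from the proof of Proposition~\ref{quasi-word_combination} to the normal form displayed above, and verifying that the term attached to each summand there can really be taken to be a genuine quasi-word rather than merely a quasi-word closure. Once those two points are settled, the chain argument in the last step is immediate.
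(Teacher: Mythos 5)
Your proposal is correct and follows essentially the same route as the paper: both extract from the end of the proof of Proposition~\ref{quasi-word_combination} a representation of the element as a finite join of summands each $\algs_{\ione}$-equivalent to a quasi-word, and both then use the trichotomy of Lemma~\ref{quasi-word_comparability} to argue that the join is $\algs_{\ione}$-equivalent to the ``largest'' of these quasi-words. The paper phrases the last step as a binary claim (the join of two elements $\algs_{\ione}$-equivalent to quasi-words is again such), whereas you take the maximum of the finite chain of elements $\diamo[\ione]{\termf{q}_{\natf{i}}}\baor\termf{q}_{\natf{i}}$ directly; these are the same argument.
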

\begin{proof} We denote $\fextfr^{\tembf{l}}(\algf{A})$ by $\algf{B}$.  In the end part of  Proposition \ref{quasi-word_combination} we have actually shown that every $\elf{x}\in\algf{B}$ is equal to $\sum\limits_{1\le\natf{i}\le\natf{k}}\elf{y}_i$ such that all $\elf{y}_{\natf{i}}$ are  $\algs_{\ione}$-equivalent to the value of some  quasi-word of the type $\typef{B}$.

We claim that if $\elf{x},\elf{y}\in\algf{B}$ are $\algs_{\ione}$-equivalent to the values of some  quasi-words $\termf{t}_1,\termf{t}_2$ of the type $\typef{B}$, respectively, then $\elf{x}\baand^{\algf{B}}\elf{y}$ is the value of some quasi-word of the type $\typef{B}$.  There are three cases: $\algf{B}\models \termf{t}_1\algl_{\ione}\termf{t}_2$, $\algf{B}\models \termf{t}_1\algs_{\ione}\termf{t}_2$, $\algf{B}\models \termf{t}_2\algl_{\ione}\termf{t}_1$. Without lose of generality we consider only first two cases because third case is equivalent to the first one. If $\algf{B}\models\termf{t}_1\algl_{\ione}\termf{t}_2$ then  $\algf{B}\models\elf{x}\baor\diamo[\ione]{\elf{x}}\le\elf{y}$ and hence
$$\algf{B}\models\termf{t}_1 \baor \diamo[\ione]{\termf{t}_1}=\elf{x}\baor\diamo[\ione]{\elf{x}}=(\elf{x}\baor \elf{y})\baor\diamo[\ione]{\elf{x}\baor \elf{y}}.$$
Therefore in the first case we can take $\termf{t}_1$ as required quasi-word of the type $\typef{B}$.
In the second case we have $$\algf{B}\models\termf{t}_1 \baor \diamo[\ione]{\termf{t}_1}=\termf{t}_2 \baor \diamo[\ione]{\termf{t}_2}=\elf{x}\baor\diamo[\ione]{\elf{x}}=\elf{y}\baor\diamo[\ione]{\elf{y}}=(\elf{x}\baor \elf{y})\baor\diamo[\ione]{\elf{x}\baor \elf{y}},$$
and hence we can take $\termf{t}_1$ as required  quasi-word of the type $\typef{B}$.

Obviously, the lemma follows from the claim\end{proof}

\begin{lemma} \label{fext_linear_preserve}Suppose $\tembf{l}\colon \typef{A}\to \typef{B}$ is a final type embedding. Then for a linear $\typef{A}$-algebra $\algf{A}$ the algebra $\fextfr^{\tembf{l}}(\algf{A})$ is linear.
\end{lemma}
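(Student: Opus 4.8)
The plan is to reduce, in three steps, to the situation where $\tembf{l}$ is a normal type embedding and $\algf{A}$ is constant complete --- the case covered by Lemmas~\ref{quasi-word_comparability} and \ref{quasi-word_equiv} --- and then to run an element-by-element argument for embeddings adding infinitely many operators. First reduction: one may assume $\algf{A}$ is constant complete. Indeed, take a constant complete strong constant extension $\algf{A}'$ of $\algf{A}$ by a set $\csetf{C}$ of fresh constants; a strong constant extension has the same carrier, Boolean operations and operator symbols, so $\algf{A}'$ is again linear and $\tembf{l}\typeplus\csetf{C}$ is again a final type embedding adding the same operators. By Lemma~\ref{fextfr_const_shift_c}, $\fextfr^{\tembf{l}}(\algf{A})\cong\purfr^{\csetf{C}}(\fextfr^{\tembf{l}\typeplus\csetf{C}}(\algf{A}'))$, and since $\purfr^{\csetf{C}}$ affects neither carrier nor operators, the left side is linear iff the right side is. Second reduction: the operator symbols added by a final type embedding form an initial segment of $\ordf{b}$, so when finitely many but at least two operators are added one may factor $\tembf{l}=\tembf{l}_1\comp\tembf{l}_2$, where $\tembf{l}_1$ adjoins only the topmost new operator (just below the minimal operator of $\typef{A}$) and $\tembf{l}_2$ adjoins the rest; both are final type embeddings, $\fextfr^{\tembf{l}}(\algf{A})\cong\fextfr^{\tembf{l}_2}(\fextfr^{\tembf{l}_1}(\algf{A}))$ by Lemma~\ref{fextfr_comp}, and an induction on the number of new operators reduces to the one-operator case --- note the intermediate algebra $\fextfr^{\tembf{l}_1}(\algf{A})$ is linear but generally not constant complete, which is exactly why the first reduction must hold for arbitrary linear algebras. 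Third reduction: a simple final type embedding equals $\tembf{r}_1\comp\tembf{l}'\comp\tembf{r}_2^{-1}$ for a normal $\tembf{l}'$ and bijective type embeddings $\tembf{r}_1,\tembf{r}_2$ (the remark preceding this section), and linearity transports across bijective type embeddings, so the one-operator case reduces to a normal type embedding.

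There remains the core case: $\tembf{l}\colon\typef{A}\to\typef{B}$ a normal type embedding, $\algf{A}$ a constant complete linear $\typef{A}$-algebra, $\typef{B}$ normal with minimal operator $\diamos[\ione]$. By the normal-type description of linearity it suffices to check that $\algl_{\ione}\cup\algs_{\ione}$ is a linear preorder on $\fextfr^{\tembf{l}}(\algf{A})$ with $\algl_{\ione}\cap\algs_{\ione}=\{(\baz,\baz)\}$. Totality: for $\elf{x},\elf{y}\in\fextfr^{\tembf{l}}(\algf{A})$, Lemma~\ref{quasi-word_equiv} yields quasi-words $\termf{t},\termf{w}$ of type $\typef{B}$ with $\elf{x}\algs_{\ione}\termf{t}^{\fextfr^{\tembf{l}}(\algf{A})}$ and $\elf{y}\algs_{\ione}\termf{w}^{\fextfr^{\tembf{l}}(\algf{A})}$, Lemma~\ref{quasi-word_comparability} makes $\termf{t}^{\fextfr^{\tembf{l}}(\algf{A})}$ and $\termf{w}^{\fextfr^{\tembf{l}}(\algf{A})}$ comparable under $\algl_{\ione}\cup\algs_{\ione}$, and part~\ref{algl_comparability} of Lemma~\ref{algl_properties} carries the comparability through the $\algs_{\ione}$-equivalences back to $\elf{x}$ and $\elf{y}$. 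Preorder: $\algs_{\ione}$ is an equivalence relation, $\algl_{\ione}$ is transitive by part~\ref{algl_transitivity} of Lemma~\ref{algl_properties}, and the mixed transitivity cases follow from part~\ref{algl_comparability}. Finally $\algl_{\ione}\cap\algs_{\ione}=\{(\baz,\baz)\}$ needs no linearity assumption: $(\baz,\baz)$ lies in both relations by Axiom~\ref{GLPA_Ax1} and reflexivity of $\algs_{\ione}$, and if $\elf{x}\algl_{\ione}\elf{y}$ and $\elf{x}\algs_{\ione}\elf{y}$ then part~\ref{algl_alternatives} of Lemma~\ref{algl_properties} forces one of $\elf{x},\elf{y}$ to be $\baz$, after which Axiom~\ref{GLPA_Ax1} forces the other to be $\baz$ too.

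For a final type embedding adding infinitely many operators, reduce first to $\algf{A}$ constant complete, so that each element of $\fextfr^{\tembf{l}}(\algf{A})$ is the value of a closed $\thrf{GLPA}_{\typef{B}}$-term and hence involves only finitely many operators. Given finitely many elements --- two to verify totality, three for transitivity --- collect all operators occurring in terms representing them, together with those witnessing any relations already established among them, into a finite subtype $\typef{B}'$ of $\typef{B}$ that still contains every operator of $\typef{A}$; then $\tembf{l}$ factors through a final type embedding $\tembf{l}'\colon\typef{A}\to\typef{B}'$ with finitely many new operators, $\fextfr^{\tembf{l}'}(\algf{A})$ is linear by the finite case, and by Lemma~\ref{fextfr_comp} together with the fact that $\fextemb^{\tembf{l}}_{\algf{A}}$ is always an embedding, $\fextfr^{\tembf{l}'}(\algf{A})$ embeds into $\purfr^{\typef{B}',\typef{B}}(\fextfr^{\tembf{l}}(\algf{A}))$; this embedding preserves each $\algl_{\objf{i}}$ and $\algs_{\objf{i}}$ by Lemma~\ref{embedding_order_preservation}, so linearity of $\fextfr^{\tembf{l}'}(\algf{A})$ transports to the required comparabilities and transitivity instances in $\fextfr^{\tembf{l}}(\algf{A})$, and the intersection condition is automatic as before.

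The main obstacle is not any single hard computation but the correct bookkeeping: one must verify that linearity is preserved by $\purfr^{\csetf{C}}$, by the free-extension functor along a composition, and by passing to a subtype puration, and one must notice that each hypothesis of the quasi-word lemmas can be removed independently --- normality of the embedding via the remark on simple final type embeddings, constant completeness of $\algf{A}$ via Lemma~\ref{fextfr_const_shift_c}, and ``one new operator'' via Lemma~\ref{fextfr_comp} --- so that the only genuinely new verification is the short core case resting on Lemmas~\ref{quasi-word_comparability} and \ref{quasi-word_equiv}.
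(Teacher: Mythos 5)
Your proposal is correct and follows essentially the same route as the paper: reduce to the constant complete case via the puration lemmas, decompose a finite final embedding into simple final (hence, up to bijective type embeddings, normal) ones using Lemma~\ref{fextfr_comp}, settle the core case with Lemmas~\ref{quasi-word_comparability} and~\ref{quasi-word_equiv}, and localize the infinitely-many-new-operators case to a subtype with finitely many new operators and transfer linearity along the embedding $\fextemb$. The only differences are expository --- you spell out the core case and the intersection condition that the paper dispatches with ``follows from,'' and your ``finite subtype $\typef{B}'$'' should read ``subtype with finitely many new operators,'' as your own later sentence makes clear.
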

\begin{proof}We consider three cases:
\begin{enumerate}
 \item \label{fext_linear_preserve_c1}$\tembf{l}$ is a simple final type embedding;
\item \label{fext_linear_preserve_c2} there are only finitely many operator symbols in $\typef{B}$ that are not $\tembf{l}$ image of some operator symbol from $\typef{A}$;
\item \label{fext_linear_preserve_c3}$\tembf{l}$ is an arbitrary final type embedding.
\end{enumerate}

Case \ref{fext_linear_preserve_c1}: Follows from  Lemmas \ref{quasi-word_comparability} and \ref{quasi-word_equiv}.

Case \ref{fext_linear_preserve_c2}: Obviously, we can decompose $\tembf{l}$ as a composition of simple final type embeddings:
$$\tembf{l}=\tembf{r}_1\comp\tembf{r}_2\comp\ldots \comp \tembf{r}_{\natf{n}}.$$
From Lemma \ref{fextfr_comp} it follows that for an $\typef{A}$-algebra $\algf{A}$ algebras $\fextfr^{\tembf{l}}(\algf{A})$ and $$\fextfr^{\tembf{r}_{\natf{n}}}(\ldots(\fextfr^{\tembf{r}_{1}}(\algf{A})))$$
are isomorphic. Hence for a linear $\typef{A}$-algebra $\algf{A}$ the algebra $\fextfr^{\tembf{l}}(\algf{A})$ is linear.

Case \ref{fext_linear_preserve_c3}: Due to Lemma \ref{fextfr_const_shift} we can consider only the case of constant complete algebra $\algf{A}$. From the definition of linear algebra, it follows that the linearity of $\fextfr^{\tembf{l}}(\algf{A})$ follows from the subsequent claim. We claim that for every three elements $\elf{x},\elf{y},\elf{z}\in\fextfr^{\tembf{l}}(\algf{A})$ and operator $\diamos[\objf{i}]\in\typef{B}$ there exists a type $\typef{C}$, the trivial type embedding $\tembf{r}\colon\typef{C}\to\typef{A}$, a linear $\typef{C}$-algebra $\algf{B}$, and an embedding $\funf{f}\colon \algf{B}\to \fextfr^{\tembf{l}}(\algf{A})$ such that there exist $\funf{f}^{-1}(\elf{x})$, $\funf{f}^{-1}(\elf{y})$, and $\funf{f}^{-1}(\elf{z})$. From constant completeness of $\algf{A}$ it follows that we can find closed $\thrf{GLPA}_{\typef{B}}$-terms $\termf{t}_1,\termf{t}_2,\termf{t}_3$ such that $\elf{x}$, $\elf{y}$, and $\elf{z}$ are equal to the values of $\termf{t}_1$, $\termf{t}_2$, and $\termf{t}_3$ in $\fextfr^{\tembf{l}}(\algf{A})$, respectively. We find final type embeddings $\tembf{r}'\colon \typef{A}\to\typef{C}$ and trivial type embedding $\tembf{r}\colon\typef{C}\to\typef{B}$ such that $\tembf{r}'\comp\tembf{r}=\tembf{l}$, the set of operators from $\typef{C}$ that are not in $\ran(\tembf{r}_1)$ is finite, $\diamos[\objf{i}]\in\typef{C}$, and $\termf{t}_1,\termf{t}_,\termf{t}_3$ are $\thrf{GLPA}_{\typef{C}}$-terms. We denote by $\algf{B}$ the algebra $\fextfr^{\tembf{r}'}(\algf{A})$. From the case \ref{fext_linear_preserve_c2} we know that $\algf{B}$ is linear. Without lose of generality we can assume that $\fextfr^{\tembf{l}}(\algf{A})=\fextfr^{\tembf{r}}(\algf{B})$. Clearly, $\elf{x},\elf{y},\elf{z}$ are in the $\fextemb^{\tembf{r}}_{\algf{B}}$-image of $\algf{B}$. That finishes the proof of the claim.\end{proof}

From the Lemma \ref{fext_linear_preserve} and Lemma \ref{algl_subset} it follows that
\begin{corollary} \label{algl_fextemb} Suppose $\tembf{l}\colon\typef{A}\to\typef{B}$ is a normal type embedding and $\algf{A}$ is a linear $\typef{A}$-algebra. Then for every $\elf{x},\elf{y}\in\algf{A}$:
$$\elf{x}\algl_{\itwo}^{\algf{A}}\elf{y} \iff \fextemb_{\algf{A}}^{\tembf{l}}(\elf{x})\algl_{\ione}^{\fextfr^{\tembf{l}}(\algf{A})} \fextemb_{\algf{A}}^{\tembf{l}}(\elf{y})\mbox{ and}$$
$$\elf{x}\algs_{\itwo}^{\algf{A}}\elf{y} \iff \fextemb_{\algf{A}}^{\tembf{l}}(\elf{x})\algs_{\ione}^{\fextfr^{\tembf{l}}(\algf{A})} \fextemb_{\algf{A}}^{\tembf{l}}(\elf{y}).$$
\end{corollary}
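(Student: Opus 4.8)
The plan is to obtain the two forward implications from Lemmas~\ref{fext_linear_preserve} and~\ref{algl_subset} (as the hint suggests), together with Lemma~\ref{embedding_order_preservation} and two earlier observations --- that $\fextemb^{\tembf{l}}_{\algf{A}}$ is an embedding and that puration along a trivial type embedding changes neither $\algl$ nor $\algs$ --- and then to upgrade these implications to equivalences by exploiting the linearity of $\algf{A}$ and of $\fextfr^{\tembf{l}}(\algf{A})$ through a trichotomy argument.

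Set $\algf{C}:=\fextfr^{\tembf{l}}(\algf{A})$ and $\funf{h}:=\fextemb^{\tembf{l}}_{\algf{A}}$. A normal type embedding is in particular a final type embedding, so $\algf{C}$ is linear by Lemma~\ref{fext_linear_preserve}. Recall that $\diamos[\itwo]$ is the minimal operator of $\typef{A}$, that $\diamos[\ione]$ is the minimal operator of $\typef{B}$, that $\diamos[\ione]$ is the unique operator by which $\typef{B}$ extends $\typef{A}$, and that --- by the final-interval condition --- $\diamos[\ione]$ lies below all operators of $\typef{A}$; in particular $\ione<_{\ordf{b}}\itwo$. Since $\funf{h}$ is an embedding of $\typef{A}$-algebras into $\purfr^{\tembf{l}}(\algf{C})$ and $\diamos[\itwo]\in\typef{A}$, Lemma~\ref{embedding_order_preservation} gives $\elf{x}\algl^{\algf{A}}_{\itwo}\elf{y}\iff\funf{h}(\elf{x})\algl^{\purfr^{\tembf{l}}(\algf{C})}_{\itwo}\funf{h}(\elf{y})$, and likewise with $\algs_{\itwo}$ in place of $\algl_{\itwo}$; and since $\tembf{l}$ is trivial and $\diamos[\itwo]\in\typef{A}$, the relations $\algl^{\purfr^{\tembf{l}}(\algf{C})}_{\itwo}$ and $\algl^{\algf{C}}_{\itwo}$ coincide (similarly for $\algs$). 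Finally, from $\ione<_{\ordf{b}}\itwo$ and Lemma~\ref{algl_subset} we get $\algl^{\algf{C}}_{\itwo}\subseteq\algl^{\algf{C}}_{\ione}$ and $\algs^{\algf{C}}_{\itwo}\subseteq\algs^{\algf{C}}_{\ione}$. Composing these steps yields
$$\elf{x}\algl^{\algf{A}}_{\itwo}\elf{y}\Implication\funf{h}(\elf{x})\algl^{\algf{C}}_{\ione}\funf{h}(\elf{y})\qquad\mbox{and}\qquad\elf{x}\algs^{\algf{A}}_{\itwo}\elf{y}\Implication\funf{h}(\elf{x})\algs^{\algf{C}}_{\ione}\funf{h}(\elf{y}).$$

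For the converse implications I would first dispose of the degenerate cases by a direct computation: if $\elf{x}=\baz$ then $\funf{h}(\elf{x})=\baz$, and both $\elf{x}\algl^{\algf{A}}_{\itwo}\elf{y}$ and $\funf{h}(\elf{x})\algl^{\algf{C}}_{\ione}\funf{h}(\elf{y})$ are equivalent to $\elf{y}=\baz$ (using $\diamo[\objf{i}]{\baz}=\baz$ and injectivity of $\funf{h}$), and the analogous statements hold when $\elf{y}=\baz$ and with $\algs$ in place of $\algl$. So assume $\elf{x},\elf{y}\ne\baz$, hence also $\funf{h}(\elf{x}),\funf{h}(\elf{y})\ne\baz$. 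By linearity of $\algf{A}$ and item~\ref{algl_alternatives} of Lemma~\ref{algl_properties}, applied to the minimal operator $\diamos[\itwo]$ of $\typef{A}$, exactly one of $\elf{x}\algl^{\algf{A}}_{\itwo}\elf{y}$, $\elf{y}\algl^{\algf{A}}_{\itwo}\elf{x}$, $\elf{x}\algs^{\algf{A}}_{\itwo}\elf{y}$ holds; likewise, by linearity of $\algf{C}$ and item~\ref{algl_alternatives} of Lemma~\ref{algl_properties}, applied to the minimal operator $\diamos[\ione]$ of $\typef{B}$, exactly one of $\funf{h}(\elf{x})\algl^{\algf{C}}_{\ione}\funf{h}(\elf{y})$, $\funf{h}(\elf{y})\algl^{\algf{C}}_{\ione}\funf{h}(\elf{x})$, $\funf{h}(\elf{x})\algs^{\algf{C}}_{\ione}\funf{h}(\elf{y})$ holds. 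The three forward implications just proved send the unique $\algf{A}$-alternative that holds to the corresponding $\algf{C}$-alternative; as exactly one alternative holds on each side, this assignment of alternatives is a bijection, so each forward implication is in fact an equivalence. This gives the two displayed equivalences in the statement.

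The only step that deserves care is the exhaustiveness used in the last paragraph: to know that \emph{exactly} one of the three alternatives holds, one needs, besides item~\ref{algl_alternatives} of Lemma~\ref{algl_properties} (``at most one''), the ``linear preorder'' clause in the definition of a linear algebra together with the symmetry of $\algs$, which together reduce the four preorder alternatives to these three --- and this must be invoked on both the $\algf{A}$-side and the $\algf{C}$-side. Everything else is a routine chaining of the cited lemmas, so I foresee no genuine obstacle.
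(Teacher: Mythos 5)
Your proof is correct and follows exactly the route the paper intends: the paper states this corollary as an immediate consequence of Lemmas~\ref{fext_linear_preserve} and~\ref{algl_subset} without writing out the details, and your elaboration (embedding preservation via Lemma~\ref{embedding_order_preservation}, the inclusion $\algl_{\itwo}\subseteq\algl_{\ione}$, $\algs_{\itwo}\subseteq\algs_{\ione}$ from Lemma~\ref{algl_subset}, and the upgrade to equivalences via the exactly-one-of-three trichotomy on both sides) is precisely the argument being elided. Your explicit handling of the $\baz$ cases and your remark that exhaustiveness of the trichotomy needs the linear-preorder clause plus symmetry of $\algs$ are both accurate.
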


\section{Some Factor Algebras}
Suppose $\constf{q}$ is a constant symbol, $\typef{A}=(\ordf{a},\csetf{A})$ is a normal type, $\diamos[\objf{m}]$ is the minimal operator  of $\typef{A}$ and $\constf{q}$ doesn't lie in $\typef{A}$. We denote $\typef{A}\typeplus\constf{q}$ by $\typef{B}$ and $\{\diamos[\objf{m}]\}$-puration of $\typef{A}$ by $\typef{C}$.

Suppose $\algf{A}$ is a $\typef{B}$-algebra. Then we denote by $\fquotalg^{\typef{A},\constf{q}}(\algf{A})$ the $\{\constf{q}\}$-puration of the factor algebra  $\algf{A}/{\sim}$, where $\sim$ is
$$\elf{x}\sim\elf{y}\defiff \elf{x}\baor \diamo[\objf{m}]{\constf{q}^{\algf{A}}}=\elf{x}\baor \diamo[\objf{m}]{\constf{q}}.$$
Obviously, $\sim$ is an equivalence relation. Let us check that $\sim$ is compatible with all operations of $\algf{A}$. Obviously, Boolean operations of $\algf{A}$ are compatible with $\sim$.  Now we prove compatibility for operators $\diamos[\objf{i}]$, where $\objf{i}\ne\objf{m}$. Suppose we have $\objf{i}>_{\ordf{a}}\objf{m}$ and $\elf{x},\elf{y}\in \algf{A}$ such that
$$\elf{x}\baor \diamo[\objf{m}]{\constf{q}}=\elf{y}\baor \diamo[\objf{m}]{\constf{q}}.$$
We need to prove that
$$\diamo[\objf{i}]{\elf{x}}\baor \diamo[\objf{m}]{\constf{q}}=\diamo[\objf{i}]{\elf{y}}\baor \diamo[\objf{m}]{\constf{q}}.$$
Using item \ref{GLPA_Eq1_4} of Lemma \ref{GLPA_Eq1} we obtain 
$$\diamo[\objf{i}]{\elf{x}}\baor \diamo[\objf{m}]{\constf{q}}=\diamo[\objf{i}]{\elf{x}\baor \diamo[\objf{m}]{\constf{q}}}\baor \diamo[\objf{m}]{\constf{q}}=\diamo[\objf{i}]{\elf{y}\baor \diamo[\objf{m}]{\constf{q}}}\baor \diamo[\objf{m}]{\constf{q}}=\diamo[\objf{i}]{\elf{y}}\baor \diamo[\objf{m}]{\constf{q}}.$$
Now we prove compatibility for operator $\diamos[\objf{m}]$. Suppose we have $\elf{x},\elf{y}\in\algf{A}$ such that
$$\elf{x}\baor \diamo[\objf{m}]{\constf{q}}=\elf{y}\baor \diamo[\objf{m}]{\constf{q}}.$$
We need to prove that
$$\diamo[\objf{m}]{\elf{x}}\baor \diamo[\objf{m}]{\constf{q}}=\diamo[\objf{m}]{\elf{y}}\baor \diamo[\objf{m}]{\constf{q}}.$$
Using item \ref{GLPA_Eq1_1} of Lemma \ref{GLPA_Eq1} we obtain
$$\diamo[\objf{m}]{\elf{x}}\baor \diamo[\objf{m}]{\constf{q}}=\diamo[\objf{m}]{\elf{x}}\baor \diamo[\objf{m}]{\diamo[\objf{m}]{\constf{q}}} \baor \diamo[\objf{m}]{\constf{q}}=\diamo[\objf{m}]{\elf{x}\baor \diamo[\objf{m}]{\constf{q}}} \baor \diamo[\objf{m}]{\constf{q}}.$$
We have the same  for $\elf{y}$
$$\diamo[\objf{m}]{\elf{y}}\baor \diamo[\objf{m}]{\constf{q}}=\diamo[\objf{m}]{\elf{y}\baor \diamo[\objf{m}]{\constf{q}}} \baor \diamo[\objf{m}]{\constf{q}}.$$
Hence 
$$\diamo[\objf{m}]{\elf{x}}\baor \diamo[\objf{m}]{\constf{q}}=\diamo[\objf{m}]{\elf{y}}\baor \diamo[\objf{m}]{\constf{q}}.$$
We have proved that $\fquotalg^{\typef{A},\constf{q}}(\algf{A})$ is well-defined. We denote the homomorphism from the algebra $\purfr^{\typef{A},\typef{B}}(\algf{A})$ to $\fquotalg^{\typef{A},\constf{q}}(\algf{A})$ that maps a given element $\elf{x}$ to the equivalent class $[\elf{x}]$ by $\fquotalgm^{\typef{A},\constf{q}}_{\algf{A}}$.

Suppose $\algf{A}$ is a $\typef{B}$-algebra. We define an $\typef{A}$-algebra $\squotalg^{\typef{A},\constf{q}}(\algf{A})$. Here we denote $\squotalg^{\typef{A},\constf{q}}(\algf{A})$ by $\algf{C}$. $\algf{B}$ is a factor algebra of the $\{\constf{q},\diamos[\objf{m}]\}$-puration of the algebra $\algf{A}$; the corresponding quotient relation is
$$\elf{x}\sim\elf{y}\defiff \elf{x}\baand^{\algf{A}} \diamoi[\objf{m}]{\algf{A}}{\constf{q}^{\algf{A}}}=\elf{y}\baand^{\algf{A}} \diamoi[\objf{m}]{\algf{A}}{\constf{q}^{\algf{A}}}.$$ Clearly, $\sim$ is an equivalence relation. Boolean operations obviously compatible with $\sim$. The fact that $\diamos[\objf{i}]$ is compatible with $\sim$ for $\objf{i}>_{\ordf{a}}\objf{m}$ can be proved with the use of item \ref{GLPA_Eq1_1} of Lemma \ref{GLPA_Eq1}. Hence $\algf{B}$ is well-defined. The algebra $\squotalg^{\typef{B},\constf{q}}(\algf{A})$ is an extension of the algebra $\algf{B}$. In order to complete the definition of $\algf{C}$ we need to give the interpretation of $\diamos[\objf{m}]$. We put 
$$\diamoi[\objf{m}]{\algf{C}}{[\elf{x}]}=[\diamoi[\objf{m}]{\algf{A}}{\elf{x}\baand^{\algf{A}}\diamoi[\objf{m}]{\algf{A}}{\constf{q}^{\algf{A}}}}].$$
Obviously, this definition of $\diamosi[\objf{m}]{\algf{C}}$ doesn't depend of the choice of $\elf{x}$ from a quotient class. Let us check that $\algf{C}$  is $\typef{A}$-algebra. For this check it sufficient to show that all axioms of $\typef{A}$-algebras with $\diamos[\objf{m}]$ holds in $\algf{C}$. It can be done straightforward for  axioms \ref{GLPA_Ax1}, \ref{GLPA_Ax2}, and \ref{GLPA_Ax3} of $\thrf{GLP}$-algebras. Now we prove that  axiom \ref{GLPA_Ax4} of $\thrf{GLP}$-algebras holds in $\algf{C}$. Let us work in $\algf{A}$. We need to show that
 equation
$$\diamo[\objf{m}]{\elf{x}\baand\diamo[\objf{m}]{\constf{q}}}\baand\diamo[\objf{m}]{\constf{q}}= (\diamo[\objf{i}]{\elf{x}}\baor \diamo[\objf{m}]{\elf{x}\baand\diamo[\objf{m}]{\constf{q}}})\baand\diamo[\objf{m}]{\constf{q}}$$
holds in $\algf{A}$ for all $\elf{x}\in \algf{A}$ and $\objf{i}>_{\ordf{a}}\objf{m}$.
From item \ref{GLPA_Eq1_2} of Lemma \ref{GLPA_Eq1} it follows that
$$\begin{aligned}\diamo[\objf{m}]{\elf{x}\baand\diamo[\objf{m}]{\constf{q}}}&= \diamo[\objf{i}]{\elf{x}\baand\diamo[\objf{m}]{\constf{q}}}\baor \diamo[\objf{m}]{\elf{x}\baand\diamo[\objf{m}]{\constf{q}}}=(\diamo[\objf{i}]{\elf{x}}\baand\diamo[\objf{m}]{\constf{q}})\baor \diamo[\objf{m}]{\elf{x}\baand\diamo[\objf{m}]{\constf{q}}}\\ &=(\diamo[\objf{i}]{\elf{x}}\baor \diamo[\objf{m}]{\elf{x}\baand\diamo[\objf{m}]{\constf{q}}})\baand \diamo[\objf{m}]{\constf{q}}.\end{aligned}$$
Hence the required equation holds in $\algf{A}$. Axiom \ref{GLPA_Ax5} can be checked in the same way as the axiom \ref{GLPA_Ax4} (with the use of item \ref{GLPA_Eq1_3} of Lemma \ref{GLPA_Eq1} instead of item \ref{GLPA_Eq1_2})   and we omit this check. Hence $\algf{C}$ is an $\typef{A}$-algebra. 

Suppose $\algf{A}$ is a $\typef{B}$-algebra.  There is a homomorphism
$$\fqprsqiso_{\algf{A}}^{\typef{A},\constf{q}}\colon \purfr^{\typef{C},\typef{A}}(\fquotalg^{\typef{A},\constf{q}}(\algf{A}))\prodfr\purfr^{\typef{C},\typef{A}}( \squotalg^{\typef{A},\constf{q}}(\algf{A}))\to \purfr^{\typef{C},\typef{B}}(\algf{A}),$$
$$\fqprsqiso_{\algf{A}}^{\typef{A},\constf{q}}\colon ([\elf{x}],[\elf{y}])\longmapsto (\elf{x}\baor^{\algf{A}}\diamoi[\objf{m}]{\algf{A}}{\constf{q}^{\algf{A}}})\baand^{\algf{A}}(\elf{y}\baor^{\algf{A}}\bacmpi{\algf{A}}{\diamoi[\objf{m}]{\algf{A}}{\constf{q}^{\algf{A}}}}).$$
Straightforward check shows that $\fqprsqiso_{\algf{A}}$ is a well-defined function, homomorphism, and isomorphism.

We are interested in the case when $\fqprsqiso_{\algf{A}}^{\typef{A},\constf{q}}$  is actually a homorphism of $\fquotalg^{\typef{A},\constf{q}}(\algf{A})\linprodfr \squotalg^{\typef{A},\constf{q}}(\algf{A})$ to $\purfr^{\tembf{r}}(\algf{A})$.

Suppose $\algf{A}$ is a linear $\typef{B}$-algebra. Then we define $$\fqlprsqiso_{\algf{A}}^{\typef{A},\constf{q}}\colon \fquotalg^{\typef{A},\constf{q}}(\algf{A})\linprodfr \squotalg^{\typef{A},\constf{q}}(\algf{A})\to\purfr^{\typef{A},\typef{B}}(\algf{A})$$ is the only $\funf{f}\colon\fquotalg^{\typef{A},\constf{q}}(\algf{A})\linprodfr \squotalg^{\typef{A},\constf{q}}(\algf{A})\to\purfr^{\typef{A},\typef{B}}(\algf{A})$ such that $\purfr^{\typef{C},\typef{A}}(\funf{f})=\fqprsqiso_{\algf{A}}^{\typef{A},\constf{q}}$. In order to check correctness of the definition of $\fqlprsqiso_{\algf{A}}^{\typef{A},\constf{q}}$ we prove
\begin{enumerate}
\item \label{fqlprsqiso_corr_c1}$\algf{A}\models \forall \fovarf{x}(\fonot \diamo[\objf{m}]{\constf{q}}\ge\fovarf{x}\foimp \diamo[\objf{m}]{\fovarf{x}}=\diamo[\objf{m}]{\fovarf{x}\baor \diamo[\objf{m}]{\constf{q}}} \baor\diamo[\objf{m}]{\constf{q}})$;
\item \label{fqlprsqiso_corr_c2} $\algf{A}\models \forall \fovarf{x}(\diamo[\objf{m}]{\constf{q}}\ge\fovarf{x}\foimp \diamo[\objf{m}]{\fovarf{x}}=\diamo[\objf{m}]{\constf{q}}\baand \diamo[\objf{m}]{\diamo[\objf{m}]{\constf{q}}\baand \fovarf{x}})$.
\end{enumerate}
Items \ref{fqlprsqiso_corr_c1} and \ref{fqlprsqiso_corr_c2} correspond to the different cases in the definition of the interpretation of $\diamos[\objf{m}]$ in linear product. Item \ref{fqlprsqiso_corr_c2} obviously holds. Now we prove item \ref{fqlprsqiso_corr_c1}. Suppose $\elf{x}\in\algf{A}$ such that $\diamoi[\objf{m}]{\algf{A}}{\constf{q}^{\algf{A}}}\not\ge^{\algf{A}}\elf{x}$. Then  $\constf{q}^{\algf{A}}\not\algl_{\objf{m}}^{\algf{A}}\elf{x}$. Hence either $\constf{q}^{\algf{A}}\algs_{\objf{m}}^{\algf{A}}\elf{x}$ or $\elf{x}\algl_{\objf{m}}^{\algf{A}} \constf{q}^{\algf{A}}$. Therefore $\diamoi[\objf{m}]{\algf{A}}{\elf{x}}\ge^{\algf{A}}\diamoi[\objf{m}]{\algf{A}}{\constf{q}^{\algf{A}}}$. Thus $\diamo[\objf{m}]{\elf{x}}=\diamo[\objf{m}]{\elf{x}\baor \diamo[\objf{m}]{\constf{q}}} \baor\diamo[\objf{m}]{\constf{q}}$. This finishes the proof of correctness of the definition of $\fqlprsqiso_{\algf{A}}^{\typef{A},\constf{q}}$.

\section{Free Extensions}   We call a tuple $\extsqtf{E}=(\tembf{l},\algf{A},\constf{q},\csetf{C})$ an {\it extension sequence type} if $\tembf{l}\colon\typef{A}\to\typef{B}$ is a normal type embedding, $\algf{A}$ is a linear $\typef{A}$-algebra, $\csetf{C}$ is a set of constant symbols, $\typef{B}\typeplus \csetf{C}$ is well-defined, and the constant symbol $\constf{q}\not \in \typef{B}\typeplus \csetf{C}$.

Suppose $\tembf{l}\colon \typef{A}\to\typef{B}$ is a normal type embedding and $\constf{q}\not\in\typef{B}$. For an $(\typef{A}\typeplus \constf{q})$-algebra $\algf{A}$ we denote by $\fextfquotfr^{\tembf{l},\constf{q}}(\algf{A})$ the $\typef{B}$-algebra $\fquotalg^{\typef{B},\constf{q}}(\fextfr^{\tembf{l}\typeplus\constf{q}}(\algf{A}))$ and we denote by $\fextfquotm^{\tembf{l},\constf{q}}_{\algf{A}}\colon \algf{A}\to\fextfquotfr^{\tembf{l},\constf{q}}(\algf{A})$ the homomorphism $\fextemb^{\typef{A}\typeplus\constf{q},\typef{B}\typeplus\constf{q}}_{\algf{A}}\comp \fquotalgm_{\fextfr^{\typef{A}\typeplus\constf{q},\typef{B}\typeplus\constf{q}}(\algf{A})}^{\typef{B},\constf{q}}$.

Suppose $\tembf{l}\colon \typef{A}\to\typef{B}$ is a normal type embedding and $\extsqtf{E}=(\tembf{l},\algf{A},\constf{q},\csetf{C})$  is a  extension sequence type. We call a non-empty sequence $\overline{\algf{H}}=(\algf{H}_1,\ldots,\algf{H}_{\natf{n}})$ an {\it extension sequence} of the type $\extsqtf{E}$ if $\algf{H}_{\natf{n}}$ is $(\typef{A}\typeplus \csetf{C})$-algebra and  $\algf{H}_{\natf{i}}$ is a $(\typef{A}\typeplus \csetf{C}\typeplus \constf{q})$-algebra, for $\natf{i}<\natf{n}$. Now we will define an $(\typef{A}\typeplus \csetf{C})$-algebra $\sqqlp_{\overline{\algf{H}}}$. If $\natf{n}=1$ then $$\sqqlp_{\overline{\algf{H}}}=\fextfr^{\tembf{l}\typeplus \csetf{C}}(\algf{H}_1).$$ Otherwise, $$\sqqlp_{\overline{\algf{H}}}=\fextfquotfr^{\tembf{l}\typeplus\csetf{C},\constf{q}}(\algf{H}_{\natf{1}}) \linprodfr \sqqlp_{(\algf{H}_{2},\ldots,\algf{H}_{\natf{n}})}.$$ 
Note that $\purfr^{\typef{A},\typef{B}\typeplus\csetf{C}}(\sqqlp_{\overline{\algf{H}}})$ is just the product
$$\begin{aligned}\purfr^{\typef{A},\typef{B}\typeplus\csetf{C}}(\fextfquotfr^{\tembf{l}\typeplus\csetf{C},\constf{q}}(\algf{H}_{1}))&\prodfr(\purfr^{\typef{A},\typef{B}\typeplus\csetf{C}}(\fextfquotfr^{\tembf{l}\typeplus\csetf{C},\constf{q}}(\algf{H}_{2}))\prodfr (\ldots \prodfr (\purfr^{\typef{A},\typef{B}\typeplus\csetf{C}}(\fextfquotfr^{\tembf{l}\typeplus\csetf{C},\constf{q}}(\algf{H}_{\natf{n}-1}))\prodfr \\ & \purfr^{\typef{A},\typef{B}\typeplus\csetf{C}}(\fextfr^{\tembf{l}\typeplus \csetf{C}}(\algf{H}_{\natf{n}})))\ldots)),\end{aligned}$$
in the natural way we encode it's elements by $\natf{n}$-tuples.   We give $\sqqlpam_{\overline{\algf{H}}}^{\extsqtf{E}}\colon \algf{A} \to \purfr^{\typef{A},\typef{B}\typeplus\csetf{C}}(\sqqlp_{\overline{\algf{H}}})$ by $$\elf{x}\longmapsto (\purfr^{\csetf{C}}(\fextfquotm_{\algf{H}_1}^{\tembf{l}\typeplus\csetf{C},\constf{q}})(\elf{x}),\ldots,\purfr^{\csetf{C}}(\fextfquotm_{\algf{H}_{\natf{n}-1}}^{\tembf{l}\typeplus\csetf{C},\constf{q}})(\elf{x}),\purfr^{\csetf{C}}(\fextemb^{\tembf{l}\typeplus\csetf{C}})(\elf{x})).$$
We define $\sqqlpm_{\overline{\algf{H}}}^{\extsqtf{E}}\colon \fextfr^{\tembf{l}}(\algf{A})\to \purfr^{\csetf{C}}(\sqqlp_{\overline{\algf{H}}})$ as the unique morphism such that $\fextemb^{\tembf{l}}_{\algf{A}}\comp \purfr^{\tembf{l}}(\sqqlpm_{\overline{\algf{H}}}^{\extsqtf{E}})=\sqqlpam_{\overline{\algf{H}}}^{\extsqtf{E}}$.
\[
\begin{diagram}
\node[3]{\purfr^{\typef{A},\typef{B}\typeplus\csetf{C}}(\sqqlp_{\overline{\algf{H}}})}
\node[3]{\purfr^{\csetf{C}}(\sqqlp_{\overline{\algf{H}}})}\\
\node{\algf{A}}
  \arrow{ene,t}{\sqqlpam_{\overline{\algf{H}}}^{\extsqtf{E}}}
  \arrow[2]{e,b}{\fextemb^{\tembf{l}}_{\algf{A}}}
\node[2]{\purfr^{\tembf{l}}(\fextfr^{\tembf{l}}(\algf{A}))}
  \arrow{n,r,..}{\purfr^{\tembf{l}}(\sqqlpm_{\overline{\algf{H}}}^{\extsqtf{E}})} 
\node[3]{\fextfr^{\tembf{l}}(\algf{A})}
  \arrow{n,r,..}{\sqqlpm_{\overline{\algf{H}}}^{\extsqtf{E}}}\\
\node[2]{\mbox{\it $\typef{A}$-algebras}}
\node[4]{\mbox{\it $\typef{B}$-algebras}}
\end{diagram}
\]

Obviously, the following two lemmas holds
\begin{lemma} \label{sqqlp_shift1} Suppose $\tembf{l}\colon \typef{A}\to\typef{B}$ is a normal type embedding, $\csetf{C},\csetf{D}$ are pairwise non-intersecting sets of constants, $\algf{A}$ is a linear $(\typef{A}\typeplus \csetf{D})$-algebra, $\algf{A}'=\purfr^{\csetf{D}}(\algf{A})$,  $\overline{\algf{H}}=(\algf{H}_1,\ldots,\algf{H}_{\natf{n}-1},\algf{H}_{\natf{n}})$ is an extension sequence of the type $\extsqtf{E}=(\tembf{l}\typeplus \csetf{D},\algf{A},\constf{q},\csetf{C})$. Then  $$\overline{\algf{H}'}=(\purfr^{\csetf{D}}(\algf{H}_1),\ldots,\purfr^{\csetf{D}}(\algf{H}_{\natf{n}-1}),\purfr^{\csetf{D}}(\algf{H}_{\natf{n}}))$$ is an extension sequence of the type  $\extsqtf{E}'=(\tembf{l},\algf{A}',\constf{q},\csetf{C})$ and there exist isomorphisms $\funf{f}\colon \fextfr^{\tembf{l}}(\algf{A}')\to\purfr^{\csetf{D}}(\fextfr^{\tembf{l}\typeplus\csetf{D}}(\algf{A}))$ and $\funf{g}\colon\sqqlp_{\overline{H'}}\to\purfr^{\csetf{D}}(\sqqlp_{\overline{H}})$ such that the following diagrams commute:
\[
\begin{diagram}
\node{\fextfr^{\tembf{l}}(\algf{A})}
\arrow{s,r,..}{\funf{f}}
\arrow[3]{e,t}{\fextemb^{\tembf{l}}_{\algf{A}}}
\node[3]{\purfr^{\csetf{C}}(\sqqlp_{\overline{\algf{H}^{1}}})}
\arrow{s,r,..}{\purfr^{\csetf{C}}(\funf{g})}\\
\node{\purfr^{\csetf{D}}(\fextfr^{\tembf{l}}(\algf{A}'))}
\arrow[3]{e,b}{\purfr^{\csetf{D}}(\fextemb^{\tembf{l}\typeplus\csetf{D}}_{\algf{A}'})}
\node[3]{\purfr^{\csetf{C}}(\sqqlp_{\overline{\algf{H}^{1}}})}\\
\end{diagram}
\]
\[
\begin{diagram}
\node{}\node{}\node{}
\node{\purfr^{\csetf{C}}(\sqqlp_{\overline{\algf{H}^{1}}})}
\arrow[6]{s,r,..}{\purfr^{\typef{A},\typef{B}\typeplus\csetf{C}}(\funf{g})}
\\
\\
\node{}\node{}
\node{\purfr^{\tembf{l}}(\fextfr^{\tembf{l}}(\algf{A}))}
\arrow{nne,b}{\purfr^{\tembf{l}}(\sqqlpm_{\overline{\algf{H}^1}}^{\extsqtf{E}^1})}
\arrow[2]{s,r,..}{\purfr^{\tembf{l}}(\funf{f})}
\\
\node{\algf{A}}
\arrow{ene,b}{\fextemb^{\tembf{l}}_{\algf{A}}}
\arrow{ese,t}{\purfr^{\csetf{D}}(\fextemb^{\tembf{l}\typeplus\csetf{D}}_{\algf{A}'})}
\arrow[3]{ne,t}{\sqqlpam_{\overline{\algf{H}^{1}}}^{\extsqtf{E}^1}}
\arrow[3]{se,b}{\purfr^{\csetf{D}}(\sqqlpam_{\overline{\algf{H}^{2}}}^{\extsqtf{E}^2})}
\\
\node{}\node{}
\node{\purfr^{\typef{A},\typef{B}\typeplus\csetf{D}}(\fextfr^{\tembf{l}\typeplus\csetf{D}}(\algf{A}'))}
\arrow{sse,t}{\purfr^{\typef{A},\typef{B}\typeplus\csetf{D}}(\sqqlpm_{\overline{\algf{H}^2}}^{\extsqtf{E}^2})}
\\
\\
\node{}\node{}\node{}
\node{\purfr^{\typef{A},\typef{B}\typeplus\csetf{C}\typeplus \csetf{D}}(\sqqlp_{\overline{\algf{H}^{2}}})}
\end{diagram}
\]
\end{lemma}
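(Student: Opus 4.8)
The plan is to reduce the statement to a single principle: deleting the constant symbols of $\csetf{D}$ from the signature commutes, up to canonical isomorphism, with each of the three operations out of which $\sqqlp$ is built --- free extension $\fextfr$, the quotient $\fquotalg$ (hence $\fextfquotfr$), and the linear product $\linprodfr$. Granting this, the isomorphisms $\funf{f}$ and $\funf{g}$ are assembled from these commutations, and commutativity of the two displayed diagrams is read off from the universal properties that those isomorphisms carry with them.

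I would first dispose of the preliminary assertions and build $\funf{f}$. That $\extsqtf{E}'=(\tembf{l},\algf{A}',\constf{q},\csetf{C})$ is an extension sequence type and that $\overline{\algf{H}'}$ is an extension sequence of it is routine: $\purfr^{\csetf{D}}$ takes a $(\typef{A}\typeplus\csetf{D}\typeplus\csetf{C})$-algebra to a $(\typef{A}\typeplus\csetf{C})$-algebra (and similarly with $\constf{q}$ adjoined), $\typef{B}\typeplus\csetf{C}$ is still well-defined, and $\constf{q}$ still lies outside it; the only point worth a remark is that $\algf{A}'=\purfr^{\csetf{D}}(\algf{A})$ is again linear, which holds because for a normal type linearity is a property of $\algl_{\objf{m}}$ and $\algs_{\objf{m}}$ alone, and these relations are computed from $\diamos[\objf{m}]$ and the Boolean operations and so are untouched by deleting $\csetf{D}$. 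For $\funf{f}$, I apply Lemma \ref{fextfr_const_shift_c} with base embedding $\tembf{l}$, constant set $\csetf{D}$, the $\typef{A}$-algebra $\algf{A}'$, and its strong constant extension $\algf{A}$ by $\csetf{D}$ (indeed $\purfr^{\csetf{D}}(\algf{A})=\algf{A}'$): it gives that $(\purfr^{\csetf{D}}(\fextfr^{\tembf{l}\typeplus\csetf{D}}(\algf{A})),\purfr^{\csetf{D}}(\fextemb^{\tembf{l}\typeplus\csetf{D}}_{\algf{A}}))$ satisfies the defining universal property of $(\fextfr^{\tembf{l}}(\algf{A}'),\fextemb^{\tembf{l}}_{\algf{A}'})$, so by the uniqueness clause for free extensions there is a unique isomorphism $\funf{f}\colon\fextfr^{\tembf{l}}(\algf{A}')\to\purfr^{\csetf{D}}(\fextfr^{\tembf{l}\typeplus\csetf{D}}(\algf{A}))$ compatible with the two embeddings --- which is exactly the first displayed square.

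For $\funf{g}$ I would induct on the length $\natf{n}$ of $\overline{\algf{H}}$, following the recursion defining $\sqqlp$. When $\natf{n}=1$ both sides are free extensions, $\sqqlp_{\overline{\algf{H}}}=\fextfr^{\tembf{l}\typeplus\csetf{D}\typeplus\csetf{C}}(\algf{H}_1)$ and $\sqqlp_{\overline{\algf{H}'}}=\fextfr^{\tembf{l}\typeplus\csetf{C}}(\purfr^{\csetf{D}}(\algf{H}_1))$, and a second application of Lemma \ref{fextfr_const_shift_c} (now with base embedding $\tembf{l}\typeplus\csetf{C}$ and constant set $\csetf{D}$, using that adjoining disjoint constant sets is order-independent) supplies the isomorphism. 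For the induction step I need two commutation facts. First, $\fextfquotfr$ commutes with $\csetf{D}$-puration up to isomorphism: since $\fextfquotfr^{\tembf{l},\constf{q}}=\fquotalg^{\typef{B},\constf{q}}\comp\fextfr^{\tembf{l}\typeplus\constf{q}}$, the $\fextfr$ factor commutes by Lemma \ref{fextfr_const_shift_c}, and the $\fquotalg^{\typef{B},\constf{q}}$ factor commutes because its defining congruence is expressed purely with $\baor$, $\diamos[\objf{m}]$ and $\constf{q}$, none of which involves the constants of $\csetf{D}$, so it is literally the same relation before and after deleting $\csetf{D}$, while purations commute with one another and with passage to a quotient. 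Second, $\linprodfr$ commutes with $\csetf{D}$-puration: $\algf{X}\linprodfr\algf{Y}$ is a strong extension of the product of the $\{\diamos[\objf{m}]\}$-purations of $\algf{X}$ and $\algf{Y}$ with an interpretation of $\diamos[\objf{m}]$ referring only to $\baz$, $\bau$ and the two coordinates, none of which deleting $\csetf{D}$ disturbs, so $\purfr^{\csetf{D}}(\algf{X}\linprodfr\algf{Y})\cong\purfr^{\csetf{D}}(\algf{X})\linprodfr\purfr^{\csetf{D}}(\algf{Y})$. Composing the induction hypothesis on the tail $(\algf{H}_2,\ldots,\algf{H}_{\natf{n}})$ with the first fact applied to $\algf{H}_1$, forming the linear product of the two resulting isomorphisms (on embeddings the operation $\linprodfr$ sends bijective maps to bijective maps, hence isomorphisms to isomorphisms), and post-composing with the second fact yields $\funf{g}\colon\sqqlp_{\overline{\algf{H}'}}\to\purfr^{\csetf{D}}(\sqqlp_{\overline{\algf{H}}})$.

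It remains to verify the second, larger diagram. Unfolding $\sqqlpam$ and $\sqqlpm$ --- the former given coordinatewise by the canonical maps $\algf{A}\to\fextfquotfr^{\ldots}(\algf{H}_{\natf{i}})$ for $\natf{i}<\natf{n}$ and $\algf{A}\to\fextfr^{\ldots}(\algf{H}_{\natf{n}})$, the latter as the unique lift of $\sqqlpam$ along $\fextemb^{\tembf{l}}_{\algf{A}}$ --- the diagram splits into a triangle for $\sqqlpam$ and one for $\sqqlpm$. The $\sqqlpam$ triangle is checked one coordinate at a time: on each coordinate it is the commuting triangle built into the relevant instance of Lemma \ref{fextfr_const_shift_c} (or of $\funf{f}$), together with the compatibility of $\fquotalgm$ with $\csetf{D}$-puration. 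The $\sqqlpm$ triangle then follows from the $\sqqlpam$ triangle by uniqueness of the morphism out of $\fextfr^{\tembf{l}}(\algf{A})$ whose $\purfr^{\tembf{l}}$-image precomposed with $\fextemb^{\tembf{l}}_{\algf{A}}$ equals $\sqqlpam$. I expect the main obstacle to be purely organizational: lining up the nested occurrences of $\typeplus$ and $\purfr$ so that the hypotheses of Lemma \ref{fextfr_const_shift_c} and the recursions defining $\sqqlp$, $\sqqlpam$ and $\sqqlpm$ match, and confirming that the $\funf{g}$ produced by the induction is genuinely the isomorphism making the $\sqqlpam$/$\sqqlpm$ triangles commute rather than merely some isomorphism of the two algebras. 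No substantial mathematical content is involved --- every commutation used is an instance of ``deleting constant symbols commutes with the construction at hand.''
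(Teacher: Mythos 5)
The paper offers no proof of Lemma \ref{sqqlp_shift1}: it is introduced with the phrase ``Obviously, the following two lemmas holds'' and left entirely to the reader. Your argument is a correct and complete reconstruction of the intended justification --- reducing everything to the fact that deleting the constants of $\csetf{D}$ commutes (up to canonical isomorphism, compatibly with the structure maps) with $\fextfr$ via Lemma \ref{fextfr_const_shift_c}, with the quotient $\fquotalg$, and with $\linprodfr$, and then propagating this through the recursion defining $\sqqlp$, $\sqqlpam$ and $\sqqlpm$ --- so it takes the only natural route and fills the gap the paper leaves open.
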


\begin{lemma} \label{sqqlp_shift2} Suppose $\tembf{l}\colon \typef{A}\to\typef{B}$ is a normal type embedding, $\csetf{C},\csetf{D}$ are pairwise non-intersecting sets of constants, $\algf{A}$ is a linear $\typef{A}$-algebra, $\extsqtf{E}_1=(\tembf{l},\algf{A},\constf{q},\csetf{C})$ and $\extsqtf{E}_2=(\tembf{l},\algf{A},\constf{q},\csetf{C}\sqcup\csetf{D})$ are extension sequence types, $\overline{\algf{H}^2}=(\algf{H}_1,\ldots,\algf{H}_{\natf{n}-1},\algf{H}_{\natf{n}})$ is an extension sequence of the type $\extsqtf{E}_2$. Then $$\overline{\algf{H}^1}=(\purfr^{\csetf{D}}(\algf{H}_1),\ldots,\purfr^{\csetf{D}}(\algf{H}_{\natf{n}-1}),\purfr^{\csetf{D}}(\algf{H}_{\natf{n}}))$$ is an extension sequence of the type $\extsqtf{E}_1$ such that $\purfr^{\csetf{D}}(\sqqlp_{\overline{\algf{H}^2}})=\sqqlp_{\overline{\algf{H}^1}}$, $\sqqlpam_{\overline{\algf{H}^{2}}}^{\extsqtf{E}^{2}}=\sqqlpam_{\overline{\algf{H}^{1}}}^{\extsqtf{E}^{1}}$, and  $\sqqlpm_{\overline{\algf{H}^{2}}}^{\extsqtf{E}^{2}}=\sqqlpm_{\overline{\algf{H}^{1}}}^{\extsqtf{E}^{1}}$.\end{lemma}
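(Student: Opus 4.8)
The plan is to argue by induction on the length $\natf{n}$ of the extension sequence, reducing everything to the fact that $\fextfr$, $\fquotalg$, $\fextfquotfr$ and $\linprodfr$ all commute with the constant puration $\purfr^{\csetf{D}}$; the structure is entirely parallel to Lemma \ref{sqqlp_shift1}. First I would dispose of the trivial part: since $\purfr^{\csetf{D}}$ merely forgets the interpretations of the constants in $\csetf{D}$, the algebra $\purfr^{\csetf{D}}(\algf{H}_{\natf{n}})$ is a $(\typef{A}\typeplus\csetf{C})$-algebra and each $\purfr^{\csetf{D}}(\algf{H}_{\natf{i}})$ with $\natf{i}<\natf{n}$ is a $(\typef{A}\typeplus\csetf{C}\typeplus\constf{q})$-algebra, so $\overline{\algf{H}^1}$ is indeed an extension sequence of type $\extsqtf{E}_1$. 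I would also record that $\sqqlp$ of an extension sequence depends only on $\tembf{l}$, $\csetf{C}$, $\constf{q}$ and the sequence itself, not on the distinguished base algebra, so the inductive hypothesis applies directly to the tail $(\algf{H}_2,\ldots,\algf{H}_{\natf{n}})$.

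For the base case $\natf{n}=1$ one has $\sqqlp_{\overline{\algf{H}^2}}=\fextfr^{\tembf{l}\typeplus(\csetf{C}\sqcup\csetf{D})}(\algf{H}_1)$ and $\sqqlp_{\overline{\algf{H}^1}}=\fextfr^{\tembf{l}\typeplus\csetf{C}}(\purfr^{\csetf{D}}(\algf{H}_1))$. Since $(\tembf{l}\typeplus\csetf{C})\typeplus\csetf{D}=\tembf{l}\typeplus(\csetf{C}\sqcup\csetf{D})$ and $\algf{H}_1$ is a strong constant extension of $\purfr^{\csetf{D}}(\algf{H}_1)$ by $\csetf{D}$, Lemma \ref{fextfr_const_shift_c} says exactly that $\purfr^{\csetf{D}}(\fextfr^{\tembf{l}\typeplus(\csetf{C}\sqcup\csetf{D})}(\algf{H}_1))$ together with $\purfr^{\csetf{D}}(\fextemb^{\tembf{l}\typeplus(\csetf{C}\sqcup\csetf{D})}_{\algf{H}_1})$ satisfies the defining property of $\fextfr^{\tembf{l}\typeplus\csetf{C}}(\purfr^{\csetf{D}}(\algf{H}_1))$ and its canonical embedding; with the chosen free extensions this gives the required equality of algebras, and the equalities of $\sqqlpam$ and $\sqqlpm$ then follow from their definitions — the latter because $\sqqlpm$ is characterized by a uniqueness clause.

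For the inductive step I would expand $\sqqlp_{\overline{\algf{H}^2}}=\fextfquotfr^{\tembf{l}\typeplus(\csetf{C}\sqcup\csetf{D}),\constf{q}}(\algf{H}_1)\linprodfr\sqqlp_{(\algf{H}_2,\ldots,\algf{H}_{\natf{n}})}$ and verify two commutation statements. First, $\purfr^{\csetf{D}}$ commutes with $\fextfquotfr$: this is because $\fextfquotfr$ is $\fquotalg$ applied to $\fextfr$, the free extension commutes with $\purfr^{\csetf{D}}$ exactly as in the base case, and $\fquotalg$ commutes with $\purfr^{\csetf{D}}$ because the congruence used in its definition only mentions $\diamos[\objf{m}]$ and $\constf{q}$, neither of which lies in $\csetf{D}$, so the same congruence is produced whether or not one first forgets $\csetf{D}$. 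Second, $\purfr^{\csetf{D}}$ commutes with $\linprodfr$: the Boolean structure of a linear product and the interpretation of every operator, in particular of the minimal operator, are determined by the $\{\diamos[\objf{m}]\}$-purations of the factors, while the constants of $\csetf{D}$ are interpreted componentwise, so the linear product of the $\csetf{D}$-purations equals the $\csetf{D}$-puration of the linear product. Combining these with the inductive hypothesis applied to $(\algf{H}_2,\ldots,\algf{H}_{\natf{n}})$ yields
$$\purfr^{\csetf{D}}(\sqqlp_{\overline{\algf{H}^2}})=\fextfquotfr^{\tembf{l}\typeplus\csetf{C},\constf{q}}(\purfr^{\csetf{D}}(\algf{H}_1))\linprodfr\sqqlp_{(\purfr^{\csetf{D}}(\algf{H}_2),\ldots,\purfr^{\csetf{D}}(\algf{H}_{\natf{n}}))}=\sqqlp_{\overline{\algf{H}^1}}.$$

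Finally I would deduce the equalities of the transition maps from the equality of the $\sqqlp$-algebras. The base algebra $\algf{A}$ and the domain $\fextfr^{\tembf{l}}(\algf{A})$ are literally the same for $\extsqtf{E}_1$ and $\extsqtf{E}_2$; the codomains match because composing $\purfr^{\csetf{D}}$ with a further constant puration identifies $\purfr^{\csetf{C}\sqcup\csetf{D}}(\sqqlp_{\overline{\algf{H}^2}})$ with $\purfr^{\csetf{C}}(\sqqlp_{\overline{\algf{H}^1}})$ and, likewise, $\purfr^{\typef{A},\typef{B}\typeplus(\csetf{C}\sqcup\csetf{D})}(\sqqlp_{\overline{\algf{H}^2}})$ with $\purfr^{\typef{A},\typef{B}\typeplus\csetf{C}}(\sqqlp_{\overline{\algf{H}^1}})$; and the component maps $\fextfquotm$ and $\fextemb$ out of which $\sqqlpam$ is assembled are preserved by $\purfr^{\csetf{D}}$ by Lemma \ref{fextfr_const_shift_c}, whence $\sqqlpam^{\extsqtf{E}_2}_{\overline{\algf{H}^2}}=\sqqlpam^{\extsqtf{E}_1}_{\overline{\algf{H}^1}}$, and then $\sqqlpm^{\extsqtf{E}_2}_{\overline{\algf{H}^2}}=\sqqlpm^{\extsqtf{E}_1}_{\overline{\algf{H}^1}}$ is forced by the uniqueness property defining $\sqqlpm$. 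I do not expect a genuine obstacle here — the statement is essentially a bookkeeping exercise — but the one point demanding care is keeping track of the many type decorations and checking that the identities hold on the nose rather than merely up to the canonical isomorphisms supplied by Lemmas \ref{fextfr_const_shift} and \ref{fextfr_const_shift_c}; if one does not wish to assume the choices of free extensions were made compatibly, the whole statement should be read up to those canonical isomorphisms, exactly as in Lemma \ref{sqqlp_shift1}.
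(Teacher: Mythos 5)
Your proof is correct, and there is nothing in the paper to compare it against: the paper simply asserts that Lemma \ref{sqqlp_shift2} (together with Lemma \ref{sqqlp_shift1}) ``obviously holds'' and gives no argument, so your induction on the length of the sequence, with Lemma \ref{fextfr_const_shift_c} handling the commutation of $\purfr^{\csetf{D}}$ with $\fextfr$ and the direct checks that the quotient congruence and the linear product only involve symbols outside $\csetf{D}$, is exactly the bookkeeping the author is taking for granted. Your closing caveat is also well placed: the lemma as stated asserts literal equalities such as $\purfr^{\csetf{D}}(\sqqlp_{\overline{\algf{H}^2}})=\sqqlp_{\overline{\algf{H}^1}}$, which hold on the nose only if the fixed choices of $(\fextfr^{\tembf{l}}(\algf{A}),\fextemb^{\tembf{l}}_{\algf{A}})$ were made compatibly across the relevant type embeddings; otherwise the statement must be read up to the canonical isomorphisms of Lemmas \ref{fextfr_const_shift} and \ref{fextfr_const_shift_c}, which is how it is actually used.
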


\begin{lemma} \label{sqqlpm_isomorphism}Suppose $\tembf{l}\colon \typef{A}\to\typef{B}$ is a normal type embedding, $\extsqtf{E}=(\tembf{l},\algf{A},\constf{q},\csetf{C})$ is an extension sequence type, and $\overline{\algf{H}}$ is an extension sequence of the type $\extsqtf{E}$. Then $\sqqlpm_{\overline{\algf{H}}}^{\extsqtf{E}}\colon \fextfr^{\tembf{l}}(\algf{A})\to \sqqlp_{\overline{\algf{H}}}$ is isomorphism.
\end{lemma}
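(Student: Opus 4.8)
The plan is to argue by induction on the length $\natf{n}$ of the extension sequence $\overline{\algf{H}}=(\algf{H}_1,\ldots,\algf{H}_{\natf{n}})$.

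For the base case $\natf{n}=1$ we have $\sqqlp_{\overline{\algf{H}}}=\fextfr^{\tembf{l}\typeplus\csetf{C}}(\algf{H}_1)$ with $\algf{H}_1$ a strong constant extension of $\algf{A}$ by $\csetf{C}$, so that $\sqqlpam_{\overline{\algf{H}}}^{\extsqtf{E}}=\purfr^{\csetf{C}}(\fextemb^{\tembf{l}\typeplus\csetf{C}}_{\algf{H}_1})$. By Lemma~\ref{fextfr_const_shift_c} the pair $\bigl(\purfr^{\csetf{C}}(\fextfr^{\tembf{l}\typeplus\csetf{C}}(\algf{H}_1)),\,\purfr^{\csetf{C}}(\fextemb^{\tembf{l}\typeplus\csetf{C}}_{\algf{H}_1})\bigr)$ satisfies the universal property defining $(\fextfr^{\tembf{l}}(\algf{A}),\fextemb^{\tembf{l}}_{\algf{A}})$, and $\sqqlpm_{\overline{\algf{H}}}^{\extsqtf{E}}$ is, by construction, precisely the comparison morphism determined by $\sqqlpam_{\overline{\algf{H}}}^{\extsqtf{E}}$; hence by the uniqueness clause of the definition of the free extension (the isomorphism remark following it) $\sqqlpm_{\overline{\algf{H}}}^{\extsqtf{E}}$ is an isomorphism.

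For the inductive step, set $\overline{\algf{H}'}=(\algf{H}_2,\ldots,\algf{H}_{\natf{n}})$; this is again an extension sequence of the type $\extsqtf{E}$, so the induction hypothesis says that $\sqqlpm_{\overline{\algf{H}'}}^{\extsqtf{E}}\colon\fextfr^{\tembf{l}}(\algf{A})\to\purfr^{\csetf{C}}(\sqqlp_{\overline{\algf{H}'}})$ is an isomorphism. Since $\sqqlp_{\overline{\algf{H}}}=\fextfquotfr^{\tembf{l}\typeplus\csetf{C},\constf{q}}(\algf{H}_1)\linprodfr\sqqlp_{\overline{\algf{H}'}}$ and $\sqqlpam_{\overline{\algf{H}}}^{\extsqtf{E}}(\elf{x})=\bigl(\purfr^{\csetf{C}}(\fextfquotm_{\algf{H}_1}^{\tembf{l}\typeplus\csetf{C},\constf{q}})(\elf{x}),\,\sqqlpam_{\overline{\algf{H}'}}^{\extsqtf{E}}(\elf{x})\bigr)$ in the product encoding, it suffices to exhibit a matching linear-product decomposition of $\fextfr^{\tembf{l}}(\algf{A})$ itself. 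Put $\algf{A}_1'=\purfr^{\csetf{C}}(\algf{H}_1)$, a strong constant extension of $\algf{A}$ by $\constf{q}$, and $\algf{D}=\fextfr^{\tembf{l}\typeplus\constf{q}}(\algf{A}_1')$; then $\fextfr^{\tembf{l}}(\algf{A})\cong\purfr^{\constf{q}}(\algf{D})$ by Lemma~\ref{fextfr_const_shift_c}, and $\algf{D}$ is linear by Lemma~\ref{fext_linear_preserve} because $\tembf{l}\typeplus\constf{q}$ is a final type embedding and $\algf{A}_1'$ is linear. Applying the isomorphism $\fqlprsqiso_{\algf{D}}^{\typef{B},\constf{q}}$ of the section on factor algebras, and noting that $\fquotalg^{\typef{B},\constf{q}}(\algf{D})$ is by definition $\fextfquotfr^{\tembf{l},\constf{q}}(\algf{A}_1')$, we obtain
$$\fextfr^{\tembf{l}}(\algf{A})\;\cong\;\purfr^{\constf{q}}(\algf{D})\;\cong\;\fextfquotfr^{\tembf{l},\constf{q}}(\algf{A}_1')\linprodfr\squotalg^{\typef{B},\constf{q}}(\algf{D}).$$
Reinstating the constants $\csetf{C}$ (and commuting $\purfr^{\csetf{C}}$ past $\linprodfr$, $\fquotalg$ and $\fextfr$, which is routine, using Lemma~\ref{fextfr_const_shift_c} for the last one) makes the first factor equal to $\purfr^{\csetf{C}}(\fextfquotfr^{\tembf{l}\typeplus\csetf{C},\constf{q}}(\algf{H}_1))$. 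It then remains to identify the second factor $\squotalg^{\typef{B},\constf{q}}(\algf{D})$ with a copy of $\fextfr^{\tembf{l}}(\algf{A})$, feed the induction hypothesis in through $\purfr^{\csetf{C}}(\sqqlp_{\overline{\algf{H}'}})$, and assemble an isomorphism $\fextfr^{\tembf{l}}(\algf{A})\to\purfr^{\csetf{C}}(\sqqlp_{\overline{\algf{H}}})$ using functoriality of $\linprodfr$ (the construction $\funf{f}\linprodfr\funf{g}$) and the naturality statements of Lemmas~\ref{sqqlp_shift1} and~\ref{sqqlp_shift2}; the recursion for $\sqqlpam_{\overline{\algf{H}}}^{\extsqtf{E}}$ displayed above, together with the defining equation $\fextemb^{\tembf{l}}_{\algf{A}}\comp\purfr^{\tembf{l}}(\sqqlpm_{\overline{\algf{H}}}^{\extsqtf{E}})=\sqqlpam_{\overline{\algf{H}}}^{\extsqtf{E}}$, then pins this isomorphism down as $\sqqlpm_{\overline{\algf{H}}}^{\extsqtf{E}}$.

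The main obstacle I expect is the identification of $\squotalg^{\typef{B},\constf{q}}(\fextfr^{\tembf{l}\typeplus\constf{q}}(\algf{A}_1'))$ --- morally the part of the free extension lying below $\diamm[\itwo]{\constf{q}}$ --- with $\fextfr^{\tembf{l}}(\algf{A})$; the natural way to do this is to verify that this algebra, equipped with the evident homomorphism from $\algf{A}$, satisfies the universal property of $\fextfr^{\tembf{l}}(\algf{A})$, and it is precisely here that the linearity of $\algf{A}$ and the quasi-word calculus of Lemmas~\ref{quasi-word_comparability}, \ref{quasi-word_equiv} and~\ref{quasi-word_combination} are needed. The rest is bookkeeping: tracking the maps $\sqqlpam_{\overline{\algf{H}}}^{\extsqtf{E}}$ and $\sqqlpm_{\overline{\algf{H}}}^{\extsqtf{E}}$ and the various $\csetf{C}$-purations through the chain of isomorphisms so that the final composite is provably $\sqqlpm_{\overline{\algf{H}}}^{\extsqtf{E}}$ rather than merely some isomorphism.
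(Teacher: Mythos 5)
Your overall architecture coincides with the paper's: induction on the length of $\overline{\algf{H}}$, reduction of the inductive step to a linear-product decomposition of $\fextfr^{\tembf{l}}(\algf{A})$ obtained by applying $\fqlprsqiso$ to the linear algebra $\fextfr^{\tembf{l}\typeplus\constf{q}}(\purfr^{\csetf{C}}(\algf{H}_1))$, and identification of the two factors. But there is a genuine gap at precisely the step you yourself flag as ``the main obstacle'': the identification of $\squotalg^{\typef{B},\constf{q}}(\fextfr^{\tembf{l}\typeplus\constf{q}}(\algf{A}_1'))$ with $\fextfr^{\tembf{l}}(\algf{A})$. You propose to verify the universal property using the quasi-word calculus of Lemmas~\ref{quasi-word_comparability}, \ref{quasi-word_equiv} and~\ref{quasi-word_combination}, but those lemmas only give normal forms and $\algl_{\ione}$-comparability of elements of a free extension over a linear algebra; they do not give what is actually needed here, namely that the canonical homomorphism $\fextfr^{\tembf{l}}(\algf{A})\to\squotalg^{\typef{B},\constf{q}}(\fextfr^{\tembf{l}\typeplus\constf{q}}(\algf{A}_1'))$ induced by the universal property is injective. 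That injectivity amounts to the statement that relativizing closed $\thrf{GLPA}_{\typef{B}}$-terms below $\diamo[\ione]{\constf{q}}$ neither creates nor destroys identities in the free extension. This is Corollary~\ref{sqtr_cor}, which the paper derives from Lemma~\ref{sqtr} via the translation $\sqtrs{\constf{q}}{\typef{B}}$, and which ultimately rests on the conservativity result Lemma~\ref{sqftr}; that last lemma is proved by a nontrivial Kripke-model construction (the models $\mathcal{B}+\mathcal{A}$ of Section~\ref{syntactical_facts}) and is not a consequence of the quasi-word machinery. Without invoking it, your plan for the second factor does not go through.

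Two smaller remarks. First, the paper sidesteps your final ``bookkeeping'' problem by reducing (via Lemma~\ref{sqqlp_shift1}) to the case where $\algf{A}$ is constant complete; then there is at most one homomorphism out of $\fextfr^{\tembf{l}}(\algf{A})$ into any fixed algebra, so once one knows that $\purfr^{\csetf{C}}(\fextfquotfr^{\tembf{l}\typeplus\csetf{C},\constf{q}}(\algf{H}_1))\linprodfr\fextfr^{\tembf{l}}(\algf{A})$ is isomorphic to $\fextfr^{\tembf{l}}(\algf{A})$, the map $\sqqlpm_{\overline{\algf{H}}}^{\extsqtf{E}}$ is automatically the composite of that isomorphism with $\idf\linprodfr\sqqlpm_{(\algf{H}_2,\ldots,\algf{H}_{\natf{n}})}^{\extsqtf{E}}$; you should adopt this reduction rather than chase $\sqqlpam$ and $\sqqlpm$ through the chain of isomorphisms. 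Second, your base case is fine and matches the paper's, which obtains it from Lemma~\ref{fextfr_const_shift}.
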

We will prove Lemma \ref{sqqlpm_isomorphism} later in the section.

Suppose  $\tembf{l}\colon\typef{A}\to\typef{B}$ is a normal type embedding and $\extsqtf{E}=(\tembf{l},\algf{A},\constf{q},\csetf{C})$  is an  extension sequence type. For an extension sequence $\overline{\algf{H}}$ of the type $\extsqtf{E}$ we denote by $\sqext_{\overline{\algf{H}}}$ the only strong constant extension by the set $\csetf{C}$ of $\fextfr^{\tembf{l}}(\algf{A})$ that is isomorphic to $\sqqlp_{\overline{\algf{H}}}$ under an isomorphism $\funf{f}\colon \sqext_{\overline{\algf{H}}}\to \sqqlp_{\overline{\algf{H}}}$ such that $\purfr^{\csetf{C}}(\funf{f})=\sqqlpm^{\extsqtf{E}}_{\overline{\algf{H}}}$.

\begin{lemma} \label{ext_sqext_prop}Suppose $\tembf{l}\colon\typef{A}\to \typef{B}$ is a normal type embedding and $\algf{A}$ is a linear $\typef{A}$-algebra. Then for every strong constant extension $\algf{B}$ of $\fextfr^{\tembf{l}}(\algf{A})$ by a finite set of constants $\csetf{C}$ and $\constf{q}\not \in \typef{B}\typeplus \csetf{C}$ there exists an extension sequence $\overline{\algf{H}}$ of the type $\extsqtf{E}=(\tembf{l},\algf{A},\constf{q},\csetf{C})$ such that $\sqext_{\overline{\algf{H}}}=\algf{B}$.
\end{lemma}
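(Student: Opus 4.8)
The plan is to reduce first to the case where $\algf{A}$ is constant complete, and then to treat that case by induction on a finite parameter measuring how far the interpretations of the constants of $\csetf{C}$ in $\fextfr^{\tembf{l}}(\algf{A})$ deviate from the embedded copy of $\algf{A}$, at each step peeling off one $\diamos[\itwo]$-level as an initial linear factor of the form $\fextfquotfr^{\tembf{l}\typeplus\csetf{C},\constf{q}}(\cdot)$.

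For the reduction: by Lemma~\ref{fext_linear_preserve} the algebra $\fextfr^{\tembf{l}}(\algf{A})$ is linear, and since linearity depends only on the operators, $\algf{B}$ is linear as well. Choose a set $\csetf{D}$ of constant symbols disjoint from $\typef{B}\typeplus\csetf{C}\typeplus\constf{q}$ and in bijection with the underlying set of $\algf{A}$, let $\algf{A}^{\ast}$ be the constant complete strong extension of $\algf{A}$ by $\csetf{D}$, and let $\algf{B}^{\ast}$ be the strong extension of $\algf{B}$ by $\csetf{D}$ whose new constants name the corresponding elements of $\fextemb^{\tembf{l}}_{\algf{A}}(\algf{A})\subseteq\fextfr^{\tembf{l}}(\algf{A})$. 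By Lemma~\ref{fextfr_const_shift_c}, $\fextfr^{\tembf{l}\typeplus\csetf{D}}(\algf{A}^{\ast})$ is a strong constant extension of $\fextfr^{\tembf{l}}(\algf{A})$ by $\csetf{D}$ realizing exactly those elements, so $\algf{B}^{\ast}$ is a strong constant extension of $\fextfr^{\tembf{l}\typeplus\csetf{D}}(\algf{A}^{\ast})$ by $\csetf{C}$, and $\tembf{l}\typeplus\csetf{D}$ is again a normal type embedding. If we produce an extension sequence $\overline{\algf{H}^{\ast}}$ of the type $(\tembf{l}\typeplus\csetf{D},\algf{A}^{\ast},\constf{q},\csetf{C})$ with $\sqext_{\overline{\algf{H}^{\ast}}}=\algf{B}^{\ast}$, then Lemma~\ref{sqqlp_shift1} turns $\overline{\algf{H}}=(\purfr^{\csetf{D}}(\algf{H}^{\ast}_1),\ldots,\purfr^{\csetf{D}}(\algf{H}^{\ast}_{\natf{n}}))$ into an extension sequence of $\extsqtf{E}$ and supplies isomorphisms commuting with $\sqqlpm$ and $\sqqlpam$; applying $\purfr^{\csetf{D}}$ to the isomorphism witnessing $\sqext_{\overline{\algf{H}^{\ast}}}=\algf{B}^{\ast}$ and composing with these yields $\sqext_{\overline{\algf{H}}}=\algf{B}$. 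Hence we may assume $\algf{A}$ constant complete, and I write $\iota=\fextemb^{\tembf{l}}_{\algf{A}}$.

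Since $\algf{A}$ is constant complete and closed under the Boolean operations and $\diamos[\ione]$, every quasi-word of the type $\typef{B}$ takes its value in $\iota(\algf{A})$; hence, by Proposition~\ref{quasi-word_combination}, Corollary~\ref{quasi-word_comparability_2} and Lemma~\ref{algl_lin_properties} applied to the minimal operator $\diamos[\itwo]$, every element of $\fextfr^{\tembf{l}}(\algf{A})$ is a finite join of terms $\iota(a)\baand\boxo[\itwo]{\iota(b)}\baand\diamo[\itwo]{\iota(d)}$ with $a,b,d\in\algf{A}$, and the set $V$ of all values $\diamo[\itwo]{\elf{z}}$ is linearly ordered by $\le$. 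Fix such representations of all $\constf{c}^{\algf{B}}$ ($\constf{c}\in\csetf{C}$), and let $\natf{k}$ be the number of distinct nonzero elements of $V$ occurring among the associated $\diamo[\itwo]{\iota(b)}$, $\diamo[\itwo]{\iota(d)}$; as $\csetf{C}$ and each representation are finite, $\natf{k}<\omega$, and I argue by induction on $\natf{k}$. If $\natf{k}=0$ then each $\constf{c}^{\algf{B}}\in\iota(\algf{A})$; letting $\algf{H}_1$ be the strong constant extension of $\algf{A}$ by $\csetf{C}$ with $\iota(\constf{c}^{\algf{H}_1})=\constf{c}^{\algf{B}}$, Lemma~\ref{fextfr_const_shift_c} gives $\purfr^{\csetf{C}}(\fextfr^{\tembf{l}\typeplus\csetf{C}}(\algf{H}_1))=\fextfr^{\tembf{l}}(\algf{A})$ with the $\csetf{C}$-constants interpreted by the $\constf{c}^{\algf{B}}$, so $\algf{B}=\fextfr^{\tembf{l}\typeplus\csetf{C}}(\algf{H}_1)=\sqqlp_{(\algf{H}_1)}$ and $\sqext_{(\algf{H}_1)}=\algf{B}$. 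If $\natf{k}\ge1$, let $v\in V$ be the $\le$-largest occurring value, say $v=\diamo[\itwo]{\iota(a_0)}$, let $\algf{B}'$ be the strong extension of $\algf{B}$ by $\constf{q}$ with $\constf{q}^{\algf{B}'}=\iota(a_0)$ (still linear), and apply the isomorphism $\fqlprsqiso^{\typef{B}\typeplus\csetf{C},\constf{q}}_{\algf{B}'}$ (legitimate by the correctness facts~\ref{fqlprsqiso_corr_c1},~\ref{fqlprsqiso_corr_c2}) to get $\algf{B}\cong\fquotalg^{\typef{B}\typeplus\csetf{C},\constf{q}}(\algf{B}')\linprodfr\squotalg^{\typef{B}\typeplus\csetf{C},\constf{q}}(\algf{B}')$ with respect to $\diamos[\itwo]$. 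Because $v$ is the largest occurring value, the quotient defining $\fquotalg$ collapses every term $\boxo[\itwo]{\iota(b)}$ to $\bau$ and every $\diamo[\itwo]{\iota(d)}$ to $\baz$, so all $\csetf{C}$-constants of $\fquotalg^{\typef{B}\typeplus\csetf{C},\constf{q}}(\algf{B}')$ become $\baz$; combined with the case $\natf{k}=0$ and Lemma~\ref{fextfr_const_shift_c}, this identifies $\fquotalg^{\typef{B}\typeplus\csetf{C},\constf{q}}(\algf{B}')$ with $\fextfquotfr^{\tembf{l}\typeplus\csetf{C},\constf{q}}(\algf{H}_1)$, where $\algf{H}_1$ is the strong constant extension of $\algf{A}[\constf{q}\mapsto a_0]$ by $\csetf{C}$ with all $\csetf{C}$-constants $\baz$. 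The remaining factor $\squotalg^{\typef{B}\typeplus\csetf{C},\constf{q}}(\algf{B}')$ is isomorphic, via an isomorphism restricting to one of $\csetf{C}$-purations, to a linear strong constant extension $\widetilde{\algf{B}}$ of $\fextfr^{\tembf{l}}(\algf{A})$ by $\csetf{C}$ whose parameter is $\natf{k}-1$; by the induction hypothesis there is an extension sequence $(\algf{H}_2,\ldots,\algf{H}_{\natf{n}})$ of $\extsqtf{E}$ with $\sqext_{(\algf{H}_2,\ldots,\algf{H}_{\natf{n}})}=\widetilde{\algf{B}}$, hence $\sqqlp_{(\algf{H}_2,\ldots,\algf{H}_{\natf{n}})}\cong\squotalg^{\typef{B}\typeplus\csetf{C},\constf{q}}(\algf{B}')$ compatibly with $\sqqlpm$. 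Then $\overline{\algf{H}}=(\algf{H}_1,\ldots,\algf{H}_{\natf{n}})$ is an extension sequence of $\extsqtf{E}$ and $\sqqlp_{\overline{\algf{H}}}=\fextfquotfr^{\tembf{l}\typeplus\csetf{C},\constf{q}}(\algf{H}_1)\linprodfr\sqqlp_{(\algf{H}_2,\ldots,\algf{H}_{\natf{n}})}\cong\algf{B}$; a diagram chase with the universal property defining $\sqqlpm_{\overline{\algf{H}}}^{\extsqtf{E}}$, the explicit form of $\sqqlpam_{\overline{\algf{H}}}^{\extsqtf{E}}$, and naturality of $\linprodfr$ and $\fqlprsqiso$ shows the composite isomorphism $\algf{B}\to\sqqlp_{\overline{\algf{H}}}$ has $\csetf{C}$-puration $\sqqlpm_{\overline{\algf{H}}}^{\extsqtf{E}}$, so $\sqext_{\overline{\algf{H}}}=\algf{B}$.

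The main obstacle is the claim invoked for the second factor: that cutting $\fextfr^{\tembf{l}}(\algf{A})$ at a $\diamos[\itwo]$-level $v=\diamo[\itwo]{\iota(a_0)}$ leaves, below $v$ and with the $\squotalg$-modified $\diamos[\itwo]$, an algebra isomorphic to $\fextfr^{\tembf{l}}(\algf{A})$ itself, along an isomorphism through which the transported $\csetf{C}$-constants exhibit strictly fewer distinct $\diamos[\itwo]$-values (the level $v$ disappears and no larger one is created). This ``locality'' of free $\thrf{GLP}$-extensions along the minimal operator is the delicate step; I would prove it by representing elements below $v$ through quasi-words and $\diamos[\itwo]$-closures as in Proposition~\ref{quasi-word_combination}, checking that the quotient maps defining $\fquotalg$ and $\squotalg$ act transparently on such representations, and using Lemmas~\ref{quasi-word_comparability} and~\ref{quasi-word_equiv} to control the $\diamos[\itwo]$-order and to verify that the restricted algebra satisfies the universal property of a free $\tembf{l}$-extension.
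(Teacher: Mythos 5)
Your strategy is genuinely different from the paper's. The paper fixes a finite family of closed terms naming the constants of $\csetf{C}$ that is closed under subterms, names each term by a fresh constant, and builds the extension sequence by a double induction that adds one named subterm at a time, splitting the sequence at the position $\natf{u}$ where the argument of a new $\diamo[\ione]{\cdot}$ first fails to be $\bau$. You instead propose a top--down induction that peels off the largest occurring diamond level as a leading $\fquotalg$-factor. The idea is reasonable, but as written the argument has concrete gaps.

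First, the normal form you use is wrong. Since $\algf{A}$ is \emph{not} closed under the new minimal operator of $\typef{B}$, quasi-words of the type $\typef{B}$ do not take values in $\iota(\algf{A})$; Proposition~\ref{quasi-word_combination} gives Boolean combinations of constants and closures $\diamo[\ione]{\termf{w}}$ of \emph{iterated} quasi-words $\termf{w}$, not of terms $\diamo[\ione]{\iota(d)}$ with $d\in\algf{A}$. Hence your parameter $\natf{k}$ and the assertion that the largest level has the form $\diamo[\ione]{\iota(a_0)}$ do not follow from what you wrote. (The latter can be rescued: if you close the set of occurring levels under subterms, the inequality $\diamo[\ione]{\constf{c}\baand\diamo[\ione]{\termf{w}'}}\le\diamo[\ione]{\termf{w}'}$ shows the maximum is attained at an innermost, hence constant, argument --- but this is not in your proof.) Relatedly, the cut, the quotients and the linear product all act on the minimal operator $\diamos[\ione]$ of $\typef{B}$, not on $\diamos[\itwo]$. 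Second, the claim that all $\csetf{C}$-constants of the top factor $\fquotalg^{\typef{B}\typeplus\csetf{C},\constf{q}}(\algf{B}')$ become $\baz$ is false: modulo $\diamo[\ione]{\constf{q}}$ the diamond subterms vanish but the constant parts survive, so these constants become (generally nonzero) elements of $\iota(\algf{A})$, and $\algf{H}_1$ must be chosen accordingly. Third, and most seriously, the step you yourself flag as delicate --- that $\squotalg^{\typef{B}\typeplus\csetf{C},\constf{q}}(\algf{B}')$ is again a strong constant extension of $\fextfr^{\tembf{l}}(\algf{A})$ by $\csetf{C}$, and with parameter $\natf{k}-1$ --- is not proved. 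The underlying isomorphism $\squotalg^{\typef{B},\constf{q}}(\fextfr^{\tembf{l}\typeplus\constf{q}}(\algf{A}'))\cong\fextfr^{\tembf{l}}(\purfr^{\constf{q}}(\algf{A}'))$ is exactly Corollary~\ref{sqtr_cor}, which you could cite instead of proposing to rederive it from quasi-word manipulations; the paper obtains it only through the syntactic Lemma~\ref{sqftr} and a Kripke-model argument, and it is not clear that a purely algebraic rederivation goes through. Even granting it, you still need to track the images of the $\csetf{C}$-constants through that isomorphism and verify that exactly one level disappears and none is created; that bookkeeping is the actual content of the induction step and is missing.
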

We will prove Lemma \ref{ext_sqext_prop} later in the section.

Suppose $\constf{q}$ is a constant symbol and $\typef{A}=(\ordf{a},\csetf{A})$ is a type such that $\constf{q}\in \typef{A}$ and $\boxos[\ione]\in \typef{A}$.  We define the mapping $\sqtrs{\constf{q}}{\typef{A}}$ of $\thrf{GLPA}_{\typef{A}}$-terms
\begin{itemize}
\item $\sqtr{\constf{q}}{\typef{A}}{\baz}=\boxo[\ione]{\constf{q}}\baor\baz$;
\item $\sqtr{\constf{q}}{\typef{A}}{\bau}=\boxo[\ione]{\constf{q}}\baor\bau$;
\item $\sqtr{\constf{q}}{\typef{A}}{\constf{c}}=\boxo[\ione]{\constf{q}}\baor\constf{c}$, where $\constf{c}$ is constant symbol;
\item $\sqtr{\constf{q}}{\typef{A}}{\fovarf{x}}=\boxo[\ione]{\constf{q}}\baor\fovarf{x}$, where $\fovarf{x}$ is first-order variable;
\item $\sqtr{\constf{q}}{\typef{A}}{\termf{t}_1\baand \termf{t}_2}=\boxo[\ione]{\constf{q}}\baor(\sqtr{\constf{q}}{\typef{A}}{\termf{t}_1}\baand \sqtr{\constf{q}}{\typef{A}}{\termf{t}_2})$;
\item $\sqtr{\constf{q}}{\typef{A}}{\termf{t}_1\baor \termf{t}_2}=\boxo[\ione]{\constf{q}}\baor(\sqtr{\constf{q}}{\typef{A}}{\termf{t}_1}\baor \sqtr{\constf{q}}{\typef{A}}{\termf{t}_2})$;
\item $\sqtr{\constf{q}}{\typef{A}}{\bacmp{\termf{t}}}=\boxo[\ione]{\constf{q}}\baor\bacmp{\sqtr{\constf{q}}{\typef{A}}{\termf{t}}}$;
\item $\sqtr{\constf{q}}{\typef{A}}{\diamo[\objf{x}]{\termf{t}}}=\boxo[\ione]{\constf{q}}\baor\diamo[\objf{x}]{\sqtr{\constf{q}}{\typef{A}}{\termf{t}}}$.
\end{itemize}
Similarly, for a propositional variable $\prvarf{x}$ we define the mapping $\sqftrs{\prvarf{x}}{\ordf{a}}$ of $\thrf{GLP}_{\ordf{a}}$-formulas
\begin{itemize}
\item $\sqftr{\prvarf{x}}{\ordf{a}}{\top}= \boxm[\ione]\prvarf{x}\pror\top$;
\item $\sqftr{\prvarf{x}}{\ordf{a}}{\bot}= \boxm[\ione]\prvarf{x}\pror\bot$;
\item $\sqftr{\prvarf{x}}{\ordf{a}}{\prvarf{y}}= \boxm[\ione]\prvarf{x}\pror\prvarf{y}$, for a propositional variable $\prvarf{y}$;
\item $\sqftr{\prvarf{x}}{\ordf{a}}{\prflf{f}\pror\prflf{p}}= \boxm[\ione]\prvarf{x}\pror (\sqftr{\prvarf{x}}{\ordf{a}}{\prflf{f}}\pror\sqftr{\prvarf{x}}{\ordf{a}}{\prflf{p}})$, for $\thrf{GLP}_{\ordf{a}}$-formulas $\prflf{f},\prflf{p}$;
\item $\sqftr{\prvarf{x}}{\ordf{a}}{\prflf{f}\prand\prflf{p}}= \boxm[\ione]\prvarf{x}\pror (\sqftr{\prvarf{x}}{\ordf{a}}{\prflf{f}}\prand\sqftr{\prvarf{x}}{\ordf{a}}{\prflf{p}})$, for $\thrf{GLP}_{\ordf{a}}$-formulas $\prflf{f},\prflf{p}$;
\item $\sqftr{\prvarf{x}}{\ordf{a}}{\prflf{f}\primp\prflf{p}}= \boxm[\ione]\prvarf{x}\pror (\sqftr{\prvarf{x}}{\ordf{a}}{\prflf{f}}\primp\sqftr{\prvarf{x}}{\ordf{a}}{\prflf{p}})$, for $\thrf{GLP}_{\ordf{a}}$-formulas $\prflf{f},\prflf{p}$;
\item $\sqftr{\prvarf{x}}{\ordf{a}}{\prnot \prflf{f}}= \boxm[\ione]\prvarf{x}\pror (\prnot \sqftr{\prvarf{x}}{\ordf{a}}{\prflf{f}})$, for a $\thrf{GLP}_{\ordf{a}}$-formula $\prflf{f}$;
\item $\sqftr{\prvarf{x}}{\ordf{a}}{\boxm[\objf{x}]{ \prflf{f}}}= \boxm[\ione]\prvarf{x}\pror (\boxm[\objf{x}] \sqftr{\prvarf{x}}{\ordf{a}}{\prflf{f}})$, for a $\thrf{GLP}_{\ordf{a}}$-formula $\prflf{f}$ and $\boxos[\objf{x}]\in\typef{A}$.
\end{itemize}
Obviously,  for a $\thrf{GLPA}_{\typef{A}}$-term  $\termf{t}$ the formula $\sqftr{\sppvar{\constf{q}}}{\ordf{a}}{\trtfl{\termf{t}}}$ is $\thrf{GLPA}_{\typef{A}}$-equivalent to $\trtfl{(\sqftr{\constf{q}}{\typef{A}}{\termf{t}})}$

\begin{lemma} \label{sqftr} Suppose $\ordf{a}$ is a linear ordered set, $\ione$ is the minimal element of $\ordf{a}$, $\itwo$ is the minimal element of $\ordf{a}\setminus\{\ione\}$, $\prflf{f}$ and $\prflf{p}$ are formulas from $\lang{\thrf{GLP}_{\ordf{a}}}$, and $\prvarf{x}$ is a propositional variable such that $\prvarf{x}$ doesn't occur in $\prflf{f}$, $\boxm[\ione]$ doesn't occur in $\prflf{p}$, and $\thrf{GLP}_{\ordf{a}}\nvdash (\prflf{p}\prand \boxm[\itwo]\prflf{p})\primp \prvarf{x}$. Then 
$$\thrf{GLP}_{\ordf{a}}\vdash (\prflf{p}\prand \boxm[\ione]\prflf{p})\primp\prflf{f}\iff \thrf{GLP}_{\ordf{a}}\vdash (\prflf{p}\prand \boxm[\ione]\prflf{p})\primp\sqftr{\prvarf{x}}{\ordf{a}}{\prflf{f}}.$$
\end{lemma}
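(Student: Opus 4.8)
The plan is to argue semantically, using that $\thrf{GLP}_{\ordf{a}}$ is complete with respect to $\thrf{GLP}$-algebras (via Lemma~\ref{fo-mod_corr}): each side of the equivalence is a statement about all $\thrf{GLP}_{\ordf{a}}$-algebras $\algf{A}$ with valuations, and I would prove both implications by contraposition. We may assume $\prvarf{x}$ does not occur in $\prflf{p}$ (otherwise first replace the remaining propositional variables of $\prflf{p},\prflf{f}$ by fresh constants, which is harmless by Lemma~\ref{mod_add_cons}). Two preliminary remarks are needed. Applying Lemma~\ref{box_switch_imp} to the implication $\boxm[\objf{y}]\prflf{p}\primp(\prflf{p}\primp\prvarf{x})$ --- whose modalities are those of $\prflf{p}$, hence at $\itwo$ or above since $\boxm[\ione]$ is absent from $\prflf{p}$ --- shows that $\thrf{GLP}_{\ordf{a}}\nvdash(\prflf{p}\prand\boxm[\itwo]\prflf{p})\primp\prvarf{x}$ is equivalent to $\thrf{GLP}_{\ordf{a}}\nvdash(\prflf{p}\prand\boxm[\ione]\prflf{p})\primp\prvarf{x}$, i.e.\ to consistency of $\prflf{p}\prand\boxm[\ione]\prflf{p}$. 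And in any $\thrf{GLP}_{\ordf{a}}$-algebra, by item~\ref{GLPA_Eq1_1} of Lemma~\ref{GLPA_Eq1} and axiom~\ref{GLPA_Ax4}, the value $P$ of $\prflf{p}\prand\boxm[\ione]\prflf{p}$ satisfies $P\le\boxo[\objf{y}]{P}$ for every $\objf{y}$, while the value of $\prflf{p}$ is independent of the interpretation of $\diamos[\ione]$ (since $\boxm[\ione]$ does not occur in $\prflf{p}$).

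The crux is that $\sqftr{\prvarf{x}}{\ordf{a}}{\prflf{f}}$ computes $\prflf{f}$ in a $\squotalg$-style quotient: given $\algf{A}$ and a valuation $v$, put $c:=\boxo[\ione]{v(\prvarf{x})}$ (evaluated in $\algf{A}$); then the value of $\sqftr{\prvarf{x}}{\ordf{a}}{\prflf{f}}$ under $v$ equals the value of $\prflf{f}$, under the induced valuation, in the algebra $\algf{A}'$ obtained from $\algf{A}$ by quotienting along the principal ideal generated by $c$ and reinterpreting $\diamos[\ione]$ by $[\elf{z}]\mapsto[\diamo[\ione]{\elf{z}\baand\bacmp{c}}]$ --- exactly as in the definition of $\squotalg$. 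This is a routine induction on $\prflf{f}$ using items~\ref{GLPA_Eq1_1}--\ref{GLPA_Eq1_3} of Lemma~\ref{GLPA_Eq1}, and $\algf{A}'$ is a $\thrf{GLP}_{\ordf{a}}$-algebra by the verification already carried out for $\squotalg$; the quotient map $\algf{A}\to\algf{A}'$ preserves every operator except $\diamos[\ione]$. For the implication ``$\Rightarrow$'', starting from $\algf{A},v$ that falsify $(\prflf{p}\prand\boxm[\ione]\prflf{p})\primp\sqftr{\prvarf{x}}{\ordf{a}}{\prflf{f}}$, I would pass to $\algf{A}',v'$: since the quotient map preserves the Boolean operations and all $\boxos[\objf{y}]$ with $\objf{y}>\ione$ and is monotone, the value of $\prflf{p}\prand\boxm[\ione]\prflf{p}$ in $\algf{A}'$ is at least the image of its value in $\algf{A}$; and the value of $\sqftr{\prvarf{x}}{\ordf{a}}{\prflf{f}}$ in $\algf{A}$, being $\ge c$, represents the value of $\prflf{f}$ in $\algf{A}'$, so the failure $\prflf{p}\prand\boxm[\ione]\prflf{p}\not\le\sqftr{\prvarf{x}}{\ordf{a}}{\prflf{f}}$ persists in $\algf{A}'$ as $\prflf{p}\prand\boxm[\ione]\prflf{p}\not\le\prflf{f}$. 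This direction uses only $\prvarf{x}\notin\prflf{f}$ and $\boxm[\ione]\notin\prflf{p}$.

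For ``$\Leftarrow$'' I would start from $\algf{N},w$ falsifying $(\prflf{p}\prand\boxm[\ione]\prflf{p})\primp\prflf{f}$ and, by the consistency remark, choose $\algf{K},u$ with the value $e$ of $\prflf{p}\prand\boxm[\ione]\prflf{p}$ under $u$ nonzero. Since $e\le\boxo[\objf{y}]{e}$ for all $\objf{y}$, the principal ideal $[\baz,e]\subseteq\algf{K}$ carries a quotient $\thrf{GLP}_{\ordf{a}}$-algebra $\algf{C}$ (all operators descend, as in the $\squotalg$ correctness check) on which $\prflf{p}$ evaluates to $\bau$, and in which $(\boxm[\ione]\bot)^{\algf{C}}\ne\baz$ --- for otherwise $e\le\diamo[\ione]{e}$, which by axioms~\ref{GLPA_Ax1} and~\ref{GLPA_Ax3} forces $e=\baz$. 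I would then take $\algf{M}:=\algf{C}\linprodfr\algf{N}$, lift the $\algf{C}$-valuation realizing $\prflf{p}=\bau$ and the valuation $w$ into the two coordinates, and choose the value of $\prvarf{x}$ so that $\boxo[\ione]{v(\prvarf{x})}=(\bau^{\algf{C}},\baz^{\algf{N}})$ --- possible precisely because $(\boxm[\ione]\bot)^{\algf{C}}\ne\baz$. With that choice the $\squotalg$-style quotient of $\algf{M}$ attached to $v(\prvarf{x})$ is isomorphic to $\algf{N}$ carrying the valuation $w$, so the value of $\sqftr{\prvarf{x}}{\ordf{a}}{\prflf{f}}$ in $\algf{M}$ is the copy of the value of $\prflf{f}$ in $\algf{N}$; meanwhile a direct computation of $\diamos[\ione]$ in the linear product shows the value of $\prflf{p}\prand\boxm[\ione]\prflf{p}$ in $\algf{M}$ to be $(\bau^{\algf{C}},\,$value of $\prflf{p}\prand\boxm[\ione]\prflf{p}$ in $\algf{N})$, which is not below that copy. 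Hence $\algf{M},v$ falsify $(\prflf{p}\prand\boxm[\ione]\prflf{p})\primp\sqftr{\prvarf{x}}{\ordf{a}}{\prflf{f}}$.

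The main obstacle is this last step: both that the linear product $\algf{C}\linprodfr\algf{N}$ with the chosen $\prvarf{x}$-value has its $\squotalg$-quotient equal to $\algf{N}$ exactly, and that the witness for $\prflf{p}\prand\boxm[\ione]\prflf{p}\prand\prnot\prflf{f}$ survives, rest on the fact --- an instance of L\"ob's theorem, i.e.\ of axioms~\ref{GLPA_Ax1} and~\ref{GLPA_Ax3} --- that $\diamos[\ione]$ never attains the value $\bau$; this is exactly why $\algf{N}$'s ``base'' must first be replaced by the relativization $\algf{C}$ on which $\prflf{p}$ holds outright, and it is here that the consistency hypothesis on $\prflf{p}\prand\boxm[\ione]\prflf{p}$ (equivalently, via Lemma~\ref{box_switch_imp}, on $\prflf{p}\prand\boxm[\itwo]\prflf{p}$) enters. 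The ``$\Rightarrow$'' direction, by contrast, is essentially the $\squotalg$-construction read off directly, and the bookkeeping there is routine.
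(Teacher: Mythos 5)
Your argument takes a genuinely different route from the paper (which proves ``$\Rightarrow$'' by a syntactic induction on derivations and ``$\Leftarrow$'' by gluing Kripke models $\mathcal{B}+\mathcal{A}$ using the $\mathfrak{B}_{\natf{n}}$-machinery of Beklemishev's paper), and most of it is sound: the observation that $\sqftrs{\prvarf{x}}{\ordf{a}}$ evaluates $\prflf{f}$ in the $\squotalg$-style relativization determined by $\boxo[\ione]{v(\prvarf{x})}$ is correct and carries the whole ``$\Rightarrow$'' direction. The gap is in ``$\Leftarrow$'', at the single step where you claim that a value $v(\prvarf{x})$ with $\boxo[\ione]{v(\prvarf{x})}=(\bau^{\algf{C}},\baz^{\algf{N}})$ exists in $\algf{C}\linprodfr\algf{N}$ ``precisely because $(\boxm[\ione]\bot)^{\algf{C}}\ne\baz$''. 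By the case split in the definition of $\diamos[\ione]$ on the linear product, such a value exists iff $\algf{C}$ contains some $\elf{a}\ne\baz$ with $\diamoi[\ione]{\algf{C}}{\elf{a}}=\baz$ (the other case would require $\diamoi[\ione]{\algf{N}}{\elf{b}}=\bau^{\algf{N}}$, which L\"ob's axiom forbids in a nontrivial $\algf{N}$). That existence is \emph{not} equivalent to $\boxoi[\ione]{\algf{C}}{\baz}\ne\baz$: the latter holds in \emph{every} nontrivial algebra (if $\diamo[\ione]{\bau}=\bau$ then Axiom~3 gives $\diamo[\ione]{\bau}=\diamo[\ione]{\bacmp{\diamo[\ione]{\bau}}}=\diamo[\ione]{\baz}=\baz$), so it carries no information, while the former can fail. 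Concretely, the powerset algebra of the frame $(\omega,>)$ with $R_{\ione}=\,>$ and all higher accessibility relations empty is a $\thrf{GLP}_{\ordf{a}}$-algebra in which $\boxo[\ione]{\baz}=\{0\}\ne\baz$ but $\diamo[\ione]{\elf{a}}\ne\baz$ for every nonempty $\elf{a}$; nothing prevents your $\algf{C}$ from being such an algebra (take $\prflf{p}$ globally true on it, so $e=\bau$ and $\algf{C}=\algf{K}$).

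The step is repairable, which is why I would call this a gap rather than a dead end: relativize $\algf{C}$ once more to $\elf{e}'=\boxoi[\ione]{\algf{C}}{\baz}$. One checks $\elf{e}'\le\boxo[\objf{y}]{\elf{e}'}$ for every index $\objf{y}$ (for $\objf{y}>\ione$ this is $\diamo[\objf{y}]{\diamo[\ione]{\bau}}\le\diamo[\ione]{\bau}$, from Axiom~4 and transitivity; for $\objf{y}=\ione$ it is transitivity), so all operators descend; the resulting nontrivial algebra has $\diamo[\ione]{\bau}=\diamo[\ione]{\elf{e}'}\baand\elf{e}'=\baz$ by L\"ob, hence \emph{every} element has $\diamos[\ione]$-image $\baz$, $\prflf{p}$ still evaluates to $\bau$ there, and the required $v(\prvarf{x})$ now exists. (A smaller loose end: your opening reduction does not actually dispose of the case where $\prvarf{x}$ occurs in $\prflf{p}$, which the lemma's hypotheses permit; there the value of $\prflf{p}$ in $\algf{M}$ would depend on the very $v(\prvarf{x})$ you are choosing to control the quotient, so this case needs a separate word.) With these repairs your algebraic proof goes through and is a legitimate alternative to the paper's model-theoretic one; as written, however, the crucial existence claim is false.
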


The proof of Lemma \ref{sqftr} uses technique that is significantly different from the technique of the other parts of the paper. We prove Lemma \ref{sqftr} in Section \ref{syntactical_facts}.
\begin{lemma} \label{sqtr} Suppose $\tembf{l}\colon\typef{A}\to\typef{B}$ is a normal embedding, $\constf{q}\not\in\typef{A}$ and $\algf{A}$ is an $(\typef{A}\typeplus \constf{q})$-algebra such that $\purfr^{ \constf{q}}(\algf{A})$ is a constant complete $\typef{A}$-algebra and $\constf{q}^{\algf{A}}\ne\bau^{\algf{A}}$. Then for a closed $\thrf{GLPA}_{\typef{B}}$-terms $\termf{t}_1$ and $\termf{t}_2$ we have
$$\fextfr^{\tembf{l}}(\algf{A})\models \termf{t}_1=\termf{t}_2\iff \fextfr^{\tembf{l}}(\algf{A})\models \sqtr{\constf{q}}{\typef{B}}{\termf{t}_1}=\sqtr{\constf{q}}{\typef{B}}{\termf{t}_2}$$
\end{lemma}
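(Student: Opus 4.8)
The plan is to push the problem down from the algebra $\fextfr^{\tembf{l}}(\algf{A})$ --- which, since $\purfr^{\constf{q}}(\algf{A})$, hence $\algf{A}$, is constant complete, denotes the $(\typef{B}\typeplus\constf{q})$-algebra $\fextfr^{\tembf{l}\typeplus\constf{q}}(\algf{A})$ --- to the term model of Section~3 and then to plain modal derivability in $\thrf{GLP}_{\ordf{b}}$, where Lemma~\ref{sqftr} does the real work. Throughout, $\tembf{l}\typeplus\constf{q}$ is a trivial embedding of $\typef{A}\typeplus\constf{q}$ into $\typef{B}\typeplus\constf{q}$, the index $\ione$ is the least modality of $\typef{B}$ and is not a modality of $\typef{A}$, and $\itwo$ is the least modality of $\typef{A}$; write $\sigma:=\sqtrs{\constf{q}}{\typef{B}\typeplus\constf{q}}$. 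By the explicit construction of free extensions of constant complete algebras, for closed $\thrf{GLPA}_{\typef{B}\typeplus\constf{q}}$-terms $\termf{u}_1,\termf{u}_2$ one has $\fextfr^{\tembf{l}}(\algf{A})\models\termf{u}_1=\termf{u}_2$ iff $\thrf{GLPA}_{\typef{B}\typeplus\constf{q}}+\foflf{f}\vdash\termf{u}_1=\termf{u}_2$ for some finite conjunction $\foflf{f}$ of closed $\thrf{GLPA}_{\typef{A}\typeplus\constf{q}}$-equations with $\algf{A}\models\foflf{f}$. Hence it suffices to prove, for every such $\foflf{f}$, that $\thrf{GLPA}_{\typef{B}\typeplus\constf{q}}+\foflf{f}\vdash\termf{t}_1=\termf{t}_2$ holds iff $\thrf{GLPA}_{\typef{B}\typeplus\constf{q}}+\foflf{f}\vdash\sigma\termf{t}_1=\sigma\termf{t}_2$ holds; the existential over $\foflf{f}$ on the two sides of the lemma then matches.

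I would next move to $\thrf{GLP}_{\ordf{b}}$. Writing $\trtfl{\foflf{f}}$ for the conjunction of the formulas $\trtfl{\termf{v}}\prequiv\trtfl{\termf{w}}$ over the equations $\termf{v}=\termf{w}$ of $\foflf{f}$, Lemma~\ref{fo-mod_corr} turns the two derivabilities into $\thrf{GLP}_{\ordf{b}}+\trtfl{\foflf{f}}\vdash\trtfl{\termf{t}_1}\prequiv\trtfl{\termf{t}_2}$ and $\thrf{GLP}_{\ordf{b}}+\trtfl{\foflf{f}}\vdash\trtfl{\sigma\termf{t}_1}\prequiv\trtfl{\sigma\termf{t}_2}$; and by the remark preceding Lemma~\ref{sqftr}, $\trtfl{\sigma\termf{t}_i}$ is $\thrf{GLP}_{\ordf{b}}$-provably equivalent to $\sqftr{\sppconst{\constf{q}}}{\ordf{b}}{\trtfl{\termf{t}_i}}$. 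Since $\ione$ is least in $\ordf{b}$, Lemma~\ref{glp_deduction} (both directions, with $\objf{x}:=\ione$) rewrites $\thrf{GLP}_{\ordf{b}}+\trtfl{\foflf{f}}\vdash\prflf{g}$ as $\thrf{GLP}_{\ordf{b}}\vdash(\trtfl{\foflf{f}}\prand\boxm[\ione]\trtfl{\foflf{f}})\primp\prflf{g}$ for any $\prflf{g}$ without free variables. Thus the goal becomes: $\thrf{GLP}_{\ordf{b}}\vdash(\trtfl{\foflf{f}}\prand\boxm[\ione]\trtfl{\foflf{f}})\primp(\trtfl{\termf{t}_1}\prequiv\trtfl{\termf{t}_2})$ is equivalent to $\thrf{GLP}_{\ordf{b}}\vdash(\trtfl{\foflf{f}}\prand\boxm[\ione]\trtfl{\foflf{f}})\primp(\sqftr{\sppconst{\constf{q}}}{\ordf{b}}{\trtfl{\termf{t}_1}}\prequiv\sqftr{\sppconst{\constf{q}}}{\ordf{b}}{\trtfl{\termf{t}_2}})$.

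Then I would apply Lemma~\ref{sqftr} taking (in its notation) $\prflf{p}$ to be $\trtfl{\foflf{f}}$, the consequent to be $\trtfl{\termf{t}_1}\prequiv\trtfl{\termf{t}_2}$, and the distinguished atom to be $\sppconst{\constf{q}}$, checking its three hypotheses. First, $\sppconst{\constf{q}}$ does not occur in $\trtfl{\termf{t}_1}\prequiv\trtfl{\termf{t}_2}$ because $\constf{q}\notin\typef{B}$. Second, $\boxm[\ione]$ does not occur in $\trtfl{\foflf{f}}$ because $\foflf{f}$ uses only modalities of $\typef{A}$, each $\ge_{\ordf{b}}\itwo>_{\ordf{b}}\ione$. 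Third, $\thrf{GLP}_{\ordf{b}}\nvdash(\trtfl{\foflf{f}}\prand\boxm[\itwo]\trtfl{\foflf{f}})\primp\sppconst{\constf{q}}$: indeed, all modalities of $\trtfl{\foflf{f}}$ and of $\sppconst{\constf{q}}$ are $\ge_{\ordf{b}}\itwo$, so Lemma~\ref{box_switch_imp} lets me replace $\boxm[\itwo]$ by $\boxm[\ione]$, and then Lemmas~\ref{glp_deduction} and~\ref{fo-mod_corr} reduce the statement to $\thrf{GLPA}_{\typef{B}\typeplus\constf{q}}+\foflf{f}\nvdash\constf{q}=\bau$, which holds since $\fextfr^{\tembf{l}\typeplus\constf{q}}(\algf{A})$ is a model of $\thrf{GLPA}_{\typef{B}\typeplus\constf{q}}+\foflf{f}$ (the embedding $\fextemb^{\tembf{l}\typeplus\constf{q}}_{\algf{A}}$ carries the equations of $\foflf{f}$, true in $\algf{A}$, over to it) in which $\constf{q}\ne\bau$ (that embedding is injective and $\constf{q}^{\algf{A}}\ne\bau^{\algf{A}}$). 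Lemma~\ref{sqftr} now identifies the first provability above with $\thrf{GLP}_{\ordf{b}}\vdash(\trtfl{\foflf{f}}\prand\boxm[\ione]\trtfl{\foflf{f}})\primp\sqftr{\sppconst{\constf{q}}}{\ordf{b}}{\trtfl{\termf{t}_1}\prequiv\trtfl{\termf{t}_2}}$, reading $\sqftr{\sppconst{\constf{q}}}{\ordf{b}}{\cdot}$ on the abbreviation $(\trtfl{\termf{t}_1}\primp\trtfl{\termf{t}_2})\prand(\trtfl{\termf{t}_2}\primp\trtfl{\termf{t}_1})$.

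It remains to match $\sqftr{\sppconst{\constf{q}}}{\ordf{b}}{\trtfl{\termf{t}_1}\prequiv\trtfl{\termf{t}_2}}$ with $\sqftr{\sppconst{\constf{q}}}{\ordf{b}}{\trtfl{\termf{t}_1}}\prequiv\sqftr{\sppconst{\constf{q}}}{\ordf{b}}{\trtfl{\termf{t}_2}}$. Unfolding the defining clauses, a purely propositional computation shows that $\sqftr{\sppconst{\constf{q}}}{\ordf{b}}{\trtfl{\termf{t}_1}\prequiv\trtfl{\termf{t}_2}}$ is equivalent to $\boxm[\ione]\sppconst{\constf{q}}\pror(\sqftr{\sppconst{\constf{q}}}{\ordf{b}}{\trtfl{\termf{t}_1}}\prequiv\sqftr{\sppconst{\constf{q}}}{\ordf{b}}{\trtfl{\termf{t}_2}})$; and since every formula in the range of $\sqftr{\sppconst{\constf{q}}}{\ordf{b}}{\cdot}$ has, by its defining clauses, the shape $\boxm[\ione]\sppconst{\constf{q}}\pror(\cdots)$, the disjunct $\boxm[\ione]\sppconst{\constf{q}}$ already implies $\sqftr{\sppconst{\constf{q}}}{\ordf{b}}{\trtfl{\termf{t}_1}}\prequiv\sqftr{\sppconst{\constf{q}}}{\ordf{b}}{\trtfl{\termf{t}_2}}$ over $\thrf{GLP}_{\ordf{b}}$ and may be deleted. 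This yields the equivalence displayed at the end of the second paragraph and finishes the proof. I expect the main obstacle to be the reduction itself --- in particular the hypothesis checking for Lemma~\ref{sqftr}, since the non-derivability of $\sppconst{\constf{q}}$ is exactly where the assumption $\constf{q}^{\algf{A}}\ne\bau^{\algf{A}}$ enters (through injectivity of $\fextemb$); note that because Lemma~\ref{sqftr} is a biconditional, both implications of the present lemma come out together.
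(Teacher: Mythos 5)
Your proposal is correct and follows essentially the same route as the paper's proof: reduce satisfaction in $\fextfr^{\tembf{l}}(\algf{A})$ to derivability over a finite conjunction of closed equations true in $\algf{A}$ via the term-model construction, pass to $\thrf{GLP}_{\ordf{b}}$ by Lemma~\ref{fo-mod_corr}, verify the hypotheses of Lemma~\ref{sqftr} (the non-derivability one coming from $\constf{q}^{\algf{A}}\ne\bau^{\algf{A}}$), and absorb the outer $\boxm[\ione]\sppconst{\constf{q}}$-disjunct to commute the translation with $\prequiv$. You in fact spell out the hypothesis check for Lemma~\ref{sqftr} (via Lemmas~\ref{box_switch_imp}, \ref{glp_deduction} and injectivity of $\fextemb^{\tembf{l}\typeplus\constf{q}}_{\algf{A}}$) more explicitly than the paper, whose corresponding display states non-derivability of $\trtfl{\termf{t}_1}\prequiv\trtfl{\termf{t}_2}$ where the condition actually needed is non-derivability of the atom for $\constf{q}$.
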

\begin{proof}Suppose  $\ordf{a}$ is the operator index set of $\typef{A}$  and $\ordf{b}$ is the operator index set for $\typef{B}$.

 Clearly, $\fextfr^{\tembf{l}}(\algf{A})\models \termf{t}_1=\termf{t}_2$ iff there exist closed $\thrf{GLPA}_{\typef{A}}$-terms  $\termf{w}_1,\termf{u}_1,\ldots,\termf{w}_{\natf{n}},\termf{u}_{\natf{n}}$ such that
$$\algf{A}\models \termf{w}_1=\termf{u}_1 \foand\ldots\foand\termf{w}_{\natf{n}}=\termf{u}_{\natf{n}}$$
and
\begin{equation}\label{sqtr_eq1}\thrf{GLPA}_{\typef{B}}\vdash (\termf{w}_1=\termf{u}_1 \foand\ldots\foand\termf{w}_{\natf{n}}=\termf{u}_{\natf{n}} )\foimp \termf{t}_1=\termf{t}_2.\end{equation}
(\ref{sqtr_eq1}) is equivalent to
$$\thrf{GLP}_{\ordf{b}}+(\trtfl{\termf{w}_1}\prequiv  \trtfl{\termf{u}_1}) \prand\ldots \prand (\trtfl{\termf{w}_{\natf{n}}}\prequiv  \trtfl{\termf{u}_{\natf{n}}})\vdash \trtfl{\termf{t}_1}\prequiv\trtfl{\termf{t}_2}.$$
Because $\constf{q}^{\algf{A}}\ne\bau^{\algf{A}}$, for every $\termf{w}_1,\termf{u}_1,\ldots,\termf{w}_{\natf{n}},\termf{u}_{\natf{n}}$ such that
$$\algf{A}\models \termf{w}_1=\termf{u}_1 \foand\ldots\foand\termf{w}_{\natf{n}}=\termf{u}_{\natf{n}}$$
we have
$$\thrf{GLP}_{\ordf{a}}+(\trtfl{\termf{w}_1}\prequiv  \trtfl{\termf{u}_1}) \prand\ldots \prand (\trtfl{\termf{w}_{\natf{n}}}\prequiv  \trtfl{\termf{u}_{\natf{n}}})\nvdash \trtfl{\termf{t}_1}\prequiv\trtfl{\termf{t}_2}$$
Hence from Lemma \ref{sqftr} it follows that (\ref{sqtr_eq1}) is equivalent to  
$$\thrf{GLP}_{\ordf{b}}+(\trtfl{\termf{w}_1}\prequiv  \trtfl{\termf{u}_1}) \prand\ldots \prand (\trtfl{\termf{w}_{\natf{n}}}\prequiv  \trtfl{\termf{u}_{\natf{n}}})\vdash  \sqftr{\ordf{b}}{\sppvar{\constf{q}}}{\trtfl{\termf{t}_1}\prequiv\trtfl{\termf{t}_2}}.$$
Clearly,  $$\thrf{GLP}_{\ordf{b}}\vdash \sqftr{\ordf{b}}{\sppvar{\constf{q}}}{\trtfl{\termf{t}_1}\prequiv\trtfl{\termf{t}_2}}\prequiv(\sqftr{\ordf{b}}{\sppvar{\constf{q}}}{\trtfl{\termf{t}_1}}\prequiv\sqftr{\ordf{b}}{\sppvar{\constf{q}}}{\trtfl{\termf{t}_2}}).$$
Therefore (\ref{sqtr_eq1}) is equivalent to
$$\thrf{GLPA}_{\typef{B}}\vdash \termf{w}_1=\termf{u}_1 \foand\ldots\foand\termf{w}_{\natf{n}}=\termf{u}_{\natf{n}} \foimp \sqtr{\typef{B}}{\constf{q}}{\termf{t}_1}=\sqtr{\typef{B}}{\constf{q}}{\termf{t}_2}.$$
Also, $\fextfr^{\tembf{l}}(\algf{A})\models \sqtr{\typef{B}}{\constf{q}}{\termf{t}_1}=\sqtr{\typef{B}}{\constf{q}}{\termf{t}_2}$ iff there exists closed $\thrf{GLPA}_{\typef{A}}$-terms  $\termf{w}_1,$ $\termf{u}_1,\ldots,\termf{w}_{\natf{n}},\termf{u}_{\natf{n}}$ such that
$$\algf{A}\models \termf{w}_1=\termf{u}_1 \foand\ldots\foand\termf{w}_{\natf{n}}=\termf{u}_{\natf{n}}$$
and
\begin{equation}\thrf{GLPA}_{\typef{B}}\vdash (\termf{w}_1=\termf{u}_1 \foand\ldots\foand\termf{w}_{\natf{n}}=\termf{u}_{\natf{n}} )\foimp \sqtr{\typef{B}}{\constf{q}}{\termf{t}_1}=\sqtr{\typef{B}}{\constf{q}}{\termf{t}_2}.\end{equation}
Henceforth the lemma holds.
\end{proof}

From Lemma \ref{sqtr} we conclude
\begin{corollary} \label{sqtr_cor} Suppose $\tembf{l}\colon\typef{A}\to\typef{B}$ is a normal type embedding, $\constf{q}\not\in\typef{A}$ and $\algf{A}$ is an $(\typef{A}\typeplus \constf{q})$-algebra such that $\purfr^{ \constf{q}}(\algf{A})$ is a constant complete $\typef{A}$-algebra and $\constf{q}^{\algf{A}}\ne\bau^{\algf{A}}$. Then $\squotalg^{\typef{B},\constf{q}}(\fextfr^{\tembf{l}\typeplus\constf{q}}(\algf{A}))$ is isomorphic to $\fextfr^{\tembf{l}}(\purfr^{\constf{q}}(\algf{A}))$.
\end{corollary}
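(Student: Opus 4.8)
The plan is to present both algebras by equivalence classes of closed terms and to reconcile the two presentations using Lemma~\ref{sqtr}, the $\sqtr$-translation being the device that transports identities across the freshly adjoined operator $\diamos[\ione]$. Since $\purfr^{\constf{q}}(\algf{A})$ is constant complete, so is $\algf{A}$, hence every free extension that occurs is a copy of the corresponding constant-complete construction by $\sim$-classes of closed terms, and all the hypotheses of Lemma~\ref{sqtr} are available for the normal type embedding $\tembf{l}\typeplus\constf{q}$.

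First I would reduce to a self-similarity statement. Put $\algf{C}=\fextfr^{\tembf{l}\typeplus\constf{q}}(\algf{A})$. Applying Lemma~\ref{fextfr_const_shift_c} with the one-element set $\{\constf{q}\}$ of constants, with $\algf{A}$ in the role of the strong constant extension of $\purfr^{\constf{q}}(\algf{A})$ by $\{\constf{q}\}$, the pair $(\purfr^{\constf{q}}(\algf{C}),\purfr^{\constf{q}}(\fextemb^{\tembf{l}\typeplus\constf{q}}_{\algf{A}}))$ satisfies the definition of $(\fextfr^{\tembf{l}}(\purfr^{\constf{q}}(\algf{A})),\fextemb^{\tembf{l}}_{\purfr^{\constf{q}}(\algf{A})})$; so we may take $\fextfr^{\tembf{l}}(\purfr^{\constf{q}}(\algf{A}))=\purfr^{\constf{q}}(\algf{C})$, and the corollary becomes $\squotalg^{\typef{B},\constf{q}}(\algf{C})\cong\purfr^{\constf{q}}(\algf{C})$. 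Every element of $\purfr^{\constf{q}}(\algf{C})$ is the value $\termf{t}^{\algf{C}}$ of a closed $\thrf{GLPA}_{\typef{B}}$-term $\termf{t}$ and every element of $\squotalg^{\typef{B},\constf{q}}(\algf{C})$ is the $\sim$-class of such a value, so it suffices to match these two presentations. I would send $\termf{t}^{\algf{C}}$ to the $\sim$-class of $(\sqtr{\constf{q}}{\typef{B}\typeplus\constf{q}}{\termf{t}})^{\algf{C}}$, with the candidate inverse sending the $\sim$-class of $\termf{t}^{\algf{C}}$ back to $\termf{t}^{\algf{C}}$. Well-definedness and injectivity of these assignments should come straight out of Lemma~\ref{sqtr}, which rephrases any comparison of $\sim$-classes of $\sqtr$-translates as the corresponding comparison of literal equalities in $\algf{C}$; the clauses of $\sqtr$, which prefix $\boxo[\ione]{\constf{q}}\baor{}$ to every subterm, are tailored so that on translated terms the passage to the $\sim$-quotient carries no information.

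The homomorphism property is then routine for the Boolean operations and for the operators $\diamos[\objf{i}]$ lying in the range of $\tembf{l}$, since for those $\sqtr$ commutes with the operator modulo the term $\boxo[\ione]{\constf{q}}\baor{}$, which is absorbed by the $\sim$-quotient. The one place that needs real work — and the main obstacle — is the operator $\diamos[\ione]$, whose interpretation in $\squotalg^{\typef{B},\constf{q}}(\algf{C})$ is the twisted rule $[\elf{x}]\mapsto[\diamo[\ione]{\elf{x}\baand\diamo[\ione]{\constf{q}^{\algf{C}}}}]$: there is no equational law of $\thrf{GLPA}_{\typef{B}}$ that reconciles this with the freely adjoined $\diamos[\ione]$ of $\algf{C}$, so the reconciliation has to use the free-extension structure of $\algf{C}$, which is exactly what Lemma~\ref{sqtr} packages through the clause $\sqtr{\constf{q}}{\typef{B}\typeplus\constf{q}}{\diamo[\ione]{\termf{t}}}=\boxo[\ione]{\constf{q}}\baor\diamo[\ione]{\sqtr{\constf{q}}{\typef{B}\typeplus\constf{q}}{\termf{t}}}$. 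I would therefore concentrate the proof on verifying this single commutation, applying Lemma~\ref{sqtr} to the closed $\thrf{GLPA}_{\typef{B}\typeplus\constf{q}}$-terms that occur on the two sides; once it is in place, the remaining checks — higher operators, constants, and bijectivity using both directions of Lemma~\ref{sqtr} — are straightforward, and the corollary follows.
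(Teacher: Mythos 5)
Your overall route --- identify $\fextfr^{\tembf{l}}(\purfr^{\constf{q}}(\algf{A}))$ with $\purfr^{\constf{q}}(\algf{C})$ via Lemma~\ref{fextfr_const_shift_c} and then match term presentations through the $\sqtr$-translation --- is the derivation the paper intends (the paper offers nothing beyond ``from Lemma~\ref{sqtr} we conclude''), and you correctly locate the twisted interpretation of $\diamos[\ione]$ in $\squotalg^{\typef{B},\constf{q}}(\algf{C})$ as the place where work is needed. But two steps you declare routine are exactly where the content lies, and as written neither goes through. First, Lemma~\ref{sqtr} controls \emph{literal} equality of translates in $\algf{C}$, whereas well-definedness and injectivity of your map require control of their $\sim$-classes in $\squotalg^{\typef{B},\constf{q}}(\algf{C})$, i.e.\ of their traces on $\diamo[\ione]{\constf{q}^{\algf{C}}}$. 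A translate $\sqtr{\constf{q}}{\typef{B}}{\termf{t}}$ lies above $\boxo[\ione]{\constf{q}}$, so it is determined by its trace on $\bacmp{\boxo[\ione]{\constf{q}}}=\diamo[\ione]{\bacmp{\constf{q}}}$ --- not on $\diamo[\ione]{\constf{q}}$, which is in general an incomparable element. So ``the passage to the $\sim$-quotient carries no information'' is not automatic: taken literally, the translation and the quotient relativize to different Boolean factors, and you need an argument (or a repair of a complement-level mismatch between the definitions of $\sqtr$ and of $\squotalg$ --- note that the hypothesis $\constf{q}^{\algf{A}}\ne\bau^{\algf{A}}$ is the nontriviality condition for the $\diamo[\ione]{\bacmp{\constf{q}}}$-part, not for the $\diamo[\ione]{\constf{q}}$-part) before Lemma~\ref{sqtr} says anything about $\squotalg$.

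Second, for the $\diamos[\ione]$-commutation you propose to ``apply Lemma~\ref{sqtr} to the closed terms that occur on the two sides'', but the identity to be verified, namely
$$\diamo[\ione]{(\sqtr{\constf{q}}{\typef{B}}{\termf{t}})^{\algf{C}}\baand\diamo[\ione]{\constf{q}^{\algf{C}}}}\baand \diamo[\ione]{\constf{q}^{\algf{C}}}=(\sqtr{\constf{q}}{\typef{B}}{\diamo[\ione]{\termf{t}}})^{\algf{C}}\baand\diamo[\ione]{\constf{q}^{\algf{C}}},$$
is not an instance of the biconditional of Lemma~\ref{sqtr}: that lemma converts an equation between two terms into the equation between their translates, while here one side is a translate and the other is the quotient's twisted operator applied to a translate. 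This is a fact about the free extension (it fails in a general $\typef{B}$-algebra) and needs its own proof, for instance from the linearity of $\algf{C}$ and the relations $\algl_{\ione}$, $\algs_{\ione}$. The same remark applies to surjectivity: Lemma~\ref{sqtr} does not tell you that every $\sim$-class is the class of the value of some translate. So the skeleton is the right one, but the three load-bearing verifications --- compatibility of the two relativizations, the $\diamos[\ione]$-commutation, and surjectivity --- are all deferred to a lemma that does not by itself deliver them.
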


Now we will prove Lemma \ref{sqqlpm_isomorphism}.  
\begin{proof}
We prove the lemma by induction on the length of $\overline{\algf{H}}$. From Lemma \ref{fextfr_const_shift} it follows that in the case of one element  $\overline{\algf{H}}$ the lemma holds. Suppose $\overline{\algf{H}}=(\algf{H}_1,\ldots,\algf{H}_{\natf{n}})$, where $\natf{n}\ge 2$. We denote by $\overline{\algf{G}}$ the sequence $(\algf{H}_2,\algf{H}_3,\ldots,\algf{H}_{\natf{n}})$ From Lemma \ref{sqqlp_shift1} it follows that we can consider only the case of constant complete algebra $\algf{A}$. We consider the algebra $$\purfr^{\csetf{C}}(\fextfquotfr^{\tembf{l}\typeplus \csetf{C},\constf{q}}(\algf{H}_1))\linprodfr \fextfr^{\tembf{l}}(\algf{A}).$$
 We denote the homomorphism $\idf_{\purfr^{\csetf{C}}(\fextfquotfr^{\tembf{l}\typeplus \csetf{C},\constf{q}}(\algf{H}_1))}\linprodfr \sqqlpm_{\overline{\algf{G}}}^{\extsqtf{E}}$ by $\funf{f}$, $$\funf{f}\colon \purfr^{\csetf{C}}(\fextfquotfr^{\tembf{l}\typeplus \csetf{C},\constf{q}}(\algf{H}_1))\linprodfr \fextfr^{\tembf{l}}(\algf{A})\to \purfr^{\csetf{C}}(\sqqlp^{\extsqtf{E}}_{\overline{\algf{H}}}).$$ From inductive hypothesis we know that $\sqqlpm_{\overline{\algf{G}}}^{\extsqtf{E}}$ is an isomorphism. Hence $\funf{f}$ is an isomorphism. Because $\algf{A}$ is constant complete, there is at most one homorphism from $\fextfr^{\tembf{l}}(\algf{A})$ to any given algebra. Hence in order to prove the inductive hypothesis  we only need to show that $\purfr^{\csetf{C}}(\fextfquotfr^{\tembf{l}\typeplus \csetf{C},\constf{q}}(\algf{H}_1))\linprodfr \fextfr^{\tembf{l}}(\algf{A})$ and $\fextfr^{\tembf{l}}(\algf{A})$ are isomorphic.

 We denote the $(\typef{A}\typeplus\constf{q})$-algebra $\purfr^{\csetf{C}}(\algf{H}_1)$ by $\algf{B}$. Clearly, $\purfr^{\constf{q}}(\algf{B})=\algf{A}$. Because $\fextfr^{\tembf{l}\typeplus \constf{q}}(\algf{B})$ is linear, the algebra $\fextfr^{\tembf{l}}(\algf{A})$  is isomorphic to
$$\fquotalg^{\typef{B},\constf{q}}(\fextfr^{\tembf{l}\typeplus \constf{q}}(\algf{B}))\linprodfr \squotalg^{\typef{B},\constf{q}}(\fextfr^{\tembf{l}\typeplus \constf{q}}(\algf{B})).$$
Obviously, $\fquotalg^{\typef{B},\constf{q}}(\fextfr^{\tembf{l}\typeplus \constf{q}}(\algf{B}))$ is isomorphic to $\purfr^{\csetf{C}}(\fextfquotfr^{\tembf{l}\typeplus \csetf{C},\constf{q}}(\algf{H}_1))$. From Corollary \ref{sqtr_cor} it follows that $\fextfr^{\tembf{l}}(\purfr^{\constf{q}}(\algf{B}))$ is isomorphic to $\squotalg^{\typef{B},\constf{q}}(\fextfr^{\tembf{l}\typeplus \constf{q}}(\algf{B}))$. Hence $\squotalg^{\typef{B},\constf{q}}(\fextfr^{\tembf{l}\typeplus \constf{q}}(\algf{B}))$ is isomorphic to $\fextfr^{\tembf{l}}(\algf{A})$. Therefore $\purfr^{\csetf{C}}(\fextfquotfr^{\tembf{l}\typeplus \csetf{C},\constf{q}}(\algf{H}_1))\linprodfr \fextfr^{\tembf{l}}(\algf{A})$ and $\fextfr^{\tembf{l}}(\algf{A})$ are isomorphic.\end{proof}

Now we will prove Lemma \ref{ext_sqext_prop}.
\begin{proof} From Lemma \ref{sqqlp_shift1} it follows that the general case of the lemma follows from the case of constant complete algebra $\algf{A}$. Further, we will assume that $\algf{A}$ is constant complete. We choose a finite sequence of closed $\thrf{GLPA}_{\typef{B}}$-terms $\termf{t}_1,\ldots,\termf{t}_{\natf{n}}$ such that for every $\constf{c}\in\csetf{C}$ we have $\algf{B}\models \constf{c}=\termf{t}_{\natf{i}}$ for some $\natf{i}$ from $1$ to $\natf{n}$ and every proper subterm of every $\termf{t}_{\natf{i}}$ is graphically equal to $\termf{t}_{\natf{j}}$ for some $\natf{j}$. Now we choose some set of fresh constants $\csetf{E}=\{\constf{e}_1,\ldots,\constf{e}_{\natf{n}}\}$.  We consider the strong constant extension $\algf{B}'$ of $\fextfr^{\tembf{l}}(\algf{A})$ by the set of constants $\csetf{E}$ with interpretations $\constf{e}_{\natf{i}}^{\algf{B}'}=\termf{t}_{\natf{i}}^{\fextfr^{\tembf{l}}(\algf{A})}$. We are going to find  an extension sequence $\overline{\algf{S}}$ of the type $\extsqtf{F}=(\tembf{l},\algf{A},\constf{q},\csetf{E})$ such that $\sqext_{\overline{\algf{S}}}=\algf{B}'$. We denote by $\tembf{r}\colon \typef{A}\typeplus \csetf{C}\to\typef{A}\typeplus \csetf{E}$ such that it maps symbols from $\typef{A}$ to themselves and $\constf{c}\in\csetf{C}$ to $\constf{e}_{\natf{i}}$, where $\natf{i}$  is a number from $1$ to $\natf{n}$ such that  $\algf{B}\models \termf{t}_{\natf{i}}=\constf{c}$. Obviously from such a $\overline{\algf{S}}$ we can construct the required $\overline{\algf{H}}$ by applying $\purfr^{\tembf{r}}$ to elements of $\overline{\algf{S}}$. 

For $\natf{i}$ from $0$ to $\natf{n}$ we denote by $\csetf{E}_{\natf{i}}$ the set $\{\constf{e}_1,\ldots,\constf{e}_{\natf{i}}\}$. By induction on $\natf{i}$ from $0$ to $\natf{n}$ we prove that there exists an extension sequence $\overline{\algf{S}^{\natf{i}}}=(\algf{S}^{\natf{i}}_1,\ldots,\algf{S}^{\natf{i}}_{\natf{k}_{\natf{i}}})$ of the type $\extsqtf{F}_{\natf{i}}=(\tembf{l},\algf{A},\constf{q},\csetf{E}_{\natf{i}})$ such that 
\begin{itemize}
\item $\sqext_{\overline{\algf{S}^{\natf{i}}}}=\purfr^{\csetf{E}\setminus \csetf{E}_{\natf{i}}}(\algf{B}')$,
\item for every $\natf{j}<\natf{i}$ we have $\algf{S}^{\natf{i}}_{\natf{l}}\models \constf{e}_{\natf{j}}=\bau \foor  \constf{e}_{\natf{j}}=\baz$, for all $\natf{l}$ from $1$ to $\natf{k}_{\natf{i}}$  if $\termf{t}_{\natf{j}}$ is $\diamo[\ione]{\termf{w}}$ for some $\termf{w}$ .
\end{itemize} 
From the inductive hypothesis for $\natf{i}=\natf{n}$ it follows that required $\overline{\algf{S}}$ exists and further it follows that the required $\overline{\algf{H}}$ exists. The case of $\natf{i}=0$ is trivial. Now we prove the inductive hypothesis for $\natf{i}+1$ using the inductive hypothesis for $\natf{i}$. Suppose $\termf{t}_{\natf{i}+1}$ doesn't starts with $\diamos[\ione]$. Then we can find a closed $\thrf{GLPA}_{\typef{B}\typeplus \csetf{E}_{\natf{i}}}$-term $\termf{w}$ such that there are at most one non-constant functional symbol in $\termf{w}$, there are no $\diamos[\ione]$ in $\termf{w}$, and $\algf{B}'\models \termf{w}=\constf{e}_{\natf{i}+1}$.  We give $\overline{\algf{S}^{\natf{i}+1}}$ as following:
\begin{itemize}
\item  $\natf{k}_{\natf{i}+1}=\natf{k}_{\natf{i}}$,
\item for all $\natf{j}$ from $1$ to $\natf{k}_{\natf{i}}$, the algebra $\algf{S}^{\natf{i}+1}_{\natf{j}}$ is the strong constant extension of $\algf{S}^{\natf{i}}_{\natf{j}}$ by $\constf{e}_{\natf{i}+1}$ with the interpretation $\constf{e}_{\natf{i}+1}^{\algf{S}^{\natf{i}+1}_{\natf{j}}}=\termf{w}^{\algf{S}^{\natf{i}}_{\natf{j}}}$.
\end{itemize}
Simple check shows that for $\overline{\algf{S}^{\natf{i}+1}}$ the induction hypothesis holds.

Further we assume that $\termf{t}_{\natf{i}+1}$ starts with $\diamos[\ione]$. Obviously, we have $\purfr^{\csetf{E}\setminus\csetf{E}_{\natf{i}}}(\algf{B}')\models \diamos[\ione]{\constf{c}}=\constf{e}_{\natf{i}+1}$ for some constant symbol $\constf{c}\in \typef{A}\typeplus\csetf{E}_{\natf{i}}$. We consider  the minimal $\natf{u}$ from $1$ to $\natf{k}_{\natf{i}}$ such that $\fextfquotfr^{\tembf{l}\typeplus\csetf{E}_{\natf{i}},\constf{q}}(\algf{S}^{\natf{i}}_{\natf{u}})\not\models \constf{c}=\bau$; if there are no such a number $\natf{u}$ then we give $\overline{\algf{S}^{\natf{i}+1}}$ as following:
\begin{itemize}
\item  $\natf{k}_{\natf{i}+1}=\natf{k}_{\natf{i}}$,
\item for all $\natf{j}$ from $1$ to $\natf{k}_{\natf{i}}$ algebra $\algf{S}^{\natf{i}+1}_{\natf{j}}$ is the strong constant extension of $\algf{S}^{\natf{i}}_{\natf{j}}$ by $\constf{e}_{\natf{i}+1}$ with interpretation $\constf{e}_{\natf{i}+1}^{\algf{S}^{\natf{i}+1}_{\natf{j}}}=\bau^{\algf{S}^{\natf{i}}_{\natf{j}}}$.
\end{itemize}
Simple check shows that for $\overline{\algf{S}^{\natf{i}+1}}$ the induction hypothesis holds.

Further we assume that we have found such a number $\natf{u}$. If  $\fextfquotfr^{\tembf{l}\typeplus\csetf{E}_{\natf{i}},\constf{q}}(\algf{S}^{\natf{i}}_{\natf{u}})\models \diamo[\ione]{\constf{c}}=\baz$ the we give $\overline{\algf{S}^{\natf{i}+1}}$ as following:
\begin{itemize}
\item  $\natf{k}_{\natf{i}+1}=\natf{k}_{\natf{i}}$,
\item for all $\natf{j}$ from $1$ to $\natf{u}$, the algebra $\algf{S}^{\natf{i}+1}_{\natf{j}}$ is the strong constant extension of $\algf{S}^{\natf{i}}_{\natf{j}}$ by $\constf{e}_{\natf{i}+1}$ with interpretation $\constf{e}_{\natf{i}+1}^{\algf{S}^{\natf{i}+1}_{\natf{j}}}=\baz^{\algf{S}^{\natf{i}}_{\natf{j}}}$;
\item for all $\natf{j}$ from $\natf{u}+1$ to $\natf{k}_{\natf{i}}$, the algebra $\algf{S}^{\natf{i}+1}_{\natf{j}}$ is the strong constant extension of $\algf{S}^{\natf{i}}_{\natf{j}}$ by $\constf{e}_{\natf{i}+1}$ with interpretation $\constf{e}_{\natf{i}+1}^{\algf{S}^{\natf{i}+1}_{\natf{j}}}=\bau^{\algf{S}^{\natf{i}}_{\natf{j}}}$.
\end{itemize}
Simple check shows that for $\overline{\algf{S}^{\natf{i}+1}}$ the induction hypothesis holds.

Further we assume that $\fextfquotfr^{\tembf{l}\typeplus\csetf{E}_{\natf{i}},\constf{q}}(\algf{S}^{\natf{i}}_{\natf{u}})\models \diamo[\ione]{\constf{c}}\ne\baz$. We give $\overline{\algf{S}^{\natf{i}+1}}$ as following:
\begin{itemize}
\item  $\natf{k}_{\natf{i}+1}=\natf{k}_{\natf{i}}+1$,
\item  for all $\natf{j}$ from $1$ to $\natf{u}-1$, the algebra $\algf{S}^{\natf{i}+1}_{\natf{j}}$ is the strong constant extension of $\algf{S}^{\natf{i}}_{\natf{j}}$ by $\constf{e}_{\natf{i}+1}$ with interpretation $\constf{e}_{\natf{i}+1}^{\algf{S}^{\natf{i}+1}_{\natf{j}}}=\baz^{\algf{S}^{\natf{i}}_{\natf{j}}}$,
\item $\algf{S}^{\natf{i}+1}_{\natf{u}}$ is the strong constant extension of $\purfr^{\constf{q}}(\algf{S}^{\natf{i}}_{\natf{j}-1})$ by $\{\constf{q},\constf{e}_{\natf{i}+1}\}$ with interpretations $\constf{q}^{\algf{S}^{\natf{i}+1}_{\natf{u}}}=\constf{c}^{\algf{S}^{\natf{i}}_{\natf{u}}}$ and $\constf{e}_{\natf{i}+1}^{\algf{S}^{\natf{i}+1}_{\natf{u}}}=\baz^{\algf{S}^{\natf{i}}_{\natf{u}}}$,
\item for all $\natf{j}$ from $\natf{u}+1$ to $\natf{k}_{\natf{i}}+1$ the algebra $\algf{S}^{\natf{i}+1}_{\natf{j}}$ is the strong constant extension of $\algf{S}^{\natf{i}}_{\natf{j}-1}$ by $\constf{e}_{\natf{i}+1}$ with interpretation $\constf{e}_{\natf{i}+1}^{\algf{S}^{\natf{i}+1}_{\natf{j}}}=\bau^{\algf{S}^{\natf{i}}_{\natf{j}-1}}$.
\end{itemize}

Clearly, we have $\sqqlp_{\overline{\algf{S}^{\natf{i}+1}}}\models \constf{e}_{\natf{i}+1}=\termf{t}$. By induction on $\natf{j}$ we check that for all $\natf{j}$ from $1$ to $\natf{i}$ we have $\sqqlp_{\overline{\algf{S}^{\natf{i}+1}}}\models \constf{e}_{\natf{j}}=\termf{t}_{\natf{j}}$; from this and the previous sentence it will follows that $\sqext_{\overline{\algf{S}^{\natf{i}+1}}}=\purfr^{\csetf{E}\setminus\csetf{E}_{\natf{i}+1}}(\algf{B}')$.  The case of all $\termf{t}_{\natf{j}}$ but $\termf{t}_{\natf{j}}$ that starts with $\diamos[\ione]$ trivially holds. Further we will assume that $\termf{t}_{\natf{j}}$  starts with $\diamos[\ione]$. From the inductive hypothesis of the second induction it follows that  $\sqqlp_{\overline{\algf{S}^{\natf{i}+1}}}\models \diamo[\ione]{\constf{b}}=\termf{t}_{\natf{j}}$ for some constant symbol $\constf{b}$ that is not $\constf{e}_{\natf{i}+1}$. We find the minimal $\natf{o}$ such that $\fextfquotfr^{\tembf{l}\typeplus\csetf{E}_{\natf{i}},\constf{q}}(\algf{S}^{\natf{i}}_{\natf{o}})\not\models \constf{b}=\baz$. We consider two cases: 1. $\natf{o}<\natf{u}$ and 2. $\natf{o}\ge\natf{u}$ or $\natf{o}$ is undefined. Suppose $\natf{o}<\natf{u}$. From the induction hypothesis of the first induction  we have $\algf{S}^{\natf{i}}_{\natf{l}}\models \constf{e}_{\natf{j}}=\baz$ for all $\natf{l}$ from $1$ to $\natf{o}$ and  $\algf{S}^{\natf{i}}_{\natf{l}}\models \constf{e}_{\natf{j}}=\bau$ for all $\natf{l}$ from $\natf{o}$ to $\natf{k}_{\natf{i}}$. Hence the inductive hypothesis for this $\natf{j}$ holds. Now suppose  $\natf{o}\ge\natf{u}$ or $\natf{o}$ is undefined. Here we will assume that $\natf{u}\ne\natf{k}_{\natf{i}}$; the case of $\natf{u}=\natf{k}_{\natf{i}}$ is almost the same. We have $\fextfquotfr^{\tembf{l}\typeplus \csetf{E}_{\natf{i}},\constf{q}}(\algf{S}^{\natf{i}}_{\natf{u}})\models \diamo[\ione]{\constf{b}}=\baz$. Hence $\fextfr^{\tembf{l}\typeplus \csetf{E}_{\natf{i}}\typeplus\constf{q}}(\algf{S}^{\natf{i}}_{\natf{u}})\models \diamo[\ione]{\constf{q}}\ge \diamo[\ione]{\constf{b}}$. We also know that $\fextfr^{\tembf{l}\typeplus \csetf{E}_{\natf{i}}\typeplus\constf{q}}(\algf{S}^{\natf{i}}_{\natf{u}})\not\models \diamo[\ione]{\constf{q}} \ge \diamo[\ione]{\constf{c}}$. From the linearity of $\fextfr^{\tembf{l}\typeplus \csetf{E}_{\natf{i}}\typeplus\constf{q}}(\algf{S}^{\natf{i}}_{\natf{u}})$ it follows that  $$\fextfr^{\tembf{l}\typeplus \csetf{E}_{\natf{i}}\typeplus\constf{q}}(\algf{S}^{\natf{i}}_{\natf{u}})\models \diamo[\ione]{\constf{c}} \algl_{\ione} \diamo[\ione]{\constf{q}}$$ and  $$\fextfr^{\tembf{l}\typeplus \csetf{E}_{\natf{i}}\typeplus\constf{q}}(\algf{S}^{\natf{i}}_{\natf{u}})\models \diamo[\ione]{\constf{q}} \algl_{\ione} \diamo[\ione]{\constf{b}} \foor \diamo[\ione]{\constf{q}} \algs_{\ione} \diamo[\ione]{\constf{b}}.$$  Thus $$\fextfr^{\tembf{l}\typeplus \csetf{E}_{\natf{i}}\typeplus\constf{q}}(\algf{S}^{\natf{i}}_{\natf{u}})\models \constf{c} \algl_{\ione}\constf{b}.$$ Hence $$\fextfr^{\tembf{l}\typeplus \csetf{E}_{\natf{i}}\typeplus\constf{q}}(\algf{S}^{\natf{i}}_{\natf{u}})\models \diamo[\ione]{\constf{c}} \ge\constf{b}.$$  Therefore $\fextfquotfr^{\tembf{l}\typeplus \csetf{E}_{\natf{i}+1},\constf{q}}(\algf{S}^{\natf{i}+1}_{\natf{u}})\models\constf{b}=\baz$. From the last we conclude the inductive hypothesis. This finishes the proof of our second  inductive claim. It also finishes the proof of the first inductive claim and the lemma. 
\end{proof}

\section{Elementary Theories of $\thrf{GLP}$-Algebras}
We will assume that all types $\typef{A}=(\csetf{A},\ordf{a})$ we consider are effective in the following sense:
\begin{itemize}
\item sets $\csetf{A}$ and $|\ordf{a}|$ are enumerable,
\item $<_{\ordf{a}}$ is decidable relation.
\end{itemize}

Suppose $\typef{A}$ is a type with a minimal operator symbol $\diamos[\objf{m}]$.

For a term $\termf{t}$ we denote by  $\termf{t}^0$ the term $\bacmp{\termf{t}}$ and by $\termf{t}^1$ the term $\termf{t}$. We denote by $\slangat{\typef{A}}$ the class of all formulas in the language of $\typef{A}$-algebras of the form $\termf{t}_1^{\natf{p}_1}\baand(\termf{t}_2^{\natf{p}_2}\baand\ldots(\termf{t}_{\natf{n-1}}^{\natf{p}_{\natf{n}-1}}\baand\termf{t}_{\natf{n}}^{\natf{p}_{\natf{n}}})\ldots)=\baz$, where 
\begin{enumerate}
\item for every $\natf{i}$ from $1$ to $\natf{n}$ the number $\natf{p}_{\natf{i}}\in\{0,1\}$;
\item  for every $\natf{i}$ from $1$ to $\natf{n}$ the term $\termf{t}_{\natf{i}}$ is either $\termf{w}_{\natf{i}}$ or $\boxo[\objf{x}]{\termf{w}_{\natf{i}}}$, where $\boxos[\objf{x}]\in \typef{A}$ and $\termf{w}_{\natf{i}}$ is either a constant symbol from $\typef{A}$ or a first-order variable;
\item for $0<\natf{i}<\natf{j}\le \natf{n}$ terms $\termf{t}_{\natf{i}}$ and $\termf{t}_{\natf{j}}$  are graphically nonidentical.
\end{enumerate}

We call a type $\typef{A}$ {\it finite} if there are only finitely many symbols in $\typef{A}$.

We denote by $\slangatc{\typef{A}}$ the set of all closed formulas from $\slangat{\typef{A}}$. Note that for a finite type $\typef{A}$ there are only finitely many formulas in $\slangatc{\typef{A}}$.

We denote by $\slangate{\typef{A}}$ the class of all formulas in the language of $\typef{A}$-algebras of the form $\termf{t}_1=\termf{t}_2$ such that for all  $\diamos[\objf{a}]\in \typef{A}$ every occurrence of $\boxos[\objf{a}]$ in $\termf{t}_1$ and $\termf{t}_2$ is of the form $\diamo[\objf{a}]{\termf{w}}$, where $\termf{w}$ is either a constant from $\typef{A}$ or a first-order variable. 

We denote by $\slangprn{\typef{A}}$ the class of all formulas of the form $\prflf{f}[\prvarf{x}_1,\ldots,\prvarf{x}_{\natf{n}}/\foflf{p}_1,\ldots,\foflf{p}_{\natf{n}}]$, where $\prflf{f}$ is a propositional formula in disjunctive normal form, $\{\prvarf{x}_1,\ldots,\prvarf{x}_{\natf{n}}\}$ is the set of all propositional variable that lies in $\prflf{f}$, formulas  $\foflf{p}_1,\ldots,\foflf{p}_{\natf{n}}\in\slangat{\typef{A}}$, and for all $1\le \natf{i}<\natf{j}\le \natf{n}$ formulas $\foflf{p}_{\natf{i}}$ and $\foflf{p}_{\natf{j}}$ are graphically nonidentical. We denote by $\slangprnc{\typef{A}}$ the set of all closed formulas from $\slangprn{\typef{A}}$. Obviously, for a finite type $\typef{A}$ there are only finitely many formulas in $\slangprnc{\typef{A}}$.

We call a propositional formula $\prflf{f}$ a {\it positive propositional formula} if the only connectives used in $\prflf{f}$ are $\foand$ and $\foor$.  Note that we consider $\bot$ as a positive formula.

A {\it quantifier prefix} $\prefixf{P}$ is a string of the form $\quant_1 \fovarf{x}_1\ldots \quant_{\natf{n}} \fovarf{x}_{\natf{n}}$, where every $\quant_{\natf{i}}$ is either $\forall$ or $\exists$ and $\natf{n}\ge 0$. For a quantifier prefix $\quant \fovarf{x} \prefixf{P}$ we denote by $\slangn{\typef{A}}{\quant \fovarf{x} \prefixf{P}}$ the class of all formulas of the form $\prflf{f}[\prvarf{x}_1,\ldots,\prvarf{x}_{\natf{n}}/\quant \fovarf{x}\foflf{p}_1,\ldots,\quant \fovarf{x}\foflf{p}_{\natf{n}}]$, where $\prflf{f}$ is a positive propositional formula in disjunctive normal form, $\{\prvarf{x}_1,\ldots,\prvarf{x}_{\natf{n}}\}$ is the set of all propositional variable that lies in $\prflf{f}$, and formulas $\foflf{p}_1,\ldots,\foflf{p}_{\natf{n}}\in\slangn{\typef{A}}{\prefixf{P}}$ are pairwise graphically nonidentical. We denote by $\slangnc{\typef{A}}{\prefixf{P}}$ the set of all closed formulas from $\slangn{\typef{A}}{\prefixf{P}}$. Obviously, for a finite type $\typef{A}$ and a quantifier prefix $\prefixf{P}$ there are only finitely many  formulas in $\slangn{\typef{A}}{\prefixf{P}}$.

Obviously, the following three lemmas holds:

\begin{lemma} \label{formula_normalization1}Suppose $\typef{A}$ is a type. Then for a quantifier-less $\foflf{f}$ from $\lang{\thrf{GLPA}_{\typef{A}}}$  such that every atomic subformula of $\foflf{f}$ is from $\slangate{\typef{A}}$ we can effectively find a $\foflf{f}'\in\slangprn{\typef{A}}$ such that $\foflf{f}'$ is  $\thrf{GLPA}_{\typef{A}}$-equivalent to $\foflf{f}$ and $\freevariables{\foflf{f}'}\subset\freevariables{\foflf{f}}$.
\end{lemma}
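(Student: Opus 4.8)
The plan is to prove Lemma~\ref{formula_normalization1} by a straightforward effective normalization in two stages, each preserving $\thrf{GLPA}_{\typef{A}}$-equivalence and creating no new variables; as $\foflf{f}$ is quantifier-less, its free variables are exactly the variables occurring in it, so the bound $\freevariables{\foflf{f}'}\subset\freevariables{\foflf{f}}$ will be automatic.

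First I would treat each atomic subformula of $\foflf{f}$ separately. Since it lies in $\slangate{\typef{A}}$, it is an equation $\termf{t}_1=\termf{t}_2$ in which every modal operator occurs applied directly to an atom, i.e.\ to a constant of $\typef{A}$ or a first-order variable. In a Boolean algebra this is equivalent to $(\termf{t}_1\baand\bacmp{\termf{t}_2})\baor(\bacmp{\termf{t}_1}\baand\termf{t}_2)=\baz$, so I would put the left-hand term into disjunctive normal form $\termf{d}_1\baor\cdots\baor\termf{d}_{\natf{k}}$ with respect to the plain atoms and the modal terms $\boxo[\objf{x}]{\termf{w}}$ occurring in it, each $\termf{d}_{\natf{j}}$ a meet of literals; then the equation is $\thrf{GLPA}_{\typef{A}}$-equivalent to the conjunction of the equations $\termf{d}_{\natf{j}}=\baz$. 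After deleting repeated literals in each $\termf{d}_{\natf{j}}$, dropping any $\termf{d}_{\natf{j}}$ that contains both a literal and its complement (then $\termf{d}_{\natf{j}}=\baz$ holds identically and the conjunct is redundant), re-associating the remaining meet to the right and ordering its literals so they are graphically distinct, each surviving $\termf{d}_{\natf{j}}=\baz$ is a member of $\slangat{\typef{A}}$; here one uses that a literal over those atoms is either such an atom or its complement, and $\bacmp{\boxo[\objf{x}]{\termf{w}}}=\diamo[\objf{x}]{\bacmp{\termf{w}}}$, which is exactly the shape $\slangat{\typef{A}}$ permits. An equation that collapses to something identically true or false is recorded as such and folded into the propositional skeleton below. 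Because Boolean normalization only rearranges the structure above the atoms already present, no modal operator is ever applied to a compound term, and no variable absent from $\termf{t}_1=\termf{t}_2$ appears.

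Second, $\foflf{f}$ with every atomic subformula replaced in this way is a propositional combination, under the connectives already occurring in $\foflf{f}$, of finitely many pairwise graphically distinct formulas $\foflf{p}_1,\ldots,\foflf{p}_{\natf{n}}\in\slangat{\typef{A}}$ together with the truth constants. Assigning a fresh propositional variable $\prvarf{x}_{\natf{i}}$ to each $\foflf{p}_{\natf{i}}$, this combination has the form $\prflf{g}[\prvarf{x}_1,\ldots,\prvarf{x}_{\natf{n}}/\foflf{p}_1,\ldots,\foflf{p}_{\natf{n}}]$ for a propositional $\prflf{g}$. I would convert $\prflf{g}$ effectively to a disjunctive normal form $\prflf{h}$, discard from the list each $\foflf{p}_{\natf{i}}$ whose variable no longer occurs in $\prflf{h}$, and relabel the survivors $\foflf{p}_1,\ldots,\foflf{p}_{\natf{m}}$; since the conversion introduces no new propositional variables, $\foflf{f}'=\prflf{h}[\prvarf{x}_1,\ldots,\prvarf{x}_{\natf{m}}/\foflf{p}_1,\ldots,\foflf{p}_{\natf{m}}]$ lies in $\slangprn{\typef{A}}$, is $\thrf{GLPA}_{\typef{A}}$-equivalent to $\foflf{f}$ (each rewriting preserved equivalence, and substituting the $\foflf{p}_{\natf{i}}$ into a propositionally equivalent skeleton preserves it as well), and satisfies $\freevariables{\foflf{f}'}\subset\freevariables{\foflf{f}}$. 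All steps are algorithmic, so $\foflf{f}'$ is obtained effectively.

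The argument is entirely bookkeeping, which is why the lemma can be stated as obvious; the few points that need attention — and which I would consider the ``hard'' part — are matching, after the disjunctive normal form step, the exact associativity, literal-distinctness and atom-restriction conventions of $\slangat{\typef{A}}$; handling the identically true and identically false equations and clauses cleanly via the truth constants, the empty clause and the empty disjunction inside $\prflf{h}$; and ensuring that the propositional variables of the final normal form are exactly those still attached to the pairwise graphically distinct $\foflf{p}_{\natf{i}}$.
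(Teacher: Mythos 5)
Your proof is correct and supplies exactly the routine Boolean/propositional normalization that the paper omits (the lemma is stated there without proof, under ``Obviously, the following three lemmas holds''). The one point worth flagging is the paper's own looseness about whether the modal atoms of $\slangat{\typef{A}}$ are box- or diamond-terms applied to a constant or variable; your use of the literal notation $\termf{t}^{0}/\termf{t}^{1}$ to absorb complements is the right way to reconcile this, and the remaining steps are the bookkeeping you describe.
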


\begin{lemma} \label{slangn_closedness} Suppose $\typef{A}$ is a type. Then for every quantifier prefix $\prefixf{P}$, every positive propositional formula $\prflf{f}(\prvarf{x}_1,\ldots,\prvarf{x}_{\natf{n}})$, and formulas $\foflf{p}_1,\ldots,\foflf{p}_{\natf{n}}\in\slangn{\typef{A}}{\prefixf{P}}$ we can effectively find $\foflf{p}\in\slangn{\typef{A}}{\prefixf{P}}$ that is $\thrf{GLPA}_{\typef{A}}$-equivalent to $\prflf{f}[\prvarf{x}_1,\ldots,\prvarf{x}_{\natf{n}}/\foflf{p}_1,\ldots,\foflf{p}_{\natf{n}}]$ and $\freevariables{\foflf{p}}\subset\freevariables{\foflf{p}_1}\cup\ldots\cup \freevariables{\foflf{p}_{\natf{n}}}$.
\end{lemma}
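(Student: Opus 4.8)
\textit{(Sketch)} The plan is to induct on the length of $\prefixf{P}$, mirroring the recursion that defines $\slangn{\typef{A}}{\prefixf{P}}$; the base case of the empty prefix and the inductive step run along exactly the same lines, and, as will become clear, the inductive step does not actually need the induction hypothesis. The only genuine content is a purely propositional normalisation: substitute the $\foflf{p}_{\natf{i}}$ into the positive propositional formula $\prflf{f}$, distribute conjunctions over disjunctions to return to disjunctive normal form, and then tidy up the list of atoms. Each step preserves $\thrf{GLPA}_{\typef{A}}$-equivalence (being a Boolean-algebra identity), keeps the formula positive, and is an explicit syntactic operation, so the output is $\thrf{GLPA}_{\typef{A}}$-equivalent to $\prflf{f}[\prvarf{x}_1,\ldots,\prvarf{x}_{\natf{n}}/\foflf{p}_1,\ldots,\foflf{p}_{\natf{n}}]$ and is computed effectively.

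In the base case the atoms of $\foflf{p}_1,\ldots,\foflf{p}_{\natf{n}}$ are members of $\slangat{\typef{A}}$. After substituting into $\prflf{f}$ and distributing, the result is a positive propositional formula in disjunctive normal form whose atoms are among the finitely many $\slangat{\typef{A}}$-atoms occurring in the $\foflf{p}_{\natf{i}}$. I would then delete repeated occurrences of the same atom within each monomial (idempotence of $\foand$) and let $\foflf{h}_1,\ldots,\foflf{h}_{\natf{m}}$ enumerate, one graphical representative apiece, the $\slangat{\typef{A}}$-atoms that actually occur; the formula then takes the shape $\prflf{g}[\prvarf{y}_1,\ldots,\prvarf{y}_{\natf{m}}/\foflf{h}_1,\ldots,\foflf{h}_{\natf{m}}]$ with $\prflf{g}$ a positive disjunctive normal form whose propositional variables are exactly $\prvarf{y}_1,\ldots,\prvarf{y}_{\natf{m}}$ and with $\foflf{h}_1,\ldots,\foflf{h}_{\natf{m}}$ pairwise graphically distinct --- i.e. an element of the class, the degenerate case being covered by the convention noted earlier that $\bot$ counts as positive. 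Since the construction never introduces an atom not already present in some $\foflf{p}_{\natf{i}}$, we get $\freevariables{\foflf{p}}\subseteq\freevariables{\foflf{p}_1}\cup\ldots\cup\freevariables{\foflf{p}_{\natf{n}}}$.

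For the inductive step, write $\prefixf{P}=\quant\fovarf{x}\prefixf{P}'$. Each $\foflf{p}_{\natf{i}}$ is a positive disjunctive normal form whose atoms have the form $\quant\fovarf{x}\foflf{r}_{\natf{i},\natf{j}}$ with $\foflf{r}_{\natf{i},\natf{j}}\in\slangn{\typef{A}}{\prefixf{P}'}$; so substituting $\foflf{p}_{\natf{i}}$ for $\prvarf{x}_{\natf{i}}$ in $\prflf{f}$ produces a positive propositional combination of the atoms $\quant\fovarf{x}\foflf{r}_{\natf{i},\natf{j}}$ (a composite of positive formulas is positive). Distributing to disjunctive normal form and then, exactly as in the base case, deleting repeated atoms within monomials and keeping one graphical representative $\quant\fovarf{x}\foflf{s}_1,\ldots,\quant\fovarf{x}\foflf{s}_{\natf{m}}$ of each atom that occurs, one obtains a positive disjunctive normal form over the pairwise graphically distinct atoms $\quant\fovarf{x}\foflf{s}_1,\ldots,\quant\fovarf{x}\foflf{s}_{\natf{m}}$ with each $\foflf{s}_{\natf{l}}\in\slangn{\typef{A}}{\prefixf{P}'}$; hence it lies in $\slangn{\typef{A}}{\prefixf{P}}$. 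No recursive closure argument is invoked, because the inner subformulas $\foflf{s}_{\natf{l}}$ are supplied as members of $\slangn{\typef{A}}{\prefixf{P}'}$ to begin with, so the induction only unfolds the definition. The free-variable bound follows exactly as before from $\freevariables{\quant\fovarf{x}\foflf{r}_{\natf{i},\natf{j}}}\subseteq\freevariables{\foflf{p}_{\natf{i}}}$.

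There is really no hard step here. The only thing requiring a little care is the bookkeeping: after normalising one must re-index so that the propositional skeleton of the output mentions exactly the atoms that actually occur (identifying graphically equal atoms, while leaving atoms that are merely $\thrf{GLPA}_{\typef{A}}$-equivalent untouched), and one must keep in mind the degenerate case in which the formula reduces to $\bot$. Everything else is routine.
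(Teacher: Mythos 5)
Your argument is correct, and in fact the paper offers no proof of this lemma at all --- it is one of three statements dismissed with ``Obviously, the following three lemmas holds'' --- so your substitution--distribution--reindexing normalisation is exactly the routine verification the author had in mind. The only quibble is in your base case: members of the empty-prefix class $\slangprn{\typef{A}}$ may contain negated $\slangat{\typef{A}}$-atoms, so the result of substituting them into a positive $\prflf{f}$ need not itself be positive; this is harmless, since $\slangprn{\typef{A}}$ admits arbitrary disjunctive normal forms, but the word ``positive'' should be dropped there.
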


\begin{lemma} \label{formula_normalization2} Suppose $\typef{A}$ is a type. Then for every closed formula from $\lang{\thrf{GLPA}_{\typef{A}}}$ we can effectively find quantifier prefix $\prefixf{P}$ and formula $\foflf{f}\in\slangnc{\typef{A}}{\prefixf{P}}$ such that $\foflf{f}$ will be $\thrf{GLPA}_{\typef{A}}$-equivalent to the given formula.
\end{lemma}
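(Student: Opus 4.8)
The plan is to proceed in two stages. First I reduce the atomic subformulas of the given closed formula to $\slangate{\typef{A}}$-shape, so that Lemma~\ref{formula_normalization1} applies at the leaves; then I run a structural induction on formulas that pushes quantifiers outwards, block by block, into the nested positive-disjunctive-normal-form template that defines $\slangn{\typef{A}}{\prefixf{P}}$. Throughout I read the empty-prefix class $\slangn{\typef{A}}{}$ as $\slangprn{\typef{A}}$, which is the intended base of the hierarchy.

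Stage~1 (flattening the atoms). Given a closed $\foflf{f}\in\lang{\thrf{GLPA}_{\typef{A}}}$, as long as some atomic subformula $\termf{s}=\termf{t}$ contains a subterm $\diamo[\objf{a}]{\termf{u}}$ in which $\termf{u}$ is neither a constant symbol nor a variable, pick such an occurrence with $\termf{u}$ innermost (so $\termf{u}$ contains no further box subterm with non-atomic argument), take a fresh variable $\fovarf{z}$, and replace the whole atomic formula by $\exists\fovarf{z}\,(\fovarf{z}=\termf{u}\foand(\termf{s}=\termf{t})[\diamo[\objf{a}]{\fovarf{z}}/\diamo[\objf{a}]{\termf{u}}])$. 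Since $\fovarf{z}$ is forced to take the value of $\termf{u}$, this is $\thrf{GLPA}_{\typef{A}}$-equivalent to the original formula, keeps the formula closed, and strictly decreases the total number of box subterms with non-atomic argument; hence the process terminates in a closed $\foflf{f}_0$, equivalent to $\foflf{f}$, all of whose atomic subformulas lie in $\slangate{\typef{A}}$ (the new equations $\fovarf{z}=\termf{u}$ are in $\slangate{\typef{A}}$ by the innermost choice).

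Stage~2 (structural induction). I establish, effectively and simultaneously, two claims about $\thrf{GLPA}_{\typef{A}}$-formulas $\foflf{h}$ all of whose atomic subformulas lie in $\slangate{\typef{A}}$: (a) $\foflf{h}$ is $\thrf{GLPA}_{\typef{A}}$-equivalent to some $\foflf{g}\in\slangn{\typef{A}}{\prefixf{P}}$ with $\freevariables{\foflf{g}}\subseteq\freevariables{\foflf{h}}$; and (b) the negation of any $\slangn{\typef{A}}{\prefixf{P}}$-formula is equivalent to a $\slangn{\typef{A}}{\negprefix{\prefixf{P}}}$-formula with the same free variables, where $\negprefix{\prefixf{P}}$ flips every quantifier of $\prefixf{P}$. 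Claim (b) goes by induction on the length of $\prefixf{P}$: push the negation through the outermost positive DNF by de Morgan, flip the outermost quantifier on each resulting conjunct and disjunct, apply the induction hypothesis to the inner $\slangn{\typef{A}}{}$-subformulas, and re-expand the positive CNF so obtained back to a positive DNF; the base case is that $\slangprn{\typef{A}}$ is closed under negation up to equivalence (immediate, or by Lemma~\ref{formula_normalization1}). The observation that drives claim (a) is that for $\foflf{g}\in\slangn{\typef{A}}{\prefixf{P}}$, a fresh variable $\fovarf{x}$ and any quantifier $\quant$, the one-atom formula $\quant\fovarf{x}\,\foflf{g}$ is a bona fide (trivially positive, trivially in DNF) member of $\slangn{\typef{A}}{\quant\fovarf{x}\prefixf{P}}$ equivalent to $\foflf{g}$; iterating this, $\slangn{\typef{A}}{\prefixf{P}}$ embeds up to equivalence into $\slangn{\typef{A}}{\prefixf{P}'}$ whenever $\prefixf{P}$ is a subsequence of $\prefixf{P}'$. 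Now induct on $\foflf{h}$: an atomic $\foflf{h}$ is handled by Lemma~\ref{formula_normalization1}, landing in $\slangprn{\typef{A}}=\slangn{\typef{A}}{}$; for $\foflf{h}=\foflf{h}_1\foand\foflf{h}_2$ or $\foflf{h}_1\foor\foflf{h}_2$, invoke the induction hypothesis for $\foflf{h}_1,\foflf{h}_2$, rename the bound variables of the two resulting prefixes apart, promote both formulas to the common prefix $\prefixf{P}_1\prefixf{P}_2$ by the embedding above, and collapse the positive Boolean combination via Lemma~\ref{slangn_closedness}; for $\foflf{h}=\foflf{h}_1\foimp\foflf{h}_2$ first rewrite as $\lnot\foflf{h}_1\foor\foflf{h}_2$ using (b); for $\foflf{h}=\lnot\foflf{h}_1$ use (b); and for $\foflf{h}=\quant\fovarf{x}\,\foflf{h}_1$ apply the induction hypothesis to $\foflf{h}_1$ and merely prepend $\quant\fovarf{x}$, which preserves the free-variable containment. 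Applying (a) to the closed $\foflf{f}_0$ yields $\foflf{g}\in\slangn{\typef{A}}{\prefixf{P}}$ with $\freevariables{\foflf{g}}=\emptyset$, i.e.\ $\foflf{g}\in\slangnc{\typef{A}}{\prefixf{P}}$, and $\foflf{g}$ is $\thrf{GLPA}_{\typef{A}}$-equivalent to $\foflf{f}$; every step is visibly effective.

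I expect the only delicate point to be the bookkeeping around claim (b) and around quantifier prefixes: verifying that de Morgan followed by re-DNF-ing, and promoting two formulas with unrelated prefixes to a common prefix, both stay strictly inside the nested positive-DNF templates defining the classes $\slangn{\typef{A}}{\prefixf{P}}$ while preserving the free-variable inclusions. Everything else is bounded-depth rewriting of normal forms, so no genuine obstacle is anticipated — consistent with the lemma being stated as immediate.
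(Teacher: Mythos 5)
The paper offers no proof of this lemma (it is grouped under ``Obviously, the following three lemmas holds''), and your argument is a correct, complete rendering of the standard normal-form induction it implicitly relies on: flatten non-atomic box arguments with vacuously quantified fresh variables so that Lemma~\ref{formula_normalization1} applies at the atoms, then push quantifiers outward while absorbing negations at the innermost $\slangprn{\typef{A}}$ level via your claim (b). The only step deserving one more line is the embedding of $\slangn{\typef{A}}{\prefixf{P}}$ into $\slangn{\typef{A}}{\prefixf{P}'}$ for a subsequence $\prefixf{P}$ of $\prefixf{P}'$: iterating your one-atom observation only \emph{prepends} quantifiers, whereas promoting to the common prefix $\prefixf{P}_1\prefixf{P}_2$ also requires \emph{appending} (or inserting) them, which needs the same vacuous-quantification trick applied at the corresponding inner level of the nested template rather than at the top --- immediate, but not literally an iteration of the stated observation.
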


\begin{lemma} Suppose $\tembf{l}\colon \typef{A}\to \typef{B}$ is a normal type embedding. Then for every  constant $\constf{c}\in\typef{A}$ and formula $\foflf{f}\in\slangatc{\typef{A}}$   we can effectively find  $\cmpf{SimpQuotTr}^{\tembf{l}}(\foflf{f},\constf{c})\in\slangprnc{\typef{A}}$ such that for every $\typef{A}$-algebra $\algf{A}$ and it's extension $\algf{A}'$ by some $\constf{q}$, $\algf{A}'\models \constf{c}=\constf{q}$ we have 
$$\algf{A}\models \cmpf{SimpQuotTr}^{\tembf{l}}(\foflf{f},\constf{c})\iff \fextfquotfr^{\tembf{l},\constf{q}}(\algf{A}')\models \foflf{f}$$
\end{lemma}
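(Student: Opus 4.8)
The plan is to unwind the quotient construction, reduce ``$\fextfquotfr^{\tembf{l},\constf{q}}(\algf{A}')\models\foflf{f}$'' to a single inequality inside the free extension $\fextfr^{\tembf{l}}(\algf{A})$, transport that inequality to a condition on $\algf{A}$ itself, and finally put the condition into the normal form $\slangprnc{\typef{A}}$.

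First I note that every $\foflf{f}\in\slangatc{\typef{A}}$ has the shape $\elf{v}=\baz$ for a closed $\thrf{GLPA}_{\typef{A}}$-term $\elf{v}=\termf{t}_1^{\natf{p}_1}\baand\cdots\baand\termf{t}_{\natf{n}}^{\natf{p}_{\natf{n}}}$, which in particular mentions only constants of $\typef{A}$ and operators whose index lies in $\typef{A}$, hence neither the minimal operator $\diamos[\ione]$ of $\typef{B}$ nor $\constf{q}$. Since $\fextfquotfr^{\tembf{l},\constf{q}}(\algf{A}')=\fquotalg^{\typef{B},\constf{q}}(\fextfr^{\tembf{l}\typeplus\constf{q}}(\algf{A}'))$, since $\fextemb^{\tembf{l}\typeplus\constf{q}}_{\algf{A}'}$ is a homomorphism of $(\typef{A}\typeplus\constf{q})$-algebras, and since the congruence defining $\fquotalg^{\typef{B},\constf{q}}$ is compatible with every $\typef{A}$-operation, $\elf{v}$ takes in $\fextfquotfr^{\tembf{l},\constf{q}}(\algf{A}')$ the value $[\fextemb^{\tembf{l}\typeplus\constf{q}}_{\algf{A}'}(\elf{v}^{\algf{A}})]$, while $\baz$ is the class of all elements $\le\diamo[\ione]{\constf{q}}$. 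Using $\algf{A}'\models\constf{c}=\constf{q}$, and then Lemma~\ref{fextfr_const_shift_c} to discard $\constf{q}$ and identify $\purfr^{\constf{q}}(\fextfr^{\tembf{l}\typeplus\constf{q}}(\algf{A}'))$ with $\fextfr^{\tembf{l}}(\algf{A})$, one obtains
$$\fextfquotfr^{\tembf{l},\constf{q}}(\algf{A}')\models\foflf{f}\iff\fextfr^{\tembf{l}}(\algf{A})\models\fextemb^{\tembf{l}}_{\algf{A}}(\elf{v}^{\algf{A}})\le\diamo[\ione]{\fextemb^{\tembf{l}}_{\algf{A}}(\constf{c}^{\algf{A}})},$$
that is, $\fextemb^{\tembf{l}}_{\algf{A}}(\constf{c}^{\algf{A}})\algl_{\ione}\fextemb^{\tembf{l}}_{\algf{A}}(\elf{v}^{\algf{A}})$ in $\fextfr^{\tembf{l}}(\algf{A})$.

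The second step is to prove that this is equivalent to $\algf{A}\models\elf{v}\le\diamo[\itwo]{\constf{c}}$. One direction is immediate: $\diamo[\itwo]{x}\le\diamo[\ione]{x}$ by Axiom~\ref{GLPA_Ax4} and $\fextemb^{\tembf{l}}_{\algf{A}}$ is a homomorphism. For the converse I would first reduce to a constant complete $\algf{A}$ (both sides are $\typef{A}$-reduct invariant, so one may pass to a constant complete strong constant extension), then use the presentation of $\fextfr^{\tembf{l}}(\algf{A})$ by equivalence classes of closed $\thrf{GLPA}_{\typef{B}}$-terms modulo $\thrf{GLPA}_{\typef{B}}$-provability from a conjunction of true closed $\thrf{GLPA}_{\typef{A}}$-equations, pass to $\thrf{GLP}$ by Lemma~\ref{fo-mod_corr}, absorb the hypotheses together with the outer $\boxm[\ione]$ into a single $\boxm[\ione]$ by the deduction theorem (Lemma~\ref{glp_deduction}), replace that $\boxm[\ione]$ by $\boxm[\itwo]$ by Lemma~\ref{box_switch_imp} (every modality that remains comes from $\typef{A}$, hence is $\ge_{\ordf{b}}\itwo$), undo the absorption, descend to $\thrf{GLP}_{\ordf{a}}$ by conservativity (Lemma~\ref{mod_add_cons}), and return to $\thrf{GLPA}_{\typef{A}}$ by Lemma~\ref{fo-mod_corr}. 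When $\algf{A}$ is linear this converse is instead immediate from Corollary~\ref{algl_fextemb}, which gives $\fextemb^{\tembf{l}}_{\algf{A}}(\constf{c}^{\algf{A}})\algl_{\ione}\fextemb^{\tembf{l}}_{\algf{A}}(\elf{v}^{\algf{A}})\iff\constf{c}\algl_{\itwo}\elf{v}$.

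Combining the two steps, $\fextfquotfr^{\tembf{l},\constf{q}}(\algf{A}')\models\foflf{f}$ holds iff $\algf{A}\models\elf{v}\le\diamo[\itwo]{\constf{c}}$, a closed quantifier-free $\thrf{GLPA}_{\typef{A}}$-statement. Using the $\thrf{GLP}$-algebra identities of Lemma~\ref{GLPA_Eq1} to flatten nested operators and then Lemmas~\ref{formula_normalization1} and~\ref{slangn_closedness}, I would rewrite it effectively as the required $\cmpf{SimpQuotTr}^{\tembf{l}}(\foflf{f},\constf{c})\in\slangprnc{\typef{A}}$; effectiveness is clear throughout, since every manipulation is syntactic and, $\typef{A}$ being effective, the finitely many cases are enumerable. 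I expect the real difficulty to lie in the converse half of the second step: it is precisely a ``freeness'' statement about $\fextfr^{\tembf{l}}(\algf{A})$ — that adjoining the new bottom operator forces nothing of $\fextemb^{\tembf{l}}_{\algf{A}}(\algf{A})$ below $\diamo[\ione]{\fextemb^{\tembf{l}}_{\algf{A}}(\constf{c}^{\algf{A}})}$ beyond $\diamo[\itwo]{\fextemb^{\tembf{l}}_{\algf{A}}(\constf{c}^{\algf{A}})}$ — and it rests on the deferred syntactic facts about $\thrf{GLP}$, above all Lemma~\ref{box_switch_imp}, together with the bookkeeping needed to keep the output formula inside $\slangprnc{\typef{A}}$.
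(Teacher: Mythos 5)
Your proof is correct and follows essentially the same route as the paper's: the paper likewise takes $\cmpf{SimpQuotTr}^{\tembf{l}}(\foflf{f},\constf{c})$ to be a normalized form of $\constf{c}\algl_{\itwo}\termf{t}$ (i.e.\ $\termf{t}\le\diamo[\itwo]{\constf{c}}$) and verifies the same chain $\algf{A}\models \constf{c}\algl_{\itwo}\termf{t}\iff \fextfr^{\tembf{l}\typeplus\constf{q}}(\algf{A}')\models \constf{c}\algl_{\ione}\termf{t}\iff \fquotalg^{\typef{B},\constf{q}}(\fextfr^{\tembf{l}\typeplus\constf{q}}(\algf{A}'))\models \termf{t}=\baz$. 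The only real difference is that you spell out the justification of the middle equivalence (via Lemmas \ref{fo-mod_corr}, \ref{glp_deduction}, \ref{box_switch_imp}, \ref{mod_add_cons}, or Corollary \ref{algl_fextemb} in the linear case), a step the paper dismisses with ``Clearly''.
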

\begin{proof}  Suppose $\foflf{f}$ is from $\slangatc{\typef{A}}$ is of the form $\termf{t}=\baz$. We put $$\foflf{f}'\formeq\constf{c}\algl_{\itwo}\termf{t}.$$
Clearly, $\foflf{f}'$ lies in $\slangate{\typef{A}}$.
Using Lemma \ref{formula_normalization1} obtain $\foflf{f}''\in\slangprnc{\typef{A}}$ that is $\thrf{GLPA}_{\typef{A}}$-provable equivalent of $\foflf{f}'$. The formula $\foflf{f}''$ is the value of  $\cmpf{SimpQuotTr}^{\tembf{l}}(\foflf{f},\constf{c})$.

Let us chechk that $$\algf{A}\models \constf{c}\algl_{\itwo}\termf{t}\iff \fextfquotfr^{\tembf{l},\constf{q}}(\algf{A}')\models \foflf{f}.$$Suppose $\algf{A}$ is an $\typef{A}$-algebra and $\algf{A}'$ is an extension of $\algf{A}$ by some $\constf{q}$ such that $\algf{A}'\models \constf{c}=\constf{q}$. Clearly, we have 
$$ \begin{aligned}\algf{A}\models  \constf{c}\algl_{\itwo}\termf{t} &\iff \fextfr^{\tembf{l}\typeplus \constf{q}}(\algf{A}')\models \constf{c}\algl_{\itwo}\termf{t}  \\ & \iff \fextfr^{\tembf{l}\typeplus \constf{q}}(\algf{A}')\models \constf{c}\algl_{\ione}\termf{t} \\ &\iff \fquotalg^{\typef{B},\constf{q}}(\fextfr^{\tembf{l}\typeplus \constf{q}}(\algf{A}'))\models  \termf{t}=\baz.\end{aligned}$$\end{proof}

\begin{lemma} \label{diag_frext_translation} Suppose $\tembf{l}\colon \typef{A}\to \typef{B}$ is a normal type embedding. Then for every $\foflf{f}\in\slangprnc{\typef{B}}$ we can effectively find a closed $\cmpf{DiagFrExtTr}^{\tembf{l}}(\foflf{f})\in\slangprnc{\typef{A}}$ such that for every linear $\typef{A}$-algebra $\algf{A}$
$$\fextfr^{\tembf{l}}(\algf{A})\models \foflf{f}\iff \algf{A}\models \cmpf{DiagFrExtTr}^{\tembf{l}}(\foflf{f}).$$
\end{lemma}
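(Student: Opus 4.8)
The plan is to peel the statement apart in a few stages, ending in a finite case computation. \emph{First}, I would reduce from a general $\foflf{f}\in\slangprnc{\typef{B}}$ to a single atom. Such an $\foflf{f}$ has the shape $\prflf{f}[\prvarf{x}_{1},\dots,\prvarf{x}_{\natf{n}}/\foflf{p}_{1},\dots,\foflf{p}_{\natf{n}}]$ with $\prflf{f}$ a propositional formula in DNF and $\foflf{p}_{1},\dots,\foflf{p}_{\natf{n}}\in\slangatc{\typef{B}}$; if for each atom $\foflf{p}_{\natf{i}}$ I can construct some $\psi_{\natf{i}}\in\slangprnc{\typef{A}}$ with $\fextfr^{\tembf{l}}(\algf{A})\models\foflf{p}_{\natf{i}}\iff\algf{A}\models\psi_{\natf{i}}$ for all linear $\algf{A}$, then $\prflf{f}[\prvarf{x}_{1},\dots,\prvarf{x}_{\natf{n}}/\psi_{1},\dots,\psi_{\natf{n}}]$ is a quantifier-free formula whose atomic subformulas lie in $\slangate{\typef{A}}$, and Lemma \ref{formula_normalization1} turns it into an equivalent closed member of $\slangprnc{\typef{A}}$; since the truth value of $\psi_{\natf{i}}$ in $\algf{A}$ matches that of $\foflf{p}_{\natf{i}}$ in $\fextfr^{\tembf{l}}(\algf{A})$, this is the desired $\cmpf{DiagFrExtTr}^{\tembf{l}}(\foflf{f})$. \emph{Second}, since neither $\slangprnc{\typef{A}}$- nor $\slangprnc{\typef{B}}$-satisfaction changes when one adjoins fresh constants, Lemma \ref{fextfr_const_shift_c} lets me assume $\algf{A}$ is constant complete.

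\emph{Third}, I would bring a single atom ``$\termf{t}=\baz$'' into a normal form. In $\fextfr^{\tembf{l}}(\algf{A})$ the constants and the subterms $\boxo[\objf{x}]{\constf{c}}$ of $\termf{t}$ with $\boxos[\objf{x}]\in\typef{A}$ are $\fextemb^{\tembf{l}}_{\algf{A}}$-images of elements of $\algf{A}$, so the only new ingredients are the terms $\boxo[\ione]{\constf{c}}$. Using the De Morgan laws, constant completeness, Axiom \ref{GLPA_Ax2} (to gather $\diamos[\ione]$ over joins) and Lemma \ref{GLPA_Eq1}, and case-splitting over which of the constants appearing under $\diamos[\ione]$ is $\algl_{\itwo}$-greatest (by Corollaries \ref{quasi-word_comparability_2} and \ref{algl_fextemb} this determines which $\diamo[\ione]{\cdot}$ is $\le$-least, so the conjunction of all of them equals that one), one rewrites $\termf{t}$, in $\fextfr^{\tembf{l}}(\algf{A})$ and under each case hypothesis (a conjunction of $\slangat{\typef{A}}$-atoms), to the form $\elf{a}\baand\diamo[\ione]{\constf{d}}\baand\bacmp{\diamo[\ione]{\constf{e}}}$ with $\elf{a},\constf{d},\constf{e}\in\typef{A}$; if no $\boxo[\ione]{\cdot}$ occurs at all, $\termf{t}$ is an element of $\algf{A}$ and ``$\termf{t}=\baz$'' already lies in $\slangatc{\typef{A}}$. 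So ``$\termf{t}=\baz$'' becomes ``$\elf{a}\baand\diamo[\ione]{\constf{d}}\le\diamo[\ione]{\constf{e}}$'' in $\fextfr^{\tembf{l}}(\algf{A})$.

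\emph{Fourth}, I would evaluate this inequality. By Lemma \ref{fext_linear_preserve} the algebra $\fextfr^{\tembf{l}}(\algf{A})$ is linear, so, using Lemma \ref{algl_properties}, Lemma \ref{algl_lin_properties} and Corollary \ref{algl_fextemb}, I split once more on the relation between $\constf{d}$, $\constf{e}$ and $\baz$: if $\constf{d}=\baz$, or $\constf{d}\algs_{\itwo}\constf{e}$, or $\constf{e}\algl_{\itwo}\constf{d}$, the inequality holds identically and the answer for that case is just its defining $\slangat{\typef{A}}$-condition; the one substantial case is $\constf{d}\algl_{\itwo}\constf{e}$, with a handful of degenerate values of $\constf{d}$ (notably $\constf{d}=\bau$) dealt with by separate but similar computations. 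In that case $\diamo[\ione]{\constf{e}}\le\diamo[\ione]{\constf{d}}$, so quotienting $\fextfr^{\tembf{l}\typeplus\constf{q}}(\algf{A}')$ — where $\algf{A}'$ is $\algf{A}$ with a fresh $\constf{q}$ interpreted by $\constf{d}$ — by $\squotalg^{\typef{B},\constf{q}}$, which by Corollary \ref{sqtr_cor} is isomorphic to $\fextfr^{\tembf{l}}(\algf{A})$, reduces the inequality to ``$\elf{a}\le\diamo[\ione]{\constf{e}}$'' in $\fextfr^{\tembf{l}}(\algf{A})$; and that, after adjoining another fresh constant interpreted by $\constf{e}$, says precisely that the class of $\elf{a}$ is zero in the corresponding $\fextfquotfr$-algebra, i.e.\ that the $\slangatc{\typef{A}}$-formula ``$\elf{a}=\baz$'' holds there, which the preceding lemma (on $\cmpf{SimpQuotTr}$) converts into a member of $\slangprnc{\typef{A}}$ about $\algf{A}$. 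Collecting the finitely many case answers disjunctively and normalizing once more with Lemma \ref{formula_normalization1} yields $\psi_{\foflf{p}}$, and every step is effective.

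I expect the third and fourth stages to be the real work. The main obstacle is to verify carefully, via the quasi-word machinery of Corollary \ref{quasi-word_comparability_2}, Lemmas \ref{quasi-word_comparability} and \ref{quasi-word_equiv}, and Proposition \ref{quasi-word_combination}, that $\termf{t}$ genuinely collapses to $\elf{a}\baand\diamo[\ione]{\constf{d}}\baand\bacmp{\diamo[\ione]{\constf{e}}}$ under the stated case hypotheses, and — in the case $\constf{d}\algl_{\itwo}\constf{e}$ — that passing through $\squotalg$ and $\fextfquotfr$ actually removes every occurrence of $\boxos[\ione]$ so that the $\slangatc{\typef{A}}$-hypothesis of the preceding lemma is met; this is where Corollary \ref{sqtr_cor} and Lemma \ref{GLPA_Eq1} carry the argument, and where the degenerate sub-cases need individual attention.
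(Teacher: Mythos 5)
Your plan is correct in substance, but it reaches the translation by a genuinely different route in the decisive step. The paper never normalizes the atom: it keeps all the $\diamos[\ione]$-literals, fixes the $\algl_{\itwo}/\algs_{\itwo}$ order type of the occurring constants (your case split is exactly its family of formulas $\foflf{p}_{\natf{k}}$), and then uses the extension-sequence machinery ($\sqqlp_{\overline{\algf{S}}}$ and Lemma \ref{sqqlpm_isomorphism}) to present $\fextfr^{\tembf{l}}(\algf{A})$ as an iterated linear product indexed by the $\algs_{\itwo}$-blocks; in the $\natf{i}$-th factor every $\diamo[\ione]{\constf{e}}$ becomes $\baz$ or $\bau$ according to the position of $\constf{e}$'s block, so the whole atom is evaluated componentwise and each component is pulled back to $\algf{A}$ by $\cmpf{SimpQuotTr}$. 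You instead collapse the atom to a single interval condition $\elf{a}\baand\diamo[\ione]{\constf{d}}\le\diamo[\ione]{\constf{e}}$ and dispose of it with one application of $\squotalg$/Corollary \ref{sqtr_cor}; this buys a shorter case analysis at the cost of the normalization in your third stage, while the paper's version handles an arbitrary Boolean shape of the atom uniformly and is what its Lemma \ref{frext_translation} later iterates. Two points in your fourth stage need to be made explicit. First, the passage through $\squotalg^{\typef{B},\constf{q}}$ identifies the class of the \emph{element} $\diamo[\ione]{\constf{e}}$ with the quotient algebra's operator applied to $[\constf{e}]$; this is legitimate precisely because $\constf{d}\algl_{\itwo}\constf{e}$ gives $\constf{e}\le\diamo[\ione]{\constf{d}}$, whence $\diamo[\ione]{\constf{e}\baand\diamo[\ione]{\constf{d}}}=\diamo[\ione]{\constf{e}}$, and because the isomorphism of Corollary \ref{sqtr_cor} commutes with the canonical maps from the constant complete $\algf{A}$ (so it sends $[\constf{e}]$ to $\constf{e}$); without these checks the ``reduces to $\elf{a}\le\diamo[\ione]{\constf{e}}$'' step is only an assertion. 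Second, once you have $\elf{a}\le\diamo[\ione]{\constf{e}}$ with $\elf{a}$ and $\constf{e}$ both in the $\fextemb^{\tembf{l}}_{\algf{A}}$-image, your detour through a second $\fextfquotfr$-algebra is unnecessary: this inequality is just $\constf{e}\algl_{\ione}\elf{a}$, which Corollary \ref{algl_fextemb} converts directly into $\constf{e}\algl_{\itwo}\elf{a}$ in $\algf{A}$, i.e.\ into an $\slangate{\typef{A}}$-condition to be normalized by Lemma \ref{formula_normalization1}. With those two repairs, and with the routine handling of the degenerate subcases you already flag (absent positive or negative $\boxos[\ione]$-literals, $\constf{d}=\baz$, $\constf{e}=\baz$), your argument goes through.
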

\begin{proof} Using Lemma \ref{formula_normalization1} we conclude that the lemma follows from the modification of lemma with weaker restriction on $\cmpf{DiagFrExtTr}^{\tembf{l}}(\foflf{f})$: $\cmpf{DiagFrExtTr}^{\tembf{l}}(\foflf{f})$ is a quantifier-less formula with all atoms from $\slangatc{\typef{A}}$. Further we prove the modified lemma. Clearly, the general case of the modified lemma follows from the case of $\foflf{f}\in\slangatc{\typef{A}}$.

We consider  $\foflf{f}\in\slangatc{\typef{A}}$. Suppose $\foflf{f}$ is $\termf{t}_1^{\natf{p}_1}\baand(\termf{t}_2^{\natf{p}_2}\baand\ldots(\termf{t}_{\natf{n-1}}^{\natf{p}_{\natf{n}-1}}\baand\termf{t}_{\natf{n}}^{\natf{p}_{\natf{n}}})\ldots)=\baz$. We denote by $\csetf{D}$ the set of all constant symbols that are in some $\termf{t}_{\natf{i}}$.

We consider all sequences $\overline{\csetf{K}}=(\csetf{K}_{1},\ldots,\csetf{K}_{\natf{p}})$ such that $\csetf{K}_{\natf{i}}\ne\emptyset$ and $\csetf{D}\cup\{\bau\}=\bigsqcup\limits_{1\le \natf{i}\le\natf{p}}\csetf{K}_{\natf{i}}$. We have sequences $\overline{\csetf{K}^{1}},\ldots,\overline{\csetf{K}^{\natf{s}}}$. For $1\le\natf{i}\le\natf{s}$
$$\overline{\csetf{K}^{\natf{i}}}=(\csetf{K}_{1}^{\natf{i}},\ldots,\csetf{K}_{\natf{p}_{\natf{i}}}^{\natf{i}})$$
We construct closed formulas $\foflf{p}_1,\ldots,\foflf{p}_{\natf{s}}$ with all atoms from $\slangate{\typef{A}}$ such that for a $\typef{A}$-algebra $\algf{A}$
$$\begin{aligned}\algf{A}\models & \foflf{p}_{\natf{i}}\iff \forall \constf{c}_1,\constf{c}_2\in\csetf{D}\cup\{\bau\} (\\ &((\algf{A}\models \constf{c}_1\algs_{\itwo} \constf{c}_2) \iff \exists 1\le\natf{j}\le \natf{p}_{\natf{i}}(\constf{c}_1,\constf{c}_2\in\csetf{K}_{\natf{j}}^{\natf{i}})) \foand \\ &((\algf{A}\models \constf{c}_1\algl_{\itwo}\constf{c}_2)\iff \exists 1\le \natf{j}_1<\natf{j}_2\le \natf{p}_{\natf{i}}(\constf{c}_1\in\csetf{K}^{\natf{i}}_{\natf{j}_1} \foand \constf{c}_2\in\csetf{K}^{\natf{i}}_{\natf{j}_2}))).\end{aligned}$$
Clearly, for a given linear $\typef{A}$-algebra $\algf{A}$ there exists exactly one $\natf{i}$ from $1$ to $\natf{s}$ such that $\algf{A}\models \foflf{p}_{\natf{i}}$.

 For $0<\natf{k}\le\natf{s}$ and $0<\natf{i}\le \natf{p}_{\natf{k}}$ we construct the formula $\foflf{t}_{\natf{k},\natf{i}}$ by replacing every occurrence of the form $\diamo[\ione]{\constf{e}}$ in $\foflf{f}$. We replace an occurrence of the considered form  with $\bau$ if $\constf{e}\in\csetf{K}_{\natf{j}}^{\natf{k}}$ for some $\natf{j}< \natf{i}$. And we replace an occcurence of the considered form  with $\baz$ if $\constf{e}\in\csetf{K}_{\natf{j}}^{\natf{k}}$ for some $\natf{j}\ge \natf{i}$.

 For $0<\natf{k}\le\natf{s}$ and $0<\natf{i}\le \natf{p}_{\natf{k}}$ we choose some fixed $\constf{e}_{\natf{k},\natf{i}}\in\csetf{K}_{\natf{k},\natf{i}}$. We put $$\cmpf{DiagFrExtTr}^{\tembf{l}}(\foflf{f})\formeq      \bigwedge\limits_{0<\natf{k}<\natf{p}}(\foflf{p}_{\natf{k}}\to (\bigwedge\limits_{0<\natf{i}<\natf{p}_{\natf{k}}}\cmpf{SimpQuotTr}^{\tembf{l}}(\foflf{t}_{\natf{k},\natf{i}},\constf{e}_{\natf{k},\natf{i}}))\foand \foflf{t}_{\natf{k},\natf{p}_{\natf{k}}}).$$ 

Suppose $\algf{A}$ is a linear $\typef{A}$-algebra. Suppose $\foflf{p}_{\natf{k}}$ holds on $\algf{A}$. We consider some extensions $\algf{S}_1,\ldots,\algf{S}_{\natf{p}_{\natf{k}}-1}$ of $\algf{A}$ by a fresh constant $\constf{q}$ such that $\algf{S}_{\natf{i}}\models \constf{q}=\constf{c}$ for some $\constf{c}$ from $\csetf{K}^{\natf{i}}$. We put $\algf{S}_{\natf{p}_{\natf{k}}}=\algf{A}$. We have just formed  the $(\tembf{l},\algf{A},\constf{q},\emptyset)$ extension sequence $\overline{S}=(\algf{S}_1,\ldots,\algf{S}_{\natf{p}_{\natf{k}}-1},\algf{S}_{\natf{p}_{\natf{k}}})$. Then the following propositions are equivalent:
\begin{enumerate}
\item $\algf{A}\models \cmpf{DiagFrExtTr}^{\tembf{l}}(\foflf{f})$;
\item $\algf{A}\models \cmpf{SimpQuotTr}^{\tembf{l}}(\foflf{p}_{\natf{k},\natf{i}})$ for every $0<\natf{i}<\natf{p}_{\natf{k}}$ and $\algf{A}\models \foflf{p}_{\natf{k},\natf{p}_{\natf{k}}}$;
\item $\fquotalg^{\typef{B},\constf{q}}(\fextfr^{\tembf{l}\typeplus\constf{q}}(\algf{S}_{\natf{i}}))\models \foflf{p}_{\natf{k},\natf{i}}$  for every $0<\natf{i}<\natf{p}_{\natf{k}}$ and $\fextfr^{\tembf{l}}(\algf{S}_{\natf{p}_{\natf{k}}})\models \foflf{p}_{\natf{k},\natf{p}_{\natf{k}}}$;
\item $\sqqlp_{\overline{\algf{S}}}\models \foflf{f}$;
\item $\fextfr^{\tembf{l}}(\algf{A})\models \foflf{f}$.
\end{enumerate}
Therefore 
$$\algf{A}\models \cmpf{DiagFrExtTr}^{\tembf{l}}(\foflf{f})\iff\fextfr^{\tembf{l}}(\algf{A})\models \foflf{f}.$$
\end{proof}

\begin{lemma}\label{lin_prod_tr}Suppose $\typef{A}$ is a type with a minimal element, $\constf{q}\not\in\typef{A}$, and $\typef{B}$ is the extension of $\typef{A}$ by $\constf{q}$. Then every quantifier prefix $\prefixf{P}$ and $\foflf{f}\in\slangnc{\typef{A}}{\prefixf{P}}$ we can effectively find a $\cmpf{QuotTr}^{\tembf{l},\constf{q}}(\foflf{f})\in\slangnc{\typef{B}}{\prefixf{P}}$ such that for every linear $\typef{B}$-algebra $\algf{A}$ 
$$\fquotalg^{\typef{A},\constf{q}}(\algf{A})\models \foflf{f}\iff \algf{A}\models \cmpf{QuotTr}^{\tembf{l},\constf{q}}(\foflf{f}).$$
\end{lemma}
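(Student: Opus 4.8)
The plan is to build $\cmpf{QuotTr}^{\tembf{l},\constf{q}}$ by recursion on the length of $\prefixf{P}$ (with $\tembf{l}$ the trivial embedding $\typef{A}\to\typef{B}$), proving along the way the natural open‑formula strengthening. Write $\objf{m}$ for the minimal operator index of $\typef{A}$ (hence of $\typef{B}$), and for a $\typef{B}$‑algebra $\algf{A}$ let $\pi\colon\algf{A}\to\fquotalg^{\typef{A},\constf{q}}(\algf{A})$ be the canonical surjection $\elf{x}\mapsto[\elf{x}]$, i.e.\ the homomorphism $\fquotalgm_{\algf{A}}^{\typef{A},\constf{q}}$. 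The claim I would actually prove is: for every $\foflf{f}\in\slangn{\typef{A}}{\prefixf{P}}$ one effectively finds $\cmpf{QuotTr}^{\tembf{l},\constf{q}}(\foflf{f})\in\slangn{\typef{B}}{\prefixf{P}}$ with $\freevariables{\cmpf{QuotTr}^{\tembf{l},\constf{q}}(\foflf{f})}\subseteq\freevariables{\foflf{f}}$ such that for every $\typef{B}$‑algebra $\algf{A}$ and every assignment $v$ of elements of $\algf{A}$ to the free variables, $\algf{A}\models_v\cmpf{QuotTr}^{\tembf{l},\constf{q}}(\foflf{f})$ iff $\fquotalg^{\typef{A},\constf{q}}(\algf{A})\models_{\pi\comp v}\foflf{f}$. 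The stated lemma is then the special case without free variables (and linearity of $\algf{A}$ plays no role). The two facts I keep invoking are that $\pi$ is a \emph{surjective} homomorphism of $\typef{A}$‑algebras, and that $\pi(\elf{x})=\pi(\baz)$ iff $\elf{x}\le^{\algf{A}}\diamoi[\objf{m}]{\algf{A}}{\constf{q}^{\algf{A}}}$, both immediate from the quotient congruence.

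Base case, $\prefixf{P}$ empty: here $\foflf{f}$ is a disjunctive‑normal‑form propositional combination of atoms from $\slangat{\typef{A}}$. I first replace each atom $\termf{s}=\baz$ occurring in $\foflf{f}$ (where $\termf{s}=\termf{t}_1^{\natf{p}_1}\baand\ldots\baand\termf{t}_{\natf{n}}^{\natf{p}_{\natf{n}}}$) by $\termf{s}\baand\bacmp{\diamo[\objf{m}]{\constf{q}}}=\baz$, whose atomic subformula lies in $\slangate{\typef{B}}$, the only new modal subterm being $\diamos[\objf{m}]$ applied to the constant $\constf{q}$. This replacement is correct because $\pi$ is a $\typef{A}$‑algebra homomorphism, so $\termf{s}$ evaluated in $\fquotalg^{\typef{A},\constf{q}}(\algf{A})$ under $\pi\comp v$ equals $\pi(\termf{s}^{\algf{A}}(v))$, whence $\fquotalg^{\typef{A},\constf{q}}(\algf{A})\models_{\pi\comp v}\termf{s}=\baz$ iff $\termf{s}^{\algf{A}}(v)\le\diamoi[\objf{m}]{\algf{A}}{\constf{q}^{\algf{A}}}$ iff $\algf{A}\models_v\termf{s}\baand\bacmp{\diamo[\objf{m}]{\constf{q}}}=\baz$; and the truth value of a Boolean combination of these atoms under $\pi\comp v$ is determined by the truth values of the atoms, so keeping the propositional skeleton of $\foflf{f}$ gives an equivalence. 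A final application of Lemma \ref{formula_normalization1} converts the resulting quantifier‑free formula (all of whose atomic subformulas are in $\slangate{\typef{B}}$) into the required member of $\slangprn{\typef{B}}$, with the free‑variable bound.

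Inductive step, $\prefixf{P}=\quant\fovarf{x}\prefixf{W}$: write $\foflf{f}=\prflf{g}[\prvarf{x}_1,\ldots,\prvarf{x}_{\natf{n}}/\quant\fovarf{x}\foflf{p}_1,\ldots,\quant\fovarf{x}\foflf{p}_{\natf{n}}]$ with $\prflf{g}$ a positive DNF formula and $\foflf{p}_{\natf{i}}\in\slangn{\typef{A}}{\prefixf{W}}$. The inductive hypothesis gives $\cmpf{QuotTr}^{\tembf{l},\constf{q}}(\foflf{p}_{\natf{i}})\in\slangn{\typef{B}}{\prefixf{W}}$, so each $\quant\fovarf{x}\,\cmpf{QuotTr}^{\tembf{l},\constf{q}}(\foflf{p}_{\natf{i}})$ is, as a one‑variable one‑atom instance, a member of $\slangn{\typef{B}}{\prefixf{P}}$, and I let $\cmpf{QuotTr}^{\tembf{l},\constf{q}}(\foflf{f})$ be the formula that Lemma \ref{slangn_closedness}, applied at prefix $\prefixf{P}$ to $\prflf{g}$ and to $\quant\fovarf{x}\,\cmpf{QuotTr}^{\tembf{l},\constf{q}}(\foflf{p}_1),\ldots,\quant\fovarf{x}\,\cmpf{QuotTr}^{\tembf{l},\constf{q}}(\foflf{p}_{\natf{n}})$, returns in $\slangn{\typef{B}}{\prefixf{P}}$; this also propagates the free‑variable bound. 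Correctness reduces to the equivalence $\algf{A}\models_v\quant\fovarf{x}\,\cmpf{QuotTr}^{\tembf{l},\constf{q}}(\foflf{p}_{\natf{i}})$ iff $\fquotalg^{\typef{A},\constf{q}}(\algf{A})\models_{\pi\comp v}\quant\fovarf{x}\foflf{p}_{\natf{i}}$: for $\quant=\forall$, both sides range over all $\elf{a}\in\algf{A}$ — on the right because $\pi$ is onto — and the inductive hypothesis applied to $v[\fovarf{x}\mapsto\elf{a}]$, whose $\pi$‑image is $(\pi\comp v)[\fovarf{x}\mapsto\pi(\elf{a})]$, matches the inner formulas; the case $\quant=\exists$ is the same. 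The positive skeleton $\prflf{g}$ then transfers as in the base case, and effectiveness is evident at every step.

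I do not expect a genuine obstacle: the argument is a routine interpretation, with the surjectivity of $\pi$ making quantifiers pass through without any relativization, so that only the atoms need rewriting; the only delicate point — keeping the output inside the normal‑form classes $\slangn{\typef{B}}{\prefixf{P}}$ and $\slangprn{\typef{B}}$ — is exactly what Lemmas \ref{formula_normalization1} and \ref{slangn_closedness} are designed to handle.
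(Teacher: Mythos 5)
Your proof is correct and takes essentially the same route as the paper's one-sentence argument: rewrite each atom $\termf{t}=\baz$ into its relativization modulo the congruence class of $\diamo[\objf{m}]{\constf{q}}$ and renormalize via Lemmas \ref{formula_normalization1} and \ref{slangn_closedness}, with quantifiers passing through unchanged because the projection onto the quotient is surjective. Your rewritten atom $\termf{s}\baand\bacmp{\diamo[\objf{m}]{\constf{q}}}=\baz$ is Boolean-equivalent to the paper's $\diamo[\ione]{\constf{q}}\baor\termf{t}=\diamo[\ione]{\constf{q}}$ (and matches the actual definition of the quotient, where the paper's text appears to have a $\boxos[\ione]$/$\diamos[\ione]$ typo); your observation that linearity of $\algf{A}$ is not actually used is also accurate.
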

\begin{proof} In order to obtain $\cmpf{QuotTr}^{\tembf{l},\constf{q}}(\foflf{f})$ we first replace every $\termf{t}=\baz$ in $\foflf{f}$ with $\boxo[\ione]{\constf{q}}\baor\termf{t}=\boxo[\ione]{\constf{q}}$ and then find a $\thrf{GLPA}_{\typef{A}\typeplus\constf{q}}$-equivalent formula from $\slangnc{\typef{A}}{\prefixf{P}}$. 
\end{proof}

We fix a countable family of propositional variables $\prvarf{p}_1,\ldots,\prvarf{p}_{\natf{n}},\ldots$.

For a finite type $\typef{A}$ and quantifier prefix $\prefixf{P}$ we denote the number of formulas in $\slangnc{\typef{A}}{\prefixf{P}}$ by $\natf{u}_{\typef{A},\prefixf{P}}$. We choose an enumerations of formulas of $\slangnc{\typef{A}}{\prefixf{P}}$:
$$\slangnc{\typef{A}}{\prefixf{P}}=\{\foflf{e}^{\typef{A},\prefixf{P}}_{1},\ldots,\foflf{e}^{\typef{A},\prefixf{P}}_{\natf{u}_{\typef{A},\prefixf{P}}}\}.$$

\begin{lemma}Suppose $\typef{A}$ is a normal type. Then for a quantifier prefix $\prefixf{P}$ and a formula  $\foflf{f}\in \slangnc{\typef{A}}{\prefixf{P}}$  we can effectively find a positive propositional formula $\cmpf{LinProdTr}^{\typef{A}}(\foflf{f})$ such that
\begin{itemize}
\item $\cmpf{LinProdTr}^{\typef{A}}(\foflf{f})$  is positive;
\item any variable in $\cmpf{LinProdTr}^{\typef{A}}(\foflf{f})$ is $\prvarf{p}_{\natf{i}}$ for some $\natf{i}\le 2\natf{u}_{\typef{A},\prefixf{P}}$;
\item  for every pair of linear $\typef{A}$-algebras $(\algf{A},\algf{B})$ we have $\algf{A}\linprodfr \algf{B}\models \foflf{f}$ iff the result of the application to $\cmpf{LinProdTr}^{\typef{A}}(\foflf{f})$ of the following substitution  is a true judgment:
\begin{itemize}
\item $\prvarf{p}_1\;\leftarrow\;\algf{A}\models \foflf{e}_1^{\typef{A},\prefixf{P}}$,\\
 $\ldots$
\item $\prvarf{p}_{\natf{u}_{\typef{A},\prefixf{P}}}\;\leftarrow\;\algf{A}\models \foflf{e}_{\natf{u}_{\typef{A},\prefixf{P}}}^{\typef{A},\prefixf{P}}$,
\item $\prvarf{p}_{\natf{u}_{\typef{A},\prefixf{P}}+1}\;\leftarrow\;\algf{B}\models \foflf{e}_{1}^{\typef{A},\prefixf{P}}$,\\
$\ldots$
\item $\prvarf{p}_{2\natf{u}_{\typef{A},\prefixf{P}}}\;\leftarrow\;\algf{B}\models \foflf{e}_{\natf{u}_{\typef{A},\prefixf{P}}}^{\typef{A},\prefixf{P}}$.
\end{itemize}
\end{itemize}

\end{lemma}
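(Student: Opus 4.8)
\textit{(Plan)}
The argument is of Feferman--Vaught type: I would describe how the truth of a formula in the linear product $\algf{A}\linprodfr\algf{B}$ is governed by the truth of formulas of the same syntactic shape in the factors $\algf{A}$ and $\algf{B}$, organised as an induction on the number of quantifiers in $\prefixf{P}$ (here $\typef{A}$ is finite, so each $\slangnc{\typef{A}}{\prefixf{P}}$ is finite). Since peeling off a quantifier leaves a formula with one more free variable, the statement I would actually prove by induction is a version with free variables: for every quantifier prefix $\prefixf{P}_0$ and every $\foflf{f}\in\slangn{\typef{A}}{\prefixf{P}_0}$ whose free variables lie in a fixed finite list $\fovarf{z}_1,\ldots,\fovarf{z}_{\natf{r}}$, one can effectively produce a positive propositional formula $G$ whose variables range over two copies of the finitely many formulas of $\slangn{\typef{A}}{\prefixf{P}_0}$ with free variables among $\fovarf{z}_1,\ldots,\fovarf{z}_{\natf{r}}$, such that for all linear $\typef{A}$-algebras $\algf{A},\algf{B}$ and all $(a_1,b_1),\ldots,(a_{\natf{r}},b_{\natf{r}})$ in $\algf{A}\linprodfr\algf{B}$ we have $\algf{A}\linprodfr\algf{B}\models\foflf{f}[(a_1,b_1),\ldots]$ iff $G$ becomes true after substituting, for each propositional variable of the ``$\algf{A}$-block'' the judgment ``$\algf{A}\models(\text{its formula})[a_1,\ldots,a_{\natf{r}}]$'' and, for the ``$\algf{B}$-block'', the analogous judgment about $\algf{B}$ at $b_1,\ldots,b_{\natf{r}}$. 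Alongside this I would carry the auxiliary fact that each $\slangn{\typef{A}}{\prefixf{P}_0}$ is effectively closed under negation up to $\thrf{GLPA}_{\typef{A}}$-provable equivalence, with $\fonot\foflf{f}$ landing in $\slangn{\typef{A}}{\prefixf{P}_0^{*}}$, where $\prefixf{P}_0^{*}$ is $\prefixf{P}_0$ with every quantifier dualised; note $(\prefixf{P}_0^{*})^{*}=\prefixf{P}_0$, and for the empty prefix $\prefixf{P}_0^{*}=\prefixf{P}_0$ and the fragment is $\slangprn{\typef{A}}$. The lemma is then the closed case of this statement.

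For the base case (the empty prefix, $\slangprn{\typef{A}}$), recall that an $\slangat{\typef{A}}$-atom is ``$\termf{t}=\baz$'' for a meet of literals built from constants, variables, and depth-one applications of operators. In $\algf{A}\linprodfr\algf{B}$ every operator acts coordinatewise except the minimal operator $\diamos[\objf{m}]$, whose value on $(\elf{x},\elf{y})$ splits according to whether $\elf{x}=\baz^{\algf{A}}$ (and $\boxos[\objf{m}]$ correspondingly splits on whether the argument's first coordinate is $\bau^{\algf{A}}$). So, after a finite case analysis whose cases say, for the relevant first coordinates, ``this one is $\baz$ / is $\bau$ / is neither'' --- conditions expressible by $\slangat{\typef{A}}$-formulas of the first coordinates --- every term evaluates to a pair $(\elf{u}_{\algf{A}},\elf{u}_{\algf{B}})$ with $\elf{u}_{\algf{A}}$ a $\slangat$-shaped term evaluated in $\algf{A}$ (at the chosen first coordinates) and $\elf{u}_{\algf{B}}$ one evaluated in $\algf{B}$, so that ``$\termf{t}=\baz$'' becomes ``$\elf{u}_{\algf{A}}=\baz^{\algf{A}}$ and $\elf{u}_{\algf{B}}=\baz^{\algf{B}}$''. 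Propagating this through the disjunctive normal form of $\foflf{f}$ and re-distributing, and using Lemma~\ref{formula_normalization1} to repackage conjunctions of the resulting judgments, yields an equivalence of ``$\algf{A}\linprodfr\algf{B}\models\foflf{f}$ on the tuple'' with a disjunction $\bigvee_{\natf{m}}\big((\algf{A}\models\psi_{\natf{m}})\foand(\algf{B}\models\chi_{\natf{m}})\big)$ with $\psi_{\natf{m}},\chi_{\natf{m}}\in\slangprn{\typef{A}}$. Positivity survives although $\slangprn{\typef{A}}$ has negations: a judgment ``$\algf{A}\not\models(\termf{t}=\baz)$'' is exactly ``$\algf{A}\models(\termf{t}\ne\baz)$'', and ``$\termf{t}\ne\baz$'' is itself a one-disjunct, one-literal formula of $\slangprn{\typef{A}}$, so negated judgments need never appear; closure of $\slangprn{\typef{A}}$ under negation is just negating a DNF and re-normalising.

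For the inductive step, $\foflf{f}$ is, by definition of the fragment, a positive propositional combination $\prflf{f}$ of formulas $\quant\fovarf{x}\foflf{p}_i$ with $\foflf{p}_i\in\slangn{\typef{A}}{\prefixf{P}_0}$; since $\prflf{f}$ is positive it suffices to reduce each ``$\algf{A}\linprodfr\algf{B}\models\quant\fovarf{x}\foflf{p}_i$'' positively into judgments about formulas of $\slangnc{\typef{A}}{\quant\fovarf{x}\prefixf{P}_0}$. Applying the induction hypothesis to $\foflf{p}_i$ (regarding $\fovarf{x}$ as one more free variable) and Lemma~\ref{slangn_closedness} to collapse conjunctions, ``$\algf{A}\linprodfr\algf{B}\models\foflf{p}_i[(a,b),\ldots]$'' is equivalent to $\bigvee_{\natf{m}}\big((\algf{A}\models\psi_{\natf{m}}[a,\ldots])\foand(\algf{B}\models\chi_{\natf{m}}[b,\ldots])\big)$ with $\psi_{\natf{m}},\chi_{\natf{m}}\in\slangn{\typef{A}}{\prefixf{P}_0}$. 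If $\quant=\exists$, quantifying over $(a,b)$ and pushing the existential through the disjunction gives $\bigvee_{\natf{m}}\big((\algf{A}\models\exists\fovarf{x}\,\psi_{\natf{m}})\foand(\algf{B}\models\exists\fovarf{x}\,\chi_{\natf{m}})\big)$, a positive combination with $\exists\fovarf{x}\,\psi_{\natf{m}},\exists\fovarf{x}\,\chi_{\natf{m}}\in\slangnc{\typef{A}}{\exists\fovarf{x}\prefixf{P}_0}$ (at the outer parameters). If $\quant=\forall$, I would use $\forall\fovarf{x}=\fonot\exists\fovarf{x}\fonot$: negate $\foflf{p}_i$ into $\slangn{\typef{A}}{\prefixf{P}_0^{*}}$, run the existential case just described at prefix $\prefixf{P}_0^{*}$ to obtain $\bigvee_{\natf{m}}\big((\algf{A}\models\exists\fovarf{x}\,\psi_{\natf{m}})\foand(\algf{B}\models\exists\fovarf{x}\,\chi_{\natf{m}})\big)$ with $\psi_{\natf{m}},\chi_{\natf{m}}\in\slangn{\typef{A}}{\prefixf{P}_0^{*}}$, and negate once more; the point is that ``$\algf{A}\not\models\exists\fovarf{x}\,\psi_{\natf{m}}$'' equals ``$\algf{A}\models\forall\fovarf{x}\,\fonot\psi_{\natf{m}}$'' and, since $(\prefixf{P}_0^{*})^{*}=\prefixf{P}_0$, the formula $\fonot\psi_{\natf{m}}$ is equivalent to one of $\slangn{\typef{A}}{\prefixf{P}_0}$, whence $\forall\fovarf{x}\,\fonot\psi_{\natf{m}}\in\slangnc{\typef{A}}{\forall\fovarf{x}\prefixf{P}_0}$, so the negation of the disjunction becomes the positive conjunctive form $\bigwedge_{\natf{m}}\big((\algf{A}\models\forall\fovarf{x}\,\fonot\psi_{\natf{m}})\foor(\algf{B}\models\forall\fovarf{x}\,\fonot\chi_{\natf{m}})\big)$. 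This double use of duality is exactly what keeps the output positive and inside the \emph{same} prefix $\quant\fovarf{x}\prefixf{P}_0$ rather than in a fragment with a longer or dualised prefix. Checking that $\slangn{\typef{A}}{\quant\fovarf{x}\prefixf{P}_0}$ is closed under negation into $\slangn{\typef{A}}{\overline{\quant}\fovarf{x}\prefixf{P}_0^{*}}$ (negate the positive DNF, distribute, apply the auxiliary fact to the $\foflf{p}_i$) completes the induction.

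The main obstacle is the universal case: forcing a \emph{positive} reduction that neither lengthens nor dualises $\prefixf{P}$ is what necessitates the $\fonot\exists\fonot$ manoeuvre above, and this requires carrying the closure-under-negation fact at both a prefix and its dual, so the induction must be run on the \emph{length} of $\prefixf{P}$ rather than ``by removing one quantifier''. The rest is bookkeeping: tracking the accumulating free variables, and checking that every intermediate formula can be effectively normalised onto the chosen enumeration $\foflf{e}^{\typef{A},\prefixf{P}}_{1},\ldots,\foflf{e}^{\typef{A},\prefixf{P}}_{\natf{u}_{\typef{A},\prefixf{P}}}$ of $\slangnc{\typef{A}}{\prefixf{P}}$ (so that ``the corresponding formula is $\foflf{e}^{\typef{A},\prefixf{P}}_{\natf{i}}$'' is meaningful), which leans only on Lemmas~\ref{formula_normalization1}, \ref{slangn_closedness}, \ref{formula_normalization2} and on $\typef{A}$ being finite, and is routine once the shape of the argument is fixed. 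The value of $\cmpf{LinProdTr}^{\typef{A}}(\foflf{f})$ is then the positive propositional formula obtained from this reduction for the closed $\foflf{f}$, with the ``$\algf{A}$-block'' variables taken to be $\prvarf{p}_{\natf{i}}$ for $\natf{i}\le \natf{u}_{\typef{A},\prefixf{P}}$ and the ``$\algf{B}$-block'' variables $\prvarf{p}_{\natf{u}_{\typef{A},\prefixf{P}}+\natf{i}}$.
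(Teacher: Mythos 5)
Your proposal is correct and follows essentially the same Feferman--Vaught-style route as the paper: induction on the length of the quantifier prefix, a coordinatewise case analysis on the minimal operator $\diamos[\objf{m}]$ (splitting on whether first coordinates are $\baz$) for the quantifier-free base case, and splitting the quantifier over the product into independent quantifiers over the factors via the positive disjunctive normal form. The only cosmetic differences are that the paper tracks the peeled-off variable with a fresh constant symbol rather than a free variable, and treats $\forall$ by dualizing the whole argument (conjunctions for disjunctions, etc.) instead of your $\fonot\exists\fonot$ manoeuvre --- both amount to the same De Morgan dual and yield the same positive conjunctive form.
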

\begin{proof} Suppose $\diamos[\objf{m}]$ is the minimal operator symbol of $\typef{A}$. 

We give the construction of $\cmpf{LinProdTr}^{\typef{A}}(\foflf{f})$ by induction on the length of $\prefixf{P}$; the effectiveness is a trivial consequence of our proof.  

First we prove the basis of the induction, i.e. the case of empty $\prefixf{P}$. We will construct $\cmpf{LinProdTr}^{\typef{A}}(\foflf{f})$ such that may be it will not be positive but all other conditions will holds for it. 

If we construct such a $\cmpf{LinProdTr}^{\typef{A}}(\foflf{f})$ for all formulas $\foflf{f}\in\slangatc{\typef{A}}$ then obviously, we can construct it for all formulas $\foflf{f}\in\slangprnc{\typef{A}}$. So further we assume that $\foflf{f}$ is a formula from $\slangatc{\typef{A}}$. Suppose all subterms of the form $\diamo[\objf{m}]{\termf{t}}$ for $\foflf{f}$ are terms $\diamo[\objf{m}]{\termf{w}_1},\ldots,\diamo[\objf{m}]{\termf{w}_{\natf{m}}}$. 
For a $\{0,1\}$-sequence $\seqf{p}=(\seqelf{p}_1,\ldots,\seqelf{p}_{\natf{m}})\in \mathrm{Sq}_2(\natf{m})$ of the length $\natf{m}$ we denote by $\foflf{t}_{\seqf{p}}$ the formula
$$(\bigwedge\limits_{0\le\natf{i}\le \natf{m}, \seqelf{p}_{\natf{i}}=0}\termf{w}_{\natf{i}}= \baz) \foand (\bigwedge\limits_{0\le\natf{i}\le \natf{m}, \seqelf{p}_{\natf{i}}=1}\termf{w}_{\natf{i}}\ne \baz).$$ For a binary sequence $\seqf{p}=(\seqelf{p}_1,\ldots,\seqelf{p}_{\natf{m}})\in \mathrm{Sq}_2(\natf{m})$ of the length $\natf{m}$ we denote by $\foflf{p}_{\seqf{p}}$ the result of replacement of all occurrences of $\diamo[\objf{m}]{\termf{w}_{\natf{i}}}$ with  $\baz$ for all $\natf{i}$ such that $\seqelf{p}=0$. For a binary sequence $\seqf{p}=(\seqelf{p}_1,\ldots,\seqelf{p}_{\natf{m}})\in \mathrm{Sq}_2(\natf{m})$ of the length $\natf{m}$ we denote by $\foflf{h}_{\seqf{p}}$ the result of replacement of all occurrences of $\diamo[\objf{m}]{\termf{w}_{\natf{i}}}$ with  $\bau$ for all $\natf{i}$ such that $\seqelf{p}_{\natf{i}}=1$. 

Obviously, for linear $\typef{A}$-algebras $\algf{A}$,$\algf{B}$, an element $(\elf{x},\elf{y})\in\algf{A}\linprodfr\algf{B}$, and $\natf{i}$ from $1$ to $\natf{m}$ we have 
$$\begin{aligned}\algf{A}\linprodfr\algf{B} & \models (\elf{x},\elf{y})=\diamo[\objf{m}]{\termf{w}_{\natf{i}}}\iff  (\algf{A}\models \termf{w}_{\natf{i}}=\baz  \mbox{, and }\algf{A}\models \elf{x}=\baz\mbox{, and } \\ & \algf{B}\models \elf{y}=\diamo[\objf{m}]{\termf{w}_{\natf{i}}} ) \mbox{ or} (\algf{A}\models \termf{w}_{\natf{i}}\ne\baz  \mbox{, and }\algf{A}\models \elf{x}=\diamo[\objf{m}]{\termf{w}_{\natf{i}}}\mbox{, and } \algf{B}\models \elf{y}=\baz ).\end{aligned}$$
Hence for linear $\typef{A}$-algebras $\algf{A}$,$\algf{B}$
$$\algf{A}\linprodfr\algf{B}\models \foflf{f} \iff \bigvee\limits_{\seqf{p}\in \mathrm{Sq}_2(\natf{m})}(\algf{A}\models\foflf{t}_{\seqf{p}})\foand (\algf{A}\models \foflf{p}_{\seqf{p}})\foand(\algf{B}\models \foflf{h}_{\seqf{p}}).$$
Using this equivalence we easily construct the required formula $\cmpf{LinProdTr}^{\typef{A}}(\foflf{f})$.

We claim that  we can transform  $\cmpf{LinProdTr}^{\typef{A}}(\foflf{f})$ that we constructed above  to a formula that satisfies all conditions of the lemma. For every formula $\foflf{p}\in\slangprnc{\typef{A}}$ we can find a formula from $\slangprnc{\typef{A}}$ that is $\thrf{GLPA}_{\typef{A}}$-equivalent  to $\fonot\foflf{p}$. Every propositional formula is equivalent to some propositional formula in Disjunctive Normal Form; all occurrences of $\prnot$ in formulas in {\bf DNF} are of the form $\prnot \prvarf{x}$, where $\prvarf{x}$ is a propositional variable. Obviously, our claim follows from the two previous sentences.  

Now we prove the step of induction. Suppose $\prefixf{P}=\exists\fovarf{x}\prefixf{P}'$ (the proof for $\prefixf{P}=\forall\fovarf{x}\prefixf{P}'$ can be carried out in a similar way). Suppose $\foflf{f}(\fovarf{x})$ is a formula from $\slangnc{\typef{A}}{\prefixf{P}'}$ and there are no free variables in $\foflf{f}(\fovarf{x})$ other than $\fovarf{x}$. We are going to construct a formula $\cmpf{LinProdTr}^{\typef{A}}(\exists \fovarf{x} \foflf{f})$ that satisfies all conditions of the lemma. 

We choose  a fresh constant symbol $\constf{c}\not \in \typef{A}$. We denote by $\prflf{p}$ the propositional formula $\cmpf{LinProdTr}^{\typef{A}\typeplus\constf{c}}(\foflf{f}(\constf{c}))$. We can transform $\prflf{p}$ to an equivalent  positive formula $\prflf{p}'$ in Disjunctive Normal Form:
$$\bigvee\limits_{0<\natf{i}\le\natf{k}}(\bigwedge\limits_{\natf{j}\in\setf{A}_{\natf{i}}} \prvarf{p}_{\natf{j}})\foand(\bigwedge\limits_{\natf{j}\in\setf{B}_{\natf{i}}} \prvarf{p}_{\natf{u}_{\typef{A}\typeplus\constf{c},\prefixf{P}'}+\natf{j}}),$$
where $\setf{A}_{\natf{i}},\setf{B}_{\natf{i}}\subset\{1,\ldots,\natf{u}_{\typef{A}\typeplus \constf{c},\prefixf{P}'}\}$. For every $0<\natf{i}\le\natf{k}$ using Lemma \ref{slangn_closedness} we find $\natf{s}_{\natf{i}},\natf{t}_{\natf{i}}\in\{1,\ldots,\natf{u}_{\typef{A},\prefixf{P}}\}$ such that $\foflf{e}^{\typef{A},\prefixf{P}}_{\natf{s}_{\natf{i}}}$ and  $\foflf{e}^{\typef{A},\prefixf{P}}_{\natf{t}_{\natf{i}}}$ are $\thrf{GLPA}_{\typef{A}}$-equivalent to the formulas $\exists \fovarf{x}(\bigwedge\limits_{\natf{j}\in\setf{A}_{\natf{i}}}\foflf{t}_{\natf{j}})$ and $\exists \fovarf{x}(\bigwedge\limits_{\natf{j}\in\setf{B}_{\natf{i}}}\foflf{t}_{\natf{j}})$, respectively, where for every $\natf{j}$ from $1$ to $\natf{u}_{\typef{A}\typeplus \constf{c},\prefixf{P}'}$ the formula $\foflf{t}_{\natf{j}}$ is $\foflf{e}^{\typef{A}\typeplus \constf{c},\prefixf{P}'}_{\natf{j}}$ with every occurrence of $\constf{c}$ replaced with $\fovarf{x}$.

 Suppose $\algf{A}$ and $\algf{B}$ are linear $\typef{A}$-algebras. Clearly, the following propositions are equivalent:
\begin{enumerate}
\item $\algf{A}\linprodfr \algf{B}\models\foflf{f}$;
\item there exist constant extensions $\algf{A}'$ and $\algf{B}'$ by constant $\constf{c}$ of algebras $\algf{A}$ and $\algf{B}$, respectively such that the result of the following substitution applied to $\prflf{p}$ is a true judgment:
\begin{itemize}
\item $\prvarf{p}_1\;\leftarrow\;\algf{A}'\models \foflf{e}_1^{\typef{A}\typeplus \constf{c},\prefixf{P}'}$,\\
 $\ldots$
\item $\prvarf{p}_{\natf{u}_{\typef{A}\typeplus \constf{c},\prefixf{P}'}}\;\leftarrow\;\algf{A}'\models \foflf{e}_{\natf{u}_{\typef{A}\typeplus \constf{c},\prefixf{P}'}}^{\typef{A}\typeplus \constf{c},\prefixf{P}'}$,
\item $\prvarf{p}_{\natf{u}_{\typef{A}\typeplus \constf{c},\prefixf{P}'}+1}\;\leftarrow\;\algf{B}'\models \foflf{e}_{1}^{\typef{A}\typeplus\constf{c},\prefixf{P}'}$,\\
$\ldots$
\item $\prvarf{p}_{2\natf{u}_{\typef{A}\typeplus \constf{c},\prefixf{P}'}}\;\leftarrow\;\algf{B}'\models \foflf{e}_{\natf{u}_{\typef{A}\typeplus\constf{c},\prefixf{P}'}}^{\typef{A}\typeplus\constf{c},\prefixf{P}'}$;
\end{itemize}
\item for some $\natf{i}$ from $1$ to $\natf{k}$ there exist $\constf{c}$ constant extensions $\algf{A}'$ and $\algf{B}'$ of algebras $\algf{A}$ and $\algf{B}$, respectively such that $\algf{A}'\models \foflf{e}^{\typef{A}\typeplus\constf{c},\prefixf{P}'}_{\natf{j}}$ for all $\natf{j}\in\setf{A}_{\natf{i}}$ and $\algf{B}'\models \foflf{e}^{\typef{A}\typeplus\constf{c},\prefixf{P}'}_{\natf{j}}$ for all $\natf{j}\in\setf{B}_{\natf{i}}$;
\item  the result of the substitution from the lemma formulation applied to $\bigvee\limits_{0<\natf{i}\le\natf{k}}\prvarf{p}_{\natf{s}_{\natf{i}}} \foand\prvarf{p}_{\natf{t}_{\natf{i}}+\natf{u}_{\typef{A},\prefixf{P}}}$ is a true judgment.
\end{enumerate}
We put $$\cmpf{LinProdTr}^{\typef{A}}(\foflf{f})\formeq  \bigvee\limits_{0<\natf{i}\le\natf{k}}\prvarf{p}_{\natf{s}_{\natf{i}}}\foand\prvarf{p}_{\natf{t}_{\natf{i}}+\natf{u}_{\typef{A},\prefixf{P}}}.$$

If $\prefixf{P}$ starts with $\forall$ then we can carry the proof in a dual way to $\exists$ case. We replace conjunctions with disjunctions, $\exists$ quantifiers with $\forall$ quantifiers, existential propositions with universal, etc.\end{proof}

For a quantifier prefix $\prefixf{P}=\quant_1\fovarf{x}_1\ldots\quant_{\natf{n}}\fovarf{x}_{\natf{n}}$ we denote by $\negprefix{\prefixf{P}}$ the prefix $\quant_1'\fovarf{x}_1\ldots\quant_{\natf{n}}'\fovarf{x}_{\natf{n}}$ such that $\quant_{\natf{i}}'\ne\quant_{\natf{i}}$ for all $\natf{i}$ from $1$ to $\natf{n}$.

Clearly, the following lemma holds:
\begin{lemma}\label{neg_th_tr} For a type $\typef{A}$, a quantifier prefix $\prefixf{P}$ and a set $\thrf{T}\subset\slangnc{\typef{A}}{\prefixf{P}}$ we can effectively find a subset $\cmpf{NegThTr}^{\typef{A}}(\prefixf{P},\thrf{T})\subset\slangnc{\typef{A}}{\prefixf{P}}$ such that for every $\typef{A}$-algebra $\algf{A}$ $$\thrf{T}=\elthr[\slangn{\typef{A}}{\prefixf{P}}]{\algf{A}}\Implication \cmpf{NegThTr}^{\typef{A}}(\prefixf{P},\thrf{T})=\elthr[\slangn{\typef{A}}{\negprefix{\prefixf{P}}}]{\algf{A}}.$$
Moreover, for a  a type $\typef{A}$, a quantifier prefix $\prefixf{P}$ and sets $\thrf{T}_1,\thrf{T}_2\subset\slangnc{\typef{A}}{\prefixf{P}}$
$$\thrf{T}_1\subset\thrf{T}_2 \Implication \cmpf{NegThTr}^{\typef{A}}(\prefixf{P},\thrf{T}_2)\subset \cmpf{NegThTr}^{\typef{A}}(\prefixf{P},\thrf{T}_1).$$\end{lemma}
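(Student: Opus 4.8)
The plan is to reduce the whole statement to a single effective ``negation translation''. First I would prove the following auxiliary claim: for every quantifier prefix $\prefixf{Q}$ there is an effectively computable operation $\cmpf{Neg}$ sending each $\foflf{g}\in\slangnc{\typef{A}}{\prefixf{Q}}$ to a formula $\cmpf{Neg}(\foflf{g})\in\slangnc{\typef{A}}{\negprefix{\prefixf{Q}}}$ with $\thrf{GLPA}_{\typef{A}}\vdash \cmpf{Neg}(\foflf{g})\prequiv\prnot\foflf{g}$. Granting this, applied with $\prefixf{Q}=\negprefix{\prefixf{P}}$ (so that $\negprefix{\prefixf{Q}}=\prefixf{P}$ and $\cmpf{Neg}(\foflf{g})\in\slangnc{\typef{A}}{\prefixf{P}}$), I would put
$$\cmpf{NegThTr}^{\typef{A}}(\prefixf{P},\thrf{T})=\{\foflf{g}\in\slangnc{\typef{A}}{\negprefix{\prefixf{P}}}\mid \cmpf{Neg}(\foflf{g})\not\in\thrf{T}\}.$$
This depends effectively on $\thrf{T}$ and is visibly antitone in $\thrf{T}$, which gives the ``moreover'' clause for free. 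For the main conclusion, suppose $\thrf{T}=\elthr[\slangn{\typef{A}}{\prefixf{P}}]{\algf{A}}$. Then for $\foflf{g}\in\slangnc{\typef{A}}{\negprefix{\prefixf{P}}}$ we have $\foflf{g}\in\cmpf{NegThTr}^{\typef{A}}(\prefixf{P},\thrf{T})$ iff $\cmpf{Neg}(\foflf{g})\not\in\thrf{T}$ iff $\algf{A}\not\models\cmpf{Neg}(\foflf{g})$ (here $\cmpf{Neg}(\foflf{g})\in\slangnc{\typef{A}}{\prefixf{P}}$ and $\thrf{T}$ is exactly the set of $\slangnc{\typef{A}}{\prefixf{P}}$-sentences true in $\algf{A}$) iff $\algf{A}\not\models\prnot\foflf{g}$ (since $\algf{A}\models\thrf{GLPA}_{\typef{A}}$ and $\cmpf{Neg}(\foflf{g})$ is $\thrf{GLPA}_{\typef{A}}$-equivalent to $\prnot\foflf{g}$) iff $\foflf{g}\in\elthr[\slangn{\typef{A}}{\negprefix{\prefixf{P}}}]{\algf{A}}$.

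It then remains to construct $\cmpf{Neg}$, which I would do by induction on the length of $\prefixf{Q}$. In the base case $\prefixf{Q}$ is empty, $\slangn{\typef{A}}{\prefixf{Q}}$ is $\slangprn{\typef{A}}$, and $\negprefix{\prefixf{Q}}=\prefixf{Q}$: pushing the negation down to the $\slangatc{\typef{A}}$-atoms and re-normalizing the propositional skeleton into disjunctive normal form (merging graphically identical atoms) is an effective procedure of the kind already packaged in Lemma~\ref{formula_normalization1}. For the step, write $\prefixf{Q}=\quant\fovarf{x}\prefixf{Q}'$ and $\foflf{g}=\prflf{f}[\prvarf{x}_1,\ldots,\prvarf{x}_{\natf{n}}/\quant\fovarf{x}\foflf{p}_1,\ldots,\quant\fovarf{x}\foflf{p}_{\natf{n}}]$ with $\prflf{f}$ a positive disjunctive normal form and $\foflf{p}_{\natf{i}}\in\slangnc{\typef{A}}{\prefixf{Q}'}$. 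Since $\prflf{f}$ is positive there is a positive propositional $\prflf{f}^{\sharp}$ with $\thrf{PC}\vdash \prnot\prflf{f}(\prvarf{x}_1,\ldots,\prvarf{x}_{\natf{n}})\prequiv\prflf{f}^{\sharp}(\prnot\prvarf{x}_1,\ldots,\prnot\prvarf{x}_{\natf{n}})$; combining this with $\prnot\quant\fovarf{x}\foflf{p}_{\natf{i}}\prequiv\quant'\fovarf{x}\prnot\foflf{p}_{\natf{i}}$ ($\quant'$ the dual of $\quant$) and the induction hypothesis for each $\foflf{p}_{\natf{i}}$, which gives $\foflf{q}_{\natf{i}}\in\slangnc{\typef{A}}{\negprefix{\prefixf{Q}'}}$ with $\thrf{GLPA}_{\typef{A}}\vdash\foflf{q}_{\natf{i}}\prequiv\prnot\foflf{p}_{\natf{i}}$, I obtain $\thrf{GLPA}_{\typef{A}}\vdash\prnot\foflf{g}\prequiv\prflf{f}^{\sharp}(\quant'\fovarf{x}\foflf{q}_1,\ldots,\quant'\fovarf{x}\foflf{q}_{\natf{n}})$. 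Each $\quant'\fovarf{x}\foflf{q}_{\natf{i}}$ lies in $\slangn{\typef{A}}{\negprefix{\prefixf{Q}}}$, so Lemma~\ref{slangn_closedness} applied to the positive formula $\prflf{f}^{\sharp}$ produces an effectively found $\cmpf{Neg}(\foflf{g})\in\slangnc{\typef{A}}{\negprefix{\prefixf{Q}}}$ (it is closed because the $\quant'\fovarf{x}\foflf{q}_{\natf{i}}$ are) that is $\thrf{GLPA}_{\typef{A}}$-equivalent to $\prnot\foflf{g}$, which completes the induction.

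The only place I expect to need care is the bookkeeping in this inductive step: after substituting the $\foflf{q}_{\natf{i}}$ and passing $\prflf{f}^{\sharp}$ to disjunctive normal form, one must check that the result genuinely has the shape prescribed for $\slangn{\typef{A}}{\negprefix{\prefixf{Q}}}$ — in particular that repeated or graphically identical quantified substituends get merged and that free variables do not proliferate. This is exactly what Lemmas~\ref{formula_normalization1} and~\ref{slangn_closedness} are designed to absorb, so once the reduction to $\cmpf{Neg}$ is in place no new ideas are required, and the statement is, as the text asserts, essentially a bookkeeping lemma.
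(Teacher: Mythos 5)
Your argument is correct: the paper states this lemma without proof (it is introduced with ``Clearly, the following lemma holds''), and your De Morgan--duality induction on the quantifier prefix, with the bookkeeping absorbed by Lemmas \ref{formula_normalization1} and \ref{slangn_closedness} and the theory-level map defined as the complement of the $\cmpf{Neg}$-preimage of $\thrf{T}$ (which makes antitonicity immediate), is exactly the routine verification the author is implicitly relying on. The only slip is notational: in the inductive step the substituends $\foflf{p}_{\natf{i}}$ contain the free variable $\fovarf{x}$ and hence lie in $\slangn{\typef{A}}{\prefixf{Q}'}$ rather than $\slangnc{\typef{A}}{\prefixf{Q}'}$, so the auxiliary claim should be stated and proved for open formulas, using the free-variable clauses of the two cited lemmas to conclude that the final output is closed whenever $\foflf{g}$ is.
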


For a class of formulas $\thrf{L}$ from the first-order language of $\typef{A}$-algebras and $\typef{A}$-algebra $\algf{A}$ we denote by $\elthr[\thrf{L}]{\algf{A}}$ the set of all closed formulas from $\thrf{L}$ that holds in $\algf{A}$. We denote by $\elthr{\algf{A}}$ the set of all well-built first-order closed formulas that holds in $\algf{A}$.  

For a quantifier prefix $\prefixf{P}=\quant_1 \fovarf{x}_1 \ldots \quant_{\natf{n}} \fovarf{x}_{\natf{n}}$ we denote by $\doubleprefix{\prefixf{P}}$ the quantifier prefix $$\quant_1 \fovarf{x}_1\quant_1 \fovarf{x}_1'\quant_2 \fovarf{x}_2\quant_2 \fovarf{x}_2'\ldots \quant_{\natf{n}} \fovarf{x}_{\natf{n}}\quant_{\natf{n}} \fovarf{x}_{\natf{n}}',$$ where  $\fovarf{x}_1',\ldots,\fovarf{x}_{\natf{n}}'$ are pairwise different fresh variables that are chosen in a some fixed way. 

\begin{lemma}\label{frext_translation} For a quantifier prefix $\prefixf{P}$, normal type embedding $\tembf{l}\colon \typef{A}\to \typef{B}$ with finite $\typef{A}$, and a subset $\thrf{T}$ of $\slangnc{\typef{A}}{\doubleprefix{\prefixf{P}}}$ we can effectively find a subset $\cmpf{FrExtThTr}^{\tembf{l}}(\prefixf{P},\thrf{T})$ of $\slangnc{\typef{B}}{\prefixf{P}}$ such that for every $\typef{A}$-algebra $\algf{A}$  $$\thrf{T}=\elthr[\slangn{\typef{A}}{\doubleprefix{\prefixf{P}}}]{\algf{A}}\Implication \cmpf{FrExtThTr}^{\tembf{l}}(\prefixf{P},\thrf{T})=\elthr[\slangn{\typef{A}}{\prefixf{P}}]{\fextfr^{\tembf{l}}(\algf{A})}.$$
\end{lemma}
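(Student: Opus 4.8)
The plan is to argue by induction on the length of $\prefixf{P}$, constructing $\cmpf{FrExtThTr}^{\tembf{l}}(\prefixf{P},\thrf{T})$ uniformly in all normal type embeddings $\tembf{l}$ with finite domain (all algebras occurring below are linear, which is what makes Lemmas \ref{diag_frext_translation}, \ref{ext_sqext_prop}, \ref{lin_prod_tr} and the lemma on $\cmpf{LinProdTr}$ applicable). In the base case $\prefixf{P}$ --- hence also $\doubleprefix{\prefixf{P}}$ --- is empty: by Lemmas \ref{formula_normalization1} and \ref{formula_normalization2} every $\foflf{f}\in\slangnc{\typef{B}}{\prefixf{P}}$ is $\thrf{GLPA}_{\typef{B}}$-equivalent to a quantifier-free formula with atoms in $\slangatc{\typef{B}}$, so Lemma \ref{diag_frext_translation} yields a closed $\cmpf{DiagFrExtTr}^{\tembf{l}}(\foflf{f})\in\slangprnc{\typef{A}}$ with $\fextfr^{\tembf{l}}(\algf{A})\models\foflf{f}$ iff $\algf{A}\models\cmpf{DiagFrExtTr}^{\tembf{l}}(\foflf{f})$; normalizing the latter back into $\slangnc{\typef{A}}{\prefixf{P}}$ (Lemma \ref{formula_normalization2}) and testing membership in $\thrf{T}$ decides whether $\foflf{f}$ belongs to $\cmpf{FrExtThTr}^{\tembf{l}}(\prefixf{P},\thrf{T})$.

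For the inductive step let $\prefixf{P}=\exists\fovarf{x}\prefixf{P}'$ (the case $\forall\fovarf{x}\prefixf{P}'$ is dual, see the end). A formula of $\slangnc{\typef{B}}{\prefixf{P}}$ is a positive propositional combination of formulas $\exists\fovarf{x}\,\foflf{p}(\fovarf{x})$ with $\foflf{p}(\fovarf{x})\in\slangn{\typef{B}}{\prefixf{P}'}$, so it is enough to decide, for each such $\foflf{p}$, whether $\fextfr^{\tembf{l}}(\algf{A})\models\exists\fovarf{x}\,\foflf{p}(\fovarf{x})$. Fix fresh constants $\constf{q}_0,\constf{q}\notin\typef{B}$. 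Then $\fextfr^{\tembf{l}}(\algf{A})\models\exists\fovarf{x}\,\foflf{p}(\fovarf{x})$ iff some strong constant extension of $\fextfr^{\tembf{l}}(\algf{A})$ by $\constf{q}_0$ satisfies $\foflf{p}(\constf{q}_0)$, and by Lemma \ref{ext_sqext_prop} (together with Lemma \ref{sqqlpm_isomorphism}, which identifies $\sqext_{\overline{\algf{H}}}$ with $\sqqlp_{\overline{\algf{H}}}$) this holds iff there is an extension sequence $\overline{\algf{H}}=(\algf{H}_1,\dots,\algf{H}_{\natf{n}})$ of type $\extsqtf{E}=(\tembf{l},\algf{A},\constf{q},\{\constf{q}_0\})$ with $\sqqlp_{\overline{\algf{H}}}\models\foflf{p}(\constf{q}_0)$. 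Each $\algf{H}_{\natf{i}}$ ($\natf{i}<\natf{n}$) is a copy of $\algf{A}$ with two elements named by $\constf{q},\constf{q}_0$, and $\algf{H}_{\natf{n}}$ a copy of $\algf{A}$ with one element named by $\constf{q}_0$, so they are all linear; unwinding the definition of $\sqqlp_{\overline{\algf{H}}}$ exhibits it as the iterated linear product $F_1\linprodfr(F_2\linprodfr(\cdots\linprodfr F_{\natf{n}}))$ of linear $(\typef{B}\typeplus\constf{q}_0)$-algebras (linearity by Lemma \ref{fext_linear_preserve} and inheritance of linearity by the factors of a linear product), where $F_{\natf{i}}=\fquotalg^{\typef{B}\typeplus\constf{q}_0,\constf{q}}(\fextfr^{(\tembf{l}\typeplus\constf{q}_0)\typeplus\constf{q}}(\algf{H}_{\natf{i}}))$ for $\natf{i}<\natf{n}$ and $F_{\natf{n}}=\fextfr^{\tembf{l}\typeplus\constf{q}_0}(\algf{H}_{\natf{n}})$.

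Applying the lemma on $\cmpf{LinProdTr}^{\typef{B}\typeplus\constf{q}_0}$ along this product: since that translation is a fixed positive propositional operation, the $\slangnc{\typef{B}\typeplus\constf{q}_0}{\prefixf{P}'}$-theory of a linear product $F\linprodfr R$ is a fixed, computable, monotone function of the $\slangnc{\typef{B}\typeplus\constf{q}_0}{\prefixf{P}'}$-theories of $F$ and $R$; hence that theory of $\sqqlp_{\overline{\algf{H}}}$ is the corresponding right fold over the sequence of theories of the $F_{\natf{i}}$, and ``$\sqqlp_{\overline{\algf{H}}}\models\foflf{p}(\constf{q}_0)$'' becomes the condition that a fixed finite automaton --- whose states are the finitely many subsets of $\slangnc{\typef{B}\typeplus\constf{q}_0}{\prefixf{P}'}$ --- accepts that sequence. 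Thus $\fextfr^{\tembf{l}}(\algf{A})\models\exists\fovarf{x}\,\foflf{p}(\fovarf{x})$ iff the automaton accepts from some realizable initial state (a realizable $\slangnc{\typef{B}\typeplus\constf{q}_0}{\prefixf{P}'}$-theory of a last factor $F_{\natf{n}}$) along some word over the realizable alphabet (realizable such theories of middle factors $F_{\natf{i}}$) --- an emptiness problem that is decidable once these realizable data are known. They are computed from $\thrf{T}$: by the inductive hypothesis for the normal type embeddings $\tembf{l}\typeplus\constf{q}_0$ and $(\tembf{l}\typeplus\constf{q}_0)\typeplus\constf{q}$ (finite domains) with the shorter prefix $\prefixf{P}'$, followed by Lemma \ref{lin_prod_tr} for the outer quotient $\fquotalg^{\typef{B}\typeplus\constf{q}_0,\constf{q}}$, the $\slangnc{\typef{B}\typeplus\constf{q}_0}{\prefixf{P}'}$-theory of $F_{\natf{i}}$, resp.\ $F_{\natf{n}}$, is a fixed computable function of the $\slangnc{\typef{A}\typeplus\constf{q}_0\typeplus\constf{q}}{\doubleprefix{\prefixf{P}'}}$-theory of $\algf{H}_{\natf{i}}$, resp.\ of the $\slangnc{\typef{A}\typeplus\constf{q}_0}{\doubleprefix{\prefixf{P}'}}$-theory of $\algf{H}_{\natf{n}}$; and which theories of these one- and two-pointed copies of $\algf{A}$ are realized is read off from $\thrf{T}$ using Lemmas \ref{formula_normalization1}, \ref{slangn_closedness}, \ref{formula_normalization2} and \ref{neg_th_tr}. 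This is precisely where the doubling is spent: $\doubleprefix{\prefixf{P}}$ carries two more quantifier layers than $\doubleprefix{\prefixf{P}'}$, exactly the room needed to range over the witness constant $\constf{q}_0$ and the auxiliary constant $\constf{q}$. Collecting the answers over all $\foflf{p}$ gives $\cmpf{FrExtThTr}^{\tembf{l}}(\prefixf{P},\thrf{T})$; for $\prefixf{P}=\forall\fovarf{x}\prefixf{P}'$ one runs the same analysis with the automaton's acceptance condition complemented (equivalently, composing with $\cmpf{NegThTr}$).

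The two steps I expect to cost real effort are: (i) turning ``$\sqqlp_{\overline{\algf{H}}}\models\foflf{p}(\constf{q}_0)$'', for extension sequences $\overline{\algf{H}}$ of unbounded length, into a genuinely regular condition on the sequence of factor theories --- this rests on $\cmpf{LinProdTr}$ being a fixed positive propositional map, so the fold is monotone and, in the $\exists$-case, only the downward closure of the realizable factor theories (not the exact realizable set) is needed, which $\thrf{T}$ does determine; and (ii) the quantifier-prefix bookkeeping, namely checking that exactly the $\doubleprefix{\prefixf{P}}$-theory of $\algf{A}$, together with $\cmpf{NegThTr}$ and the normalization lemmas, recovers the realizable $\doubleprefix{\prefixf{P}'}$-theories of the pointed copies of $\algf{A}$ used as the factors --- and the analogous point for the complemented automaton in the $\forall$-case.
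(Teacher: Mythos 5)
Your proposal follows essentially the same route as the paper's proof: reduce the $\exists$-step to theories of strong constant extensions of $\fextfr^{\tembf{l}}(\algf{A})$, identify these via Lemmas \ref{ext_sqext_prop} and \ref{sqqlpm_isomorphism} with the iterated linear products $\sqqlp_{\overline{\algf{H}}}$, compute the realizable factor theories from $\thrf{T}$ by the inductive hypothesis together with $\cmpf{QuotTr}$, and close under the monotone $\cmpf{LinProdThTr}$ map until stabilization (your finite automaton over subsets of $\slangnc{\typef{B}\typeplus\constf{c}}{\prefixf{P}'}$ is exactly the paper's fixpoint iteration $\setf{C}_1\subset\setf{C}_2\subset\ldots$, and your downward-closure/monotonicity remark matches the paper's simultaneous induction on monotonicity of $\cmpf{FrExtThTr}$). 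The argument is correct and the bookkeeping you flag is handled in the paper in the same way, so no gap.
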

\begin{proof} We prove  the lemma simultaneously with the following proposition by induction on the length of $\prefixf{P}$: for a quantifier prefix $\prefixf{P}$, normal type embedding $\tembf{l}\colon \typef{A}\to \typef{B}$ with finite $\typef{A}$, and  subsets $\thrf{T}_1,\thrf{T}_2\subset\slangnc{\typef{A}}{\doubleprefix{\prefixf{P}}}$ we have $$\thrf{T}_1\subset\thrf{T}_2\Implication\cmpf{FrExtThTr}^{\tembf{l}}(\prefixf{P},\thrf{T}_1)\subset \cmpf{FrExtThTr}^{\tembf{l}}(\prefixf{P},\thrf{T}_2).$$ For empty prefix $\prefixf{P}$ the lemma straightforward follows from Lemma \ref{diag_frext_translation}. 

Suppose $\prefixf{P}=\quant \fovarf{x} \prefixf{P}'$. We can only consider the case of $\quant=\exists$; if $\quant=\forall$ then we put $$\cmpf{FrExtThTr}^{\tembf{l}}(\prefixf{P},\thrf{T})=\cmpf{NegThTr}^{\typef{B}}(\negprefix{\prefixf{P}},\cmpf{FrExtThTr}^{\tembf{l}}(\negprefix{\prefixf{P}},\cmpf{NegThTr}^{\typef{A}}(\doubleprefix{\prefixf{P}},\thrf{T}))).$$ 

We choose fresh constant symbols $\constf{c}$ and $\constf{q}$. For every subset $\thrf{U}$ of the set of formulas from $\slangnc{\typef{A}\typeplus \constf{c}\typeplus\constf{q}}{\doubleprefix{(\prefixf{P}')}}$ we can construct formula $\foflf{p}_{\thrf{U}}\in \slangnc{\typef{A}\typeplus \constf{c} \typeplus\constf{q}}{\doubleprefix{(\prefixf{P}')}}$ which is $\thrf{GLPA}_{\typef{A}\typeplus\constf{c}\typeplus\constf{q}}$-equivalent to conjunction of all formulas from $\thrf{U}$.  Further, for every $\foflf{p}_{\thrf{U}}$ we construct $\foflf{p}_{\thrf{U}}'$ by replacing  every occurrence of $\constf{c}$ and $\constf{q}$ with  fresh variables $\fovarf{y}_1$ and $\fovarf{y}_2$, respectively. Then for $\foflf{p}_{\thrf{U}}'$ we denote by $\foflf{p}_{\thrf{U}}''$ the formula $\exists \fovarf{y}_1 \exists \fovarf{y}_2 \foflf{p}_{\thrf{U}}''$. And finally, by renaming some bounded variables  for every $\foflf{p}_{\thrf{U}}''$ we construct an equivalent $\foflf{p}_{\thrf{U}}'''\in \slangnc{\typef{A}}{\doubleprefix{\prefixf{P}}}$. We denote by $\setf{A}$ the set of all $\thrf{U}$ such that  $\foflf{p}_{\thrf{U}}'''\in\thrf{T}$.  Obviously, for a linear $\typef{A}$-algebra $\algf{A}$ such that $\elthr[\slangn{\typef{A}}{\doubleprefix{\prefixf{P}}}]{\algf{A}}=\thrf{T}$ we have $$\setf{A}=\{\thrf{U}\subset\elthr[\slangn{\typef{A}\typeplus\constf{c}\typeplus\constf{q}}{\doubleprefix{(\prefixf{P}')}}]{\algf{A}'}\mid \mbox{ $\algf{A}'$ is a constant extensions by  $\{\constf{c},\constf{q}\}$ of $\algf{A}$}\}.$$  
We put $$\setf{B}=\{\thrf{U}\mid \exists \thrf{U}'\in\setf{A}' (\thrf{U}=\{\foflf{t}\in\thrf{U}'\mid \mbox{ there are no $\constf{q}$ in $\foflf{t}$}\})\}.$$
 Obviously, for a linear $\typef{A}$-algebra $\algf{A}$ such that $\elthr[\slangn{\typef{A}}{\doubleprefix{\prefixf{P}}}]{\algf{A}}=\thrf{T}$ we have $$\setf{B}=\{\thrf{U}\subset\elthr[\slangn{\typef{A}\typeplus\constf{c}}{\doubleprefix{(\prefixf{P}')}}]{\algf{A}'}\mid \mbox{ $\algf{A}'$ is a constant extensions by  $\constf{c}$ of $\algf{A}$}\}.$$ 

Using the inductive hypothesis we construct
$$\setf{B}'=\{\cmpf{FrExtThTr}^{\tembf{l}\typeplus\constf{c}}(\prefixf{P}',\setf{U})\mid \thrf{U}\in \setf{B}\}$$
and
$$\setf{A}'=\{\cmpf{FrExtThTr}^{\tembf{l}\typeplus\constf{c}}(\prefixf{P}',\setf{U})\mid \thrf{U}\in \setf{A}\}.$$
We consider downward closures $\setf{A}''$ and $\setf{B}''$ of $\setf{A}'$ and $\setf{B}'$ respectively. 

Clearly, for a linear $\typef{A}$-algebra $\algf{A}$ such that $\elthr[\slangn{\typef{A}}{\doubleprefix{\prefixf{P}}}]{\algf{A}}=\thrf{T}$ we have $$\setf{B}''=\{\elthr[\slangn{\typef{B}\typeplus\constf{c}}{\prefixf{P}'}]{\fextfr^{\tembf{l}\typeplus\constf{c}}(\algf{A}')}\mid \mbox{ $\algf{A}'$ is a constant extensions by  $\constf{c}$ of $\algf{A}$}\}$$
and 
$$\setf{A}''=\{\elthr[\slangn{\typef{B}\typeplus\constf{c}}{\prefixf{P}'}]{\fextfr^{\tembf{l}\typeplus\constf{c}\typeplus\constf{q}}(\algf{A}')}\mid \mbox{ $\algf{A}'$ is a constant extensions by  $\{\constf{q},\constf{c}\}$ of $\algf{A}$}\}.$$

We put
$$\setf{A}'''=\{\thrf{U}\mid \{\cmpf{QuotTr}^{\typef{B}\typeplus\constf{c},\constf{q}}(\foflf{t})\mid \foflf{t}\in\thrf{U}\}\in\setf{A}'\}.$$
Clearly, for a linear $\typef{A}$-algebra $\algf{A}$ such that $\elthr[\slangn{\typef{A}}{\doubleprefix{\prefixf{P}}}]{\algf{A}}=\thrf{T}$ we have
$$\setf{A}'''=\{\elthr[\slangn{\typef{B}\typeplus\constf{c}}{\prefixf{P}'}]{\fextfquotfr^{\tembf{l}\typeplus\constf{c},\constf{q}}(\algf{A}')}\mid \mbox{ $\algf{A}'$ is a constant extensions by  $\{\constf{q},\constf{c}\}$ of $\algf{A}$}\}.$$

Clearly, for a normal type $\typef{C}$, quantifier prefix $\prefixf{W}$ and two subsets $\thrf{U}_1,\thrf{U}_2\subset\slangnc{\typef{C}}{\prefixf{W}}$ we can effectively construct the set $\cmpf{LinProdThTr}^{\typef{C}}(\prefixf{W},\thrf{U}_1,\thrf{U}_2)\subset\slangnc{\typef{C}}{\prefixf{W}}$ such that for  linear $\typef{C}$-algebras $\algf{C}_1$, $\algf{C}_2$, $\elthr[\slangn{\typef{C}}{\prefixf{W}}]{\algf{C}_1}=\thrf{U}_1$ and  $\elthr[\slangn{\typef{C}}{\prefixf{W}}]{\algf{C}_2}=\thrf{U}_2$ we have  $\elthr[\slangn{\typef{C}}{\prefixf{W}}]{\algf{C}_1\linprodfr\algf{C}_2}=\cmpf{LinProdThTr}^{\typef{C}}(\prefixf{W},\thrf{U}_1,\thrf{U}_2)$ ; here we use Lemma \ref{lin_prod_tr}. Moreover, for a normal type $\typef{C}$, quantifier prefix $\prefixf{W}$ and subsets $\thrf{U}_1,\thrf{U}_2,\thrf{U}_3,\thrf{U}_4\subset\slangnc{\typef{C}}{\prefixf{W}}$ we have $$\thrf{U}_1\subset\thrf{U}_3\foand\thrf{U}_2\subset\thrf{U}_4\Implication \cmpf{LinProdThTr}^{\typef{C}}(\prefixf{W},\thrf{U}_1,\thrf{U}_2)\subset\cmpf{LinProdThTr}^{\typef{C}}(\prefixf{W},\thrf{U}_3,\thrf{U}_4).$$

We consider an infinite sequence:
\begin{itemize}
\item $\setf{C}_1=\setf{B}'$;
\item $\setf{C}_{\natf{i}+1}=\{\cmpf{LinProdThTr}^{\typef{B}\typeplus\constf{c}}(\prefixf{P}',\thrf{U}_1,\thrf{U}_2)\mid \thrf{U}_1\in\setf{A}''', \thrf{U}_2\in\setf{C}_{\natf{i}}\}$, for $\natf{i}\ge 1$.
\end{itemize} 
We denote by $\setf{D}$ the set $\bigcup\limits_{\natf{i}\ge 1} \setf{C}_{\natf{i}}$. Clearly, for a linear $\typef{A}$-algebra $\algf{A}$ such that $\elthr[\slangn{\typef{A}}{\doubleprefix{\prefixf{P}}}]{\algf{A}}=\thrf{T}$ we have $$\setf{C}_{\natf{i}}=\{\elthr[\slangn{\typef{B}\typeplus\constf{c}}{\prefixf{P}'}]{\sqext_{\overline{\algf{S}}}}\mid \mbox{ $\overline{\algf{S}}$ is $(\tembf{l},\algf{A},\constf{q},\{\constf{c}\})$-extension sequence of the length $\natf{i}$}\}$$
and $$\begin{aligned}\setf{D}&=\{\elthr[\slangn{\typef{B}\typeplus\constf{c}}{\prefixf{P}'}]{\sqext_{\overline{\algf{S}}}}\mid \mbox{ $\overline{\algf{S}}$ is $(\tembf{l},\algf{A},\constf{q},\{\constf{c}\})$-extension sequence}\}\\ &=\{\elthr[\slangn{\typef{B}\typeplus\constf{c}}{\prefixf{P}'}]{\algf{B}}\mid \mbox{ $\algf{B}$ is a constant extension of $\fextfr^{\tembf{l}}(\algf{A})$ by $\constf{c}$}\}\end{aligned}$$

Sets $\setf{C}_{\natf{i}}$ are subsets of  $\slangnc{\prefixf{P}'}{\typef{B}\typeplus\constf{c}}$. Obviously, if for some $\natf{i},\natf{j}$ we have $\setf{C}_{\natf{i}}=\setf{C}_{\natf{j}}$ then $\setf{C}_{\natf{i}+1}=\setf{C}_{\natf{j}+1}$. Hence  $$\setf{D}=\bigcup\limits_{\natf{i}\ge 1} \setf{C}_{\natf{i}}=\bigcup\limits_{1\le \natf{i}\le \natf{k}}\setf{C}_{\natf{i}},$$ where $\natf{k}=2^{|\slangnc{\prefixf{P}'}{\typef{B}\typeplus\constf{c}}|}$. Therefore we can calculate $\setf{D}$.

From the set $\setf{D}$ we construct the set $\setf{D}'$ of all $\exists \fovarf{x} \foflf{t}$ such that $\foflf{t}[\fovarf{x}/\constf{q}]$ lies in some element of $\setf{D}$. The resulting set $\cmpf{FrExtThTr}^{\tembf{l}}(\prefixf{P},\thrf{T})$ is the set of all $$\prflf{r}[\prvarf{x}_1,\ldots,\prvarf{x}_{\natf{n}}/\exists \fovarf{x} \foflf{t}_1,\ldots, \exists \fovarf{x} \foflf{t}_{\natf{n}}]$$ (where $\prflf{r}(\prvarf{x}_1,\ldots,\prvarf{x}_{\natf{n}})$ is positive propositional formula in disjunctive normal form and $\foflf{t}_{\natf{i}}(\fovarf{x})$ are pairwise different formulas from $\slangnc{\typef{A}}{\prefixf{P}'}$ such that $\prflf{r}$ is true under the substitution $$\prvarf{x}_{\natf{i}}\;\leftarrow\; (\exists \fovarf{x}\foflf{t}_{\natf{i}}) \in \setf{D}'.$$
Clearly, such a  $\cmpf{FrExtThTr}^{\tembf{l}}(\prefixf{P},\thrf{T})$ satisfies the condition of the lemma. Obviously, our additional induction assumption is satisfied too.  
\end{proof}

Using Lemma \ref{formula_normalization2} and Lemma \ref{frext_translation} we obtain
\begin{corollary} Suppose $\tembf{l}\colon \typef{A}\to \typef{B}$ is a normal type embedding, $\algf{A}$ is an $\typef{A}$-algebra, and the theory $\elthr{\algf{A}}$ is decidable. Then the theory $\elthr{\fextfr^{\tembf{l}}(\algf{A})}$ is decidable.
\end{corollary}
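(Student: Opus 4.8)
We plan to decide an arbitrary closed formula of $\lang{\thrf{GLPA}_{\typef{B}}}$ against $\fextfr^{\tembf{l}}(\algf{A})$ by reducing it, through the two normalisation lemmas, to finitely many queries to the (decidable) theory $\elthr{\algf{A}}$. Throughout we use that the types in play are effective and that $\typef{A}$ is finite, which is the situation in all intended applications --- in particular for the free $\emptyset$-generated $\thrf{GLP}_{\natf{n}}$-algebras --- and which is exactly what makes the set $\slangnc{\typef{A}}{\doubleprefix{\prefixf{P}}}$ occurring below finite.

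First I would take a closed $\psi\in\lang{\thrf{GLPA}_{\typef{B}}}$ and apply Lemma \ref{formula_normalization2} to $\typef{B}$, obtaining effectively a quantifier prefix $\prefixf{P}$ and a formula $\foflf{f}\in\slangnc{\typef{B}}{\prefixf{P}}$ that is $\thrf{GLPA}_{\typef{B}}$-equivalent to $\psi$, hence equivalent to $\psi$ in every $\typef{B}$-algebra and in particular in $\fextfr^{\tembf{l}}(\algf{A})$. Thus $\fextfr^{\tembf{l}}(\algf{A})\models\psi$ holds iff $\foflf{f}\in\elthr[\slangn{\typef{B}}{\prefixf{P}}]{\fextfr^{\tembf{l}}(\algf{A})}$, so it suffices to compute the finite set $\elthr[\slangn{\typef{B}}{\prefixf{P}}]{\fextfr^{\tembf{l}}(\algf{A})}$.

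Next I would invoke Lemma \ref{frext_translation} with the prefix $\prefixf{P}$ and with $\thrf{T}=\elthr[\slangn{\typef{A}}{\doubleprefix{\prefixf{P}}}]{\algf{A}}$: it yields $\elthr[\slangn{\typef{B}}{\prefixf{P}}]{\fextfr^{\tembf{l}}(\algf{A})}=\cmpf{FrExtThTr}^{\tembf{l}}(\prefixf{P},\thrf{T})$, and $\cmpf{FrExtThTr}^{\tembf{l}}$ is computable from $\prefixf{P}$ and the finite set $\thrf{T}$. So the problem reduces to computing $\thrf{T}$. But $\slangnc{\typef{A}}{\doubleprefix{\prefixf{P}}}$ is a finite set of closed first-order formulas of $\lang{\thrf{GLPA}_{\typef{A}}}$ that can be listed effectively, and for each such $\foflf{g}$ the statement ``$\foflf{g}\in\thrf{T}$'' is precisely ``$\algf{A}\models\foflf{g}$'', i.e. ``$\foflf{g}\in\elthr{\algf{A}}$'', which is decidable by hypothesis. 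Hence $\thrf{T}$ is computable, $\cmpf{FrExtThTr}^{\tembf{l}}(\prefixf{P},\thrf{T})$ is computable, and testing whether $\foflf{f}\in\cmpf{FrExtThTr}^{\tembf{l}}(\prefixf{P},\thrf{T})$ decides $\fextfr^{\tembf{l}}(\algf{A})\models\psi$. Since $\psi$ was arbitrary, $\elthr{\fextfr^{\tembf{l}}(\algf{A})}$ is decidable.

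I do not expect a genuine obstacle in this corollary: all of the depth sits in Lemma \ref{frext_translation} and, behind it, in the linear-product decomposition of free extensions. The points that need care are purely bookkeeping --- that Lemma \ref{frext_translation} must be supplied with the $\slangn{\typef{A}}{\doubleprefix{\prefixf{P}}}$-fragment of $\elthr{\algf{A}}$, the fragment for the doubled prefix rather than for $\prefixf{P}$ itself, and that finiteness of $\typef{A}$ is exactly what turns that fragment into a concrete finite set computable from an oracle for $\elthr{\algf{A}}$; also, the normalisation machinery is set up precisely so that $\thrf{GLPA}_{\typef{B}}$-equivalence (which means truth in all $\typef{B}$-algebras) automatically transfers to $\fextfr^{\tembf{l}}(\algf{A})$, which is what licenses the reduction in the first step.
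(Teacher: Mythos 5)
Your proposal is correct and follows essentially the same route as the paper, which derives this corollary directly from Lemma \ref{formula_normalization2} (normalising the given closed formula to a prefix-form formula in $\slangnc{\typef{B}}{\prefixf{P}}$) combined with Lemma \ref{frext_translation} (computing $\elthr[\slangn{\typef{B}}{\prefixf{P}}]{\fextfr^{\tembf{l}}(\algf{A})}$ from the finite, decidably computable set $\elthr[\slangn{\typef{A}}{\doubleprefix{\prefixf{P}}}]{\algf{A}}$). Your remarks about needing the doubled prefix and the finiteness of $\typef{A}$ match the implicit assumptions under which the paper applies the corollary.
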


Because, of the correspondence between notions of normal type embedding and simple final type embedding we can conclude that
\begin{corollary} \label{fext_dec_tr} Suppose $\tembf{l}\colon \typef{A}\to \typef{B}$ is a simple final type embedding, $\algf{A}$ is an $\typef{A}$-algebra, and the theory $\elthr{\algf{A}}$ is decidable. Then the theory $\elthr{\fextfr^{\tembf{l}}(\algf{A})}$ is decidable.
\end{corollary}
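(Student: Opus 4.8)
The plan is to reduce the statement to the preceding corollary, which already treats normal type embeddings, by means of the factorisation of simple final type embeddings recorded just after the definition of normal type embedding: every simple final type embedding $\tembf{l}\colon\typef{A}\to\typef{B}$ can be written as $\tembf{l}=\tembf{r}_1\comp\tembf{l}'\comp\tembf{r}_2^{-1}$, where $\tembf{l}'\colon\typef{A}'\to\typef{B}'$ is a normal type embedding and $\tembf{r}_1\colon\typef{A}\to\typef{A}'$, $\tembf{r}_2\colon\typef{B}\to\typef{B}'$ are bijective type embeddings. The only auxiliary fact I need is that a bijective type embedding is merely a relabelling of the operator and constant symbols: for such $\tembf{r}\colon\typef{C}\to\typef{D}$ the functor $\purfr^{\tembf{r}}$ is an isomorphism between the category of $\typef{D}$-algebras and the category of $\typef{C}$-algebras, with inverse $\purfr^{\tembf{r}^{-1}}$, and replacing every non-logical symbol by its $\tembf{r}$-image yields an effective truth-preserving bijection between $\lang{\thrf{GLPA}_{\typef{C}}}$ and $\lang{\thrf{GLPA}_{\typef{D}}}$. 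Consequently, for a bijective $\tembf{r}\colon\typef{C}\to\typef{D}$ and a $\typef{D}$-algebra $\algf{D}$, the theory $\elthr{\algf{D}}$ is decidable iff the theory $\elthr{\purfr^{\tembf{r}}(\algf{D})}$ is; and $\typef{C}$ is finite iff $\typef{D}$ is, so any finiteness hypothesis hidden in the preceding corollary is transported as well.

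Carrying this out, I would first let $\algf{A}'$ be the unique $\typef{A}'$-algebra with $\purfr^{\tembf{r}_1}(\algf{A}')=\algf{A}$; it exists and is unique because $\tembf{r}_1$ is bijective, and by the remark above $\elthr{\algf{A}'}$ is decidable. Applying the preceding corollary to the normal type embedding $\tembf{l}'$ then yields that $\elthr{\fextfr^{\tembf{l}'}(\algf{A}')}$ is decidable. Next I would identify $\fextfr^{\tembf{l}}(\algf{A})$: using $\tembf{l}\comp\tembf{r}_2=\tembf{r}_1\comp\tembf{l}'$ together with the fact that $\purfr^{\tembf{r}_1}$ and $\purfr^{\tembf{r}_2}$ are isomorphisms of the relevant categories, one checks directly from the defining universal property of free extensions that the pair $(\purfr^{\tembf{r}_2}(\fextfr^{\tembf{l}'}(\algf{A}')),\purfr^{\tembf{r}_1}(\fextemb^{\tembf{l}'}_{\algf{A}'}))$ satisfies the definition of $(\fextfr^{\tembf{l}}(\algf{A}),\fextemb^{\tembf{l}}_{\algf{A}})$. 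This is the exact analogue of Lemma \ref{fextfr_const_shift}, with the constant extensions there replaced by the bijective renamings $\tembf{r}_1,\tembf{r}_2$, and the diagram chase is formally identical: any $\typef{B}$-algebra $\algf{C}$ equipped with a homomorphism $\algf{A}\to\purfr^{\tembf{l}}(\algf{C})$ corresponds under $\purfr^{\tembf{r}_2^{-1}}$ and $\purfr^{\tembf{r}_1^{-1}}$ to a $\typef{B}'$-algebra carrying a homomorphism out of $\algf{A}'$, and the universal arrow supplied by $\fextfr^{\tembf{l}'}(\algf{A}')$ transports back. Hence $\fextfr^{\tembf{l}}(\algf{A})\cong\purfr^{\tembf{r}_2}(\fextfr^{\tembf{l}'}(\algf{A}'))$.

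Finally, since $\tembf{r}_2$ is bijective, the opening remark gives that $\elthr{\purfr^{\tembf{r}_2}(\fextfr^{\tembf{l}'}(\algf{A}'))}$ is decidable iff $\elthr{\fextfr^{\tembf{l}'}(\algf{A}')}$ is, and the latter has just been established; therefore $\elthr{\fextfr^{\tembf{l}}(\algf{A})}$ is decidable. The one step that deserves genuine attention is the isomorphism $\fextfr^{\tembf{l}}(\algf{A})\cong\purfr^{\tembf{r}_2}(\fextfr^{\tembf{l}'}(\algf{A}'))$; I expect this to be the main, albeit ultimately routine, obstacle, and it can be dispatched either by the universal-property chase sketched above or, equivalently, by noting that passing to $\tembf{r}^{-1}$-purations turns the whole configuration into the already-settled normal case word for word.
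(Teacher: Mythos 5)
Your proposal is correct and follows essentially the same route as the paper, which simply invokes the correspondence between simple final type embeddings and normal type embeddings (the factorisation $\tembf{l}=\tembf{r}_1\comp\tembf{l}'\comp\tembf{r}_2^{-1}$ recorded after the definition of normal type embedding) to reduce to the preceding corollary. You merely spell out the routine details the paper leaves implicit, namely that bijective renamings preserve decidability and transport free extensions via the analogue of Lemma~\ref{fextfr_const_shift}.
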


We denote by $\sptype{\onf{a}}$ the type $((\onf{a},<),\emptyset)$, where $(\onf{a},<)$ is an ordinal $\onf{a}$ with standard ordering. We denote by $\spemb{\onf{a}}{\onf{b}}\colon \sptype{\onf{a}}\to \sptype{\onf{b}+\onf{a}}$ the type embedding that maps an operator symbol $\boxos[\onf{g}]\in\sptype{\onf{a}}$ to $\boxos[\onf{b}+\onf{g}]$. Clearly all $\spemb{\onf{a}}{\onf{b}}$ are simple final type embeddings. We denote by $\algf{F}_0$ the two element Boolean algebra. Note that $\algf{F}_0$ is $\sptype{0}$-algebra. We denote by $\algf{F}_{\onf{a}}$ the $\sptype{\onf{a}}$-algebra $\fextfr^{\spemb{0}{\onf{a}}}(\algf{F}_0)$. Clearly an algebra $\algf{F}_{1+\onf{a}}$ is isomorphic to $\fextfr^{\spemb{\onf{a}}{1}}(\algf{F}_{\onf{a}})$
In \cite{ArtBek93} S.N.~Artemov and L.D.~Beklemishev have proved that  $\elthr{\algf{F}_{1}}$ is decidable. Using Corollary \ref{fext_dec_tr}, Lemma \ref{fextfr_comp}, and mentioned theorem from \cite{ArtBek93} we conclude that

\begin{theorem} For every $\natf{n}$ the elementary theory $\elthr{\algf{F}_{\natf{n}}}$ is decidable.
\end{theorem}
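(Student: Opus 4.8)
The plan is a straightforward induction on $\natf{n}$, feeding the free $\emptyset$-generated algebras $\algf{F}_{\natf{n}}$ into the decidability-transfer machinery developed above; the only non-bookkeeping ingredient is the theorem of Artemov and Beklemishev from \cite{ArtBek93}, which supplies the base of the induction.

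For the base I would treat $\natf{n}=0$ and $\natf{n}=1$ separately. The algebra $\algf{F}_0$ is the two-element Boolean algebra, a finite structure, so $\elthr{\algf{F}_0}$ is trivially decidable. For $\natf{n}=1$ the algebra $\algf{F}_1$ is the free $\emptyset$-generated $\thrf{GL}$-algebra (the logic $\thrf{GLP}$ with a single modality is just $\thrf{GL}$), and decidability of $\elthr{\algf{F}_1}$ is exactly the positive half of the quoted result of \cite{ArtBek93}. Note that this case cannot be obtained from Corollary~\ref{fext_dec_tr} applied to $\spemb{0}{1}\colon\sptype{0}\to\sptype{1}$, since the domain $\sptype{0}$ carries the empty order and hence is not a normal type, so $\spemb{0}{1}$ is not a final type embedding — this is precisely why the first level must be imported from outside.

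For the inductive step I would fix $\natf{n}\ge 1$ and assume $\elthr{\algf{F}_{\natf{n}}}$ is decidable. Since $\spemb{0}{1+\natf{n}}$ is the unique type embedding $\sptype{0}\to\sptype{1+\natf{n}}$ and factors as $\spemb{0}{\natf{n}}$ followed by $\spemb{\natf{n}}{1}$, Lemma~\ref{fextfr_comp} yields an isomorphism $\algf{F}_{\natf{n}+1}\cong\fextfr^{\spemb{\natf{n}}{1}}(\algf{F}_{\natf{n}})$. Because $\natf{n}\ge 1$, the type $\sptype{\natf{n}}$ is normal with least operator index $0$, and $\spemb{\natf{n}}{1}$ sends each $\boxos[\onf{g}]$ to $\boxos[1+\onf{g}]$, embedding $\sptype{\natf{n}}$ as a final interval of $\sptype{\natf{n}+1}$ whose complement consists of the single index $0$; hence $\spemb{\natf{n}}{1}$ is a simple final type embedding. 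Corollary~\ref{fext_dec_tr} then gives that $\elthr{\fextfr^{\spemb{\natf{n}}{1}}(\algf{F}_{\natf{n}})}$ is decidable, and since $\elthr{\cdot}$ is an isomorphism invariant, $\elthr{\algf{F}_{\natf{n}+1}}$ is decidable, closing the induction.

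I do not expect any genuine obstacle in the theorem itself: the argument is purely organizational once Corollary~\ref{fext_dec_tr} is available. All the substance lies in the machinery being invoked — in particular in Corollary~\ref{fext_dec_tr}, whose proof rests on the linear-product decomposition of free extensions (Lemmas~\ref{sqqlpm_isomorphism} and~\ref{ext_sqext_prop}) and on the effective first-order translations \texttt{DiagFrExtTr}, \texttt{QuotTr}, \texttt{LinProdTr} and \texttt{FrExtThTr}; granted those, the theorem follows immediately as above.
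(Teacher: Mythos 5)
Your proof is correct and follows the paper's own argument exactly: the base case $\elthr{\algf{F}_{1}}$ is imported from Artemov--Beklemishev, and the inductive step uses Lemma~\ref{fextfr_comp} to identify $\algf{F}_{\natf{n}+1}$ with $\fextfr^{\spemb{\natf{n}}{1}}(\algf{F}_{\natf{n}})$ and Corollary~\ref{fext_dec_tr} to transfer decidability along the simple final type embedding $\spemb{\natf{n}}{1}$. Your side remark that $\spemb{0}{1}$ is not a final type embedding (since $\sptype{0}$ is not a normal type) is a correct refinement of the paper's blanket claim that all $\spemb{\onf{a}}{\onf{b}}$ are simple final, and it rightly explains why the first level must come from the external result rather than from Corollary~\ref{fext_dec_tr}.
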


\section{Some Syntactical Facts} \label{syntactical_facts} 
The aim of the section is to prove Lemmas \ref{box_switch_imp} and \ref{sqftr}. In the section we will assume that a reader is familiar with paper ``Kripke semantics for provability logic \thrf{GLP}'' by L.D.~Beklemishev \cite{Bek07}. Moreover, in the section we will use the terminology  of \cite{Bek07} rather than the terminology of the other parts of the present paper.

We briefly remind the main notions and results of \cite{Bek07}. The polymodal provability logic $\thrf{GLP}$ were considered as a polymodal logic with modalities indexed by natural numbers. In par with the logic $\thrf{GLP}$  there were considered a weaker logic $\thrf{J}$ in the same language (we don't give an axiomatization of $\thrf{J}$ here, we give a complete semantics for this logic below) .  Kripke models with  accessibility relations $R_{\natf{i}}$ for all $\natf{i}\ge \natf{m}$  were called $\natf{m}$-models. The rank $\mathrm{rk}_{\natf{m}}(\mathcal{A})$ of an $\natf{m}$-model $\mathcal{A}$ is the minimal $\natf{n}\ge 0$ such that for all $\natf{k}\ge \natf{m}+\natf{n}$ the relation $R_{\natf{k}}$ is empty  in $\mathcal{A}$; $\mathrm{rk}_{\natf{m}}$ is a partial function from $\natf{m}$-models to natural numbers. The notion of stratified  were important in \cite{Bek07}. The notion of {\it hereditarily rooted finite stratified $\natf{m}$-model} $\mathcal{A}$ can be given by induction on rank (for every such a $\natf{m}$-model $\mathcal{A}$ the rank $\mathrm{rk}_{\natf{m}}(\mathcal{A})$ is a finite number) as following: 
\begin{enumerate}
\item $\natf{m}$-model $\mathcal{A}$ with $\mathrm{rk}_{\natf{m}}(\mathcal{A})=0$ is hereditarily rooted finite stratified  if all $R_{\natf{i}}$ are empty and there is exactly one point in  $\mathcal{A}$;
\item  $\natf{m}$-model $\mathcal{A}$ with $\mathrm{rk}_{\natf{m}}(\mathcal{A})=\natf{n}+1$ is hereditarily rooted finite stratified  if
\begin{enumerate}
\item \label{plane_p}points of $\mathcal{A}$ can be separated on $(\natf{m}+1)$-submodels $\alpha_1,\ldots,\alpha_{\natf{n}}$ such that 
\begin{enumerate}
\item for every $\alpha_{\natf{i}}$ the restriction of $R_{\natf{m}}$ on points of $\alpha_{\natf{i}}$ is empty,
\item for every different $\alpha_{\natf{i}},\alpha_{\natf{j}}$, $\natf{k}>\natf{m}$, $x\in\alpha_{\natf{i}}$, and $y\in\alpha_{\natf{j}}$ the point $y$ isn't $R_{\natf{k}}$-accessible from $x$ in $\mathcal{A}$,
\item all $\alpha_{\natf{i}}$ are finite hereditarily rooted stratified models,
\item $\alpha_1,\ldots,\alpha_{\natf{n}}$ are called $(\natf{m}+1)$-planes,
\end{enumerate}
\item $R_{\natf{m}}$ in $\mathcal{A}$ is strict partial order on $(\natf{m}+1)$-planes,
\item in $\mathcal{A}$ there exist the lowest $(\natf{m}+1)$-plane.
\end{enumerate}
\end{enumerate}

For an $\natf{m}$-model $\mathcal{A}$ there is at most one separation on $(\natf{m}+1)$-models $\alpha_1,\ldots,\alpha_{\natf{n}}$ that  satisfies properties from \ref{plane_p}.

A point $a$ of a hereditarily rooted finite stratified $\natf{m}$-model $\mathcal{A}$ is the {\it hereditary root of $\mathcal{A}$} if either $a$ is the only point of $\mathcal{A}$ or $a$ is the hereditary root of the root plane $\alpha$ of $\mathcal{A}$.

The logic $\thrf{J}$ is complete with respect to the class of all hereditarily rooted finite stratified models.

 In \cite{Bek07} there was defined blowup operation $\mathcal{A}\longmapsto \mathcal{A}^{(\natf{n})}$ that maps a $(\natf{m}+1)$-model to $\natf{m}$-model. We give the definition by induction on the number of planes in a model $\mathcal{A}$. Suppose $\alpha$ is the root $(\natf{m}+2)$-plane of $\mathcal{A}$. Suppose $(\natf{m}+2)$-planes $\alpha_1,\ldots,\alpha_{\natf{k}}$ are all immediate successors of $\alpha$. For $1\le\natf{i}\le\natf{k}$ we denote by $\mathcal{A}_{\natf{i}}$ the cone from $\alpha_{\natf{i}}$ in $\mathcal{A}$. The model $$\mathcal{A}^{(\natf{n})}=(\bigsqcup\limits_{1\le\natf{i}\le \natf{k}}\underbrace{\mathcal{A}_{\natf{i}}^{(\natf{n})}+\ldots+\mathcal{A}_{\natf{i}}^{(\natf{n})}}\limits_{\mbox{$\natf{n}$-times}}) + \{\mathcal{A}\},$$ where $\{\mathcal{A}\}$ denotes $\mathcal{A}$ enriched by the empty $R_{\natf{m}}$ and for $\natf{m}$-models $\mathcal{C}$ and $\mathcal{B}$ the model $\mathcal{C}+\mathcal{B}$ is $\mathcal{C} \sqcup \mathcal{B}$ with $R_{\natf{m}}$ enriched by all $xR_{\natf{m}} y$, for $x\in\mathcal{B}$ and $y\in \mathcal{C}$. 

 Also in \cite{Bek07} there were defined the operation $\mathcal{A}\longmapsto\mathfrak{B}_{\natf{n}}(\mathcal{A})$ that maps finite hereditary rooted stratified $\natf{m}$-models to hereditarily rooted finite stratified $\natf{m}$-models. For a $\natf{m}$-model $\mathcal{A}$ we define the $\natf{m}$-model $\mathfrak{B}_{\natf{n}}(\mathcal{A})$ by induction on  the rank of $\mathcal{A}$. Suppose $\mathcal{A}$ is separated on $(\natf{m}+1)$-planes $\alpha_1,\ldots,\alpha_{\natf{k}}$. Then $\mathfrak{B}_{\natf{n}}(\mathcal{A})$ is the disjoint union $\bigsqcup\limits_{1\le\natf{i}\le\natf{k}}\mathfrak{B}_{\natf{n}}(\alpha_{\natf{i}})^{(\natf{n})}$ with $R_{\natf{m}}$ enriched by all $xR_{\natf{m}}y$ such that $x\in\alpha_{\natf{i}}$, $y\in\alpha_{\natf{j}}$, $1\le\natf{i},\natf{j}\le\natf{k}$, and $\alpha_{\natf{i}}R_{\natf{m}}\alpha_{\natf{j}}$ in $\mathcal{A}$.

$\mathit{dp}(\prflf{f})$  denotes the modal depth of a formula $\prflf{f}$.
The following straightforward corollary of  \cite[Lemma 7.6]{Bek07} holds
\begin{lemma} For $\natf{n}\le\natf{m}$ and a hereditarily rooted finite stratified model $\mathcal{A}$ the model $\mathfrak{B}_{\natf{n}}(\mathcal{A})$ and $\mathfrak{B}_{\natf{m}}(\mathcal{A})$ satisfies the same formulas $\prflf{f}$ with $\mathit{dp}(\prflf{f})\le\natf{n}$.
\end{lemma}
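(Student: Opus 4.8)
The plan is to deduce the statement directly from \cite[Lemma 7.6]{Bek07} by applying it twice. Recall that for a hereditarily rooted finite stratified $\natf{m}$-model $\mathcal{A}$ the model $\mathfrak{B}_{\natf{k}}(\mathcal{A})$ is again hereditarily rooted finite stratified (and is moreover a $\thrf{GLP}$-model), and that \cite[Lemma 7.6]{Bek07} compares $\mathcal{A}$ with $\mathfrak{B}_{\natf{k}}(\mathcal{A})$ for formulas of modal depth at most $\natf{k}$: for every $\prflf{f}$ with $\mathit{dp}(\prflf{f})\le\natf{k}$ one has $\mathfrak{B}_{\natf{k}}(\mathcal{A})\models\prflf{f}$ iff $\mathcal{A}\models\prflf{f}$, where in both cases satisfaction is evaluated at the hereditary root, which $\mathfrak{B}_{\natf{k}}$ keeps as the hereditary root of the output so that the identification is canonical.

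Concretely, I would fix an arbitrary formula $\prflf{f}$ with $\mathit{dp}(\prflf{f})\le\natf{n}$ and argue as follows. Since $\mathit{dp}(\prflf{f})\le\natf{n}$, Lemma 7.6 applied with index $\natf{n}$ gives $\mathfrak{B}_{\natf{n}}(\mathcal{A})\models\prflf{f}\iff\mathcal{A}\models\prflf{f}$. Since $\natf{n}\le\natf{m}$, we also have $\mathit{dp}(\prflf{f})\le\natf{m}$, so Lemma 7.6 applied with index $\natf{m}$ gives $\mathfrak{B}_{\natf{m}}(\mathcal{A})\models\prflf{f}\iff\mathcal{A}\models\prflf{f}$. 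Composing these two biconditionals yields $\mathfrak{B}_{\natf{n}}(\mathcal{A})\models\prflf{f}\iff\mathfrak{B}_{\natf{m}}(\mathcal{A})\models\prflf{f}$, which is exactly the assertion of the lemma.

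I do not expect a genuine obstacle here; the mathematical content is entirely carried by \cite[Lemma 7.6]{Bek07}, and the present statement is only the observation that the equivalence it supplies is uniform in the index $\natf{k}$ as long as $\natf{k}\ge\mathit{dp}(\prflf{f})$. The only points that need a line of care are bookkeeping ones: verifying that $\mathfrak{B}_{\natf{n}}(\mathcal{A})$ and $\mathfrak{B}_{\natf{m}}(\mathcal{A})$ are both defined (immediate from $\mathcal{A}$ being hereditarily rooted finite stratified, since $\mathfrak{B}_{\natf{k}}$ preserves this class and $\mathrm{rk}_{\natf{m}}(\mathcal{A})$ is finite), and keeping the reading of ``a model satisfies $\prflf{f}$'' the same in all three occurrences so that the chain of biconditionals is legitimate.
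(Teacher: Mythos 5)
Your argument rests on a reading of \cite[Lemma 7.6]{Bek07} that cannot be correct: you take it to say that $\mathfrak{B}_{\natf{k}}(\mathcal{A})$ and $\mathcal{A}$ satisfy the same formulas $\prflf{f}$ with $\mathit{dp}(\prflf{f})\le\natf{k}$ (evaluated at the hereditary root), and then compose two instances of this through the common middle term $\mathcal{A}$. No such unconditional preservation between a model and its blow\nobreakdash-up holds. Concretely, let $\mathcal{A}$ be the $0$-model consisting of a single $1$-plane with two points $a\,R_1\,b$ and empty $R_0$; its hereditary root is $a$, and $\mathcal{A},a\nVdash\diamm[0]\top$. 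The blow\nobreakdash-up attaches to $a$ an $R_0$-successor (a copy of the cone of $b$ placed below the original copy by the $+$ operation), so $\mathfrak{B}_{1}(\mathcal{A}),a\Vdash\diamm[0]\top$, while $\mathit{dp}(\diamm[0]\top)=1$. This is not an accident of the example: the entire purpose of the blow\nobreakdash-up is to repair the monotonicity axioms $\boxm[\natf{i}]\prflf{p}\primp\boxm[\natf{j}]\prflf{p}$, which fail in typical $\thrf{J}$-models, and doing so necessarily changes truth values of low-depth formulas. Had your version of Lemma 7.6 been true, Theorem \ref{glp_completeness} together with the completeness of $\thrf{J}$ for hereditarily rooted finite stratified models would force $\thrf{GLP}$ and $\thrf{J}$ to prove the same formulas, which is false; and \cite[Lemma 9.3]{Bek07}, quoted later in this section, would be pointless, since it asserts precisely such a preservation but only for subformulas of $\prflf{p}$ and only under the hypothesis $\mathcal{C}\models M(\prflf{p})$.

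The comparison the paper intends is not between $\mathcal{A}$ and its blow\nobreakdash-up but between the two blow\nobreakdash-ups $\mathfrak{B}_{\natf{n}}(\mathcal{A})$ and $\mathfrak{B}_{\natf{m}}(\mathcal{A})$ built with different multiplicity parameters. What \cite[Lemma 7.6]{Bek07} supplies is the corresponding one-step fact, comparing $\mathcal{B}^{(\natf{n})}$ with $\mathcal{B}^{(\natf{m})}$ for $\natf{n}\le\natf{m}$ on formulas of modal depth at most $\natf{n}$ (in a form robust enough, e.g.\ via $\natf{n}$-bisimulation, to be iterated); the present lemma is then obtained by induction on the rank of $\mathcal{A}$, propagating this agreement through the recursive definition of $\mathfrak{B}_{\natf{n}}$ plane by plane. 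Your two-biconditional shortcut cannot be repaired, because the middle term it pivots on does not satisfy the required equivalences.
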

 From the lemma above and \cite[Theorem 4]{Bek07} we straightforward obtain the following completeness result for $\thrf{GLP}$ 
\begin{theorem} \label{glp_completeness} For a $\thrf{GLP}$-formula $\prflf{f}$ and number $\natf{m}\ge\mathit{dp}(\prflf{f})$ the following sentences are equivalent
\begin{enumerate}
\item $\thrf{GLP}\vdash\prflf{f}$; 
\item for every hereditarily rooted finite stratified $0$-model $\mathcal{A}$ we have $\mathfrak{B}_{\natf{m}}(\mathcal{A})\models \prflf{f}$.
\end{enumerate}\end{theorem}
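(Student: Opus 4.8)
The plan is to deduce the equivalence directly from \cite[Theorem 4]{Bek07} together with the preceding lemma, with essentially no new semantic work. The point is that \cite[Theorem 4]{Bek07} already expresses the completeness of $\thrf{GLP}$ relative to the blown-up models at the single index $\mathit{dp}(\prflf{f})$, and the preceding lemma is exactly the tool that lets one slide this index up to an arbitrary $\natf{m}\ge\mathit{dp}(\prflf{f})$.

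First I would set $\natf{n}=\mathit{dp}(\prflf{f})$ and invoke \cite[Theorem 4]{Bek07} in the form: $\thrf{GLP}\vdash\prflf{f}$ if and only if $\mathfrak{B}_{\natf{n}}(\mathcal{A})\models\prflf{f}$ for every hereditarily rooted finite stratified $0$-model $\mathcal{A}$. This instance encodes both the soundness and the completeness of $\thrf{GLP}$ with respect to the family of models $\mathfrak{B}_{\natf{n}}(\mathcal{A})$ at the critical index $\natf{n}$.

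Next, for an arbitrary $\natf{m}\ge\mathit{dp}(\prflf{f})=\natf{n}$, I would apply the preceding lemma with the pair $\natf{n}\le\natf{m}$. Since $\mathit{dp}(\prflf{f})=\natf{n}$, the formula $\prflf{f}$ satisfies the depth restriction $\mathit{dp}(\prflf{f})\le\natf{n}$ under which the lemma guarantees that $\mathfrak{B}_{\natf{n}}(\mathcal{A})$ and $\mathfrak{B}_{\natf{m}}(\mathcal{A})$ satisfy the same formulas of modal depth $\le\natf{n}$; hence $\mathfrak{B}_{\natf{n}}(\mathcal{A})\models\prflf{f}$ if and only if $\mathfrak{B}_{\natf{m}}(\mathcal{A})\models\prflf{f}$, uniformly in $\mathcal{A}$. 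Quantifying this biconditional over all hereditarily rooted finite stratified $0$-models $\mathcal{A}$ and chaining it with the base-index equivalence of the previous step gives at once both implications of the theorem: $\thrf{GLP}\vdash\prflf{f}$ iff $\mathfrak{B}_{\natf{m}}(\mathcal{A})\models\prflf{f}$ for every such $\mathcal{A}$.

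I do not expect any genuine obstacle here, since all the nontrivial content---the stratified Kripke semantics, the blowup operation, and the completeness of $\thrf{GLP}$ at the depth index---is imported from \cite{Bek07}. The only point demanding care is the matching of the depth bounds: one must confirm that the index $\natf{n}$ supplied by \cite[Theorem 4]{Bek07} coincides with $\mathit{dp}(\prflf{f})$ and hence satisfies both $\natf{n}\le\natf{m}$ and the hypothesis $\mathit{dp}(\prflf{f})\le\natf{n}$ of the preceding lemma, so that the lemma is legitimately applicable to $\prflf{f}$ itself rather than merely to formulas of strictly smaller depth.
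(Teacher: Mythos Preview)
Your proposal is correct and matches the paper's own approach exactly: the paper simply states that the theorem follows straightforwardly from the preceding lemma and \cite[Theorem~4]{Bek07}, and you have spelled out precisely that two-step argument (base-index completeness at $\natf{n}=\mathit{dp}(\prflf{f})$, then sliding to $\natf{m}\ge\natf{n}$ via the depth-preservation lemma). Your caveat about checking that the index in \cite[Theorem~4]{Bek07} is indeed $\mathit{dp}(\prflf{f})$ is the only thing to verify, and it is.
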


\begin{lemma} \label{suitable_model} Suppose $\prflf{f}$ is a formula and $\thrf{GLP}\nvdash \prflf{f}$. Then  for every $\natf{n}$ there exists a  hereditarily rooted finite stratified  $0$-model $\mathcal{B}$ such that $\mathfrak{B}_{\natf{n}}(\mathcal{B}),a\nVdash \prflf{f}$, where $a$ is the hereditary root of $\mathfrak{B}_{\natf{n}}(\mathcal{B})$, but for every point $x\ne a$ from a non-root $1$-plane of $\mathcal{B}$ we have $\mathfrak{B}_{\natf{n}}(\mathcal{B}),x\Vdash \prflf{f}$.
\end{lemma}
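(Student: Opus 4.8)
The plan is to obtain $\mathcal{B}$ as a minimal countermodel and to read off both assertions from minimality. Since $\thrf{GLP}\nvdash\prflf{f}$, the completeness of $\thrf{GLP}$ with respect to the blow-ups (Theorem~\ref{glp_completeness}) supplies a hereditarily rooted finite stratified $0$-model $\mathcal{C}$ with $\mathfrak{B}_{\natf{n}}(\mathcal{C})\not\models\prflf{f}$, so the class of hereditarily rooted finite stratified $0$-models $\mathcal{D}$ with $\mathfrak{B}_{\natf{n}}(\mathcal{D})\not\models\prflf{f}$ is nonempty. Pick $\mathcal{B}$ in this class for which $\mathfrak{B}_{\natf{n}}(\mathcal{B})$ has the least number of worlds, and let $a$ be the hereditary root of $\mathfrak{B}_{\natf{n}}(\mathcal{B})$ (which, by the recursive form of the operations $(\cdot)^{(\natf{n})}$ and $\mathfrak{B}_{\natf{n}}$, is the image of the hereditary root of $\mathcal{B}$). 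I claim that in $\mathfrak{B}_{\natf{n}}(\mathcal{B})$ the formula $\prflf{f}$ is refuted at no world other than $a$. Granting this, and since $\mathfrak{B}_{\natf{n}}(\mathcal{B})\not\models\prflf{f}$, we obtain both $\mathfrak{B}_{\natf{n}}(\mathcal{B}),a\nVdash\prflf{f}$ and $\mathfrak{B}_{\natf{n}}(\mathcal{B}),x\Vdash\prflf{f}$ for every $x\neq a$; in particular the latter holds for every point $x$ coming from a non-root $1$-plane of $\mathcal{B}$, which is the statement of the lemma.

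To prove the claim, fix a world $x\neq a$ of $\mathfrak{B}_{\natf{n}}(\mathcal{B})$ and let $\mathcal{N}$ be the cone generated by $x$. First, $\mathcal{N}$ is a proper submodel: the hereditary root of a hereditarily rooted stratified model sees every other world (by induction on rank it lies in the root plane, in the root subplane of that, and so on), so by converse well-foundedness it is seen by no world, whence $a\notin\mathcal{N}$. Next, $\mathcal{N}$ is again a hereditarily rooted finite stratified $0$-model with hereditary root $x$ (a routine restriction of the stratification), and cones preserve modal satisfaction. The key point is that $\mathcal{N}$ is again a blow-up: by induction on rank, unwinding the definitions of $(\cdot)^{(\natf{n})}$ and $\mathfrak{B}_{\natf{n}}$ --- both of which decompose a model into the cones above its root plane and above its $(\natf{m}+1)$-planes --- the cone generated by a world in any $\mathfrak{B}_{\natf{n}}(\mathcal{E})$ is isomorphic to $\mathfrak{B}_{\natf{n}}(\mathcal{D})$ for some hereditarily rooted finite stratified $0$-model $\mathcal{D}$ (concretely, $\mathcal{D}$ is a cone inside $\mathcal{E}$). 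Applying this with $\mathcal{E}=\mathcal{B}$ gives $\mathcal{N}\cong\mathfrak{B}_{\natf{n}}(\mathcal{D})$ with $|\mathfrak{B}_{\natf{n}}(\mathcal{D})|=|\mathcal{N}|<|\mathfrak{B}_{\natf{n}}(\mathcal{B})|$. Now if $\mathfrak{B}_{\natf{n}}(\mathcal{B}),x\nVdash\prflf{f}$, then $\mathcal{N},x\nVdash\prflf{f}$ and hence $\mathfrak{B}_{\natf{n}}(\mathcal{D})\not\models\prflf{f}$, contradicting the minimality of $\mathcal{B}$. Therefore $\mathfrak{B}_{\natf{n}}(\mathcal{B}),x\Vdash\prflf{f}$, which proves the claim.

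The step I expect to be the real obstacle is the key point above: that a cone generated by a world inside a blow-up $\mathfrak{B}_{\natf{n}}(\mathcal{E})$ is itself of the form $\mathfrak{B}_{\natf{n}}(\mathcal{D})$. Making this precise requires a careful induction on rank that tracks how $\mathcal{A}\mapsto\mathcal{A}^{(\natf{n})}$ and $\mathcal{A}\mapsto\mathfrak{B}_{\natf{n}}(\mathcal{A})$ interact with passing to the cone above a world, namely that these operations essentially commute with cones up to isomorphism; this is the kind of bookkeeping lemma that can be extracted from the constructions of \cite{Bek07}. The remaining ingredients --- nonemptiness of the class of $\mathfrak{B}_{\natf{n}}$-countermodels, identification of the hereditary root of a blow-up, unreachability of a hereditary root, and invariance of satisfaction under cones --- are either immediate from \cite{Bek07} or entirely routine.
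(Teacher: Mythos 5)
Your strategy is genuinely different from the paper's, and the step you yourself single out as the crux --- that the cone generated by a world inside a blow-up $\mathfrak{B}_{\natf{n}}(\mathcal{E})$ is again isomorphic to some $\mathfrak{B}_{\natf{n}}(\mathcal{D})$ --- is false, so the minimality argument does not go through. Here is a counterexample already for $\natf{n}=1$. Let $\mathcal{E}$ be the $0$-model consisting of a single $1$-plane $\alpha$ with three points $p_0\,R_1\,p_1\,R_1\,p_2$ and $R_0=\emptyset$. Since every $2$-plane of $\alpha$ is a single point, $\mathfrak{B}_1(\alpha)=\alpha$ and $\mathfrak{B}_1(\mathcal{E})=\alpha^{(1)}=\mathcal{A}_1^{(1)}+\{\alpha\}$, where $\mathcal{A}_1$ is the cone $p_1R_1p_2$; explicitly $\mathcal{A}_1^{(1)}$ has points $p_1',p_2',t$ with $p_1'R_1p_2'$ and with $p_1',p_2'$ both $R_0$-seeing $t$. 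The cone $\mathcal{N}$ of the point $p_1$ of the root copy $\{\alpha\}$ consists of $p_1,p_2$ (related by $R_1$, no $R_0$ between them, both $R_0$-seeing everything below) together with all of $\mathcal{A}_1^{(1)}$; its $1$-planes in descending $R_0$-order are $\{p_1,p_2\}$, $\{p_1',p_2'\}$, $\{t\}$, of sizes $2,2,1$. If $\mathcal{N}$ were $\mathfrak{B}_1(\mathcal{D})=\bigsqcup_k\mathfrak{B}_1(\beta_k)^{(1)}$, the piece containing the root $1$-plane would be $\beta^{(1)}$ for the root $1$-plane $\beta$ of $\mathcal{D}$ with $\mathfrak{B}_1(\beta)\cong(p_1R_1p_2)$, which forces $\beta\cong(p_1R_1p_2)$; hence that piece has exactly two $1$-planes, of sizes $2$ and $1$, and since every point of the root piece sees every point of every other piece, those two $1$-planes must be the two topmost $1$-planes of $\mathcal{N}$ --- which have sizes $2$ and $2$. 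So $\mathcal{N}$ is not a blow-up, and consequently even for a size-minimal countermodel you cannot conclude that $\prflf{f}$ holds at such an $x$. (A secondary point: your appeal to Theorem \ref{glp_completeness} for nonemptiness of the class needs $\natf{n}\ge\mathit{dp}(\prflf{f})$, while the lemma is stated for every $\natf{n}$.)

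The paper avoids working inside the blow-up altogether. It passes to the auxiliary logic $\thrf{J}$: from $\thrf{GLP}\nvdash\prflf{f}$ it gets $\thrf{J}\nvdash M^{+}(\prflf{f})\primp\prflf{f}$ for the monotonicity conjunction $M^{+}(\prflf{f})$, chooses in a $\thrf{J}$-countermodel a point refuting $M^{+}(\prflf{f})\primp\prflf{f}$ all of whose accessible points satisfy it, and lets $\mathcal{B}$ be the cone of that point; then $\mathcal{B}\models M(\prflf{f})$ and $\prflf{f}$ fails only at the root of $\mathcal{B}$. The hypothesis $\mathcal{B}\models M(\prflf{f})$ is exactly what licenses \cite[Lemma 9.3]{Bek07}, which transfers truth of subformulas of $\prflf{f}$ between $\mathfrak{B}_{\natf{n}}(\mathcal{B})$ and $\mathcal{B}$ along the projection $\pi^{\star}$; the conclusion follows because the only $\pi^{\star}$-preimage of the root of $\mathcal{B}$ is the hereditary root $a$. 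If you want to keep a minimality flavour, it has to be run at the level of the $\thrf{J}$-model (where cones of hereditarily rooted finite stratified models really are again such models), and you still need the $M^{+}$ device plus the projection lemma to descend to $\mathfrak{B}_{\natf{n}}(\mathcal{B})$.
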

\begin{proof}  $M(\prflf{f})$ is the conjunction of all  $\boxm[\natf{m}_{\natf{i}}]\prflf{p}\primp\boxm[\natf{m}_{\natf{i}}+1]\prflf{p}$, where $\boxm[\natf{m}_{\natf{i}}]\prflf{p}$ is a subformula of $\prflf{f}$. $M^{+}(\prflf{f})$ is the conjunction
$$M(\prflf{f})\prand \boxm[0]M(\prflf{f})\prand\ldots\prand \boxm[\natf{k}]M(\prflf{f}),$$
where $\natf{k}$ is the maximum of all $\natf{m}_{\natf{i}}+1$. 

 Because $\thrf{GLP}\vdash M^{+}(\prflf{f})$, we have $\thrf{J}\nvdash  M^{+}(\prflf{f})\primp\prflf{f}$. Therefore there exists hereditarily rooted finite stratified $0$-model $\mathcal{A}$ such that $\mathcal{A}\not\models M^{+}(\prflf{f})\primp\prflf{f}$. Clearly, we can find $x\in \mathcal{A}$ such that $\mathcal{A},x\nvdash  M^{+}(\prflf{f})\primp\prflf{f}$ but in any $y$ accessible from $x$ by any $R_i$ we have $\mathcal{A},y\Vdash M^{+}(\prflf{f})\primp\prflf{f}$. Now we consider the submodel $\mathcal{B}$ of $\mathcal{A}$ that consists of all points accessible from $x$ by some $R_i$. Clearly, $\mathcal{B}\models M(\prflf{f})$ 

\cite[Lemma 9.3]{Bek07} states that if a hereditarily rooted finite stratified $\natf{0}$-model $\mathcal{C}\models M(\prflf{p})$ then for any $x$ from $\mathfrak{B}_{\natf{n}}(\mathcal{C})$ and a subformula $\prflf{t}$ of $\prflf{p}$ we have $$\mathfrak{B}_{\natf{n}}(\mathcal{C}),x\Vdash\prflf{t} \iff \mathcal{C},\pi^{\star}(x)\Vdash \prflf{t},$$ where $\pi^{\star}\colon \mathfrak{B}_{\natf{n}}(\mathcal{C})\to \mathcal{C}$ is the natural projection (we don't give the definition of $\pi^{\star}$ here it is given in \cite{Bek07} just above \cite[Lemma 9.3]{Bek07}).

  Using \cite[Lemma 9.3]{Bek07} we conclude that in the hereditary root $a$ of $\mathfrak{B}_{\natf{n}}(\mathcal{B})$  we have $\mathfrak{B}_{\natf{n}}(\mathcal{B}),a\nVdash \prflf{f}$ but in any $x\ne a$ from $\mathfrak{B}_{\natf{n}}(\mathcal{B})$ we have $\mathfrak{B}_{\natf{n}}(\mathcal{B}),x \Vdash\prflf{f}$.  
\end{proof}

Using Lemma \ref{mod_add_cons} we reformulate Lemma \ref{box_switch_imp}.
{
\renewcommand{\thelemma}{\ref{box_switch_imp}}
\begin{lemma}Suppose $\prflf{f}$ and $\prflf{p}$ are formulas without $\boxm[0]$. Then 
$$\thrf{GLP}\vdash \boxm[1]\prflf{p}\primp \prflf{f}\iff \thrf{GLP}\vdash \boxm[0]\prflf{p}\primp \prflf{f}.$$
\end{lemma}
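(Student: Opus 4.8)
The plan is to use Lemma~\ref{mod_add_cons} to reduce to the displayed case, where the index set is $\omega$ and the two indices are $0$ and $1$ (this is exactly the reformulation above), and then to treat the two implications separately. The implication $\thrf{GLP}\vdash\boxm[1]\prflf{p}\primp\prflf{f}\Rightarrow\thrf{GLP}\vdash\boxm[0]\prflf{p}\primp\prflf{f}$ is immediate, since $\boxm[0]\prflf{p}\primp\boxm[1]\prflf{p}$ is an axiom of $\thrf{GLP}$. The converse is the substantial one; I would prove it contrapositively, using the semantics of~\cite{Bek07}: assuming $\thrf{GLP}\nvdash\boxm[1]\prflf{p}\primp\prflf{f}$, I will build a hereditarily rooted finite stratified $0$-model whose blow-up refutes $\boxm[0]\prflf{p}\primp\prflf{f}$, and then Theorem~\ref{glp_completeness} yields $\thrf{GLP}\nvdash\boxm[0]\prflf{p}\primp\prflf{f}$.

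Concretely: write $\prflf{g}$ for $\boxm[1]\prflf{p}\primp\prflf{f}$, a formula containing no $\boxm[0]$, put $\natf{n}=\mathit{dp}(\boxm[0]\prflf{p}\primp\prflf{f})=\mathit{dp}(\prflf{g})$, and suppose $\thrf{GLP}\nvdash\prflf{g}$. By Lemma~\ref{suitable_model} there is a hereditarily rooted finite stratified $0$-model $\mathcal{B}$ with hereditary root $a$ such that $\mathfrak{B}_{\natf{n}}(\mathcal{B}),a\nVdash\prflf{g}$, i.e.\ $\mathfrak{B}_{\natf{n}}(\mathcal{B}),a\Vdash\boxm[1]\prflf{p}$ and $\mathfrak{B}_{\natf{n}}(\mathcal{B}),a\nVdash\prflf{f}$. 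Let $\beta$ be the root $1$-plane of $\mathcal{B}$ and let $\mathcal{B}'$ be the $0$-model obtained from the $1$-model $\beta$ by adjoining the empty relation $R_0$ (so $\mathcal{B}'$ has the single $1$-plane $\beta$). Then $\mathcal{B}'$ is again hereditarily rooted finite stratified, $\mathfrak{B}_{\natf{n}}(\mathcal{B}')=\mathfrak{B}_{\natf{n}}(\beta)^{(\natf{n})}$, and its hereditary root coincides with $a$ (both equal the hereditary root of $\mathfrak{B}_{\natf{n}}(\beta)$, which is the bottom plane of $\mathfrak{B}_{\natf{n}}(\beta)^{(\natf{n})}$).

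It then remains to check $\mathfrak{B}_{\natf{n}}(\mathcal{B}'),a\nVdash\boxm[0]\prflf{p}\primp\prflf{f}$. Since neither $\prflf{p}$ nor $\prflf{f}$ contains $\boxm[0]$, the truth of $\boxm[1]\prflf{p}$ and of $\prflf{f}$ at $a$ depends only on the submodel generated from $a$ by $\bigcup_{\natf{i}\ge 1}R_{\natf{i}}$; this submodel lies inside the $1$-plane containing $a$ and is literally the same in $\mathfrak{B}_{\natf{n}}(\mathcal{B})$, in $\mathfrak{B}_{\natf{n}}(\beta)^{(\natf{n})}$ and in $\mathfrak{B}_{\natf{n}}(\beta)$, because the operations $+$ and $\sqcup$ and the passage from $\mathfrak{B}_{\natf{n}}(\beta)$ to $\mathfrak{B}_{\natf{n}}(\beta)^{(\natf{n})}$ add only $R_0$-edges and disjoint copies reachable from $a$ through $R_0$ alone. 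Hence $\mathfrak{B}_{\natf{n}}(\mathcal{B}'),a\Vdash\boxm[1]\prflf{p}$ and $\mathfrak{B}_{\natf{n}}(\mathcal{B}'),a\nVdash\prflf{f}$. Next I would show $\mathfrak{B}_{\natf{n}}(\mathcal{B}'),a\Vdash\boxm[0]\prflf{p}$: writing $\mathfrak{B}_{\natf{n}}(\beta)^{(\natf{n})}=\bigl(\bigsqcup_{\natf{i}}(\mathcal{C}_{\natf{i}}^{(\natf{n})}+\dots+\mathcal{C}_{\natf{i}}^{(\natf{n})})\bigr)+\{\mathfrak{B}_{\natf{n}}(\beta)\}$ as in~\cite{Bek07}, where the $\mathcal{C}_{\natf{i}}$ are the cones of $\mathfrak{B}_{\natf{n}}(\beta)$ issuing from the immediate $R_1$-successor planes of the root plane, the point $a$ lies in the bottom copy $\{\mathfrak{B}_{\natf{n}}(\beta)\}$ and its $R_0$-successors are precisely the points of the copies of the $\mathcal{C}_{\natf{i}}^{(\natf{n})}$. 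Each point of $\mathcal{C}_{\natf{i}}$ is $R_1$-accessible from $a$ in $\mathfrak{B}_{\natf{n}}(\beta)$, hence forces $\prflf{p}$ because $a\Vdash\boxm[1]\prflf{p}$; and the truth of the $\boxm[0]$-free formula $\prflf{p}$ transfers from $\mathcal{C}_{\natf{i}}$ to $\mathcal{C}_{\natf{i}}^{(\natf{n})}$ by an induction on rank, using that cones are generated submodels and that $(\cdot)^{(\natf{n})}$ introduces only $R_0$-edges and copies --- essentially the content of \cite[Lemma~9.3]{Bek07} and its proof. Thus every $R_0$-successor of $a$ forces $\prflf{p}$, so $a\Vdash\boxm[0]\prflf{p}$ and $\mathfrak{B}_{\natf{n}}(\mathcal{B}'),a\nVdash\boxm[0]\prflf{p}\primp\prflf{f}$; since $\natf{n}=\mathit{dp}(\boxm[0]\prflf{p}\primp\prflf{f})$, Theorem~\ref{glp_completeness} gives $\thrf{GLP}\nvdash\boxm[0]\prflf{p}\primp\prflf{f}$.

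The step I expect to be the main obstacle is the structural bookkeeping just sketched: pinning down the $R_0$-successors of the hereditary root inside a blow-up --- separating the ``internal'' ones produced by $(\cdot)^{(\natf{n})}$ on the root plane, which always lie in cones issuing from $R_1$-successor planes, from the extra ones coming from the non-root $1$-planes of $\mathcal{B}$, which is exactly what passing to $\mathcal{B}'$ discards --- and verifying carefully that discarding those non-root planes does not disturb the truth of $\prflf{p}$ and $\prflf{f}$ at $a$, which is the locality property that hinges on $\boxm[0]$ not occurring in those formulas. (The extra clause in Lemma~\ref{suitable_model} about non-root $1$-planes is not needed here; the bare refutation at the hereditary root is enough.)
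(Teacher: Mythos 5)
Your proposal is correct and takes essentially the same route as the paper's proof: the forward direction via the axiom $\boxm[0]\prflf{p}\primp\boxm[1]\prflf{p}$, and the converse by contraposition, extracting the root $1$-plane $\beta$ of the countermodel supplied by Lemma~\ref{suitable_model}, passing to the singleton-plane model $\{\beta\}$ (your $\mathcal{B}'$) and its blow-up, and using the locality of $\boxm[0]$-free formulas to see that the hereditary root still refutes $\prflf{f}$ while now forcing $\boxm[0]\prflf{p}$. Your closing observation is also accurate: the paper's own argument likewise uses only the refutation at the hereditary root and not the extra clause of Lemma~\ref{suitable_model} about non-root $1$-planes.
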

\addtocounter{theorem}{-1}
}
\begin{proof} $(\Rightarrow)$: Holds because $\thrf{GLP}\vdash \boxm[0]\prflf{p}\primp\boxm[1]\prflf{p}$.

$(\Leftarrow)$: We denote by $\natf{n}$ the modal depth $\mathit{dp}( \boxm[1]\prflf{p}\primp \prflf{f})=\mathit{dp}( \boxm[0]\prflf{p}\primp \prflf{f})$.  We'll prove that
$$\thrf{GLP}\nvdash \boxm[1]\prflf{p}\primp \prflf{f}\Implication \thrf{GLP}\nvdash \boxm[0]\prflf{p}\primp \prflf{f},$$
using Theorem \ref{glp_completeness}. Suppose $\thrf{GLP}\nvdash \boxm[1]\prflf{p}\primp \prflf{f}$.  We'll construct a $0$-model $\mathcal{B}$ such that $\mathfrak{B}_{\natf{n}}(\mathcal{B})\not\models\boxm[0]\prflf{p}\primp\prflf{f}$ .

 From Lemma \ref{suitable_model} it follows that we have a  hereditarily rooted finite stratified  $0$-model $\mathcal{C}$ such that for the hereditarily root $a$ of $\mathfrak{B}_{\natf{n}}(\mathcal{C})$ we have $\mathfrak{B}_{\natf{n}}(\mathcal{C}),a\nVdash \boxm[1]\prflf{p}\primp \prflf{f}$. We consider the root $1$-plane of $\mathcal{C}$. We denote this plane by $\mathcal{A}$. Clearly, $\mathfrak{B}_{\natf{n}}(\mathcal{A}),a\nVdash \boxm[1]\prflf{p}\primp \prflf{f}$. Because $\mathfrak{B}_{\natf{n}}(\mathcal{A}),a\Vdash \boxm[1]\prflf{p}$, we have $\mathfrak{B}_{\natf{n}}(\mathcal{A}),x\Vdash \prflf{p}$, for every $x$ from non-root $2$-plane of $\mathfrak{B}_{\natf{n}}(\mathcal{A})$. The root of $\mathfrak{B}_{\natf{n}}(\{\mathcal{A}\})$ is a copy of $\mathfrak{B}_{\natf{n}}(\mathcal{A})$. We consider $b\in \mathfrak{B}_{\natf{n}}(\{\mathcal{A}\})$ that corresponds to $a$ in that copy. Because there are no $\boxm[0]$ in $\prflf{f}$ and $\mathfrak{B}_{\natf{n}}(\mathcal{A}),a\nVdash\prflf{f}$, we have $\mathfrak{B}_{\natf{n}}(\{\mathcal{A}\}),b\nVdash\prflf{f}$. Clearly, every non-root $1$-plane of $\mathfrak{B}_{\natf{n}}(\{\mathcal{A}\})$ is a copy of a proper cone of $\mathfrak{B}_{\natf{n}}(\mathcal{A})$. Hence for any $x$ from a non-root $1$-plane of $\mathfrak{B}_{\natf{n}}(\{\mathcal{A}\})$ we  have $\mathfrak{B}_{\natf{n}}(\{\mathcal{A}\}),x\Vdash \prflf{p}$. We conclude that $\mathfrak{B}_{\natf{n}}(\{\mathcal{A}\}),b\nVdash\boxm[0]\prflf{p}\primp\prflf{f}$. We put $\mathcal{B}=\{\mathcal{A}\}$.\end{proof}

  For a propositional variable $\prvarf{x}$ we define $\sqfatrs{\prvarf{x}}\colon \lang{\thrf{GLP}}\to\lang{\thrf{GLP}}$:
\begin{itemize}
\item $\sqfatr{\prvarf{x}}{\top}= \boxo[0]\prvarf{x}\pror\top$;
\item $\sqfatr{\prvarf{x}}{\bot}= \boxo[0]\prvarf{x}\pror\bot$;
\item $\sqfatr{\prvarf{x}}{\prvarf{y}}= \boxo[0]\prvarf{x}\pror\prvarf{y}$, for a propositional variable $\prvarf{y}$;
\item $\sqfatr{\prvarf{x}}{\prflf{f}\pror\prflf{p}}= \boxo[0]\prvarf{x}\pror (\sqfatr{\prvarf{x}}{\prflf{f}}\pror\sqfatr{\prvarf{x}}{\prflf{p}})$, for $\thrf{GLP}$-formulas $\prflf{f},\prflf{p}$;
\item $\sqfatr{\prvarf{x}}{\prflf{f}\prand\prflf{p}}= \boxo[0]\prvarf{x}\pror (\sqfatr{\prvarf{x}}{\prflf{f}}\prand\sqfatr{\prvarf{x}}{\prflf{p}})$, for $\thrf{GLP}$-formulas $\prflf{f},\prflf{p}$;
\item $\sqfatr{\prvarf{x}}{\prflf{f}\primp\prflf{p}}= \boxo[0]\prvarf{x}\pror (\sqfatr{\prvarf{x}}{\prflf{f}}\primp\sqfatr{\prvarf{x}}{\prflf{p}})$, for $\thrf{GLP}$-formulas $\prflf{f},\prflf{p}$;
\item $\sqfatr{\prvarf{x}}{\prnot \prflf{f}}= \boxo[0]\prvarf{x}\pror (\prnot \sqfatr{\prvarf{x}}{\prflf{f}})$, for a $\thrf{GLP}$-formula $\prflf{f}$;
\item $\sqfatr{\prvarf{x}}{\boxm[\natf{n}]{ \prflf{f}}}= \boxo[0]\prvarf{x}\pror (\boxm[\natf{n}]\sqfatr{\prvarf{x}}{\prflf{f}})$, for a $\thrf{GLP}$-formula $\prflf{f}$ and a natural number $\natf{n}$.
\end{itemize}

We have an equivalent form of Lemma \ref{sqftr}.
{
\renewcommand{\thelemma}{\ref{sqftr}}
\begin{lemma} Suppose $\prflf{f}$,$\prflf{p}$ are formulas from $\lang{\thrf{GLP}}$, and $\prvarf{x}$ is a propositional variable such that $\prvarf{x}$ doesn't occur in $\prflf{f}$, $\boxm[0]$ doesn't occur in $\prflf{p}$, and $\thrf{GLP}\nvdash (\prflf{p}\prand \boxm[1]\prflf{p})\primp \prvarf{x}$. Then 
$$\thrf{GLP}\vdash (\prflf{p}\prand \boxm[0]\prflf{p})\primp\prflf{f}\iff \thrf{GLP}\vdash (\prflf{p}\prand \boxm[0]\prflf{p})\primp\sqfatr{\prvarf{x}}{\prflf{f}}.$$
\end{lemma}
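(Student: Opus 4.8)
The plan is to prove Lemma~\ref{sqftr} by a Kripke-model argument using Theorem~\ref{glp_completeness}, Lemma~\ref{suitable_model}, and the blowup machinery from \cite{Bek07}. The direction $(\Leftarrow)$ is the easy one: since $\prvarf{x}$ does not occur in $\prflf{f}$, a straightforward induction on the structure of $\prflf{f}$ shows $\thrf{GLP}\vdash \sqfatr{\prvarf{x}}{\prflf{f}} \primp (\boxm[0]\prvarf{x} \pror \prflf{f})$ and also $\thrf{GLP}\vdash (\boxm[0]\prvarf{x}) \primp \sqfatr{\prvarf{x}}{\prflf{f}}$; combined with $\thrf{GLP}\vdash \boxm[0]\prflf{p}\primp\boxm[0]\boxm[0]\prflf{p}$ and the fact that $\boxm[0]$ does not occur in $\prflf{p}$, one shows that under the hypothesis $\prflf{p}\prand\boxm[0]\prflf{p}$ the substitution instance $\prvarf{x}\mapsto(\prflf{p}\prand\boxm[0]\prflf{p})$ applied to $\sqfatr{\prvarf{x}}{\prflf{f}}$ is equivalent to $\prflf{f}$ itself, so provability of the right-hand implication gives the left-hand one.

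For the hard direction $(\Rightarrow)$, I would argue contrapositively: assume $\thrf{GLP}\nvdash (\prflf{p}\prand\boxm[0]\prflf{p})\primp\sqfatr{\prvarf{x}}{\prflf{f}}$ and produce a model refuting $(\prflf{p}\prand\boxm[0]\prflf{p})\primp\prflf{f}$. Let $\natf{n}$ bound the modal depths of all formulas involved. Apply Lemma~\ref{suitable_model} to the formula $\prvarf{x}$, or more precisely to a formula witnessing the failure, to obtain a hereditarily rooted finite stratified $0$-model $\mathcal{C}$ and a point where $\sqfatr{\prvarf{x}}{\prflf{f}}$ (hence $\prflf{f}$, at the relevant worlds) fails while $\prflf{p}$, $\boxm[0]\prflf{p}$ hold at the hereditary root. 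The key structural observation is what $\sqfatr{\prvarf{x}}{-}$ does semantically: in a model where $\boxm[0]\prvarf{x}$ is forced at the root (i.e.\ $\prvarf{x}$ holds everywhere $R_0$-above the root), the translation $\sqfatr{\prvarf{x}}{\prflf{f}}$ collapses to $\prflf{f}$ at the root; so $\sqfatr{\prvarf{x}}{\prflf{f}}$ forces $\prflf{f}$ to hold "modulo $\boxm[0]\prvarf{x}$". The trick is then to use the hypothesis $\thrf{GLP}\nvdash (\prflf{p}\prand\boxm[1]\prflf{p})\primp\prvarf{x}$ to find, inside the $R_0$-successors of the relevant world, a place where $\prvarf{x}$ can be set false while $\prflf{p}$ is still forced, so that $\boxm[0]\prvarf{x}$ genuinely fails there — but because $\boxm[0]$ does not occur in $\prflf{f}$ or $\prflf{p}$, adjoining such a "plane" below does not affect the truth of $\prflf{f}$ or $\prflf{p}$ at the original point; this is exactly the $\{\mathcal{A}\}$ and non-root-$1$-plane manipulation from the proof of Lemma~\ref{box_switch_imp}. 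Doctoring the valuation of $\prvarf{x}$ on the blowup $\mathfrak{B}_{\natf{n}}$ of the resulting model yields a point at which $\prflf{p}\prand\boxm[0]\prflf{p}$ holds but $\prflf{f}$ fails.

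More concretely, I would: (1)~establish the two syntactic facts about $\sqfatrs{\prvarf{x}}$ above by induction on formula structure; (2)~from $\thrf{GLP}\nvdash(\prflf{p}\prand\boxm[0]\prflf{p})\primp\sqfatr{\prvarf{x}}{\prflf{f}}$ and Theorem~\ref{glp_completeness}, fix a $0$-model $\mathfrak{B}_{\natf{n}}(\mathcal{C})$ and a world $w$ forcing $\prflf{p}$, $\boxm[0]\prflf{p}$ but not $\sqfatr{\prvarf{x}}{\prflf{f}}$; (3)~using the semantic collapse of $\sqfatrs{\prvarf{x}}$, locate within the $R_0$-cone above $w$ a world $w'$ where $\prflf{f}$ fails and where the only reason $\sqfatr{\prvarf{x}}{\prflf{f}}$ fails but we cannot conclude directly is the valuation of $\prvarf{x}$; (4)~invoke $\thrf{GLP}\nvdash(\prflf{p}\prand\boxm[1]\prflf{p})\primp\prvarf{x}$ via Lemma~\ref{suitable_model} applied to $\prvarf{x}$ to get a plane one can splice in below $w'$ (as a fresh $1$-plane, as in the proof of Lemma~\ref{box_switch_imp}) in which $\prflf{p}$ is forced but $\prvarf{x}$ is false, so that $\boxm[0]\prvarf{x}$ fails at $w'$; since $\boxm[0]$ is absent from $\prflf{f}$ and $\prflf{p}$, their truth values at $w$ and $w'$ are unchanged; (5)~redefine the valuation of $\prvarf{x}$ everywhere $R_0$-above $w$ accordingly, check via the induction hypothesis that now $(\prflf{p}\prand\boxm[0]\prflf{p})$ holds at the appropriate point while $\prflf{f}$ still fails, and conclude $\thrf{GLP}\nvdash(\prflf{p}\prand\boxm[0]\prflf{p})\primp\prflf{f}$ by Theorem~\ref{glp_completeness}. \textbf{The main obstacle} will be step~(3)--(4): precisely controlling how $\sqfatr{\prvarf{x}}{\prflf{f}}$ can fail at a world while $\boxm[0]\prvarf{x}$ is forced there, and arranging the splicing so that forcing of $\prflf{p}$ and non-forcing of $\prflf{f}$ are preserved — this requires carefully tracking which worlds the modal-depth-$\natf{n}$ subformulas of $\prflf{f},\prflf{p}$ can see, and exploiting the stratified/plane structure of $\mathfrak{B}_{\natf{n}}$ just as in Lemma~\ref{box_switch_imp}, but now with an extra propositional variable whose valuation we get to choose.
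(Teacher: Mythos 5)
You have the two halves of the equivalence the wrong way round, and the argument you offer for the half that is genuinely hard does not work. The hard direction is $(\Leftarrow)$: from provability of $(\prflf{p}\prand\boxm[0]\prflf{p})\primp\sqfatr{\prvarf{x}}{\prflf{f}}$ deduce provability of $(\prflf{p}\prand\boxm[0]\prflf{p})\primp\prflf{f}$. Your substitution $\prvarf{x}\mapsto(\prflf{p}\prand\boxm[0]\prflf{p})$ collapses: after the substitution $\boxm[0]\prvarf{x}$ becomes $\boxm[0](\prflf{p}\prand\boxm[0]\prflf{p})$, which is \emph{derivable} from the antecedent $\prflf{p}\prand\boxm[0]\prflf{p}$ (via $\boxm[0]\prflf{p}\primp\boxm[0]\boxm[0]\prflf{p}$); since $\sqfatr{\prvarf{x}}{\prflf{f}}$ carries the disjunct $\boxm[0]\prvarf{x}$ at its top level, the substituted instance becomes provably true under the antecedent, so the implication degenerates to $(\prflf{p}\prand\boxm[0]\prflf{p})\primp\top$ and tells you nothing about $\prflf{f}$. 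Your auxiliary claim $\thrf{GLP}\vdash\sqfatr{\prvarf{x}}{\prflf{f}}\primp(\boxm[0]\prvarf{x}\pror\prflf{f})$ is also false in general: for $\prflf{f}=\boxm[0]\prvarf{y}$ a three-element $R_0$-chain with $\prvarf{x}$ false at the middle point and true at the top, and $\prvarf{y}$ false at the middle point, satisfies $\boxm[0](\boxm[0]\prvarf{x}\pror\prvarf{y})$ but neither $\boxm[0]\prvarf{x}$ nor $\boxm[0]\prvarf{y}$ at the root; the induction breaks at the $\boxm[0]$ case because $\prnot\boxm[0]\prvarf{x}$ does not persist along $R_0$. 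Tellingly, your $(\Leftarrow)$ never invokes the hypothesis $\thrf{GLP}\nvdash(\prflf{p}\prand\boxm[1]\prflf{p})\primp\prvarf{x}$, which exists precisely to make this direction true: it is what guarantees that $\boxm[0]\prvarf{x}$ can be falsified without destroying $\prflf{p}\prand\boxm[0]\prflf{p}$.

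The correct division of labour is the opposite of yours. $(\Rightarrow)$ is the easy direction and is purely syntactic: by the deduction theorem (Lemma \ref{glp_deduction}) it reduces to showing that whenever $\thrf{GLP}+\prflf{p}$ proves a formula it also proves its $\sqfatrs{\prvarf{x}}$-translation, by induction on the length of the derivation; no models are needed. For $(\Leftarrow)$ one argues contrapositively with exactly the gluing machinery you describe, but starting from the other end: from $\thrf{GLP}\nvdash(\prflf{p}\prand\boxm[0]\prflf{p})\primp\prflf{f}$ take a blowup countermodel $\mathfrak{B}_{\natf{n}}(\mathcal{A})$ on which one may additionally assume $\prvarf{x}$ is false everywhere (since $\prvarf{x}$ does not occur in $\prflf{f}$); separately, from $\thrf{GLP}\nvdash(\prflf{p}\prand\boxm[0]\prflf{p})\primp\prvarf{x}$ (which follows from the stated hypothesis via Lemma \ref{box_switch_imp}) and Lemma \ref{suitable_model} take a model $\mathcal{B}$ forcing $\prflf{p}$ everywhere and refuting $\prvarf{x}$ exactly at its hereditary root; then form $\mathcal{C}=\mathcal{B}+\mathcal{A}$. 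Every point of the $\mathcal{A}$-part of $\mathfrak{B}_{\natf{n}}(\mathcal{C})$ now $R_0$-sees a point refuting $\prvarf{x}$, so $\boxm[0]\prvarf{x}$ fails throughout that part, $\sqfatr{\prvarf{x}}{\cdot}$ agrees there with the original formulas, and $\boxm[0]\prflf{p}$ survives because $\mathcal{B}$ forces $\prflf{p}$. Your splicing idea is the right ingredient, but you have attached it to the direction in which a countermodel of the translated formula is already given and nothing needs to be spliced.
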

\addtocounter{theorem}{-1}
}
\begin{proof} $\Rightarrow$: We claim that for  all $\prflf{x}$ if $\thrf{GLP}+\prflf{f}\vdash\sqfatr{\prvarf{x}}{\prflf{x}}$ then $\thrf{GLP}+\prflf{f}\vdash\prflf{x}$; obviously, $(\Rightarrow)$ follows from Lemma \ref{glp_deduction} and the claim. We prove the claim by induction on the length of proof of $\prflf{x}$. Simple check shows that induction hypothesis holds for the axioms. For induction step the induction hypothesis can be proved easily for both cases of the last inference rule. 

$\Leftarrow$:   We denote by $\natf{n}$ the modal depth $\mathit{dp}( \boxm[1]\prflf{p}\primp \prflf{f})=\mathit{dp}( \boxm[0]\prflf{p}\primp \prflf{f})$. We will prove
$$\thrf{GLP}\nvdash (\prflf{p}\prand \boxm[0]\prflf{p})\primp\prflf{f}\Implication \thrf{GLP}\nvdash (\prflf{p}\prand \boxm[0]\prflf{p})\primp\sqfatr{\prvarf{x}}{\prflf{f}}.$$
Suppose $\thrf{GLP}\nvdash (\prflf{p}\prand \boxm[0]\prflf{p})\primp\prflf{f}$.  From Lemma \ref{box_switch_imp} it follows that $\thrf{GLP}\nvdash (\prflf{p}\prand \boxm[0]\prflf{p})\primp \prvarf{x}$. By Lemma \ref{suitable_model} we obtain a  hereditarily rooted finite stratified  $0$-model $\mathcal{B}$ such that in the hereditary root $b$ is of $\mathfrak{B}_{\natf{n}}(\mathcal{B})$ we have $\mathfrak{B}_{\natf{n}}(\mathcal{B}),b\nvdash (\prflf{p}\prand \boxm[0]\prflf{p})\primp \prvarf{x}$, but for every point $w\ne a$ from non-root $1$-plane of $\mathcal{B}$ we have $\mathfrak{B}_{\natf{n}}(\mathcal{B}),w\Vdash (\prflf{p}\prand \boxm[0]\prflf{p})\primp \prvarf{x}$. Clearly, $\mathfrak{B}_{\natf{n}}(\mathcal{B}),b\Vdash \prflf{p}\prand \boxm[0]\prflf{p}$ and $\thrf{GLP}\vdash \boxm[0]\prflf{p}\primp \boxm[\natf{k}]\prflf{p}$ for every $\natf{k}$. Hence from Theorem \ref{glp_completeness} it follows that $\mathfrak{B}_{\natf{n}}(\mathcal{B})\models \prflf{p}$. Hence $\mathfrak{B}_{\natf{n}}(\mathcal{B}),b\nVdash \prvarf{x}$ and in all $w\ne b$ from  $\mathfrak{B}_{\natf{n}}(\mathcal{B})$ there is $\mathfrak{B}_{\natf{n}}(\mathcal{B}),w\Vdash \prvarf{x}$.

 By Theorem \ref{glp_completeness} we have a  hereditarily rooted finite stratified  $0$-model $\mathcal{A}$ such that $\mathfrak{B}_{\natf{n}}(\mathcal{A})\not \models (\prflf{p}\prand \boxm[0]\prflf{p})\primp\prflf{f}$. Obviously, we can choose such a  $\mathcal{A}$ that $\mathfrak{B}_{\natf{n}}(\mathcal{A}) \models \prnot\prvarf{x}$. We consider model  $\mathcal{C}=\mathcal{B}+\mathcal{A}$. We consider $\mathfrak{B}_{\natf{n}}(\mathcal{A})$  as a submodel of $\mathfrak{B}_{\natf{n}}(\mathcal{C})$ in a natural way.  Obviously, for a point $w$ from $\mathfrak{B}_{\natf{n}}(\mathcal{A})$ we have
$$\mathfrak{B}_{\natf{n}}(\mathcal{C}),w\Vdash \boxm[0]x \iff w\in\mathfrak{B}_{\natf{n}}(\mathcal{A}).$$
By trivial induction on subformulas of $\prflf{t}$ we show that for a point $w$ from $\mathfrak{B}_{\natf{n}}(\mathcal{A})$ and a formula $\prflf{t}$ we have
$$\mathfrak{B}_{\natf{n}}(\mathcal{A}),w\Vdash \prflf{t} \iff \mathfrak{B}_{\natf{n}}(\mathcal{C}),w\Vdash \sqfatr{\prvarf{x}}{\prflf{t}}.$$ 
Obviously,  for a point $w$ from $\mathfrak{B}_{\natf{n}}(\mathcal{A})$ we have
$$\mathfrak{B}_{\natf{n}}(\mathcal{A}),w\Vdash \prflf{p} \iff \mathfrak{B}_{\natf{n}}(\mathcal{C}),w\Vdash \prflf{p}$$
and because $\mathfrak{B}(\mathcal{B})\models \prflf{p}$
$$\mathfrak{B}_{\natf{n}}(\mathcal{A}),w\Vdash \boxm[0]\prflf{p} \iff \mathfrak{B}_{\natf{n}}(\mathcal{C}),w\Vdash \boxm[0]\prflf{p}.$$
Hence for a point $w$ from $\mathfrak{B}_{\natf{n}}(\mathcal{A})$ we have
$$\mathfrak{B}_{\natf{n}}(\mathcal{A}),w\Vdash (\prflf{p}\prand \boxm[0]\prflf{p})\primp\prflf{f} \iff \mathfrak{B}_{\natf{n}}(\mathcal{C}),w\Vdash (\prflf{p}\prand \boxm[0]\prflf{p})\primp\sqfatr{\prvarf{x}}{\prflf{f}}.$$
Therefore $\mathfrak{B}_{\natf{n}}(\mathcal{C})\not\models (\prflf{p}\prand \boxm[0]\prflf{p})\primp\sqfatr{\prvarf{x}}{\prflf{f}}$. Finally, we conclude that  $\thrf{GLP}\nvdash (\prflf{p}\prand \boxm[0]\prflf{p})\primp\sqfatr{\prvarf{x}}{\prflf{f}}$.
\end{proof}


\bibliographystyle{plain}
\bibliography{bibliography}
\end{document}